\let\old@tocline\@tocline
\let\section@tocline\@tocline
\newcommand{\subsection@dotsep}{4.5}
\newcommand{\subsubsection@dotsep}{4.5}
     \leaders\hbox{$\m@th
        \mkern \subsection@dotsep mu\hbox{.}\mkern \subsection@dotsep mu$}\hfill
\let\subsection@tocline\@tocline
\let\@tocline\old@tocline
     \leaders\hbox{$\m@th
        \mkern \subsubsection@dotsep mu\hbox{.}\mkern \subsubsection@dotsep mu$}\hfill
\let\subsubsection@tocline\@tocline
\let\@tocline\old@tocline
\let\old@l@subsection\l@subsection
\let\old@l@subsubsection\l@subsubsection
\def\@tocwriteb#1#2#3{%
  \begingroup
    \@xp\def\csname #2@tocline\endcsname##1##2##3##4##5##6{%
      \ifnum##1>\c@tocdepth
      \else \sbox\z@{##5\let\indentlabel\@tochangmeasure##6}\fi}%
    \csname l@#2\endcsname{#1{\csname#2name\endcsname}{\@secnumber}{}}%
  \endgroup
  \addcontentsline{toc}{#2}%
    {\protect#1{\csname#2name\endcsname}{\@secnumber}{#3}}}%
\newlength{\@tocsectionindent}
\newlength{\@tocsubsectionindent}
\newlength{\@tocsubsubsectionindent}
\newlength{\@tocsectionnumwidth}
\newlength{\@tocsubsectionnumwidth}
\newlength{\@tocsubsubsectionnumwidth}
\newcommand{\settocsectionnumwidth}[1]{\setlength{\@tocsectionnumwidth}{#1}}
\newcommand{\settocsubsectionnumwidth}[1]{\setlength{\@tocsubsectionnumwidth}{#1}}
\newcommand{\settocsubsubsectionnumwidth}[1]{\setlength{\@tocsubsubsectionnumwidth}{#1}}
\newcommand{\settocsectionindent}[1]{\setlength{\@tocsectionindent}{#1}}
\newcommand{\settocsubsectionindent}[1]{\setlength{\@tocsubsectionindent}{#1}}
\newcommand{\settocsubsubsectionindent}[1]{\setlength{\@tocsubsubsectionindent}{#1}}
\renewcommand{\l@section}{\section@tocline{1}{\@tocsectionvskip}{\@tocsectionindent}{}{\@tocsectionformat}}%
\renewcommand{\l@subsection}{\subsection@tocline{1}{\@tocsubsectionvskip}{\@tocsubsectionindent}{}{\@tocsubsectionformat}}%
\renewcommand{\l@subsubsection}{\subsubsection@tocline{1}{\@tocsubsubsectionvskip}{\@tocsubsubsectionindent}{}{\@tocsubsubsectionformat}}%
\newcommand{\@tocsectionformat}{}
\newcommand{\@tocsubsectionformat}{}
\newcommand{\@tocsubsubsectionformat}{}
\def\csname toc@1format\endcsname{\@tocsectionformat}
\def\csname toc@2format\endcsname{\@tocsubsectionformat}
\def\csname toc@3format\endcsname{\@tocsubsubsectionformat}
\newcommand{\settocsectionformat}[1]{\renewcommand{\@tocsectionformat}{#1}}
\newcommand{\settocsubsectionformat}[1]{\renewcommand{\@tocsubsectionformat}{#1}}
\newcommand{\settocsubsubsectionformat}[1]{\renewcommand{\@tocsubsubsectionformat}{#1}}
\newlength{\@tocsectionvskip}
\newcommand{\settocsectionvskip}[1]{\setlength{\@tocsectionvskip}{#1}}
\newlength{\@tocsubsectionvskip}
\newcommand{\settocsubsectionvskip}[1]{\setlength{\@tocsubsectionvskip}{#1}}
\newlength{\@tocsubsubsectionvskip}
\newcommand{\settocsubsubsectionvskip}[1]{\setlength{\@tocsubsubsectionvskip}{#1}}
\patchcmd{\tocsection}{\indentlabel}{\makebox[\@tocsectionnumwidth][l]}{}{}
\patchcmd{\tocsubsection}{\indentlabel}{\makebox[\@tocsubsectionnumwidth][l]}{}{}
\patchcmd{\tocsubsubsection}{\indentlabel}{\makebox[\@tocsubsubsectionnumwidth][l]}{}{}
\newcommand{\@sectypepnumformat}{}
\renewcommand{\contentsline}[1]{%
  \expandafter\let\expandafter\@sectypepnumformat\csname @toc#1pnumformat\endcsname%
  \csname l@#1\endcsname}
\newcommand{\@tocsectionpnumformat}{}
\newcommand{\@tocsubsectionpnumformat}{}
\newcommand{\@tocsubsubsectionpnumformat}{}
\newcommand{\setsectionpnumformat}[1]{\renewcommand{\@tocsectionpnumformat}{#1}}
\newcommand{\setsubsectionpnumformat}[1]{\renewcommand{\@tocsubsectionpnumformat}{#1}}
\newcommand{\setsubsubsectionpnumformat}[1]{\renewcommand{\@tocsubsubsectionpnumformat}{#1}}
\renewcommand{\@tocpagenum}[1]{%
  \hfill {\mdseries\@sectypepnumformat #1}}
\let\oldappendix\appendix
\renewcommand{\appendix}{%
  \leavevmode\oldappendix%
  \addtocontents{toc}{%
    \protect\settowidth{\protect\@tocsectionnumwidth}{\protect\@tocsectionformat\sectionname\space}%
    \protect\addtolength{\protect\@tocsectionnumwidth}{2em}}%
}
\let\oldtableofcontents\tableofcontents
\renewcommand{\tableofcontents}{%
  \vspace*{-\linespacing}
  \oldtableofcontents}
\newcommand{\mylabel}[2]{#2\def\@currentlabel{#2}\label{#1}}
\newcommand{\x}{\scalebox{1.2}{$\chi$} } 
\newcommand{\br}{\overline}
\newcommand{\R}{\mathbb R}
\newcommand{\C}{\mathbb C}
\newcommand{\N}{\mathbb N}
\newcommand{\UHP}{\mathbb H}
\theoremstyle{plain}
\newtheorem{theorem}{Theorem}
\newtheorem{lemma}[theorem]{Lemma}
\newtheorem{conjecture}{Conjecture}
\newtheorem{prop}[theorem]{Proposition}
\theoremstyle{definition}
\newtheorem{definition}[theorem]{Definition}
\newtheorem{claim}{Claim}
\theoremstyle{remark}
\newtheorem{remark}[theorem]{Remark}
\newtheorem{question}{Question}
\DeclareMathOperator{\diam}{\textup{\text{diam}}}
\DeclareMathOperator{\inter}{\textup{\text{int}}}
\DeclareMathOperator{\length}{\textup{\text{length}}}
\DeclareMathOperator{\loc}{\textup{loc}}
\DeclareMathOperator*{\osc}{\textup{osc}}
\DeclareMathOperator*{\dis}{\textup{dis}}
\numberwithin{equation}{section}
\numberwithin{theorem}{section}
\numberwithin{figure}{section}
\begin{document}
\title{Non-removability of the Sierpi\'nski Gasket}
\author{Dimitrios Ntalampekos}
\thanks{The author was partially supported by NSF grant DMS-1506099}
\address{Department of Mathematics\\ University of California, Los Angeles\\
CA 90095, USA.}
\curraddr{Institute for Mathematical Sciences \\ Stony Brook University \\ Stony Brook \\ NY 11794, USA.}
\email{dimitrios.ntalampekos@stonybrook.edu}
\subjclass[2010]{Primary: 30C62; Secondary: 46E35, 30L10, 51F99}
\date{\today}
\keywords{Removability, Sierpi\'nski gasket, Quasiconformal maps, Sobolev functions}
\maketitle

\begin{abstract}
We prove that the Sierpi\'nski gasket is non-removable for quasiconformal maps, thus answering a question of Bishop \cite{Bi2}. The proof involves a new technique of constructing an exceptional homeomorphism from $\R^2$ into some non-planar surface $S$, and then embedding this surface quasisymmetrically back into the plane by using the celebrated Bonk-Kleiner Theorem \cite{BK}.  We also prove that all homeomorphic copies of the Sierpi\'nski gasket are non-removable for continuous Sobolev functions of the class $W^{1,p}$ for $1\leq p\leq 2$, thus complementing and sharpening the results of the author's previous work \cite{Nt}.  
\end{abstract}

\tableofcontents

\section{Introduction}\label{Section Introduction}

\subsection{Background and main results}
The object of this paper is to prove that the Sierpi\'nski gasket is non-removable for (quasi)conformal maps and Sobolev functions. The problem of (quasi)conformal and Sobolev removability has been studied extensively. Besides earlier results by  Besicovitch \cite{Be} and Gehring \cite{Ge:remov}, various conditions that guarantee the removability of compact sets have been established by Jones and Smirnov \cite{Jo}, \cite{JS}, Kaufman and Wu \cite{KW}, \cite{Wu}, Koskela and Nieminen \cite{KN}, and recently by the current author \cite{Nt}. Moreover, removability of Julia sets has been studied by Kahn \cite{Kah} and also by Graczyk and Smirnov \cite{GS}. On the other side of the coin, examples of non-removable sets and constructions of exceptional functions/homeomorphisms have been given by Kaufman and Wu \cite{Kau}, \cite{KW}, Bishop \cite{Bi}, \cite{Bi:R3}, and also Koskela, Rajala and Zhang \cite{KRZ}. The present paper provides one more result in this direction. Interestingly, according to a conjecture of He and Schramm \cite{HS}, the problem of removability is also related to the rigidity of circle domains and Koebe's conjecture.

So far, the types of sets which were known to be non-removable were sets of positive Lebesgue measure, some product sets, some Cantor sets, and also ``rough" sets which are topologically ``simple" (e.g.\ simple curves). The task of constructing an exceptional homeomorphism becomes very challenging, as the topology of the set deteriorates. To the best of our knowledge, it is the first time that a non-trivial construction is used to prove that a set with infinitely many complementary components, such as the Sierpi\'nski gasket, is non-removable. This hints that a \textit{generic} set with infinitely many complementary components should be non-removable. Such sets are, for example, Sierpi\'nski carpets, the Apollonian gasket, and also $\textrm{SLE}_\kappa$ for $\kappa \in (4,8)$; Sheffield \cite{Sh} has posed the question whether the latter is removable or not.

We include some background of the problem of removability of sets for (quasi)\-conformal maps in $\R^2$ and Sobolev functions in $\R^n$. We direct the reader \cite{Yo} for a thorough survey on the topic of (quasi)conformal removability and for proofs of some of the facts that we state here.

\begin{definition}\label{def:removable}
We say that a compact set $K\subset U \subset \R^2$ is (quasi)conformally  removable inside the domain $U$ if any homeomorphism of $U$, which is (quasi)conformal on $U\setminus K$, is (quasi)conformal on $U$.
\end{definition} 

Here we mention some basic facts. A set $K$ is quasiconformally removable inside $U$ if and only if $K$ is conformally removable inside $U$. Hence, from now on, we will be using the term quasiconformal removability. Furthermore, a set $K$ is quasiconformally removable  inside a domain $U$ if and only if $K$ is quasiconformally removable inside the entire plane $\R^2$. Two fundamental open questions are the following:

\begin{question}[p.\ 264, \cite{JS}]\label{Question union} Is the union of two intersecting compact sets quasiconformally removable, whenever each one of them is  removable?
\end{question}

For a partial result in this direction see \cite[Theorem 4]{YoKoebe}. In the same paper \cite[p.\ 1306]{YoKoebe} the author discusses the problem of local removability. A set $K$ is \textit{locally quasiconformally removable} if for \textit{any} open set $U$ (not necessarily containing $K$) and for any homeomorphism $f$ of $U$ that is quasiconformal on $U\setminus K$ we have that $f$ is quasiconformal on $U$.

\begin{question}\label{Question local} If a set is quasi\-conformally removable, is it also locally quasi\-conformally removable?
\end{question}

A stronger notion of removability is the notion of $W^{1,2}$-removability. We give the general definition of $W^{1,p}$-removability in $\R^n$, where $p\in [1,\infty]$. Recall that a function $f$ lies in $W^{1,p}(U)$, where $U$ is an open subset of $\R^n$, if $f\in L^p(U)$ and also $f$ has weak derivatives in $U$ that lie in $L^p(U)$.

\begin{definition}\label{Intro Sobolev Removable}
Let $p\in [1,\infty]$. We say that a compact set $K\subset \R^n$ is $W^{1,p}$-removable if any real-valued function that is continuous in $\R^n$ and lies in $W^{1,p}(\R^n\setminus K)$, also lies $W^{1,p}(\R^n)$.
\end{definition}

Using partitions of unity one can show that this definition is local, and thus the answer to the analog of Question \ref{Question local} is positive in this case. Furthermore, H\"older's inequality implies that if a set of measure zero is $W^{1,p}$-removable, then it is also $W^{1,q}$-removable for $q>p$.

\begin{question}[p.\ 264, \cite{JS}]\label{Question Equivalence}
Is $W^{1,2}$-removability in the plane equivalent to quasiconformal removability?
\end{question}

Interestingly, so far the techniques used in the two different notions of removability are the same, but there is no further indication whether the answer to the preceding question should be positive or negative.

If a set $K\subset \R^2$ has measure zero, then $W^{1,2}$-removability of $K$ implies quasi\-conformal removability. If a set $K\subset \R^2$ has positive measure then it is non-removable for quasiconformal maps. In \cite{Nt} the author posed the question whether this is true for Sobolev removability. We provide an answer to this question here:

\begin{theorem}\label{Intro Theorem Positive}
Let $K\subset \R^n$ be a compact set of positive Lebesgue measure and $1\leq p<\infty$. Then $K$ is non-removable for $W^{1,p}$.
\end{theorem}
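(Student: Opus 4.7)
I would prove the theorem by constructing an explicit exceptional function, reducing the higher-dimensional case to the one-dimensional one by slicing.

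\emph{Case $n=1$.} Given $K\subset\R$ compact with $|K|>0$, I would first extract a Cantor subset $C\subset K$ with $|C|=0$. Since $|K|>0$ forces $K$ to be uncountable, a standard Cantor-scheme argument produces nested non-empty closed subsets $\{K_\sigma\}_{\sigma\in 2^{<\omega}}$ of $K$ with disjoint children and $\sum_{|\sigma|=m}|K_\sigma|\to 0$, whose intersection is the desired $C$. The Cantor staircase $F\colon\R\to[0,1]$ of $C$ is continuous and monotone, constant on each complementary interval of $C$; since $|C|=0$, $F'=0$ almost everywhere, while $F$ is non-constant, so $F$ is not absolutely continuous. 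After multiplication by a smooth compactly supported cutoff, the result is a continuous compactly supported $f\colon\R\to\R$ that is locally constant on $\R\setminus C\supset\R\setminus K$, so $f\in W^{1,p}(\R\setminus K)$ with zero weak derivative, while $f\notin W^{1,1}(\R)$ and hence $f\notin W^{1,p}(\R)$ for any $p\ge 1$.

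\emph{Case $n\ge 2$.} By Fubini applied to $\1_K$, there exist a direction (taken to be $e_1$) and a Borel set $B\subset\R^{n-1}$ of positive $(n-1)$-measure such that each slice $K_y=\{t\in\R:(t,y)\in K\}$ with $y\in B$ has positive $1$-measure. For each such $y$, the one-dimensional case produces a measure-zero Cantor set $C_y\subset K_y$ and an associated Cantor staircase $g_y$; this assignment can be made Borel in $y$ by measurable selection. The task is then to glue $\{g_y\}$ into a continuous function $f\colon\R^n\to\R$ with $f(\cdot,y)=g_y$ on a set of $y$ of positive measure. Lusin's theorem yields a compact $Y\subset B$ of positive measure on which $y\mapsto g_y$ is continuous (in the topology of uniform convergence on compacta), Tietze extends this continuous family to all of $\R^{n-1}$, and multiplication by a continuous cutoff localized near $Y$ produces the desired $f$. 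Then $f\in W^{1,p}(\R^n\setminus K)$ since the singular support of $\nabla f$ is contained in $\bigcup_{y\in Y}C_y\times\{y\}\subset K$ while the remaining (smooth) contributions are bounded on compact support; and $f\notin W^{1,p}(\R^n)$ by the ACL characterization of Sobolev functions, since $f(\cdot,y)=g_y$ is not absolutely continuous for every $y\in Y$.

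\emph{Main obstacle.} The technical challenge is the gluing step for $n\ge 2$, where one must reconcile continuity of $f$ on $\R^n$ with the requirement that the singular support of $\nabla f$ stay inside $K$. Since the Lusin set $Y$ may have empty interior, smooth cutoffs localized inside $Y$ are unavailable, and one must work with merely continuous cutoffs, taking care that the tangential derivatives $\partial_{x_j}f$ for $j\ge 2$ remain in $L^p$ on $\R^n\setminus K$ despite inheriting contributions from the Tietze-extended factors outside $Y$. A cleaner variant, when feasible, is to parameterize $C_y=C+\phi(y)$ as translates of a single Cantor set $C$ by a continuous selector $\phi$; the existence of such $\phi$ on a positive-measure subset of $B$ can be established by a density-point/Markov estimate on shifts, showing that at a Lebesgue density point of $K$, most translates of a sufficiently small fixed Cantor set fit inside the slices $K_y$.
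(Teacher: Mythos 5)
Your one-dimensional case is sound and is a genuinely different route from the paper's. (Minor point: after multiplying by the smooth cutoff $\psi$, the function $F\psi$ is no longer locally constant on $\R\setminus C$; it is locally smooth there with bounded derivative $F\psi'$, which is what you actually need.) The Cantor-staircase argument gives, cleanly, a continuous compactly supported $f$ that is piecewise smooth with bounded derivative off $C\subset K$, and is not absolutely continuous near $C$, hence not in $W^{1,1}_{\loc}(\R)\supset W^{1,p}(\R)$.

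For $n\ge 2$ there is a genuine gap in the gluing step, which you flag but do not close. Lusin's theorem gives a compact $Y$ of positive measure on which $y\mapsto g_y$ is continuous in the sup norm; this is far too weak to control the tangential derivatives $\partial_{x_j}f$, $j\ge 2$: a continuously varying family $\{g_y\}$ need not have any differentiability in $y$, so there is no reason $\partial_y f$ should lie in $L^p(\R^n\setminus K)$, either on $Y$ or after Tietze extension. Your cleaner variant $C_y=C+\phi(y)$ shifts the difficulty to two new claims: (i) that translates of a fixed measure-zero compact set $C$ fit inside the slices $K_y$ on a positive-measure set of $y$, and (ii) that the selector $\phi$ is regular enough to give $L^p$ tangential derivatives. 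Claim (i) is not a consequence of Lebesgue density -- whether a positive-measure slice $K_y$ contains a translate of a prescribed measure-zero Cantor set is a statement about the gap structure of $K_y$, not its density, and in general it can fail. Claim (ii) reproduces the regularity problem from the first variant.

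The paper avoids slicing altogether. Place a Lebesgue density point of $K$ at the origin, choose radii $r_i\to 0$ with $m_n(B(0,r_i)\setminus K)\le 2^{-ip}r_i^n$ and $r_{i+1}<r_i/2$, and set $f=\sum_i\phi_i$ where each $\phi_i$ is a radially oscillating sawtooth of amplitude $c_i$ and frequency $m_i$ supported on the annulus $A(0;r_i/2,r_i)$. The disjoint supports split the $L^p$ norms; the density-point estimate gives $\|\nabla\phi_i\|_{L^p(\R^n\setminus K)}\lesssim 2^{-i}\|\nabla\phi_i\|_{L^p(\R^n)}$; and one chooses $c_i\to 0$ and $m_i$ so that
\begin{align*}
\sum_{i=1}^\infty 2^{-i}c_im_ir_i^{n/p-1}<\infty \quad\textrm{and}\quad \sum_{i=1}^\infty c_i^pm_i^pr_i^{n-p}=\infty.
\end{align*}
This produces a continuous $f\in W^{1,p}(\R^n\setminus K)\setminus W^{1,p}(\R^n)$ directly, for all $n$ and all $1\le p<\infty$, with no Cantor sets and no gluing: the singular behaviour is concentrated at a single point rather than along a Cantor set in each slice, which is exactly what removes the obstacle you ran into.
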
 

However, the statement is not true for $W^{1,\infty}$:

\begin{prop}\label{Intro Theorem W^{1,infinity}}
There exists a compact set $K\subset \R^n$ of positive Lebesgue measure that is $W^{1,\infty}$-removable.
\end{prop}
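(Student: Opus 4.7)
The plan is to take $K$ to be a Cartesian power of a fat Cantor set and to exploit the resulting quasiconvexity of the complement. Assume $n \geq 2$ and let $C \subset [0,1]$ be a fat Cantor set, that is, a compact totally disconnected set of positive one-dimensional Lebesgue measure (hence necessarily of empty interior in $\R$). Set
\[
K := \underbrace{C \times C \times \cdots \times C}_{n \text{ copies}} \subset \R^n.
\]
Then $K$ is compact, $|K| = |C|^n > 0$, and $K$ has empty interior in $\R^n$.

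The key geometric step is to show that $U := \R^n \setminus K$ is quasiconvex, namely that any pair $p, q \in U$ is joined by a rectifiable path in $U$ of length at most $\sqrt{n}\,|p - q|$. A point of $\R^n$ lies in $U$ if and only if at least one of its coordinates lies in the open dense set $\R \setminus C$. Given $p, q \in U$, I would first perturb any coordinates of $p$ and $q$ that happen to lie in $C$ by arbitrarily small amounts, so that the perturbed endpoints $\tilde p, \tilde q$ lie in $(\R \setminus C)^n$; each such perturbation path remains in $U$ because the other coordinates already certify membership. Then I would join $\tilde p$ to $\tilde q$ by the standard axis-parallel path that updates one coordinate at a time: along each segment, all non-varying coordinates lie in $\R \setminus C$, so the segment lies entirely in $U$, and the total length equals $\|\tilde p - \tilde q\|_1 \leq \sqrt{n}\,|\tilde p - \tilde q|$. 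Letting the perturbations tend to zero gives $d_U(p, q) \leq \sqrt{n}\,|p - q|$, where $d_U$ denotes the intrinsic path metric on $U$.

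Removability then follows readily. Let $f : \R^n \to \R$ be continuous with $f \in W^{1,\infty}(U)$ and $\|\nabla f\|_{L^\infty(U)} \leq L$. Then $f$ admits a representative satisfying $|f(x) - f(y)| \leq L\, d_U(x, y) \leq \sqrt{n}\, L\,|x - y|$ for all $x, y \in U$, so $f$ is $\sqrt{n}\,L$-Lipschitz on $U$ in the Euclidean metric. Since $K$ has empty interior, $U$ is dense in $\R^n$, and continuity of $f$ extends the Lipschitz estimate to all of $\R^n$, giving $f \in W^{1,\infty}(\R^n)$. The main subtlety is the quasiconvexity argument, which requires verifying that the axis-parallel segments can be routed to avoid $K$ in all configurations of endpoint coordinates; the hypothesis $n \geq 2$ is essential, since in dimension one any compact set $C \subset \R$ of positive measure and empty interior fails to be $W^{1,\infty}$-removable, as the function $\sqrt{|\phi_C(x) - v|}$ with $\phi_C(x) = m(C \cap (-\infty, x])/|C|$ and a suitable generic $v$ provides an explicit counterexample.
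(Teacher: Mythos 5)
Your proof is correct and takes essentially the same approach as the paper: both use $K = C^n$ for a fat Cantor set $C$, both connect points by axis-parallel polygonal paths that avoid $K$ after small perturbations (using that each segment has a fixed coordinate in $\R\setminus C$), and both conclude by density and continuity. The paper perturbs whole lines and invokes continuity at that stage, whereas you perturb endpoints and explicitly package the argument as quasiconvexity of the complement; the paper in fact states this quasiconvexity criterion in the remark immediately following its proof, so the repackaging is already anticipated there. Your observation that the construction requires $n\geq 2$, and that no compact set of positive measure in $\R$ is $W^{1,\infty}$-removable, is a correct and worthwhile clarification not made explicit in the paper.
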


Classes of $W^{1,2}$- and quasiconformally   removable  sets include sets of $\sigma$-finite Hausdorff $1$-measure \cite{Be}, \cite[Section 35]{Va}, quasicircles, boundaries of John domains, of H\"older domains, and of domains satisfying certain quasihyperbolic conditions \cite{Jo}, \cite{JS}, \cite{KN}. Also, some novel techniques for the removability of Julia sets of quadratic polynomials appeared in \cite{Kah}.

On the other hand, as already remarked, all sets of positive measure are non-removable for (quasi)conformal maps and $W^{1,2}$ functions. Furthermore, there exist non-removable Jordan curves of Hausdorff dimension $1$ \cite{Bi}, and also non-removable graphs of functions \cite{Kau}. 

\begin{figure}
	\centering
	\input{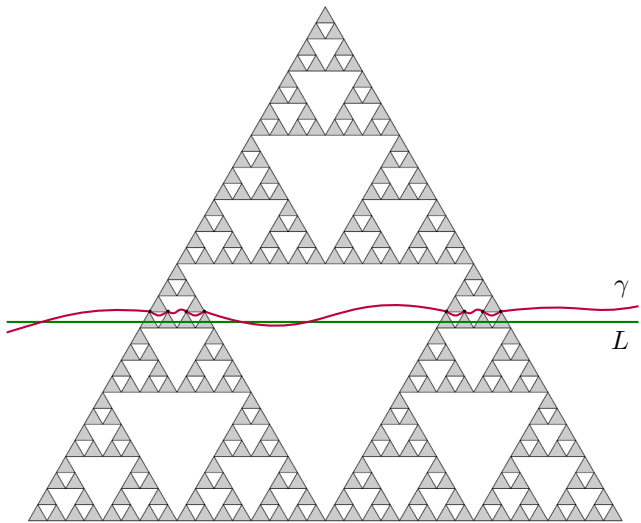}	
	\caption{The Sierpi\'nski gasket, and a detour path $\gamma$ near the line $L$.} \label{fig:gasket}
\end{figure}

Most of these results refer to compact sets that are the boundary of the union of \textit{finitely many} connected open sets. Until recently, there had been no general result on sets with \textit{infinitely many} complementary components, not falling into the preceding category. The task of proving that such a set is (non)-removable requires the development of different tools.  In \cite{Nt} the author studied this problem and derived a condition that guarantees $W^{1,p}$-removability of a set $K\subset \R^n$ for $p>n$. Roughly speaking, the condition is the following:
\begin{enumerate}
\item The complementary components of $K$ are uniform H\"older domains (see e.g.\ \cite{SS} and also \cite{Nt} for the definition), and
\item for ``almost every" line $L$ intersecting $K$, and for every $\varepsilon>0$ there exists a ``detour path" $\gamma$ that $\varepsilon$-follows the line $L$, but intersects only finitely many of the complementary components of $K$ that the line $L$ intersects.
\end{enumerate}
In other words,  (2) says that we can ``travel" in the direction of the line $L$ using only finitely many components in the complement of $K$, but still staying arbitrarily close to the line $L$; see Figure \ref{fig:gasket}. We call such sets \textit{detour sets}, and the Sierpi\'nski gasket, depicted in Figure \ref{fig:gasket}, is one such set. 

\begin{theorem}[Corollary 1.4, \cite{Nt}]\label{Intro gasket Removable}
The Sierpi\'nski gasket is $W^{1,p}$-removable for $p>2$.
\end{theorem}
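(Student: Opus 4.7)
The plan is to apply the general $W^{1,p}$-removability criterion from \cite{Nt} summarized in the two bullets above, by verifying that the Sierpi\'nski gasket $G$ is a detour set, i.e.\ that it satisfies both Conditions (1) and (2).

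Condition (1) is essentially automatic. The complementary components of $G$ are the open equilateral triangles removed during the Sierpi\'nski construction, together with one unbounded component. Each such component is convex with polygonal boundary, hence Lipschitz, hence a uniform H\"older domain. Because the bounded complementary triangles are pairwise similar, the H\"older constants can be chosen independently of scale.

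The heart of the argument is Condition (2), the construction of detour paths. By the $3$-fold symmetry of $G$ it suffices to treat lines $L$ in one of the three canonical edge directions; say $L$ is horizontal. For a full-measure set of heights $y$ (e.g.\ excluding the countable set of dyadic values), the line $L=\R\times\{y\}$ meets each removed triangle only in its interior, not along an edge. Given $\varepsilon>0$, I would choose a generation $n$ with $2^{-n}<\varepsilon/2$. The crucial geometric observation is that the horizontal edges of all generation-$n$ triangles lie in $G$ itself and appear as short horizontal segments of length $2^{-n}$ inside the $\varepsilon$-neighborhood of $L$. I would then assemble $\gamma$ by stitching together such horizontal edges of $G$ (at generation $n$ or finer) using short diagonal edges of $G$ as connectors to adjust for small height mismatches, and by direct crossings of the \emph{finitely many} triangles of generations $<n$ that $L$ intersects. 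This yields a continuous path $\gamma$ that stays within an $\varepsilon$-neighborhood of $L$ and meets only finitely many complementary components of $G$, since every triangle of generation $\geq n$ near $L$ is either avoided entirely or circumnavigated along boundary edges of $G$.

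The main obstacle lies in Step 2: one must argue explicitly, using the combinatorics of the gasket, that the horizontal edges of $G$ at generation $n$ together with short connector edges are sufficiently dense near $L$ to build a continuous path essentially following $L$, and that the connectors themselves stay within the $\varepsilon$-neighborhood. The mild genericity assumption on $y$ is used precisely to ensure that $L$ does not lie along any horizontal edge of $G$ (which would force infinitely many edge-on intersections) and that only finitely many large triangles are in the way. Once Conditions~(1) and~(2) are verified, Theorem~\ref{Intro gasket Removable} follows directly from the general $W^{1,p}$-removability theorem of \cite{Nt}.
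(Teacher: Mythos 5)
The paper does not actually prove this theorem here; it is cited as Corollary~1.4 of the author's earlier work \cite{Nt}, and only the broad strategy (verify that the gasket is a ``detour set'') is sketched in the Introduction. Your proposal correctly identifies that strategy, and your verification of Condition~(1) is fine: each complementary component is a convex polygon (or the exterior of one), hence Lipschitz, hence a H\"older domain, with uniform constants by self-similarity.

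The detour-path step (2) is where your sketch is loose. The horizontal edges of generation-$n$ triangles lie at a discrete set of dyadic heights, and for a generic horizontal line $L$ none of them lies on $L$; the edges that fall within the $\varepsilon$-neighborhood of $L$ are scattered over several slightly different heights and do not connect into a chain by themselves, so everything rides on the ``diagonal connectors,'' which is exactly the step you flag as incomplete. A cleaner way to organize the same idea, fitting the $v$-triangle machinery of Section~\ref{Section Definitions}, is to fix $n$ with $2^{-n}<\varepsilon$ and observe that the boundary of each level-$n$ $v$-triangle $V$ lies entirely in the gasket (cf.\ Lemma~\ref{Combinatorics lemma}), has diameter $<\varepsilon$, and that the level-$n$ $v$-triangles together with the finitely many $w$-triangles of level $\leq n$ tile the outer triangle. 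The detour path then simply follows $\partial V$ inside each level-$n$ $v$-triangle $V$ that $L$ enters, and crosses directly each of the finitely many level-$\leq n$ $w$-triangles that $L$ enters; no ad hoc edge-stitching is needed, and $\gamma$ meets only those finitely many $w$-triangles. With that reorganization your argument carries through, but keep in mind that the general removability criterion you invoke is established only in \cite{Nt}, not in the present paper.
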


Other sets that fall into the same category are the Apollonian gasket and generalized Sierpi\'nski gasket Julia sets of sub-hyperbolic rational maps; see \cite[Section 7]{Nt}.

The \textit{Sierpi\'nski gasket} is constructed as follows. We consider an equilateral triangle in the plane of sidelength $1$ and subdivide it into four equilateral triangles of sidelength $1/2$. After removing the middle triangle, we proceed inductively with subdividing each of the remaining three triangles into four equilateral triangles of sidelength $1/2^2$, and so on. The remaining compact set $K$ is the Sierpi\'nski gasket.

In this work we first prove that Theorem \ref{Intro gasket Removable} is sharp:
\begin{theorem}\label{Intro gasket non-removable}
The Sierpi\'nski gasket is non-removable for $W^{1,p}$, for $1\leq p\leq 2$.
\end{theorem}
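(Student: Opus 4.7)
The plan is to extract the desired continuous exceptional Sobolev function from the quasiconformal exceptional homeomorphism furnished by the paper's main theorem (quasiconformal non-removability of the gasket, announced in the abstract), and then to use the $p$-monotonicity of Sobolev removability (recorded after Definition~\ref{Intro Sobolev Removable}) to pass from $p=2$ to the full range $1\le p\le 2$.

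First I reduce to $p=2$. Since $K$ has Lebesgue measure zero, H\"older's inequality gives, as noted in the introduction, that $W^{1,p}$-removability of $K$ implies $W^{1,q}$-removability for $q>p$. In particular, $W^{1,2}$-non-removability implies $W^{1,p}$-non-removability for every $1\le p<2$. Hence it suffices to produce a continuous $f:\mathbb{R}^2\to\mathbb{R}$ with $f\in W^{1,2}(\mathbb{R}^2\setminus K)$ and $f\notin W^{1,2}(\mathbb{R}^2)$.

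Let $h:\mathbb{R}^2\to\mathbb{R}^2$ be a homeomorphism that is $K_0$-quasiconformal on $\mathbb{R}^2\setminus K$ for some $K_0\ge 1$, yet not quasiconformal on $\mathbb{R}^2$; this is exactly the content of the quasiconformal non-removability of the gasket. I would then invoke the classical fact that a planar homeomorphism in $W^{1,2}_{\mathrm{loc}}$ with essentially bounded distortion is quasiconformal. The distortion of $h$ is bounded by $K_0$ on $\mathbb{R}^2\setminus K$, and since $K$ has measure zero this bound holds almost everywhere on $\mathbb{R}^2$. If $h$ lay in $W^{1,2}_{\mathrm{loc}}(\mathbb{R}^2)$, we would conclude that $h$ is globally quasiconformal, contradicting its choice. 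Therefore $h\notin W^{1,2}_{\mathrm{loc}}(\mathbb{R}^2)$, so at least one of the coordinate functions, say $u\in\{\re h,\im h\}$, fails to belong to $W^{1,2}(V)$ for some open ball $V$. On the other hand, quasiconformal maps lie in $W^{1,2}_{\mathrm{loc}}$, so $u\in W^{1,2}_{\mathrm{loc}}(\mathbb{R}^2\setminus K)$. Picking $\eta\in C_c^\infty(\mathbb{R}^2)$ with $\eta\equiv 1$ on $V$, the function $f:=\eta u$ is continuous on $\mathbb{R}^2$, lies in $W^{1,2}(\mathbb{R}^2\setminus K)$ by the product rule, yet cannot lie in $W^{1,2}(\mathbb{R}^2)$ because $f=u$ on $V$. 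This is the desired exceptional function and completes the proof at $p=2$, hence, by the initial reduction, for the whole range $1\le p\le 2$.

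\textbf{Main difficulty.} The entire weight of the argument is outsourced to the quasiconformal non-removability of $K$, which is the central technical theorem of the paper and requires the delicate construction of an exceptional homeomorphism from $\mathbb{R}^2$ into a non-planar surface $S$ together with a quasisymmetric embedding of $S$ back into $\mathbb{R}^2$ via the Bonk-Kleiner theorem. Once that input is available, the Sobolev extraction outlined above is a formal follow-up.
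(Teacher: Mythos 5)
Your deduction is logically sound, but it takes a genuinely different route from the paper, and in a way that inverts the paper's logical architecture. The paper proves $W^{1,2}$-non-removability directly in Section~\ref{Section Continuous} by a bare-hands construction: it builds an explicit continuous function $f$ that is essentially locally constant on each $w$-triangle, with carefully chosen ``building block'' profiles near the vertices that have small Dirichlet energy (exploiting that a point has zero $2$-capacity), and then proves uniform continuity by a delicate combinatorial/averaging argument (Lemma~\ref{Basic lemma}). This argument uses none of Section~\ref{Section Flap-planes}; in particular it requires neither the Bonk-Kleiner/Wildrick uniformization theorems nor the flap-plane construction. Theorem~\ref{Intro gasket Quasi non-removable} is then presented as a \emph{boost} of Theorem~\ref{Intro gasket non-removable}: the same collapsing construction is upgraded from a scalar function to a map into a non-planar surface, which is then re-embedded in $\R^2$ via Bonk-Kleiner. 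Your proposal reverses that dependency by importing the much harder quasiconformal theorem as a black box and then extracting the Sobolev statement via the standard implication that, for a measure-zero planar set, $W^{1,2}$-removability implies quasiconformal removability (which the paper records without proof in the introduction). Applied as a contrapositive, this is correct.

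Two remarks on the substance of your extraction. First, the line ``lies in $W^{1,2}(\R^2\setminus K)$ by the product rule'' hides a real step: knowing $u\in W^{1,2}_{\loc}(\R^2\setminus K)$ gives $L^2$ control of $\nabla u$ only on compact subsets of $\R^2\setminus K$, which do not approach $K$. You need $\nabla u \in L^2(B\setminus K)$ for a fixed ball $B$, and this is supplied not by the product rule alone but by the quasiconformal area bound
\begin{align*}
\int_{B\setminus K}\|Dh\|^2 \;\leq\; M\int_{B\setminus K}J_h \;\leq\; M\, m_2\bigl(h(B)\bigr) \;<\;\infty,
\end{align*}
using the Lusin~(N) property and change of variables for quasiconformal maps. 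Worth stating, as it is precisely the point of the standard implication you are running in reverse. Second, in terms of what each approach buys: the paper's direct construction is far more elementary (no metric uniformization machinery), is reused in Section~\ref{Section QC non}, and adapts with little extra work to give Theorem~\ref{Intro homeo gasket non-removable} for homeomorphic copies of the gasket; your route would give none of this and requires the full weight of Section~\ref{Section Flap-planes}. So while your plan is correct as a formal consequence of Theorem~\ref{Intro gasket Quasi non-removable}, it is not a shortcut --- within the paper's economy it is the long way around.
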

In fact, by the monotonicity discussed after Definition \ref{Intro Sobolev Removable}, it suffices to prove that the gasket is non-removable for $W^{1,2}$.

The method used is very flexible and we obtain the following result:
\begin{theorem}\label{Intro homeo gasket non-removable}
Let $h\colon \R^2\to \R^2$ be a homeomorphism and $K$ be the Sierpi\'nski gasket. Then $h(K)$ is non-removable for $W^{1,2}$.
\end{theorem}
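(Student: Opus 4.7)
The plan is to reduce from an arbitrary homeomorphic copy $h(K)$ to the standard gasket via an abstract surface construction in the spirit of the technique outlined in the abstract for the main quasiconformal theorem. First, dispatch the easy case: if $h(K)$ has positive Lebesgue measure, then Theorem \ref{Intro Theorem Positive} immediately gives non-removability for $W^{1,2}$, so assume from now on that $h(K)$ has measure zero. Next I would construct an abstract metric space $S$ (homeomorphic to $\R^2$, with one-point compactification $\hat S$ homeomorphic to $S^2$) as follows: keep $h(K) \subset \R^2$ intact, but replace each Jordan-domain complementary component $h(T_i)$ by a flat equilateral triangle $T_i^{\ast}$ whose side-length equals that of the corresponding $T_i$ in the standard gasket, glued to $h(K)$ along $\partial h(T_i)$ via $h|_{\partial T_i}$. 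Equip $S$ with the natural path-metric so that each $T_i^{\ast}$ sits isometrically as a Euclidean triangle and the gluing along $h(K)$ respects the gasket combinatorics.

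After verifying that $\hat S$ is Ahlfors $2$-regular and linearly locally connected, I would apply the Bonk--Kleiner theorem to obtain a quasisymmetric homeomorphism $\psi\colon \hat S \to S^2$. By design, $\psi(h(K))\subset S^2$ is quasisymmetrically equivalent to the standard gasket $K$: its complementary components are the sets $\psi(T_i^{\ast})$, i.e.\ quasisymmetric distortions of Euclidean triangles of the correct combinatorial sizes. Consequently the proof of Theorem \ref{Intro gasket non-removable}---which should be essentially invariant under planar quasiconformal (equivalently, planar quasisymmetric) modifications for measure-zero sets, since $W^{1,2}$-removability has this invariance---furnishes a continuous function $g\colon S^2 \to \R$ lying in $W^{1,2}(S^2 \setminus \psi(h(K)))$ but not in $W^{1,2}(S^2)$.

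To transfer back to $\R^2$, choose a homeomorphism $\Phi\colon \R^2 \to S$ that is the identity on $h(K)$ and maps each $h(T_i)$ to $T_i^{\ast}$ by a quasiconformal homeomorphism of Jordan domains (which exists by classical uniformization). The composition $F = g \circ \psi \circ \Phi \colon \R^2 \to \R$ is then continuous, lies in $W^{1,2}(\R^2 \setminus h(K))$---since $\psi$ restricts to a quasisymmetric, hence quasiconformal, map on each $T_i^{\ast}$, and $\Phi$ is quasiconformal on each $h(T_i)$, so the composition preserves $W^{1,2}$ locally on each complementary component---and fails to lie in $W^{1,2}(\R^2)$, because the singular behavior of $g$ across $\psi(h(K))$ pulls back to analogous behavior of $F$ across $h(K)$. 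The main obstacle is verifying the Bonk--Kleiner hypotheses on $\hat S$: while each $T_i^{\ast}$ is a standard Euclidean triangle, the identified boundary curves $h(\partial T_i)$ can be arbitrarily wild Jordan curves (since $h$ is merely a homeomorphism), so the Ahlfors $2$-regularity and linear local connectedness of $\hat S$ at points of $h(K)$ are delicate, requiring a careful choice of path-metric that exploits the tree-like combinatorics of the gasket's complementary components.
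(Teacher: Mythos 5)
Your proposal has a genuine gap at the step where you require the homeomorphism $\Phi\colon \R^2\to S$ to be quasiconformal on each complementary component $h(T_i)$. For $\Phi$ to be continuous and restrict to the identity on $h(K)\subset S$, the gluing forces the boundary extension of $\Phi|_{h(T_i)}$ to $\partial h(T_i)$ to be the prescribed homeomorphism $h^{-1}|_{\partial h(T_i)}$ onto $\partial T_i^{\ast}$. This is an \emph{arbitrary} homeomorphism between Jordan curves, and by the Beurling--Ahlfors theorem a boundary homeomorphism extends to a quasiconformal map of the enclosed Jordan domains only when, after a conformal normalization to circles, it is quasisymmetric. For a general $h$ this fails; the conformal map supplied by ``classical uniformization'' comes with its own (uncontrolled) boundary values and will not patch continuously with the identity on $h(K)$. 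So the map $\Phi$ you invoke need not exist. Even if one could produce a quasiconformal homeomorphism on each $h(T_i)$ with the right boundary values, nothing bounds its distortion uniformly in $i$, so the asserted bound on $\|\nabla F\|_{L^2(\R^2\setminus h(K))}$ would not follow: each piece contributes a factor depending on the distortion there, and these factors can blow up. A separate but related issue: because the identifications are exactly the standard gasket adjacency pattern and each $T_i^{\ast}$ is the standard Euclidean $T_i$, the ``natural path-metric'' on $S$ either collapses to the Euclidean plane (if paths are routed through the triangles), in which case Bonk--Kleiner gives only a similarity and the problem reduces to the nonexistent quasiconformal straightening of $h$; or else it genuinely depends on how $h(K)$ is embedded in $\R^2$, in which case Ahlfors $2$-regularity and LLC at points of $h(K)$ are completely uncontrolled by $h$, as you yourself flag but do not resolve.

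The paper sidesteps all of this by never attempting a global reduction of $h(K)$ to $K$. The inductive definition of $f$ and the proof of its continuity in Section~\ref{Section Continuous} use only the \emph{combinatorics} of the gasket --- which are preserved under any homeomorphism --- together with the qualitative properties \ref{B1 - g=a outside balls}--\ref{B6} and condition ($\ast$) of the building block functions. The one new ingredient required for $h(K)$ is a building block function on a possibly wild Jordan domain $\Omega=h(W)$ satisfying those properties with arbitrarily small Dirichlet energy. This is produced directly by taking a Riemann map $\Omega\to\UHP$, building a radial function on $\UHP$ with ``transition annuli'' between consecutive scales so that $(\ast')$ holds, and transporting it back using the \emph{conformal invariance of the Dirichlet integral} --- a tool your proposal does not use. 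No quasiconformal extension with prescribed boundary values, no Bonk--Kleiner, and no uniformity across components is ever needed. The flap-plane/Bonk--Kleiner machinery enters the paper only for Theorem~\ref{Intro gasket Quasi non-removable} on the \emph{standard} gasket, where the attached pieces are Euclidean rectangles of controlled geometry rather than images of wild Jordan domains, and that controlled geometry is precisely what makes the $2$-regularity and LLC estimates of Section~\ref{Section Flap-planes} go through.
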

Of course, if $h(K)$ has positive Lebesgue measure, then the conclusion follows immediately from Theorem \ref{Intro Theorem Positive}, so the measure zero case is the interesting one.

Finally, we boost the proof of Theorem \ref{Intro gasket non-removable} and add a new ingredient to obtain the main result:

\begin{theorem}\label{Intro gasket Quasi non-removable}
The Sierpi\'nski gasket is non-removable for quasiconformal maps.
\end{theorem}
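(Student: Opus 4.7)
The plan is to produce the exceptional map as a composition $f=\phi\circ h$, where $h\colon \R^2\to S$ is a homeomorphism onto a metric $2$-sphere $S$ that is $1$-quasiconformal off $K$, and $\phi\colon S\to \widehat{\R}^2$ is a quasisymmetric homeomorphism supplied by the Bonk--Kleiner theorem. The sphere $S$ will be designed to be ``non-planar'' along $h(K)$ in a way strong enough to obstruct the global quasiconformality of $f$, even though $h$ and $\phi$ behave nicely in isolation.

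To build $S$ and $h$, I would take the countably many open triangular complementary components $\{T_i\}$ of $K$ as abstract Euclidean pieces and glue isometric copies $\widetilde T_i$ of them along their boundary segments, using a combinatorial pattern that reproduces the topological adjacencies of the $T_i$ in $\R^2$ (so the quotient remains a topological $2$-sphere), but incorporates self-similar ``offset'' identifications along the edges of the gasket so that the gluing carries the same exceptional data as the construction behind Theorem~\ref{Intro gasket non-removable}. Defining $h$ to send each $T_i$ onto its copy $\widetilde T_i$ by the identity and extending by continuity across $K$ and $\infty$ yields a homeomorphism that is a similarity on each $T_i$, hence $1$-quasiconformal on $\R^2\setminus K$.

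The next step is to check that $S$ is an Ahlfors $2$-regular, linearly locally connected topological $2$-sphere, so that Bonk--Kleiner applies. Sphericity is arranged by the choice of gluing; $2$-regularity follows from the self-similar Euclidean structure on the triangular pieces of total finite area; LLC is the delicate property, but should follow from the detour property of $K$ from \cite{Nt} used in the proof of Theorem~\ref{Intro gasket Removable}, which produces short paths in $\R^2\setminus K$ around pieces of the gasket that translate directly into short paths in $S$. Applying Bonk--Kleiner then yields a quasisymmetric $\phi\colon S\to\widehat{\R}^2$, and $f=\phi\circ h$ is a homeomorphism quasiconformal on $\R^2\setminus K$ as a composition of a $1$-quasiconformal map with a quasisymmetry.

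The main obstacle is showing that $f$ is \emph{not} quasiconformal on all of $\R^2$. I would argue by contradiction: if $f$ were quasiconformal on $\R^2$, then, since planar quasiconformal homeomorphisms are quasisymmetric, $h=\phi^{-1}\circ f$ would be a quasisymmetry $\R^2\to S$. The offsets in $S$ must therefore be calibrated so that no such quasisymmetry can exist. Quantitatively, I would exhibit sequences of triples $(x_n,y_n,z_n)$ in $\R^2$ straddling a line transverse to $K$ with $|x_n-y_n|/|x_n-z_n|$ uniformly bounded but $d_S(h(x_n),h(y_n))/d_S(h(x_n),h(z_n))\to\infty$, where the second quantity blows up through the accumulation of the offsets across the finitely many triangles separating the points. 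The heart of the proof is calibrating these offsets so that $S$ simultaneously (i) still satisfies the Bonk--Kleiner hypotheses and (ii) produces this quantitative failure of quasisymmetry along $h(K)$.
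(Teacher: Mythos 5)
Your high-level strategy (build a metric sphere, define a homeomorphism quasiconformal off $K$, uniformize via Bonk--Kleiner, and then obstruct global quasiconformality of the composition) matches the paper's, but the proposed construction of the space $S$ has a gap that I don't see how to fix without essentially rebuilding it as the paper does. The complementary triangles $\{T_i\}$ of the Sierpi\'nski gasket are pairwise disjoint \emph{open} sets whose closures intersect only at isolated points (vertices of the gasket), not along arcs. So ``gluing isometric copies $\widetilde T_i$ along boundary segments, reproducing the topological adjacencies'' does not produce a topological $2$-sphere: the only identifications you can make are at a countable set of vertices, and the quotient of $\bigsqcup_i\overline{T_i}$ is a wild wedge of triangles. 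In particular, the set $K^\circ=K\setminus\bigcup_i\partial T_i$ of gasket points lying on no triangle boundary --- which is a nonempty set and separates the triangles from one another locally --- has no counterpart in your $S$, so there is no candidate for $h(K^\circ)$ and no continuous extension of $h$ across $K$. Your ``offset identifications'' only sharpen the problem: if $h$ is literally the identity on each $T_i$ and the copies are glued with a nontrivial shift, then for a sequence $z_n\in T_i$ converging to $z\in\partial T_i$ from inside $T_i$ and another sequence $z_n'\to z$ from a different component, the limits of $h(z_n)$ and $h(z_n')$ land on \emph{different} points of $S$; continuity of $h$ at $z$ forces zero offset, which kills the non-planarity you need for the contradiction.

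The paper resolves this tension in a fundamentally different way. It keeps the underlying topological plane intact (so $K^\circ$ survives as a subset of the base plane of $S$) and instead ``adds area'' by gluing a pair of thin rectangular \emph{flaps} over the image of each complementary triangle. The map on each $T_i$ is \emph{not} an isometry: it first collapses $\partial T_i$ continuously onto a tripod and then folds the interior onto the flaps by a piecewise-linear homeomorphism that is $M$-quasiconformal for a uniform $M$, not $1$-quasiconformal. This requires the delicate continuity argument of Section~\ref{Section Continuous} (the boundary collapse must satisfy the modulus-of-continuity estimate in Lemma~\ref{Basic lemma}) and the flap-plane theory of Section~\ref{Section Flap-planes} (to get Ahlfors $2$-regularity, LLC, and the quasisymmetric embedding of $S$ into $\R^2$). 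Finally, the obstruction to global quasiconformality is measure-theoretic rather than via a divergent cross-ratio: $\Phi(K)$ carries positive Hausdorff $2$-measure in $S$ because the flaps blow up the tripods, a quasisymmetry of Ahlfors $2$-regular spaces sends positive measure to positive measure (Lemma~\ref{Pre Absolutely continuous}), but quasiconformal maps of $\R^2$ preserve Lebesgue null sets (Lemma~\ref{Pre Measure zero}) and $m_2(K)=0$. That argument is both cleaner and more robust than trying to exhibit explicit triples along a transversal; you may be able to make your triple argument work once a correct $S$ is in hand, but it is not needed.
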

In other words, there exists a homeomorphism of $\R^2$ that is quasiconformal in the complement of the gasket, but not globally quasiconformal. This answers a question raised by Bishop \cite[Question 13]{Bi2}. We were not able to show the analog of Theorem \ref{Intro homeo gasket non-removable} in this case, i.e., that all homeomorphic copies of the gasket are non-removable for quasiconformal maps, but we believe that a modification of the techniques used here can provide the answer.

We now  discuss a natural approach to the problem, which, however, seems extremely hard to pursue; then, in Section \ref{Section Sketch} we give a brief outline of the proof of Theorem \ref{Intro gasket Quasi non-removable} and explain what  this ``new ingredient" that we use in our approach is.

As explained in \cite[p.\ 15]{Bi2}, one may try to construct an exceptional homeomorphism as in Theorem \ref{Intro gasket Quasi non-removable} as follows. Note that between any two triangles $V_1$ and $V_2$ there exists a conformal map, and this map is unique if one requires that each vertex of $V_1$ is mapped to a prescribed vertex of $V_2$, and the vertex correspondence is orientation-preserving. Here, we allow $V_i$, $i=1,2$, to be an ``unbounded" triangle, i.e., the unbounded complementary component of a triangle. Hence, one can first map the unbounded complementary component of the Sierpi\'nski gasket onto an unbounded non-equilateral triangle, and then (inductively) require that every bounded complementary equilateral triangle of the gasket is mapped uniquely by a conformal map to a triangle. Note that at each level of this construction the image of the vertices of a triangle is prescribed by the map of the previous level. This process will uniquely determine a map that is conformal on each complementary component of the gasket. If this map extends to a homeomorphism of $\R^2$, then it cannot be globally conformal, since it changes angles. However, it is not clear at all whether this map can be extended to a continuous map on $\R^2$, and thus to a homeomorphism of $\R^2$.

Before discussing the outline of the proof of Theorem \ref{Intro gasket Quasi non-removable} in Section \ref{Section Sketch}, we conclude this section with some remarks on the (non)-removability of another related type of fractals whose complement has infinitely many components, namely, Sierpi\'nski carpets.

The \textit{standard Sierpi\'nski carpet} $S_3$ is constructed by subdividing the unit square $[0,1]^2$ into nine squares of sidelength $1/3$ and removing the middle square, and then proceeding inductively in each of the remaining eight squares. It is easy to see that the standard Sierpi\'nski carpet is non-removable for quasiconformal and $W^{1,p}$ maps for $1\leq p\leq \infty$. We sketch the proof for quasiconformal non-removability. Note that $S_3$ contains a copy of $C\times [0,1]$, where $C$ is the middle-thirds Cantor set. Let $h\colon \R \to \R$ be the Cantor staircase function and let $\psi\colon \R \to [0,1]$ be a smooth function with $\psi\equiv 0 $ outside $[0,1]$ and $\psi\equiv 1$ in $[1/9,8/9]$. Then $f(x,y)\coloneqq (x+h(x)\psi(y),y)$ is a homeomorphism of $\R^2$ that is quasiconformal on $\R^2\setminus S_3$, but not globally quasiconformal.

A \textit{(generalized) Sierpi\'nski carpet} is a planar set $S\subset \R^2$ that is homeomorphic to $S_3$; the homeomorphism need not be defined on all of $\R^2$. These sets can be characterized as the compact sets of the plane arising by removing from the interior of a Jordan region $\Omega$ countably many Jordan regions $Q_i$ for $i\in \N$, whose closures are disjoint and contained in $\Omega$, such that $\diam(Q_i) \to 0$ as $i\to\infty$ and $S\coloneqq \br \Omega \setminus \bigcup_{i\in \N}Q_i$ has empty interior; see \cite{Wh2}.
 
It is not known, in general, whether these sets are removable for quasiconformal maps or Sobolev functions, but we conjecture the following:

\begin{conjecture}\label{Intro:conjecture}
All Sierpi\'nski carpets are non-removable for quasiconformal maps and for $W^{1,p}$ functions, for $1\leq p\leq \infty$.
\end{conjecture}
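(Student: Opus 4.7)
The plan is to extend to carpets the non-planar surface technique that the paper develops for the gasket. The key topological input is that every generalized carpet $S$ contains a topological copy $K$ of $C \times [0,1]$, where $C$ denotes the middle-thirds Cantor set: such a copy is visible in $S_3$ as the horizontal strip $C \times [0,1]$, and by Whyburn's theorem every homeomorphism between planar carpets extends to one of the ambient spheres, so pushing forward produces such a $K \subset S$. This $K$ will serve as the locus across which the exceptional function (or homeomorphism) concentrates its singular behavior, playing the role that the transversal line $L$ plays for the gasket in Figure \ref{fig:gasket}.

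For the $W^{1,p}$ half of the conjecture, the case $p < \infty$ splits by measure: if $S$ has positive Lebesgue measure, non-removability is immediate from Theorem \ref{Intro Theorem Positive}; if $S$ has measure zero, the monotonicity following Definition \ref{Intro Sobolev Removable} reduces the problem to $p = \infty$. For $p = \infty$, I would mimic the Cantor staircase construction sketched for the standard carpet: build a continuous function $u \colon \R^2 \to \R$ that is locally constant on each complementary component $Q_i$ of $S$, with values prescribed by a dyadic labelling of the ``Cantor direction'' of $K$ so that $u$ jumps by a fixed amount across $K$. Local constancy on each $Q_i$ makes $u$ automatically locally Lipschitz off $S$ with zero weak gradient there, while the failure of absolute continuity on almost every curve transverse to $K$ prevents $u$ from lying in $W^{1,\infty}(\R^2)$.

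For the quasiconformal half, I would follow the surface-and-Bonk-Kleiner strategy of the paper. Build an abstract topological sphere $\Sigma$ by cutting $\R^2$ along $K$ and regluing with a nontrivial shear, and construct an exceptional homeomorphism $F \colon \R^2 \to \Sigma$ that is conformal on each peripheral component $Q_i$; the conformal matching across peripheral circles is prescribed by vertex correspondences in the spirit of the approach described at the end of Section \ref{Section Introduction}. Equip $\Sigma$ with a length metric making $F$ a local isometry on $\R^2 \setminus S$ and extending continuously across $F(S)$. If $\Sigma$ is an Ahlfors $2$-regular, linearly locally connected topological $2$-sphere, then the Bonk-Kleiner theorem produces a quasisymmetric embedding $\Phi \colon \Sigma \to \R^2$, and $\Phi \circ F$ is the desired exceptional quasiconformal homeomorphism.

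The main obstacle lies in verifying the Bonk-Kleiner hypotheses on $\Sigma$. For the standard carpet $S_3$, self-similarity should force $\Sigma$ to be $2$-regular and linearly locally connected; for round or square carpets of uniformly relatively separated type, the existing theory of quasisymmetric uniformization of carpets supplies the needed control. For an arbitrary generalized carpet, whose peripheral Jordan regions may have arbitrary shapes, sizes, and clustering, there is no guarantee that $\Sigma$ carries the required metric regularity, and indeed the ambient plane itself may already fail such properties. Resolving the full conjecture will likely demand either a Bonk-Kleiner-style uniformization theorem under substantially weaker hypotheses or a direct planar construction that bypasses the auxiliary surface and instead controls distortion across peripheral circles through an explicit modulus estimate. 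A parallel regularity issue reappears in the $W^{1,p}$ construction with $p < \infty$ should one attempt a uniform-in-$p$ proof that does not fall back on Theorem \ref{Intro Theorem Positive} in the positive-measure case.
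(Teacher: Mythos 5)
First, a point of calibration: the statement you are addressing is stated in the paper as a \emph{conjecture}, not a theorem. The paper only disposes of the standard carpet $S_3$ (via the explicit staircase map) and, in the Update, defers the general quasiconformal case to the separate paper \cite{Nt2}; so there is no proof here to compare against, and a complete argument would be a new contribution. Measured against that standard, your proposal has concrete gaps beyond the one you flag. The reduction to a topological copy $K=h(C\times[0,1])\subset S$ discards exactly the structure your constructions use: the staircase function is singular because $C\times[0,1]$ sits in $S_3$ as a metric product aligned with the axes and the complementary squares project onto intervals of constancy of the Cantor function. For a wild homeomorphic copy $K$ there is no meaningful notion of ``almost every curve transverse to $K$'' on which absolute continuity fails, so your stated mechanism for excluding $u$ from $W^{1,\infty}(\R^2)$ does not get off the ground; moreover the naive pullback of the Cantor function under the ambient homeomorphism fails to be constant, or to have controlled gradient, on the \emph{unbounded} complementary component, so it need not even lie in $W^{1,p}(\R^2\setminus S)$. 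The Sobolev half can nevertheless be salvaged in the measure-zero case by a softer route you almost state: collapsing each $\br Q_i$ and the complement of $\Omega$ to points gives a null upper semicontinuous decomposition of the sphere whose quotient is again a sphere (Moore), whence there is a non-constant continuous $u\colon\R^2\to\R$ that is \emph{constant} on every complementary component of $S$, including the unbounded one. Such a $u$ lies in $W^{1,p}(\R^2\setminus S)$ for every $p\in[1,\infty]$ with vanishing gradient, and if it lay in $W^{1,p}(\R^2)$ its weak gradient would vanish a.e.\ (this is where $m_2(S)=0$ enters), forcing $u$ to be constant --- a contradiction, with no dyadic labelling or line integration required. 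Finally, your case analysis omits the case $m_2(S)>0$ with $p=\infty$: Theorem \ref{Intro Theorem Positive} excludes $p=\infty$, Proposition \ref{Intro Theorem W^{1,infinity}} shows positive measure alone does not yield $W^{1,\infty}$ non-removability, and the vanishing-gradient argument breaks when $S$ has positive area.

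For the quasiconformal half, the proposal is a statement of intent rather than a construction. ``Cutting $\R^2$ along $K$ and regluing with a nontrivial shear'' is not a defined operation when $K$ is a copy of $C\times[0,1]$: it has empty interior, is nowhere locally an arc, and uncountably many sheets would have to be created. Requiring $F$ to be conformal on each $Q_i$ with boundary behaviour dictated by vertex correspondences is precisely the naive scheme the Introduction warns cannot be controlled even for the gasket (continuity of the glued map is the entire difficulty), and asking in addition for a length metric making $F$ a local isometry off $S$ is inconsistent with $F$ being merely conformal there. What makes the gasket argument work is a concrete replacement for that scheme: collapsing each complementary triangle to a tripod with a quantitative modulus of continuity (Lemma \ref{Basic lemma}), folding each triangle quasiconformally onto flaps of prescribed small height, and verifying Ahlfors $2$-regularity and linear local connectivity of the resulting flap-plane so that the Bonk--Kleiner/Wildrick theorem applies. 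Your proposal does not identify the carpet analogue of any of these steps, and, as you yourself observe, for a general carpet with arbitrarily shaped and distributed peripheral disks the regularity of any candidate surface $\Sigma$ is exactly the open problem. As written, the quasiconformal half therefore remains unproved; it was settled only by the substantially different construction of \cite{Nt2}.
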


\subsection{Sketch of the proof of Theorem \ref{Intro gasket Quasi non-removable}}\label{Section Sketch}
Let $K$ be the Sierpi\'nski gasket. We quickly sketch the strategy of constructing a homeomorphism $F \colon\R^2\to \R^2$ that is quasiconformal on $\R^2\setminus K$, but not globally quasiconformal. The lemmas introduced here are informal and all details can be found in Section \ref{Section QC non}. We suggest that the reader browse through the colored figures (located in Section \ref{Section QC non}) while reading the sketch.

First, we will define a continuous map $f\colon \R^2\to \R^2$ that is the identity on the unbounded component of $\R^2\setminus K$, it is injective outside the bounded equilateral triangle components of $\R^2\setminus K$, and collapses each equilateral triangle component to a tripod; see Figure \ref{fig:FoldGasket}. This map is defined inductively. More precisely, the map $f$ is the identity on the boundary of the unbounded complementary  triangle. Then, one collapses in a continuous way the ``middle" complementary equilateral triangle $W$ of sidelength $1/2$ to a tripod, whose barycenter is precisely the barycenter of the vertices of $W$ and whose vertices are the vertices of $W$. We also require that the midpoints of the three edges of $W$ are collapsed to the barycenter of the tripod $G$. Note that on the  vertices  of $W$ the function $f$ has already been defined to be the identity, by the previous step. Inductively, all complementary equilateral triangles of the gasket will be collapsed to tripods. The proof of the uniform continuity of such a map is a significant hurdle that we have to deal with. In fact, one has to choose very carefully the collapsing maps in each step, so that they satisfy a certain modulus of continuity. Note that after this procedure the gasket $K$ is blown up by $f$ to a set of full measure, since the tripods have measure zero. Summarizing, we have:
\begin{lemma}
There exists a continuous map $f\colon \R^2\to \R^2$ that is injective outside the complementary equilateral triangles of the gasket, and collapses each (bounded) complementary equilateral triangle to a tripod. Furthermore, $f(K)$ has positive Lebesgue measure.
\end{lemma}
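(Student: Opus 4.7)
The plan is to construct $f$ by iterating a ``tripod collapse'' on the bounded complementary triangles of $K$, level by level, and then to deduce the positive-measure conclusion from a degree argument.

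As a preliminary I would define a parametric model collapse. Given an equilateral triangle $W$ with vertices $v_1, v_2, v_3$, edge midpoints $m_{12}, m_{13}, m_{23}$, and target data $p_1, p_2, p_3, b \in \R^2$, I would build a continuous piecewise affine map
\[
\phi = \phi(W; p_1, p_2, p_3, b)\colon \overline{W}\longrightarrow \bigcup_{i}[p_i, b]
\]
that sends $v_i\mapsto p_i$ and each $m_{ij}\mapsto b$, folds each edge $[v_i, v_j]$ linearly at its midpoint onto $[p_i, b]\cup[b, p_j]$, and is extended to the interior by subdividing $\overline{W}$ into six sub-triangles via segments from the barycenter of $W$ to the $v_i$'s and $m_{ij}$'s, each sub-triangle mapping affinely onto one arm of the target tripod. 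The map $\phi$ collapses $\overline{W}$ onto the tripod $G=\bigcup_i [p_i, b]$ while realizing any prescribed values at the vertices of $W$.

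I would then process the bounded complementary triangles $W$ of $K$ in order of level (a level-$n$ gap has sidelength $2^{-n-1}$). At stage $0$, apply the model collapse to $W_{0,1}$ with $p_i$ equal to its geometric vertices and $b$ its barycenter. At stage $n\ge 1$, each level-$n$ gap $W$ lies inside a corner triangle $T$ on whose boundary $f$ has already been determined — either by the identity on edges lying in $\partial T_0$, or by the fold of some previous collapse on edges lying in $\partial W'$ for some earlier gap $W'$. In particular each vertex $v_i$ of $W$ is a midpoint of an edge of $T$ and has a prescribed value $p_i:=f(v_i)$. Apply $\phi(W; p_1, p_2, p_3, b)$ with a carefully selected center $b$ to extend $f$ to $\overline{W}$. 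After all stages, $f$ is defined on the dense subset $\partial T_0\cup\bigcup_n\overline{W_n}$ of $T_0$ and is the identity on the unbounded complementary component of $K$.

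The \textbf{main obstacle} is establishing uniform continuity of $f$ on $T_0$ so that the construction extends by continuity to the residual gasket $K\setminus\bigcup_n\partial W_n$. The key quantitative task is to bound the oscillation of $f$ on each level-$n$ gap by a summable sequence, which reduces to controlling $\diam(G_n)$ in terms of $2^{-n}$. This in turn requires selecting the tripod center $b$ at each stage so that the induced motion of the new midpoints (which become vertices at the next stage) is tightly controlled; a naive choice allows the distortion at shallow ancestor levels to propagate and destroy the modulus of continuity at deeper levels. A scale-adapted choice of $b$ at each stage, possibly accompanied by a slight non-affine reparametrization of the edge folds, should ensure that $f$ has a modulus of continuity tending to zero and that the three sub-triangles into which a collapse partitions the complement of the tripod are mapped to pairwise disjoint regions — the latter giving injectivity of $f$ on the complement of $\bigcup_n\overline{W_n}$.

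Finally, extending $f$ by the identity to $\R^2\setminus T_0$ produces a continuous $f\colon\R^2\to\R^2$ with $f|_{\partial T_0}=\id$. By construction $f(\bigcup_n W_n)=\bigcup_n G_n$ is a countable union of tripods, hence has two-dimensional Lebesgue measure zero. A standard winding-number argument applied to the continuous map $f\colon\overline{T_0}\to\R^2$ with $f|_{\partial T_0}=\id$ shows $f(T_0)\supseteq T_0$, whence
\[
f(K)\;\supseteq\; T_0\setminus\bigcup_n G_n,
\]
which has Lebesgue measure equal to $\Area(T_0)>0$.
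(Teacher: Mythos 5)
Your plan follows the same broad route as the paper's Section \ref{Section QC Collapsing}: build $f$ by iterating a per-triangle collapse, prove uniform continuity to pass to the limit, and deduce the positive-measure conclusion from the fact that the union of image tripods has area zero. Your winding-number argument for $f(T_0)\supseteq T_0$ is a clean alternative to the paper's combinatorial observation that $f$ is surjective. But the step you yourself flag as the main obstacle --- uniform continuity --- is only sketched, and the mechanism you propose for it does not work.

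Two specific gaps. First, the tripod center $b$ is not a free parameter to be ``carefully selected'': it must be the barycenter $(p_1+p_2+p_3)/3$, i.e.\ the \emph{canonical} tripod. This choice is what guarantees that each component of $U\setminus G$ is again a convex quadrilateral with two sides on $G$ meeting at its central vertex and two sides on $\partial U$ (Lemma \ref{QC collapsing convex}), so the induction can re-enter each piece, and it encodes the averaging identity that all the oscillation estimates run on. Second, the strategy of bounding $\diam(G_n)$ by a level-summable (or even level-null) sequence is not available: there is no uniform level-indexed decay rate for $\osc_{\br W}(f)$, because the oscillation on a level-$n$ $w$-triangle is controlled by the modulus of continuity of a shallower building block near a point of the gasket whose position within that ancestor's boundary can be arbitrary. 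What the paper proves instead, in Lemma \ref{Basic lemma}, is the qualitatively weaker statement that for each $\varepsilon>0$ only finitely many $w$-triangles have oscillation exceeding $\varepsilon$, and that is established by a three-case accumulation-point argument (vertex type, edge type, $K^\circ$) resting on Lemmas \ref{Combinatorics V lemma} and \ref{Combinatorics lemma}. The $K^\circ$ case in turn needs the dyadic modulus condition $(\ast)$ on the building blocks, and your six-sub-triangle affine fold violates it: an affine parametrization of a half-edge onto a tripod arm of length $\ell$ gives $f$-oscillation $\ell/2$ on the dyadic sub-interval $[1/4,1/2]$ of that half-edge, which can exceed the required bound $\mathcal O(W_0)/3$. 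The ``slight non-affine reparametrization'' you mention in passing is therefore not a minor refinement but the essential content: the parametrization must send each dyadic sub-interval to an image of prescribed small diameter, as in Sections \ref{Section Building block} and \ref{Section QC Folding}.
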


Of course, this map is not a homeomorphism on the complementary equilateral triangles, so we have to correct it there, but otherwise keep the existing definition. Unfortunately, there is no way to correct our function in this way, if the target is $\R^2$. What we do instead, is change the target to a non-Euclidean metric surface $S$ and correct the map $f$ inside the complementary equilateral triangles in order to obtain a homeomorphism $\Phi\colon \R^2\to S$ that is quasiconformal in $\R^2\setminus K$.

The ``correction" that we do in each equilateral triangle $W$ is the following. We ``fold" $W$ on top of the tripod $f(W)$; see Figure \ref{fig:FoldTriangle}. The folding map will be $M$-quasiconformal  for a universal $M>0$. In fact, the folding map will just be piecewise linear. More precisely, we prove that for an arbitrary equilateral triangle $W$ and an arbitrary tripod $G$ we can find an $M$-quasiconformal map that folds $W$ onto six rectangles that are attached on top of the edges of $G$, with appropriate identifications. We call \textit{flap} the metric space arising by folding a single equilateral triangle over a tripod. In this folding procedure, one can choose the ``height" of the flap to be arbitrarily small, without affecting the constant $M$. Furthermore, a crucial property is that the folding map has to be compatible, in a sense, with $f$ on $\partial W$, because we we wish to paste the two maps. In particular, the folding map has to have a certain modulus of continuity (the one that ensures the uniform continuity of $f$) and it must map the midpoints of the edges of $W$ to ``lifts" of the barycenter of the tripod $G$; see Figure \ref{fig:FoldTriangle}. We now summarize:
\begin{lemma}
There exists a universal $M>0$ such that for each equilateral triangle $W$ and for each tripod $G$ there exists a folding map $\phi_W$ from $W$ onto the flap space corresponding to $G$. The height of the flap can be chosen to be arbitrary small. Moreover, the maps $f$ and $\phi_W$ can be chosen to be compatible for all complementary equilateral triangles $W$ of the gasket $K$. 
\end{lemma}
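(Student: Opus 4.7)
My plan is to construct $\phi_W$ as an explicit piecewise affine map. I would begin by partitioning $W$ along its three medians into six congruent right triangles, grouped into three kites $K_1, K_2, K_3$, where $K_i$ consists of the two sub-triangles sharing the vertex $v_i$. The idea is to fold each kite along its axis $[v_i, b]$ (with $b$ the barycenter of $W$) onto the two rectangles of the flap attached to the arm $[v_i', b']$ of $G$: the axis will map homeomorphically onto the arm, and each half of $K_i$ will cover one of the two adjacent rectangles.

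To realize this as a concrete piecewise affine map, I would introduce an auxiliary vertex at the midpoint $n_{ij}$ of each half-edge $[v_i, m_{ij}]$ of $W$, splitting each half-kite into two triangles. I would then map these two triangles affinely onto the two triangles obtained by a diagonal subdivision of the target rectangle, using the vertex correspondence $v_i \mapsto v_i'$ (tip), $b \mapsto b'$ (barycenter of $G$), $m_{ij} \mapsto$ outer corner near $b'$, and $n_{ij} \mapsto$ outer corner near $v_i'$. Pasting the resulting twelve affine pieces along their shared edges produces a continuous map from $W$ onto the flap, and the prescribed vertex assignments guarantee consistency along the kite axes as well as the required images of the midpoints of $\partial W$.

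To establish the quasiconformality bound, I would observe that the distortion of each affine piece depends only on the shapes of its source and target triangles. When $G$ has arms of comparable length and $h$ is chosen comparable to the arm length, each piece has explicit distortion bounded by a universal constant. To accommodate $h$ being taken arbitrarily small relative to the arm lengths $L_i$, I would further subdivide each half-kite into a fan of narrow triangular strips aligned with the folding axis, with the number of strips growing as the aspect ratio $L_i/h$ grows, so that each refined affine piece has source and target of comparable aspect. This preserves a universal $M$ regardless of the tripod or the chosen height.

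For the compatibility of $\phi_W$ and $f$ on $\partial W$, the two maps already agree at the marked boundary points $v_i$ and $m_{ij}$ by construction. On each half-edge of $\partial W$, the restriction of $\phi_W$ is a piecewise affine parametrization of an arm of $G$; I would arrange for the collapsing map $f$ from the previous lemma to also be piecewise affine on each half-edge with matching speed, securing pointwise agreement. Performing the construction at every scale of the gasket, with the flap height $h_W$ scaled by $\diam(W)$ times a fixed small constant, produces a coherent family of folding maps compatible with $f$. The main obstacle will be the uniformity of the distortion bound when $h$ is arbitrarily small relative to the arm lengths: a naive single-affine-piece-per-half-kite construction has distortion of order $L_i/h$, which blows up. This is overcome by the adaptive refinement of the source triangulation described above; a secondary technical point is ensuring that this refinement is compatible with the boundary parametrization imposed by $f$ at every inductive step of the gasket hierarchy.
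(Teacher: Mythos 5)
Your high-level plan matches the paper's: partition $W$ along the medians into three kites, each sharing a vertex $v_i$ of $W$, and fold each kite piecewise-linearly onto the two rectangles attached to the corresponding arm of $G$. You also correctly isolate the main difficulty — a single affine piece per half-kite has distortion of order $L_i/h$, so some refinement is needed when $h$ is small.

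However, the refinement you propose (``a fan of narrow triangular strips aligned with the folding axis, with the number of strips growing as $L_i/h$'') is not specified precisely enough to carry the weight it needs to carry, and a natural reading of it does not work. If the strips subdivide the half-kite \emph{uniformly} along the half-edge, then the strips near $v_i$ become very narrow (the kite tapers linearly to a point), so their aspect ratios degenerate and the per-piece distortion blows up even though the number of pieces grows. The device the paper uses is different and essential: the half-kite is subdivided \emph{dyadically} from the vertex (at the points $2^{-k}\cdot|I|$ on the half-edge $I$), producing $N$ trapezoids that are all \emph{similar} to each other, plus a single final right triangle with a $\pi/6$ angle near $v_i$. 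The target rectangle of dimensions $\ell\times h$ is cut \emph{uniformly} into $N$ squares of side $h$ plus one $q\times h$ rectangle with $h\le q<2h$, where $\ell=Nh+q$. Each dyadic trapezoid maps to one $h\times h$ square, and the final triangle maps to the $q\times h$ piece. Since every source piece and every target piece has one of finitely many shapes up to similarity, each affine (or two-triangle) piece has universally bounded distortion, even though the source and target pieces have wildly different scales. This deliberate mismatch — geometrically shrinking source pieces mapped to equal-sized target pieces — is what makes the universal $M$ possible, and it is absent from your sketch.

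The second, more serious omission concerns what you call a ``secondary technical point''. The folding map $\phi_W$ does not merely need to agree with $f$ on $\partial W$; the boundary trace $P\circ\phi_W|_{\partial W}$ \emph{is} the building block map of $f$, and the continuity of $f$ on $\R^2$ (Lemma \ref{Basic lemma}, Case 3) requires condition $(\ast)$, equivalently the dyadic estimate $|\phi_W(x_k)-\phi_W(x_{k+1})|\le \mathcal O(W)/3$ for consecutive dyadic points $x_k,x_{k+1}$ on a half-edge. The dyadic subdivision is chosen precisely so that each consecutive dyadic pair lands in a single $h\times h$ square (or the $q\times h$ rectangle), whence $|\phi_W(x_k)-\phi_W(x_{k+1})|\le 2h$, and the requirement $h\le\ell/6$ then gives $2h\le\ell/3\le\mathcal O(W)/3$. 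Your proposal never states this estimate, never fixes the relationship $h\le\ell/6$, and does not tie the refinement of the source triangulation to the dyadic points of $\partial W$; without that, the continuity of the collapsing map $f$ — and hence the entire pasting argument — is left open. (A minor additional issue: your vertex correspondence appears to place the interior barycenter $b$ of $W$ at the central vertex of $G$, which lies on $\partial X$; in fact $b$ must map to the apex of the three lifted segments over that vertex, and the edge midpoints $m_{ij}\in\partial W$ are the points that map onto the copies of the central vertex in $\partial X$.)
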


By folding all complementary equilateral triangles over their corresponding tri\-pods, one obtains a \textit{flap-plane} $S$, which is a non-Euclidean surface, and a homeomorphism $\Phi\colon \R^2\to S$ that is $M$-quasi\-conformal on $\R^2\setminus K$ and ``agrees" with $f$ outside the triangles; see Figure \ref{fig:FoldGasket} (we remark here that in the figure the edges of the green rectangles are not glued to the red rectangles, except possibly at one point; see also Figure \ref{fig:TwoTripodflaps} for the gluing pattern). The map $\Phi$ is the result of pasting the map $f$ with all the folding maps $\phi_W$. The map $\Phi$ maps the gasket to a subset of $S$ that has positive measure. In brief:
\begin{lemma}
There exists a homeomorphism $\Phi$ from $\R^2$ onto a metric surface $S$ that is $M$-quasiconformal on $\R^2\setminus K$ and maps $K$ to a subset of $S$ that has positive Hausdorff $2$-measure. 
\end{lemma}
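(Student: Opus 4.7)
The plan is to paste the map $f$ from the first lemma together with the folding maps $\phi_W$ from the second lemma, one for each bounded complementary equilateral triangle $W$ of the gasket. I would first construct the target surface $S$: start from the image $f(\R^2)$, which is topologically the plane with the interior of each such $W$ collapsed to the tripod $G_W := f(W)$, and for every $W$ glue along $G_W$ the flap of some height $h_W$ produced by the second lemma, identifying tripod points consistently with the gluing pattern indicated in Figure~\ref{fig:TwoTripodflaps}. I would choose $h_W$ to decay fast with $\diam(W)$ (for instance $h_W \leq \diam(W)^2$) so that the collection of flaps does not accumulate anywhere except on $f(K)$. Endowing $S$ with its natural length metric, I define $\Phi\colon\R^2\to S$ by $\Phi(x)=f(x)$ for $x$ in the closure of the unbounded complementary component, and $\Phi(x)=\phi_W(x)$ for $x\in\overline W$ when $W$ is a bounded complementary triangle. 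The compatibility clause in the second lemma (the maps $f$ and $\phi_W$ agree on $\partial W$, with midpoints of edges of $W$ going to the barycentric gluing points of the flap above $G_W$) makes this definition unambiguous.

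Next I would verify the three required properties. Quasiconformality is immediate from the decomposition $\R^2\setminus K = U_\infty \sqcup \bigsqcup_W \innt W$: on $U_\infty$ the map $\Phi$ is the identity, and on each $\innt W$ it equals $\phi_W$, which is $M$-quasiconformal by the second lemma. Injectivity follows from three observations: $f$ is injective off the bounded triangles; each $\phi_W$ maps $\overline W$ injectively onto its flap; and distinct flaps in $S$ meet only at vertices of the triangles, where both $f$ and the corresponding folding maps assign the same value. Surjectivity onto $S$ holds by construction. For the Hausdorff measure claim, any $x\in K$ lies outside every open bounded triangle, so $\Phi(x)=f(x)$; thus $\Phi(K)=f(K)$, and by the first lemma this set has positive planar Lebesgue measure, which agrees with its two-dimensional Hausdorff measure in $S$ since on the planar part of $S$ the metric is Euclidean.

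The main obstacle is continuity of $\Phi$ at points of $K$, since a sequence $x_n\to x\in K$ may visit infinitely many distinct triangles $W_n$ before settling at $x$. On the boundary pieces the pasting is continuous for each fixed $W$, so the only serious question is that $\Phi(x_n)\to\Phi(x)=f(x)$ in $S$ when the $W_n$ vary. The triangles $W_n$ containing $x_n$ must satisfy $\diam(W_n)\to 0$; the modulus of continuity of $f$ on the plane part controls $|f(x_n')-f(x)|$ for any boundary point $x_n'\in\partial W_n$ nearest to $x_n$, while the second lemma (with its explicit modulus for $\phi_W$ and freedom in choosing $h_W$) bounds the intrinsic diameter of the flap above $G_{W_n}$ by a quantity tending to zero with $\diam(W_n)$. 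Adding the two bounds gives $d_S(\Phi(x_n),f(x))\to 0$. Once continuity is established, the continuous bijection $\Phi$ from the locally compact space $\R^2$ onto the Hausdorff space $S$ is a homeomorphism on each closed disk, hence globally, completing the proof.
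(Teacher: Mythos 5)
Your construction follows the paper's own route: patch the collapsing map $f$ with the folding maps $\phi_W$ to obtain $\Phi\colon\R^2\to S$, deduce quasiconformality on $\R^2\setminus K$ from the uniform quasiconformality of the $\phi_W$'s, and reduce continuity at points of $K$ to the modulus of continuity of $f$ plus the shrinking flap diameters. However, two steps in your measure argument are imprecise and, as stated, incorrect. First, it is \emph{not} true that $\Phi(K)=f(K)$: a point $x\in K\cap\partial W$ for a bounded $w$-triangle $W$ is mapped to $\phi_W(x)$, a boundary point of the flap above $G_W$, which is a different point of $S$ than $f(x)$ (the tripod $G_W$ has been slit and replaced by the flap). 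What one actually has is $\Phi(K^\circ)=f(K^\circ)=L^\circ$, where $K^\circ=K\setminus\bigcup_W\partial W$, and this subset already carries the positive measure. Second, the internal metric $d_S$ restricted to the planar part $\R^2\setminus\bigcup_W G_W$ is \emph{not} Euclidean but only topologically equivalent to the Euclidean metric (geodesics may have to detour over flaps), so you cannot conclude equality of $\mathcal H^2_{d_S}$ and $m_2$ there. The fix, and the route the paper takes, is to use that the natural projection $P\colon S\to\R^2$ is $1$-Lipschitz and restricts to the identity on $L^\circ$, giving $\mathcal H^2_{d_S}(L^\circ)\ge m_2(P(L^\circ))=m_2(L^\circ)>0$. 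Finally, note that the continuity argument is more delicate than your sketch suggests, because the building block maps for $f$ are \emph{defined} to be $P\circ\phi_W$, hence $f$ itself depends on the chosen heights $h_W$; the paper therefore carries out the whole construction inductively (choosing $G_n$ and $h_n$ together, see Remark \ref{General-Graph dependence}) and proves continuity by the same estimates used for completeness of the inverse-limit flap-plane (Proposition \ref{General-Completeness}), rather than by a one-pass bound.
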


If the target of $\Phi$ were not $S$ but it were $\R^2$, then the proof of non-removability would be finished. Hence, we have to find a way to change the target to $\R^2$.  This is facilitated by the Bonk-Kleiner Theorem \cite{BK}, which allows us to embed $S$ into $\R^2$ with a quasisymmetric map. The Bonk-Kleiner Theorem asserts that a metric sphere that is Ahlfors $2$-regular and linearly locally connected is quasisymmetrically equivalent to the standard Euclidean sphere. We develop a theory of flap-planes constructed similarly to $S$. These are just spaces arising by gluing to the plane an infinite collection of rectangles (or flaps), which are  ``perpendicular" to the plane. Using the Bonk-Kleiner Theorem  we will show that flap-planes can be quasisymmetrically embedded to the plane, provided that the heights of the flaps are sufficiently small. In our case, we can obtain a quasisymmetry $\Psi\colon S\to \R^2$. Note that $\Psi$ is a quasisymmetry on \textit{all} of $S$.

\begin{lemma}There exists a quasisymmetry $\Psi\colon S\to \R^2$. Moreover, $\Psi$ maps sets of positive Hausdorff $2$-measure to sets of positive Lebesgue measure.
\end{lemma}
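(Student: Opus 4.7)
The plan is to pass to the one-point compactification of $S$ and apply the Bonk--Kleiner theorem \cite{BK} to the resulting topological sphere. Since $\Phi$ is a homeomorphism of $\R^2$ onto $S$, the space $S$ is homeomorphic to $\R^2$, so $\hat S \coloneqq S \cup \{\infty\}$, endowed with a natural spherical-type metric (obtained by pushing the metric of $S$ down through a stereographic-style rescaling at infinity), is a topological $2$-sphere. The Bonk--Kleiner theorem asserts that such a metric sphere is quasisymmetric to the standard $\mathbb{S}^2$ if and only if it is Ahlfors $2$-regular and linearly locally connected; hence the main technical task is the verification of these two properties on $\hat S$, and this is precisely where the freedom granted by the preceding lemma to choose flap heights arbitrarily small is exploited.

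For Ahlfors $2$-regularity, I would control measure in $\hat S$ by separating the contribution of the base plane from that of the flaps. A flap attached in place of a complementary triangle $W$ of sidelength $\ell_W$ has area $\asymp \ell_W^2$ provided its height $h_W$ is chosen $\lesssim \ell_W$, and, by forcing $h_W$ to decay sufficiently fast relative to $\ell_W$, the total flap area intersecting any metric ball $B(x,r)$ can be estimated by a geometric series driven by the self-similarity of the gasket, yielding an upper bound $\lesssim r^2$. A matching lower bound $\mathcal H^2(B(x,r)) \gtrsim r^2$ holds because every such ball contains a Euclidean-like disk of comparable radius sitting either in the base plane or inside a single flap. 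For linear local connectivity, LLC$_1$ (joining any two points of $B(x,r)$ through a continuum inside $B(x,\lambda r)$) is handled by connecting points through the base plane, using flaps as convenient bridges and noting that each flap is a topological disk of diameter $\lesssim \ell_W$; LLC$_2$ is obtained by encircling the ball inside the base plane and detouring around the finitely many flaps whose diameter exceeds a fixed multiple of $r$.

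Once both properties are in place, Bonk--Kleiner supplies a quasisymmetry $\hat\Psi\colon \hat S \to \mathbb{S}^2$. Normalizing so that $\hat\Psi(\infty)$ is the north pole and post-composing with stereographic projection yields the desired quasisymmetry $\Psi\colon S \to \R^2$. For the final assertion, a standard consequence of quasisymmetric invariance between Ahlfors $2$-regular spaces is that $\mathcal H^2$-null sets are preserved, and thus sets of positive $\mathcal H^2$-measure are preserved; since Lebesgue measure on $\R^2$ is comparable to $\mathcal H^2$, this gives the measure statement. The main obstacle is making the area and connectivity estimates quantitative in the face of infinitely many flaps whose scales accumulate densely along the gasket, which forces a careful prescription of the heights $h_W$ as a function of $\ell_W$; this is exactly the flexibility that the folding construction of the previous lemma was designed to supply.
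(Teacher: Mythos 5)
Your proposal has the right main idea—apply a Bonk--Kleiner-type uniformization theorem after verifying Ahlfors $2$-regularity and linear local connectivity—but it departs from the paper's strategy in two ways, and in each case the paper's route sidesteps a technical difficulty your sketch glosses over.

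First, the paper does not pass to the one-point compactification. It invokes Wildrick's plane version of Bonk--Kleiner (Theorem \ref{General-Wildrick}): a complete, unbounded, Ahlfors $2$-regular, LLC metric plane is quasisymmetric to $\R^2$. Your suggestion to ``push the metric of $S$ down through a stereographic-style rescaling at infinity'' is precisely the step Wildrick's theorem packages away; getting the metric on $\hat S$ right so that the hypotheses of the sphere theorem hold near $\infty$ (in particular LLC and regularity at the added point) is nontrivial, and Wildrick's paper is essentially devoted to this. Using the plane version is the cleaner move.

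Second, and more importantly, the paper does not verify Ahlfors regularity and LLC directly on the limit space $S = S_\infty$. Its metric $d_\infty$ is defined as an inverse/Gromov--Hausdorff limit of the finite-stage metrics $d_n$, and is hard to estimate directly. Instead, the paper proves that the finite-stage flap-planes $(S_n,d_n)$ are Ahlfors $2$-regular and $c$-LLC with constants \emph{uniform in $n$} (Propositions \ref{Ahlfors} and \ref{LLC}), applies Wildrick's theorem to each $S_n$ to obtain $\eta$-quasisymmetries $f_n\colon S_n\to\R^2$ with a fixed $\eta$, and then extracts a limiting quasisymmetry $f\colon S_\infty\to\R^2$ via the mapping-package Lemma \ref{Pre Mapping package}. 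Ahlfors regularity of $S_\infty$ is then a consequence (Lemma \ref{Pre Convergence Ahlfors regular}), not an input one checks by hand. Your sketch of the upper mass bound by ``a geometric series driven by the self-similarity of the gasket'' would need to be replaced by something that works uniformly in $n$ and handles the essential source of trouble in the flap-plane setting: the flaps and tripods can accumulate arbitrarily densely, and the estimates actually hinge on the uniform bound $N_0$ on the degree of the graph $T_n$ together with a careful inductive choice of heights (conditions \eqref{Ahlfors-Upper-Measure of E} and \eqref{Ahlfors-Upper-Geometric}), not on any self-similarity of the gasket—indeed Section \ref{Section Flap-planes} is written without reference to the gasket at all, so that it can be applied to the inductively constructed family of tripods in Section \ref{Section QC homeo}. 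Your closing measure claim is correct and matches the paper's use of Lemma \ref{Pre Absolutely continuous} once Ahlfors $2$-regularity of $S$ is known.
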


The composition $F= \Psi\circ \Phi$ will be a homeomorphism of $\R^2$ that is $M'$-quasiconformal on $\R^2\setminus K$ for some uniform $M'>0$, but it cannot be globally quasiconformal, because it has to blow the gasket $K$ to a set of positive area. This is because $\Phi(K)$ had positive measure in $S$ and in our setting the quasisymmetry $\Psi$ has to map sets of positive measure to sets of positive measure.

\subsection{Organization of the paper}
In Section \ref{Section Preliminaries} we introduce our notation and discuss some preliminaries regarding quasiconformal and quasisymmetric maps, and also convergence of metric spaces in the pointed Gromov-Hausdorff sense.

In Section \ref{Section Flap-planes} we develop a theory of \textit{flap-planes}, which are surfaces arising by gluing rectangles, or else, \textit{flaps}, on top of the Euclidean plane. Under some assumptions, our goal is to apply the Bonk-Kleiner Theorem to show that these surfaces can be quasisymmetrically embedded into the plane; see Theorem \ref{General-QS Embedding}. Hence, we focus on proving that they are Ahlfors $2$-regular and linearly locally connected. These proofs occupy most of the section. Also, this section is independent of the other sections, and can be mostly skipped in a first reading of the paper. We will only need the definition and some general properties of flap-planes from Section \ref{Section General} and also we will use the embedding Theorem \ref{General-QS Embedding}.

The main content of Section \ref{Section Continuous} is the proof of  Theorem \ref{Intro gasket non-removable}, i.e., the non-remov\-ability of the gasket for continuous functions of the class $W^{1,2}$. The proof consists of several steps, which are organized in the subsections. The heart of the argument is Lemma \ref{Basic lemma}. The proof of Theorem \ref{Intro homeo gasket non-removable} is contained in Section \ref{Section Generalization}. There, we also include the proofs of the general statements in Theorem \ref{Intro Theorem Positive} and Proposition \ref{Intro Theorem W^{1,infinity}}. Moreover, in Section \ref{Section Definitions} we include basic terminology and geometric properties of the gasket that we use repeatedly throughout the paper.

Finally, in Section \ref{Section QC non} we prove the quasiconformal non-removability in Theorem \ref{Intro gasket Quasi non-removable}. First, in Section \ref{Section QC Collapsing} we show how to collapse the complementary equilateral triangles to tripods in a continuous way with a map $f\colon \R^2\to \R^2$, as described in Section \ref{Section Sketch}. The proof of the continuity of $f$ is the same as the proof of continuity for the Sobolev non-removability in Section \ref{Section Continuous}, so we recommend the reader to read first that proof. 

Then, in Section \ref{Section QC Folding} we explain how to fold a single equilateral triangle on top of a tripod with a quasiconformal map. In Section \ref{Section QC homeo} these folding maps are pieced together with $f$ to obtain a homeomorphism $\Phi$ from $\R^2$ onto a flap-plane $S$. Finally, in Section \ref{Section QC Finish} we finish the proof of non-removability by embedding $S$ into the plane and obtaining the desired exceptional homeomorphism $F\colon \R^2\to \R^2$.

\subsection*{Acknowledgements}
I am grateful to my advisor at UCLA, Mario Bonk, not only for the numerous conversations and useful comments during this project, but also for introducing me to the world of analysis on metric spaces and constantly keeping me motivated to learn mathematics and work on fascinating problems.

I also thank Huy Tran for bringing the problem of (non)-removability of the gasket to my attention, Malik Younsi for several motivating conversations, Pekka Koskela for his suggestions regarding the proof of Theorem \ref{Intro Theorem Positive}, and Guy C.\ David for explaining the different notions of convergence of metric spaces that appear in the literature. Moreover I would like to thank Matthew Romney, Raanan Schul, Jang-Mei Wu, Malik Younsi, and the anonymous referee for their comments and corrections.

\subsection*{Update} 
Since the completion and distribution of the first version of this paper, there has been some further progress. The current author in \cite{Nt2} has proved that all Sierpi\'nski carpets are non-removable for quasiconformal maps, providing an answer to Conjecture \ref{Intro:conjecture}.

Moreover, further connections between the problems of rigidity of circle domains and removability of their boundary have been established in \cite{NY}, in the spirit of the conjecture of He and Schramm \cite{HS}. In particular, it is proved that all circle domains satisfying the quasihyperbolic condition of \cite{JS} are rigid.

\section{Preliminaries}\label{Section Preliminaries}

\subsection{Notation}\label{Section Notation}
We will say that a statement (A) implies a statement (B) \textit{quantitatively} if the implicit constants or functions in  statement (B) depend only on the implicit constants or functions in  statement (A).

We use the notation $a\lesssim b$ if there exists an implicit constant $C>0$ such that $a\leq Cb$, and $a\simeq b$ if there exists a constant $C>0$ such that $C^{-1}b\leq a\leq Cb$. We will be mentioning the parameters on which the constant $C$ depends, unless $C$ is a universal constant.

The Lebesgue measure in $\R^n$ is denoted by $m_n$. If a function $f\colon \R^n\to \R$ is integrable, then we will denote its  integral against Lebesgue measure by $\int f$. The open ball around $x\in \R^n$ of radius $r>0$ is denoted by $B(x,r)$. For visual purposes, the closure of a set $U_1$ is denoted by $\br U_1$, instead of $\br {U_1}$ and the closure  of a Euclidean ball $B(x,r)$ is denoted by $\br B(x,r)$. We use the notation $A(x;r,R)$ for the annulus $B(x,R)\setminus \br B(x,r)$, where $0<r<R$.

If $(X,d)$ is a metric space and $Q>0$, then we denote the Hausdorff $Q$-measure by $\mathcal H^Q_d$; see e.g.\ \cite[Section 11.2]{Fo} for the definition. Also, we use the notation $B_d(x,r)$ for the open ball of radius $r>0$, centered at $x\in X$. In Section \ref{Section Flap-planes} we will be endowing planar sets with different metrics. If there is no subscript $d$ in the ball notation, then the ball will always refer to the Euclidean metric. 

We normalize the Hausdorff $2$-measure, so that it agrees with the Lebesgue measure in $\R^2$. This $\R^2$-normalization will also be used when we study the Hausdorff $2$-measure of an arbitrary metric space. For example, if $X$ is a metric space and $U \subset X$ is a set that is isometric to a measurable subset of $\R^2$, then its Hausdorff $2$-measure agrees with the Lebesgue measure of its isometric image in $\R^2$.

\subsection{Quasiconformal and quasisymmetric maps}\label{Section Preliminaries QC QS}

We first recall the definition of a quasiconformal map; we direct the reader to \cite{Va} and \cite{AIM} for background on quasiconformal maps.

\begin{definition}\label{Pre Definition QC plane}
Let $U,V\subset \R^2$ be open sets. An orientation-preserving homeomorphism $f\colon  U\to V$ is $M$-quasiconformal for some $M>0$ if $f\in W^{1,2}_{\loc}(U)$ and 
\begin{align*}
\|Df(z)\|^2 \leq M J_f(z)
\end{align*}
for a.e.\ $z\in U$, where $\|Df(z)\|$ denotes the operator norm of the differential of $f$ at $z$, and $J_f$ is the Jacobian of $f$. We also say that $f$ is quasiconformal if it is $M$-quasiconformal for some $M>0$. The number $M>0$ is called the quasiconformal distortion of $f$.
\end{definition}

\begin{remark}
A priori, if a set $K\subset U$ is removable in the sense of Definition \ref{def:removable}, it could be the case that the quasiconformal distortion of a map $f$ in $U$ is larger than the distortion in $U\setminus K$. However, as remarked in the Introduction, removable sets necessarily have measure zero. Since the inequality $\|Df(z)\|^2 \leq M J_f(z)$ is required to hold a.e., it follows that the quasiconformal distortion of $f$ on $U$ is the same as the distortion on $U\setminus K$.
\end{remark}

Quasiconformal maps have the important property that they preserve null sets: 
\begin{lemma}[Theorem 33.2, \cite{Va}]\label{Pre Measure zero}
Let $U,V\subset \R^2$ be open sets and let $f\colon U\to V$ be a quasiconformal map. A Borel set $A\subset U$ is mapped by $f$ to a set of measure zero if and only if $A$ has measure zero.
\end{lemma}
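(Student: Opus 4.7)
The statement is essentially Lusin's condition $(N)$ and $(N^{-1})$ for planar quasiconformal maps. My plan is to reduce the whole claim to a single implication, namely that $m_2(A)=0 \Rightarrow m_2(f(A))=0$, because the converse then follows instantly: planar quasiconformal homeomorphisms have quasiconformal inverses of the same distortion (a classical fact that can be read off from the geometric/modulus characterization of quasi\-conformality, or from the identity $J_{f^{-1}}(f(z)) = J_f(z)^{-1}$ together with the distortion inequality applied to $f$), so applying the forward implication to $f^{-1}$ yields $m_2(f(A))=0 \Rightarrow m_2(A)=0$.

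The forward implication will be obtained from the area (change of variables) formula. From Definition \ref{Pre Definition QC plane}, $f \in W^{1,2}_{\loc}(U)$ and $\|Df(z)\|^2 \le M J_f(z)$ a.e. In particular $J_f \in L^1_{\loc}(U)$, since $|J_f|\le \|Df\|^2 \in L^1_{\loc}$. The key intermediate step is to verify the estimate
\begin{equation*}
m_2(f(E)) \;\le\; \int_E J_f \, dm_2
\end{equation*}
for every Borel set $E\Subset U$. Granting this, if $m_2(A)=0$ then the right-hand side vanishes, giving $m_2(f(A))=0$ on bounded pieces, and hence globally by exhausting $U$ with a countable family of relatively compact Borel sets.

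The heart of the matter, and the main technical obstacle, is thus the area inequality. The standard route is: (i) show that $f$ is absolutely continuous on almost every horizontal and vertical line (the ACL property), which for a $W^{1,2}$ homeomorphism is a consequence of Fubini together with the representative chosen via line integrals of the weak gradient; (ii) show that $f$ is differentiable almost everywhere, which in dimension two follows from ACL plus $W^{1,2}$-regularity by Gehring--Lehto; and (iii) apply a Vitali-type covering argument with small squares to bound $m_2(f(E))$ by sums of the form $\sum \|Df(z_k)\|^2 \cdot m_2(Q_k)$, which up to the distortion factor $M$ is controlled by $\int_E J_f$. A cleaner way that avoids the factor $M$ is to use McShane--Whitney approximation of $f$ by Lipschitz maps off a set of small measure, where the classical Lipschitz area formula applies verbatim, and to control the error on the bad set via the $W^{1,2}$-norm. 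Either route produces the required inequality.

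Once the two implications are established, I would close by noting that measurability of $f(A)$ is not an issue: since $f$ is a homeomorphism between open subsets of $\R^2$, it maps Borel sets to Borel sets, so $m_2(f(A))$ is always well defined. I expect the ACL/area-formula step to be the only nontrivial part of the argument; the rest is bookkeeping.
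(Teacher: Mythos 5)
Your plan rests on the area inequality
\begin{equation*}
m_2(f(E)) \;\le\; \int_E J_f \, dm_2,
\end{equation*}
but this inequality \emph{is} Lusin's condition $(N)$ in disguise, and the route you propose for it does not close. The Vitali covering argument in step (iii) gives control only at points $z$ where $f$ is differentiable: there one has $m_2(f(B(z,r)))\lesssim \|Df(z)\|^2 r^2$ for small $r$, and summing over a Vitali cover of $E\cap D$ (with $D$ the differentiability set) does yield $m_2(f(E\cap D))=0$ when $m_2(E)=0$. But $E\setminus D$ is a null set on which you have no pointwise estimate, and you are silently using that $f$ maps $U\setminus D$ to a null set — which is exactly a special case of the statement you are trying to prove. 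The ``cleaner'' Lipschitz-truncation route has the identical circularity: after writing $E=(E\cap F)\cup(E\setminus F)$ with $f|_F$ Lipschitz and $m_2(U\setminus F)$ small, the first piece is fine, but you again need to know that $f$ maps the small-measure remainder to small measure, which is condition $(N)$. (For general $W^{1,2}$ planar homeomorphisms, condition $(N)$ actually \emph{fails}; Ponomarev-type examples show it. So no argument that uses only $f\in W^{1,2}_{\loc}$ together with naive covering can succeed — you must genuinely exploit the distortion inequality in a way beyond ``$\|Df\|^2\le M J_f$ at points of differentiability.'')

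The proof in V\"ais\"al\"a (which the paper cites and does not reprove) runs the logic in the opposite direction. One first establishes, via the modulus inequalities, that $J_f>0$ a.e.\ for any quasiconformal $f$. One also has the ``easy'' one-sided area inequality $\int_A J_g\,dm_2\le m_2(g(A))$ for any homeomorphism $g$ that is ACL, differentiable a.e., with $J_g\ge 0$; this direction does \emph{not} require condition $(N)$ and follows from a covering argument applied to the differentiability set alone. Now let $A$ have measure zero. Apply the easy inequality to $g=f^{-1}$ and the Borel set $f(A)$: $\int_{f(A)} J_{f^{-1}}\,dm_2\le m_2(f^{-1}(f(A)))=m_2(A)=0$. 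Since $J_{f^{-1}}>0$ a.e., it follows that $m_2(f(A))=0$. The crucial ingredients you omit are (a) the correct direction of the one-sided inequality that does not need $(N)$, (b) applying it to $f^{-1}$ rather than $f$, and (c) the nontrivial a.e.\ positivity of the Jacobian, which is where the modulus theory (or, alternatively, Gehring's higher integrability $J_f\in L^p_{\loc}$ for some $p>1$) actually enters. Also worth flagging: your use of the quasiconformality of $f^{-1}$ is itself not immediate from Definition \ref{Pre Definition QC plane}; it relies on the equivalence of the analytic and geometric definitions, which is deeper than a one-line identity for $J_{f^{-1}}$.
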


\begin{definition}\label{Pre Definition QC metric}
If two metric spaces $(X,d_X)$ and $(Y,d_Y)$ are locally isometric to open subsets of $\R^2$, then we say that a homeomorphism $f\colon X\to Y$ is $M$-quasiconformal if the following holds. For each $x \in X$ there exist open neighborhoods $U_x$ of $x$ and $V_{f(x)}$ of $f(x)$ and  there exist isometries $\phi\colon U_x\to U $ and $\psi\colon V_{f(x)} \to V $, where $U$ and $V$ are open subsets of $\R^2$ such that $\psi \circ f\circ \phi^{-1} \colon U\to V$ is $M$-quasiconformal, in the preceding sense. 
\end{definition}

There exists already a theory of quasiconformal maps between metric spaces that is compatible with the definition that we gave. Nevertheless, we will not need any deep results from that theory, and we wish to keep our approach as simple as possible, so we do not give the general definition. See, for example, \cite{HK} for more background.

Now, we define the notion of a quasisymmetry between two metric spaces $(X,d_X)$ and $(Y,d_Y)$; see also \cite[Chapters 10--11]{He}.

\begin{definition}\label{Pre Definition QS}
A homeomorphism $f\colon X\to Y$ is $\eta$-quasisymmetric if there exists a homeomorphism $\eta\colon [0,\infty)\to[0,\infty)$ such that for every triple of distinct points $x,y,z\in X$ and for their images $x'=f(x)$, $y'=f(y)$, $z'=f(z)$ we have
\begin{align*}
\frac{d_Y(x',y')}{d_Y(x',z')} \leq \eta \left( \frac{d_X(x,y)}{d_X(x,z)}\right).
\end{align*}
The function $\eta$ is called the distortion function associated to $f$.
\end{definition}

If $X=U$ and $Y=V$ are open subsets of the plane, then an $\eta$-quasisymmetric orientation-preserving homeomorphism $f\colon U\to V$ is $M$-quasiconformal, where $M$ depends only on $\eta$; this is proved in Chapter 4 of \cite{Va}, and in particular in Theorem 34.1. The converse does not hold without extra assumptions. More generally, we have:

\begin{lemma}\label{Pre QS implies QC}
If $X$ and $Y$ are locally isometric to open subsets of the plane, then an $\eta$-quasisymmetric map $f\colon X\to Y$ is $M$-quasiconformal, in the sense of Definition \ref{Pre Definition QC metric}. The constant $M>0$ depends only on the distortion function $\eta$.
\end{lemma}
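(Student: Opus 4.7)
The plan is to reduce the statement to the classical Euclidean result (Theorem 34.1 of \cite{Va}) by passing through local isometric charts. Since being $M$-quasiconformal in Definition \ref{Pre Definition QC metric} is a pointwise local condition, it suffices to verify it at a single arbitrary point $x\in X$.

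First, I would fix $x\in X$ and use the hypothesis that $X$ and $Y$ are locally isometric to the plane to produce neighborhoods $U_x$ of $x$ and $V_{f(x)}$ of $f(x)$ together with isometries $\phi\colon U_x\to U$ and $\psi\colon V_{f(x)}\to V$ onto open subsets $U,V\subset \R^2$. By continuity of $f$ at $x$, after shrinking $U_x$ (and replacing $U$ by its $\phi$-image) I may assume $f(U_x)\subset V_{f(x)}$. Then $g\coloneqq \psi\circ f\circ \phi^{-1}\colon U\to \psi(f(U_x))\subset V$ is a homeomorphism between open subsets of $\R^2$. If $g$ is orientation-reversing, I would replace $\psi$ by its composition with a reflection of $\R^2$, which is still an isometry, to arrange $g$ to be orientation-preserving; this does not affect the distortion function.

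Next, I would verify that $g$ is $\eta$-quasisymmetric with the same distortion function $\eta$. This is immediate because $\phi$ and $\psi$ are isometries, hence satisfy the quasisymmetry estimate with the identity distortion function: for any three distinct points $a,b,c\in U$, writing $p=\phi^{-1}(a)$, $q=\phi^{-1}(b)$, $r=\phi^{-1}(c)$, one computes
\begin{align*}
\frac{|g(a)-g(b)|}{|g(a)-g(c)|}=\frac{d_Y(f(p),f(q))}{d_Y(f(p),f(r))}\leq \eta\!\left(\frac{d_X(p,q)}{d_X(p,r)}\right)=\eta\!\left(\frac{|a-b|}{|a-c|}\right),
\end{align*}
using that $\phi$ is distance-preserving on $U_x$ and $\psi$ on $V_{f(x)}$. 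Thus $g\colon U\to \R^2$ is an $\eta$-quasisymmetric orientation-preserving homeomorphism onto its image.

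Finally, I would invoke Theorem 34.1 of \cite{Va}, which asserts that any $\eta$-quasisymmetric orientation-preserving homeomorphism between planar open sets is $M$-quasiconformal with $M$ depending only on $\eta$. This gives the required chart description in Definition \ref{Pre Definition QC metric} at the point $x$, and since $x\in X$ was arbitrary, $f$ is $M$-quasiconformal with $M=M(\eta)$. There is no real obstacle here; the only mild bookkeeping point to watch is to shrink the charts so that both the domain and the target chart contain the relevant pieces of $f$'s graph, and to fix orientations by post-composing with a reflection if necessary.
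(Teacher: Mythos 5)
Your argument is correct, and it is precisely the local-chart reduction that the paper leaves implicit: Lemma~\ref{Pre QS implies QC} is stated without proof, immediately after the Euclidean case is credited to V\"ais\"al\"a's Theorem~34.1, so the intended argument is exactly the one you give. The two bookkeeping points you flag — shrinking $U_x$ so that $f(U_x)\subset V_{f(x)}$ before forming $g=\psi\circ f\circ\phi^{-1}$, and post-composing $\psi$ with a reflection so that $g$ is orientation-preserving as required by Definition~\ref{Pre Definition QC plane} — are the only places where care is needed, and you handle both correctly; the quasisymmetry computation is immediate since isometries are $\mathrm{id}$-quasisymmetric.
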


\begin{lemma}\label{Pre Compositions}
Let $X,Y,Z$ be metric spaces that are locally isometric to open subsets of the plane. Also, consider homeomorphisms  $f\colon X\to Y$ and $g\colon Y\to Z$ such that $f$ is $M$-quasiconformal and $g$ is $M'$-quasiconformal. Then the composition $g\circ f\colon  X\to Z$ is $M\cdot M'$- quasiconformal.
\end{lemma}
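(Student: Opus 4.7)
The plan is to reduce the claim to the standard composition law for planar quasiconformal maps, namely that if $h_1 \colon U_1 \to V_1$ is $M$-quasiconformal and $h_2 \colon V_1 \to W_1$ is $M'$-quasiconformal (in the sense of Definition \ref{Pre Definition QC plane}), then $h_2 \circ h_1$ is $MM'$-quasiconformal. This classical planar fact follows from the chain rule, the multiplicativity of Jacobians, and the submultiplicativity of operator norms, combined with the observation that $h_1$ pulls back the measure-zero set where $h_2$ fails to be differentiable to a measure-zero set in $U_1$ by Lemma \ref{Pre Measure zero}, which guarantees $h_2 \circ h_1 \in W^{1,2}_{\loc}(U_1)$.

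To verify Definition \ref{Pre Definition QC metric} for $g \circ f$ at a point $x \in X$, I apply the definition to $f$ at $x$ and to $g$ at $y \coloneqq f(x)$ to obtain open neighborhoods $U^f \ni x$, $V^f \ni y$, $U^g \ni y$, $V^g \ni z \coloneqq g(y)$ and isometries $\phi_1 \colon U^f \to U_1$, $\psi_1 \colon V^f \to V_1$, $\phi_2 \colon U^g \to U_2$, $\psi_2 \colon V^g \to V_2$ onto open subsets of $\R^2$ for which $h_1 \coloneqq \psi_1 \circ f \circ \phi_1^{-1}$ and $h_2 \coloneqq \psi_2 \circ g \circ \phi_2^{-1}$ are planar quasiconformal with the prescribed distortions. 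Since $V^f$ and $U^g$ need not coincide, I shrink the charts: let $W$ be a connected open neighborhood of $y$ with $W \subset V^f \cap U^g$, set $\widetilde U \coloneqq f^{-1}(W) \cap U^f$ and $\widetilde V \coloneqq g(W)$, and let $\phi \coloneqq \phi_1|_{\widetilde U}$, $\psi \coloneqq \psi_2|_{\widetilde V}$. Inserting the identities $\phi_2^{-1} \circ \phi_2$ and $\psi_1^{-1} \circ \psi_1$ produces the decomposition
$$\psi \circ (g \circ f) \circ \phi^{-1} = h_2|_{\phi_2(W)} \circ T \circ h_1|_{\phi(\widetilde U)}, \qquad T \coloneqq \phi_2 \circ \psi_1^{-1},$$
as a map on $\phi(\widetilde U) \subset \R^2$. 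The middle factor $T$ is an isometry between the connected open planar sets $\psi_1(W)$ and $\phi_2(W)$, hence the restriction of a global Euclidean rigid motion.

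The one subtle point is that $T$ may reverse orientation, in which case the composite above would be orientation-reversing, incompatible with Definition \ref{Pre Definition QC plane}. I will handle this by exploiting the flexibility in choosing $(\phi_2, \psi_2)$: replacing this pair by $(R \circ \phi_2, R \circ \psi_2)$ for an arbitrary reflection $R$ of $\R^2$ yields another valid choice of charts for $g$, because the new planar representation is $R \circ h_2 \circ R^{-1}$, which is orientation-preserving (two orientation-reversals cancel) and satisfies $\|D(R h_2 R^{-1})\|^2 \leq M' J_{R h_2 R^{-1}}$ since conjugation by the orthogonal involution $R$ preserves both the operator norm and the Jacobian. After this swap the middle isometry becomes $R \circ T$, now orientation-preserving, so it is the restriction of a rotation-translation and hence $1$-quasiconformal. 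The planar composition law then exhibits $\psi \circ (g \circ f) \circ \phi^{-1}$ as the composition of planar quasiconformal maps of distortions $M$, $1$, $M'$, giving $MM'$-quasiconformality as required. The only real obstacle in the argument is this orientation bookkeeping; all analytic content is absorbed into the classical planar composition fact.
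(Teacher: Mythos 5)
Your argument is correct. The paper itself gives no proof of this lemma in the metric-space setting; it merely cites V\"ais\"al\"a's Theorem 13.2 for the Euclidean case and leaves the reduction implicit. Your proposal supplies precisely that reduction: the decomposition $\psi \circ (g\circ f)\circ \phi^{-1} = h_2 \circ T \circ h_1$ with the transition isometry $T = \phi_2\circ\psi_1^{-1}$ is the right way to pass between the two unrelated charts at $y = f(x)$, and the one genuine subtlety --- that $T$, being a Euclidean rigid motion of a connected open set, may be orientation-reversing and hence incompatible with Definition~\ref{Pre Definition QC plane} --- is real and correctly handled by conjugating the charts for $g$ by a reflection $R$, which preserves both the operator norm and the Jacobian and flips the orientation of $T$. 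Your appeal to the chain rule plus Lemma~\ref{Pre Measure zero} for the planar composition law is a sketch rather than a full argument (showing $h_2\circ h_1 \in W^{1,2}_{\loc}$ needs a bit more, e.g.\ via the geometric definition and its equivalence to the analytic one), but this is exactly the classical fact the paper delegates to V\"ais\"al\"a, so the level of rigor matches.
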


See \cite[Theorem 13.2]{Va} for the preceding fact, in case the spaces $X,Y,Z$ are Euclidean. We also need the following removability lemma:

\begin{lemma}[Theorem 35.1, \cite{Va}]\label{Pre Removability}
Let $f\colon U\to V$ be an orientation-preserving homeomorphism between open subsets of the plane. Let $A\subset \R^2$ be a closed set, and assume that $f\big|_{U\setminus A}$ is $M$-quasiconformal. If $A$ has $\sigma$-finite Hausdorff $1$-measure, then $f$ is $M$-quasiconformal on $U$.
\end{lemma}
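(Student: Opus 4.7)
The plan is to show that $f$ lies in $W^{1,2}_{\loc}(U)$, after which the quasiconformal distortion inequality propagates across $A$ essentially for free. Since $A$ has $\sigma$-finite $\mathcal{H}^1$-measure, in particular $m_2(A)=0$; hence the inequality $\|Df(z)\|^2\le M\,J_f(z)$, valid for $m_2$-a.e.\ $z\in U\setminus A$, will automatically hold $m_2$-a.e.\ on $U$ once a genuine weak derivative exists there. Moreover, the $L^2_{\loc}$-bound on $|Df|$ is cheap: for any compact $K\subset U$,
\begin{align*}
\int_K \|Df\|^2\,dm_2 = \int_{K\setminus A}\|Df\|^2\,dm_2 \le M\int_{K\setminus A} J_f\,dm_2 \le M\,m_2(f(K))<\infty,
\end{align*}
so the entire content of the lemma reduces to promoting $f$ from $W^{1,2}_{\loc}(U\setminus A)$ to $W^{1,2}_{\loc}(U)$, i.e.\ verifying that $f$ is ACL in $U$.

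For the ACL step I would slice by axis-parallel lines. Writing $A=\bigcup_n A_n$ with $\mathcal{H}^1(A_n)<\infty$, fixing a closed rectangle $R=[a,b]\times[c,d]\subset U$, and setting $\ell_y=[a,b]\times\{y\}$, Eilenberg's inequality applied to the $1$-Lipschitz projection $(x,y)\mapsto y$ yields
\begin{align*}
\int_c^d \mathcal{H}^0\bigl(A_n\cap \ell_y\bigr)\,dy \le C\,\mathcal{H}^1(A_n)<\infty.
\end{align*}
Hence $A_n\cap \ell_y$ is finite for $m_1$-a.e.\ $y$, and intersecting over $n$, for $m_1$-a.e.\ $y$ the set $A\cap \ell_y$ is at most countable, hence $m_1$-null on $\ell_y$. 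On such a good horizontal line, the restriction $x\mapsto f(x,y)$ is continuous on $[a,b]$ (as $f$ is a homeomorphism) and absolutely continuous on every compact subinterval of $\ell_y\setminus A$ (by the ACL property of the quasiconformal map $f|_{U\setminus A}$). By symmetry, the analogous statement holds for vertical slices.

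The main obstacle, and the step at which the $\mathcal{H}^1$-$\sigma$-finiteness hypothesis is genuinely used, is upgrading ``AC off a countable subset of $\ell_y$'' to ``AC on $[a,b]$''. A generic continuous function can fail to be AC even when it is AC off a null set --- witness Cantor's staircase --- so this step really exploits that $f$ is a homeomorphism and quasiconformal off $A$. The cleanest route is via modulus/capacity (Gehring--V\"ais\"al\"a): first show that a set of $\sigma$-finite $\mathcal{H}^1$-measure in the plane has vanishing conformal capacity, equivalently the $2$-modulus of the family of locally rectifiable curves meeting $A$ equals zero. This allows the modulus form of $M$-quasiconformality satisfied by $f$ on $U\setminus A$ to extend to every curve family in $U$, and the classical geometric--analytic equivalence for planar quasiconformal maps then delivers both ACL and the distortion bound $\le M$ simultaneously on $U$. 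An alternative, more hands-on route is to verify the Banach--Zaretskii criterion on a.e.\ good line (continuity, bounded variation via $Df\in L^2_{\loc}$ and Fubini, Luzin's property $N$) to conclude AC directly; however, the verification of property $N$ on one-dimensional slices is itself nontrivial and ultimately feeds back into a similar capacity estimate, which is why the modulus route is preferable.
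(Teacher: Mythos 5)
Your reduction (get $f\in W^{1,2}_{\loc}(U)$, then use $m_2(A)=0$ to push the distortion inequality across $A$), the $L^2_{\loc}$ estimate on $Df$, and the Eilenberg slicing (a.e.\ coordinate line meets $A$ in a countable set) are all correct, and you correctly locate the whole difficulty in the ACL step. But your primary route rests on a false lemma: a set of $\sigma$-finite $\mathcal{H}^1$-measure in the plane does \emph{not} have vanishing conformal capacity, and the $2$-modulus of the rectifiable curves meeting $A$ does \emph{not} vanish. For $A=[0,1]\times\{0\}$, the family of vertical segments $\{t\}\times[-1,1]$, $t\in[0,1]$, all of whose members meet $A$, already has $2$-modulus $1/2$, and a segment also has positive logarithmic capacity. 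The correct property here is NED (null for extremal distance): curves joining two continua disjoint from $A$ and \emph{avoiding} $A$ carry the full modulus of the joining family. That is strictly weaker than your claim, and even granting it, the proposed extension of the modulus inequality stalls because for $\Gamma_1$ the curves of $\Gamma$ that meet $A$, the image family $f\Gamma_1$ consists of curves meeting $f(A)$, about which you have no a priori control at this stage (in particular $f(A)$ could have positive Lebesgue measure, forcing $M(f\Gamma_1)>0$ for many $\Gamma$). So the purely geometric route cannot bypass the analytic information; it needs it.

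Conversely, the route you dismiss is the correct and elementary one, and the reason you give for dismissing it does not hold up: Luzin's property (N) on a good line $\ell_y$ is trivial because $S_y=A\cap\ell_y$ is countable. Any null $E\subset\ell_y$ splits into $E\cap S_y$ (countable image, hence null) and $E\setminus S_y$ (covered by countably many compact subintervals of $\ell_y\setminus A$ on which $f$ is AC and so already satisfies (N)); no capacity enters. Bounded variation on a.e.\ line follows from $\partial_x f(\cdot,y)\in L^1[a,b]$ via Fubini and $|Df|\in L^2_{\loc}\subset L^1_{\loc}$. In fact you can avoid Banach--Zaretskii altogether: for a good $y$ set $g(x)=f(a,y)+\int_a^x\partial_x f(t,y)\,dt$ and $h=f(\cdot,y)-g$; then $h$ is continuous on $[a,b]$ and constant on each component of $[a,b]\setminus S_y$ (since $f(\cdot,y)$ is AC there with derivative $\partial_x f$), so each coordinate of $h$ has countable range, is connected, hence constant; thus $f(\cdot,y)=g+\text{const}$ is AC. Applied in both coordinate directions this gives ACL, which together with your $L^2_{\loc}$ bound and the a.e.\ distortion inequality finishes the proof.
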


Note that this lemma implies that sets of $\sigma$-finite Hausdorff $1$-measure are locally removable, in the sense of Question \ref{Question local} of the Introduction.

Finally, we need a lemma for quasisymmetric maps from a metric space onto the plane. A metric space $(X,d)$ is \textit{Ahlfors $Q$-regular} for some $Q>0$ if there exists a constant $C\geq 1$ such that for all $x\in X$ and $0<r<\diam(X)$ we have
\begin{align*}
\frac{1}{C}r^Q \leq \mathcal H^Q_d(B_d(x,r)) \leq Cr^Q.
\end{align*}

\begin{lemma}\label{Pre Absolutely continuous}
Let $(X,d)$ be an Ahlfors $2$-regular metric space and assume that there exists a quasisymmetric map $f$ from $X$ onto $\R^2$.  Then the pushforward measure $f_*(\mathcal H^2_d)$ and the Lebesgue measure on $\R^2$ are mutually absolutely continuous.
\end{lemma}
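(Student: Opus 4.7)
The plan is to prove mutual absolute continuity via a Lebesgue differentiation argument, combined with a quasisymmetric duality between the densities of $f_*\mathcal{H}^2_d$ on $\R^2$ and $(f^{-1})_*m_2$ on $X$.

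First, I would show that both pushforward measures are doubling Radon measures on their respective spaces. For $y \in \R^2$ and $r > 0$, write $\phi := f^{-1}$ and let $L_\phi(y, r) := \sup\{d(\phi(y), \phi(z)) : |z - y| \leq r\}$, with $l_\phi(y, r)$ the analogous infimum over $|z - y| \geq r$. The quasisymmetric bound $L_\phi(y, r) \leq \eta(1) l_\phi(y, r)$ places $\phi(B(y, r))$ between the concentric $d$-balls of radii $l_\phi(y, r)$ and $L_\phi(y, r)$, and together with Ahlfors $2$-regularity of $X$ this gives
$$\mu(B(y, r)) := (f_*\mathcal{H}^2_d)(B(y, r)) \simeq L_\phi(y, r)^2.$$
The distortion estimate $L_\phi(y, 2r) \leq \eta(2) L_\phi(y, r)$ then yields the doubling property of $\mu$. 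Symmetrically, $\nu := (f^{-1})_* m_2$ is a doubling Radon measure on $X$ with $\nu(B_d(x, \rho)) \simeq L_f(x, \rho)^2$. Applying the Lebesgue--Besicovitch differentiation theorem to each, I would obtain decompositions $\mu = g\, dm_2 + \mu_s$ and $\nu = J\, d\mathcal{H}^2_d + \nu_s$, where $g(y) \simeq (\lim_{r \to 0} L_\phi(y, r)/r)^2$ at $m_2$-a.e.\ $y$ and $J(x) \simeq (\lim_{\rho \to 0} L_f(x, \rho)/\rho)^2$ at $\mathcal{H}^2_d$-a.e.\ $x$, with $\mu_s$ and $\nu_s$ concentrated on $\{g = \infty\}$ and $\{J = \infty\}$ respectively.

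Next, I would establish the key quasisymmetric duality $g(f(x)) \cdot J(x) \simeq 1$, interpreted in $[0, \infty]$ wherever both density limits exist. For each $x \in X$ and $\rho > 0$, the inclusions $B(f(x), l_f(x, \rho)) \subset f(B_d(x, \rho)) \subset B(f(x), L_f(x, \rho))$ together with $L_f(x,\rho) \leq \eta(1) l_f(x,\rho)$ force $L_\phi(f(x), L_f(x, \rho)) \simeq \rho$; setting $r := L_f(x, \rho)$ yields $(L_\phi(f(x), r)/r) \cdot (L_f(x, \rho)/\rho) \simeq 1$. The duality immediately gives the set-theoretic containments $f(\{J = \infty\}) \subset \{g = 0\}$ and $\phi(\{g = \infty\}) \subset \{J = 0\}$ modulo null sets. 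A measure-chase using $g, J \in L^1_{\loc}$ and the supports of $\mu_s, \nu_s$ then shows $\mu_s = \nu_s = 0$ and $g, J > 0$ almost everywhere, which is equivalent to mutual absolute continuity of $f_*\mathcal{H}^2_d$ and $m_2$.

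The main obstacle is carrying out this final measure-chase without circularity: the density limits exist only on full-measure subsets of $X$ and $\R^2$, and a priori one does not know that these sets are compatible under $f$ --- indeed, this compatibility is essentially the absolute continuity I am trying to establish. The resolution is that the underlying qs estimate $L_\phi(f(x), L_f(x, \rho)) \simeq \rho$ holds pointwise for every $x$ and every $\rho$, so the containments $f(\{J = \infty\}) \subset \{g = 0\}$ and $\phi(\{g = \infty\}) \subset \{J = 0\}$ are unconditional and the argument closes without any prior assumption of absolute continuity.
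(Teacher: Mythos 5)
The paper itself does not prove this lemma; it cites \cite[Proposition 4.3]{BM} (Bonk--Meyer, \emph{Expanding Thurston Maps}) and remarks that the spherical version there transfers to the planar case. So there is no ``paper's own proof'' to match against, but the question of whether your argument is correct still stands.

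Your opening steps are sound and are indeed the right first moves: the mass comparisons $\mu(B(y,r))\simeq L_\phi(y,r)^2$ and $\nu(B_d(x,\rho))\simeq L_f(x,\rho)^2$, the doubling of both pushforwards, and the pointwise duality $L_\phi(f(x),L_f(x,\rho))\simeq\rho$ for \emph{every} $x$ and $\rho$ (via $B(f(x),l_f(x,\rho))\subset f(B_d(x,\rho))\subset B(f(x),L_f(x,\rho))$ and the $\eta(1)$--comparison of $L$ and $l$). You have also correctly anticipated the ``wrong'' circularity worry (that the density limits might only exist on incompatible full-measure sets) and correctly resolved it, since $\rho\mapsto L_f(x,\rho)$ is a continuous increasing surjection onto a punctured interval, so the duality lets you read off the behaviour of one density from the other along all scales.

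The genuine gap is in the last step, the ``measure-chase.'' Unwinding it: to show $\mu_s=0$ you need $\mu(\{D\mu=\infty\})=\mathcal H^2_d(\phi(\{D\mu=\infty\}))=0$; the duality identifies $\phi(\{D\mu=\infty\})$ with $\{D\nu=0\}$, so this is equivalent to $\mathcal H^2_d(\{D\nu=0\})=0$, i.e.\ to $J>0$ $\mathcal H^2_d$-a.e., i.e.\ to $\mathcal H^2_d\ll\nu$. Dually, $\nu_s=0$ is equivalent to $m_2\ll\mu$. The set containments $f(\{D\nu=\infty\})\subset\{D\mu=0\}$ and $\phi(\{D\mu=\infty\})\subset\{D\nu=0\}$ are therefore \emph{symmetric}: they convert one unknown into the other and back again, and nothing in them rules out the scenario in which $\mu\perp m_2$ and simultaneously $\nu\perp\mathcal H^2_d$ (on the singular pieces $D\mu=\infty$, $D\nu=\infty$ $\mu$-, $\nu$-a.e.\ respectively, while on the Lebesgue/$\mathcal H^2_d$ sides $D\mu=0$, $D\nu=0$ a.e.). That scenario is fully compatible with every estimate you have written down, including $g,J\in L^1_{\loc}$, which only says the densities are finite a.e.\ with respect to the reference measures and says nothing about their positivity. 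In short, the duality establishes that the two absolute continuity statements are equivalent to each other, not that either one is true; the circle is reshaped, not broken.

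What is missing is a genuinely quantitative input that favours one side. If your argument worked as stated, it would give an elementary density-function proof that a quasisymmetric self-map of $\R^2$ (equivalently a quasiconformal map) satisfies Lusin's condition $(N)$ and $(N^{-1})$, which is a substantive theorem and has no such cheap proof. The standard route (and the one used in \cite[Proposition 4.3]{BM}) combines a Vitali $5r$-covering argument with the disjointness of image balls $f(B_d(x_i,\rho_i/5))$ and the two-sided Ahlfors $2$-regular comparison $\mathcal H^2_d(B_d)\simeq\rho^2$, $m_2(B)\simeq r^2$; the crucial point is that in dimension $Q=n=2$ one can pass the sum $\sum L^2$ to $\sum l^2$ and absorb it into the measure of a \emph{disjoint} union, which is where the quantitative leverage comes from. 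You should incorporate an argument of that type before the Lebesgue differentiation can be brought to a conclusion.
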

For the proof see \cite[Proposition 4.3]{BM} and the references therein. The authors prove the above statement for a quasisymmetry from $X$ onto the sphere $\widehat{\C}$, but the same proof applies in our case.

\subsection{Convergence of metric spaces}
Here we discuss the notion of pointed Gromov-Hausdorff convergence of metric spaces and the relevant properties.

We follow the approach of \cite[Chapter 8]{BBI}. For a map $f\colon (X,d_X)\to (Y,d_Y)$ between two metric spaces we define its \textit{distortion} by 
\begin{align*}
\dis(f)= \sup\{ |d_X(x,x') -d_Y(f(x),f(x'))|: x,x'\in X\}.
\end{align*}
A \textit{pointed metric space} is a triple $(X,d,p)$, where $d$ is the metric of $X$ and $p\in X$ is a point. 

\begin{definition}\label{Pre Definition pointed GH}
A sequence $(X_n,d_n,p_n)$ of pointed metric spaces converges to a pointed metric space $(X,d,p)$ in the Gromov-Hausdorff sense if the following holds. For every $r>0$ and $\varepsilon>0$ there exists $n_0\in \N$ such that for every $n>n_0$ there exists a (not necessarily continuous) map $f\colon  B_{d_n}(p_n,r)\to X$ such that the following hold:
\begin{enumerate}
\item $f(p_n)=p$,
\item $\dis(f)<\varepsilon$, and
\item the $\varepsilon$-neighborhood of the set $f(B_{d_n}(p_n,r))$ contains the ball $B_d(p,r-\varepsilon)$.
\end{enumerate}
\end{definition}

A metric space $(X,d)$ is \textit{doubling} if there is a constant $N\in \N$ such that each ball of radius $r$ can be covered by at most $N$ balls of radius $r/2$. It is easy to see that if $(X,d)$ is Ahlfors $Q$-regular for some $Q>0$ then it is also doubling, and the implicit constant depends only on $Q$ and the Ahlfors regularity constant. A family of spaces is \textit{uniformly} Ahlfors $Q$-regular, if the implicit constants are the same for all spaces in the family. Similarly, one defines a \textit{uniformly} doubling family of spaces. We will need the following lemma, regarding the convergence of Ahlfors regular spaces:

\begin{lemma}\label{Pre Convergence Ahlfors regular}
Let $(X_n,d_n,p_n)$ be a sequence of uniformly Ahlfors $Q$-regular pointed metric spaces. Suppose that $(X_n,d_n,p_n)$ converges to a space $(X,d,p)$ in the  Gromov-Hausdorff sense. Then the metric space $(X,d,p)$ is Ahlfors $Q$-regular, with implicit constants depending only on the constants of the spaces $(X_n,d_n,p_n)$.
\end{lemma}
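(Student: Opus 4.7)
The plan is to realize Hausdorff measure on $X$ as the weak-$*$ limit of pushforwards of the Hausdorff measures on the $X_n$, and to transfer Ahlfors regularity through this limit. More precisely, I would construct a Radon measure $\nu$ on $X$ satisfying two-sided ball bounds of the form $\nu(B_d(y,s)) \asymp s^Q$, and then invoke the classical fact that such a measure is comparable to $\mathcal{H}^Q_d$ with constants depending only on the common Ahlfors constant $C_0$ of the spaces $X_n$.

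Fix $R > 0$ and, for each $n$ sufficiently large, let $f_n \colon B_{d_n}(p_n, R) \to X$ be a map witnessing the Gromov-Hausdorff approximation in Definition~\ref{Pre Definition pointed GH} with error parameter $\varepsilon_n \to 0$. Define the pushforward
\[
\nu_n = (f_n)_* \bigl( \mathcal{H}^Q_{d_n}|_{B_{d_n}(p_n, R)} \bigr),
\]
a Borel measure on $X$. Given $y \in B_d(p, R - \varepsilon_n)$, the density condition in the definition of GH-convergence furnishes $y_n \in B_{d_n}(p_n, R)$ with $d(f_n(y_n), y) < \varepsilon_n$, and the distortion bound $\dis(f_n) < \varepsilon_n$ then yields the nesting
\[
B_{d_n}(y_n, s - 2\varepsilon_n) \,\subset\, f_n^{-1}(B_d(y, s)) \,\subset\, B_{d_n}(y_n, s + 2\varepsilon_n).
\]
Combined with the uniform Ahlfors $Q$-regularity of the $X_n$, this gives
\[
C_0^{-1}(s - 2\varepsilon_n)^Q \,\leq\, \nu_n(B_d(y, s)) \,\leq\, C_0 (s + 2\varepsilon_n)^Q
\]
for all admissible $s$ and all $n$ large.

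Since $\{\nu_n\}$ has locally uniformly bounded mass, a Banach-Alaoglu-type compactness argument extracts a subsequence converging weakly-$*$ (against compactly supported continuous functions) to a Radon measure $\nu$ on $X$. Passing to the limit in the previous estimate—using lower semicontinuity of $\nu(\cdot)$ on open sets, upper semicontinuity on compact sets, and the fact that the monotone function $s \mapsto \nu(B_d(y,s))$ has at most countably many discontinuities—yields the clean bounds
\[
C_0^{-1} s^Q \,\leq\, \nu(B_d(y, s)) \,\leq\, C_0\, s^Q
\]
for every $y \in X$ and $0 < s < \diam(X)$, since $R$ is arbitrary. A standard covering-measure comparison (Vitali in one direction; the upper ball bound applied to an arbitrary $\delta$-cover in the other) then shows $\nu \asymp \mathcal{H}^Q_d$, and the desired Ahlfors regularity of $\mathcal{H}^Q_d$ follows.

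The main technical hurdle is this weak-$*$ passage to the limit: weak-$*$ convergence only yields semicontinuity of $\nu$ on open or closed sets, with the "wrong" direction at each, so the $\pm 2\varepsilon_n$ slack in the $\nu_n$-estimate must be absorbed using the countability of jump points of $s\mapsto \nu(B_d(y,s))$ and appropriate approximation from inside and outside. Once that book-keeping is handled, the quantitative dependence of all constants only on $C_0$ and $Q$ is automatic, since no step introduces constants of any other kind.
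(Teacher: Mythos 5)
The paper does not supply its own argument here: it cites Lemma 8.29 of David--Semmes and notes that the latter's notion of convergence coincides with Definition~\ref{Pre Definition pointed GH} for uniformly doubling spaces, which uniformly Ahlfors $Q$-regular spaces are. Your proposal instead gives a direct proof along the standard lines one would use to establish the cited result: push forward $\mathcal H^Q_{d_n}$ by the GH-approximations, extract a weak-$*$ limit $\nu$, transfer the two-sided ball bounds to $\nu$, and compare $\nu$ with $\mathcal H^Q_d$ via the mass-distribution principle and a packing argument. The structure is sound, and the approach is more self-contained than the paper's citation.

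Two pieces of bookkeeping deserve attention. The maps $f_n$ in Definition~\ref{Pre Definition pointed GH} are only set maps, a priori not Borel measurable, so $(f_n)_*\mathcal H^Q_{d_n}$ is not automatically a Borel measure; one should arrange for the $f_n$ to be measurable (always possible, e.g., by constructing each $f_n$ from a countable $\varepsilon_n$-net of $B_{d_n}(p_n,R)$), or work with outer measures. Also, your lower bound $\nu_n(B_d(y,s))\geq C_0^{-1}(s-2\varepsilon_n)^Q$ tacitly requires $B_{d_n}(y_n,s-2\varepsilon_n)\subset B_{d_n}(p_n,R)$, since $\nu_n$ is the pushforward of the measure restricted to $B_{d_n}(p_n,R)$; this holds, say, for $y\in B_d(p,R/2)$, $s<R/4$, and $n$ large, which suffices because $R$ is arbitrary. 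Finally, the limit passage is a bit cleaner than you suggest: the upper bound follows from lower semicontinuity of $\nu$ on open sets, and for the lower bound one uses upper semicontinuity on the closed ball to get $\nu(\overline{B_d(y,s)})\geq C_0^{-1}s^Q$ and then lets $s\nearrow s'$, with no need to invoke countability of jump points. None of these is a genuine gap, but each would need to be addressed in a complete write-up, and the final constants for $\mathcal H^Q_d$ depend on $C_0$ and $Q$ rather than being $C_0$ itself, which is consistent with the statement.
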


This lemma appears in \cite[Lemma 8.29]{DS}. The authors use a different definition for the convergence of metric spaces; see \cite[Definition 8.9]{DS}. This definition is not very handy in practice and we will not use it. However, their definition agrees with our Definition \ref{Pre Definition pointed GH}, in case the spaces involved are uniformly doubling. Finally, we need the following lemma regarding the convergence of a \textit{mapping package}:

\begin{lemma}\label{Pre Mapping package}
Let $(X_n,d_n,p_n)$ and $(Y_n,\rho_n, q_n)$ be pointed metric spaces for $n\in \N$, which are complete and  uniformly doubling. Moreover, suppose that there exists a sequence of $\eta$-quasisymmetric homeomorphisms $f_n\colon X_n\to Y_n$ such that $f_n(p_n)=q_n$ for all $n\in \N$  and there exists a constant $C>0$ and points $x_n\in X_n$ such that 
\begin{align*}
\frac{1}{C}\leq  d_n(p_n,x_n)\leq C \quad \textrm{and} \quad \frac{1}{C} \leq \rho_n( q_n,f(x_n)) \leq C,
\end{align*}
for each $n\in \N$. Then there exist subsequential Gromov-Hausdorff limits $(X,d,p)$ and $(Y,\rho,q)$ of $(X_n,d_n,p_n)$ and $(Y_n,\rho_n,q_n)$, respectively, and there exists a limiting $\eta$-quasisymmetric homeomorphism $f\colon X\to Y$ with $f(p)=q$.
\end{lemma}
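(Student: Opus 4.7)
The plan is to run a standard Arzelà--Ascoli / Gromov compactness argument on the mapping package, with the $\eta$-quasisymmetry providing both the bounded-image control and the equicontinuity that drive the convergence.

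First I would pass to a subsequential Gromov--Hausdorff limit. Since $(X_n,d_n,p_n)$ and $(Y_n,\rho_n,q_n)$ are complete and uniformly doubling, Gromov's precompactness theorem (applied in the pointed setting, as in \cite[Theorem 8.1.10]{BBI}) yields subsequential pointed GH limits $(X,d,p)$ and $(Y,\rho,q)$; by a diagonal extraction I can assume both sequences converge along the same subsequence. For each $R>0$ I fix the associated $\varepsilon$-almost-isometric maps $\iota_n^R\colon B_{d_n}(p_n,R)\to X$ and $\kappa_n^R\colon B_{\rho_n}(q_n,R)\to Y$ from Definition \ref{Pre Definition pointed GH}, with $\iota_n^R(p_n)=p$ and $\kappa_n^R(q_n)=q$. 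The completeness plus the doubling property makes closed balls compact in $X$ and $Y$, which is what I will need to harvest limits.

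The core analytic step is to get \emph{uniform} control on the maps $f_n$ on bounded balls. Using the three points $p_n,x_n$ and any $z\in B_{d_n}(p_n,R)$ in Definition \ref{Pre Definition QS}, together with $f_n(p_n)=q_n$ and the normalization $C^{-1}\le \rho_n(q_n,f_n(x_n))\le C$, the quasisymmetry inequality gives
\[
\rho_n(q_n,f_n(z)) \le \eta\!\left(\tfrac{d_n(p_n,z)}{d_n(p_n,x_n)}\right)\rho_n(q_n,f_n(x_n)) \le C\,\eta(CR),
\]
so $f_n$ maps $B_{d_n}(p_n,R)$ into $B_{\rho_n}(q_n,R')$ for a radius $R'=R'(R,\eta,C)$ independent of $n$. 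The same three-point trick, comparing images of nearby points $z,z'$ using $x_n$ as the ``far'' reference and a weak form of the standard lemma (see \cite[Proposition 10.8]{He}), yields an equicontinuity estimate of the form $\rho_n(f_n(z),f_n(z'))\le \omega_R(d_n(z,z'))$, where $\omega_R\colon[0,\infty)\to[0,\infty)$ depends only on $R,\eta,C$ and satisfies $\omega_R(t)\to 0$ as $t\to 0$. An entirely symmetric argument applied to $f_n^{-1}$ (which is $\tilde\eta$-quasisymmetric with $\tilde\eta(t)=1/\eta^{-1}(1/t)$) gives the analogous bounds for $f_n^{-1}$.

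Now I build the limit. For each fixed $R$ the composite $\kappa_n^{R'}\circ f_n\circ (\iota_n^R)^{-1}$ is defined (modulo small ambiguities absorbed by $\varepsilon$) on a subset of $\overline{B_d(p,R)}$ that becomes dense as $\varepsilon\to 0$; by the uniform modulus $\omega_R$ and the compactness of $\overline{B_d(p,R)}$, a diagonal extraction on a countable dense set of $X$ produces a subsequence and a map $f\colon X\to Y$ with $f(p)=q$ such that these transported maps converge pointwise, and then uniformly on compact sets by equicontinuity. Passing the three-point inequality to the limit (using that $\eta$ is continuous, and that distances converge under $\iota_n^R,\kappa_n^{R'}$ with distortion $\to 0$) shows $f$ is $\eta$-quasisymmetric; in particular it is injective and continuous.

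The main obstacle I expect is the bookkeeping in showing that $f$ is a \emph{homeomorphism}, not just an injective continuous $\eta$-QS map. I would handle this by running the same extraction on $f_n^{-1}$ to obtain an $\tilde\eta$-quasisymmetric limit $g\colon Y\to X$ with $g(q)=p$. Since on a countable dense set the relations $g\circ f=\mathrm{id}$ and $f\circ g=\mathrm{id}$ hold in the limit (each $f_n\circ f_n^{-1}$ and $f_n^{-1}\circ f_n$ is the identity, and the almost-isometries intertwine correctly up to $\varepsilon$), continuity promotes these to identities on all of $X$ and $Y$. Thus $f$ is the desired $\eta$-quasisymmetric homeomorphism, completing the proof.
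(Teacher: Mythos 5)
Your proposal is correct, but note that the paper does not actually supply a proof of this lemma: it simply cites \cite[Lemma 8.22]{DS}, \cite[Corollary 10.30]{He}, and \cite[Lemma 2.4.7]{KL}, remarking that the hypotheses (the normalization $C^{-1}\le d_n(p_n,x_n)\le C$, $C^{-1}\le \rho_n(q_n,f_n(x_n))\le C$ together with uniform doubling) are precisely what is needed to trigger those references. What you have written is essentially a reconstruction of the standard argument that those references carry out: precompactness of uniformly doubling pointed spaces, uniform bounded-image control and equicontinuity of the maps $f_n$ from the three-point $\eta$-inequality plus the two-sided normalization, a diagonal extraction over a countable dense set combined with the almost-isometries from the GH convergence, passage of the $\eta$-inequality to the limit, and a symmetric extraction for $f_n^{-1}$ (which are $\tilde\eta$-quasisymmetric with $\tilde\eta(t)=1/\eta^{-1}(1/t)$) to obtain the inverse and hence the homeomorphism property. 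Two places where you would want to be a bit more explicit if writing this out in full: first, the diagonal extraction must be performed \emph{simultaneously} for $f_n$ and $f_n^{-1}$ along a single subsequence so that the resulting $f$ and $g$ are limits of a common sequence, which is what lets you conclude $g\circ f=\mathrm{id}$ and $f\circ g=\mathrm{id}$ from the identities at finite $n$; second, the ``transported maps'' $\kappa_n^{R'}\circ f_n\circ(\iota_n^R)^{-1}$ are not literally well-defined functions (the almost-isometries $\iota_n^R$ are neither continuous nor injective in general), so the extraction is cleaner if phrased as convergence of graphs in $X\times Y$, or equivalently via nets of values on a fixed countable dense set, with the equicontinuity estimate then promoting pointwise to uniform convergence on compacta. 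Both of these are routine and your sketch indicates awareness of them, so I would regard this as a correct, more self-contained alternative to the paper's citation.
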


This lemma follows from \cite[Lemma 8.22]{DS}, since our assumptions guarantee equicontinuity and uniform boundedness; see also \cite[Corollary 10.30]{He}. This lemma has also appeared in \cite[Lemma 2.4.7]{KL}.

\section{Flap-planes}\label{Section Flap-planes}
\subsection{Definition and general properties}\label{Section General}

\subsubsection{Constructing a flap-plane out of a single tripod}\label{Section Single Tripod}

A \textit{tripod} $G$ is the union of three line segments (also called edges) in the plane, which have a common endpoint, but otherwise they are disjoint; note that their length need not be the same and their angles could vary. The common endpoint of the edges is called the \textit{center} or \textit{central vertex} of the tripod. 

We cut the plane along a tripod $G$, and then glue two rectangles (or else a rectangular pillow) on each slit that arises from cutting an edge $e$ of $G$ with the identifications shown in Figure \ref{fig:Tripodflaps}, so that we obtain a space homeomorphic to the plane. We write $E\sim G$ to denote that a rectangle $E$ is glued to an edge of $G$. The width of each of these rectangles is equal to the the length of the corresponding edge $e$ of $G$, and the height is a prescribed constant $h>0$. Whenever two rectangles are glued along one of their edges, or a rectangle is glued to a slitted edge of $G$, then the gluing map is taken to be the ``identity", namely an isometry. We direct the reader to \cite[Chapters 3.1--3.2]{BBI} for details on gluing length spaces and constructing polyhedral spaces.

The resulting space $S=S(G)$ (the height $h$ is suppressed in the notation) is equipped with its internal metric $d$ and it is homeomorphic to the plane by construction, while the subset of $S$ consisting of the six rectangles attached to $G$ is homeomorphic to a closed Jordan region $\br \Omega$. Topologically, one can think of this construction as cutting the plane along $G$ and ``inserting" a Jordan region $\Omega$ in the plane, whose boundary consists of the six edges of the slitted $G$; see Figure \ref{fig:Tripodflaps}. We identify $S$ with the union of $\R^2\setminus G$ and the rectangles $E\sim G$, after proper identifications.

Let $P\colon S \to \R^2$ denote the ``orthogonal" projection map. This collapses each point of a rectangle $E$ that is glued on top of a slitted edge $e$ to the corresponding point of the edge $e\subset \R^2$. For instance, if a rectangle $E=[a,b]\times [c,d]$ is glued to the edge $e=[a,b]\times\{0\}\subset G$ along its side $[a,b]\times \{0\} \subset E$ with the identity map, then the projection of a point $x=(s,t)\in E$ is $P((s,t))=(s,0)\in e \subset \R^2$. Outside the rectangles $E\sim G$, the map $P$ is the ``identity". The projection of a point $x\in S$ to the plane will be denoted by $\tilde x=P(x)$. Some immediate properties of $P$ are the following:
\begin{enumerate}[\upshape(i)]
\item $P$ is $1$-Lipschitz, i.e.,
\begin{align*}
|\tilde x-\tilde y|\leq d(x,y)
\end{align*}
for all $x,y\in S$, where $|\tilde x-\tilde y|$ denotes the Euclidean distance between $\tilde x$ and $\tilde y$.
\item For all $x,y\in S$ we have
\begin{align*}
d(x,y) \leq |\tilde x -\tilde y| +6h.
\end{align*}
This is because the line segment $[\tilde x,\tilde y]\subset \R^2$ has a lift under $P$ that is a continuum $\gamma$ connecting $x$ and $y$, and whose length inside each glued rectangle is either $h$ or $0$. On the other hand, the number of rectangles that we glue is six (or three two-sided rectangular pillows).
\item Let $\tilde \gamma \subset \R^2$ be a polygonal path that connects $\tilde x$ and $\tilde y$. If $x\in P^{-1}(\tilde x)$ and $y\in P^{-1}(\tilde y) $, then $P^{-1}(\tilde \gamma)$ is a continuum that connects $x$ and $y$. In fact, $P^{-1}(\tilde \gamma)$ contains a polygonal path $\gamma$ with the same property. Here, $\gamma\subset S$ is a polygonal path in the sense that it is the union of finitely many isometric copies of compact intervals, whose endpoints are glued appropriately.
\end{enumerate}

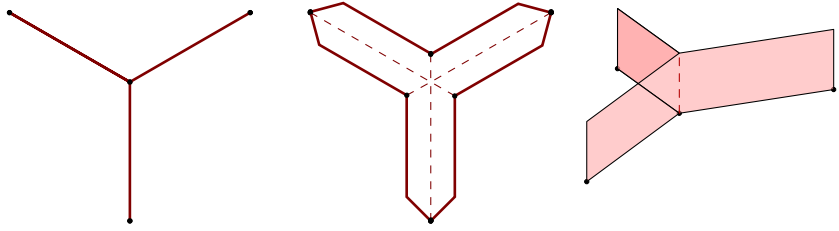
\begin{figure}
\centering
\begin{tikzpicture}

\begin{scope}[scale=.8]

%
%
%
%
%

\begin{scope}[xshift=0cm]
\draw [fill=black] (2.,3.4641016151377553) circle (1.0pt) node (A12){};
\draw [fill=black] (6.,3.4641016151377553) circle (1.0pt) node (A13){};
\draw [fill=black] (4.,2.3094010767585043) circle (1.0pt) node (B0){};
\draw [fill=black] (4.,0.) circle (1.0pt) node (A23){};

\draw[color=red!50!black, line width=1pt] (A13.center)--(B0.center)-- (A12.center)--(B0.center)--(A23.center);

\draw [fill=black] (2.,3.4641016151377553) circle (1.0pt) node (A12){};
\draw [fill=black] (6.,3.4641016151377553) circle (1.0pt) node (A13){};
\draw [fill=black] (4.,2.3094010767585043) circle (1.0pt) node (B0){};
\draw [fill=black] (4.,0.) circle (1.0pt) node (A23){};
\end{scope}

%
%
%
%
%
\begin{scope}[xshift=5cm]

\begin{scope}
\draw [fill=black] (2.,3.4641016151377553) circle (1.0pt) node (A12){};
\draw [fill=black] (6.,3.4641016151377553) circle (1.0pt) node (A13){};
\draw  (4.,2.3094010767585043) node (B0){};
\draw [fill=black] (4.,0.) circle (1.0pt) node (A23){};

\draw[dashed,color=red!50!black] (A13.center)--(B0.center);
\draw[dashed,color=red!50!black] (A12.center)--(B0.center);
\draw[dashed,color=red!50!black] (A23.center)--(B0.center);

\draw[color=red!50!black,line width=1pt] (A23.center)--(4.4,0.4)-- (4.4, 2.08);
\draw[color=red!50!black,line width=1pt]  (3.6,2.08)-- (3.6,0.4)--(A23.center);
\draw[dashed,color=red!50!black] (4.4,2.08)--(B0.center);

\draw [fill=black] (2.,3.4641016151377553) circle (1.0pt) node (A12){};
\draw [fill=black] (6.,3.4641016151377553) circle (1.0pt) node (A13){};
\draw [fill=black] (4.,0.) circle (1.0pt) node (A23){};

\end{scope}

\begin{scope}[shift= {(8,0)},rotate=120]
\draw [fill=black] (2.,3.4641016151377553) circle (1.0pt) node (A12){};
\draw [fill=black] (6.,3.4641016151377553) circle (1.0pt) node (A13){};
\draw [fill=black] (4.,0.) circle (1.0pt) node (A23){};


\draw[color=red!50!black,line width=1pt] (A23.center)--(4.4,0.4)-- (4.4, 2.08);
\draw[color=red!50!black,line width=1pt]  (3.6,2.08)-- (3.6,0.4)--(A23.center);
\draw[dashed,color=red!50!black] (4.4,2.08)--(B0.center);
\draw [fill=black] (4.4, 2.08) circle (1.0pt); 
\draw [fill=black] (3.6,2.08) circle (1.0pt); 

\draw [fill=black] (2.,3.4641016151377553) circle (1.0pt) node (A12){};
\draw [fill=black] (6.,3.4641016151377553) circle (1.0pt) node (A13){};
\draw [fill=black] (4.,0.) circle (1.0pt) node (A23){};

\end{scope}

\begin{scope}[shift= {(4,6.94)},rotate=-120]
\draw [fill=black] (2.,3.4641016151377553) circle (1.0pt) node (A12){};
\draw [fill=black] (6.,3.4641016151377553) circle (1.0pt) node (A13){};
\draw [fill=black] (4.,0.) circle (1.0pt) node (A23){};


\draw[color=red!50!black,line width=1pt] (A23.center)--(4.4,0.4)-- (4.4, 2.08);
\draw[color=red!50!black,line width=1pt]  (3.6,2.08)-- (3.6,0.4)--(A23.center);
\draw[dashed,color=red!50!black] (4.4,2.08)--(B0.center);
\draw [fill=black] (4.4, 2.08) circle (1.0pt); 
\draw [fill=black] (3.6,2.08) circle (1.0pt); 

\draw [fill=black] (2.,3.4641016151377553) circle (1.0pt) node (A12){};
\draw [fill=black] (6.,3.4641016151377553) circle (1.0pt) node (A13){};
\draw [fill=black] (4.,0.) circle (1.0pt) node (A23){};

\end{scope}

\end{scope}

%
%
%
%
%

\begin{scope}[yshift=1cm, xshift= 8cm,line cap=round,line join=round,>=triangle 45,x=1.0cm,y=1.0cm, z=0.5cm,rotate around y=10]

\draw [fill=black] (2.,0,3.4641016151377553) circle (1.0pt) node (A12){};
\draw [fill=black] (6.,0,3.4641016151377553) circle (1.0pt) node (A13){};
\draw [fill=black] (4.,0,2.3094010767585043) circle (1.0pt) node (B0){};
\draw [fill=black] (4.,0,0.) circle (1.0pt) node (A23){};

\node (B12) at ($(A12.center)+(0,1,0)$){};
\node (B13) at ($(A13.center)+(0,1,0)$){};
\node (B23) at ($(A23.center)+(0,1,0)$){};
\node (B00) at ($(B0.center)+(0,1,0)$){};
\draw[fill=red!40,fill opacity=0.5] (A12.center)--(B12.center)-- (B00.center)-- (B13.center)-- (A13.center)--(B0.center)-- cycle;
\draw[fill=red!40,fill opacity=0.5] (A12.center)--(B12.center)-- (B00.center)-- (B23.center)-- (A23.center)--(B0.center)--cycle;
\draw[dashed,red!70!black] (B0.center)--(B00.center);

\end{scope}

\end{scope}
\end{tikzpicture}
\caption{The tripod $G$ on the left is first slitted and its complement is homeomorphic to the complement of a closed Jordan region $\br \Omega$, as the one bounded by the solid red curve in the middle figure; note that the central vertex of $G$ corresponds to three points in $\partial \Omega$. Then we consider, as depicted, six topological rectangles in this Jordan region $\Omega$, each bounded by the dashed and solid lines. The middle figure shows the (topological) gluing pattern of the six Euclidean rectangles in the figure on the right, which are glued to the slitted edges of the tripod $G$, giving rise to the flap-plane. Although in the figure on the right we only see three rectangles, in fact each of them represents a rectangular pillow having two distinct faces and two slits, along which these two faces are glued to the slitted edge of the tripod and to the other two pillows.}\label{fig:Tripodflaps}
\end{figure}

\subsubsection{Constructing a flap-plane with multiple tripods}\label{Section:flap-plane:multipletripods}

Now, assume that we are given a sequence of tripods in the plane $G_i$, $i\in \N$, such that if the tripods $G_i$ and $G_j$ intersect for $i\neq j$, then $G_i\cap G_j$ is a singleton and more specifically it is a non-central vertex of one of $G_i$ or $G_j$. There are essentially three ways this can occur:
\begin{enumerate}[(i)]
\item a non-central vertex of $G_j$ lies on the central vertex of $G_i$, as in the left of Figure \ref{fig:TwoTripodflaps},
\item a non-central vertex of $G_j$ lies on an open edge of $G_i$,  
\item a non-central vertex of $G_j$ lies on a non-central vertex of $G_i$,
\end{enumerate}
or the above occur with the roles of $i$ and $j$ reversed. More generally, if any collection of planar tripods $\{G_i\}_{i\in I}$ has this property, we say that 
\begin{center}
{ $\{G_i\}_{i\in I}$ possesses \textit{property} $(G)$.}
\end{center}
See Figure \ref{fig:propertyG} for a family of tripods with this property.

\begin{figure}
\centering
\begin{tikzpicture}[scale=.9]

%
%
%
%

\begin{scope}[scale=.8, xshift=10cm, line cap=round,line join=round,>=triangle 45,x=1.0cm,y=1.0cm]
\clip(-4.263485552762403,-0.926291939988808) rectangle (13.261987941209707,7.513442620128558);

\draw [line width=1.8pt,color=red!60] (4.,2.309401076758504)-- (6.,3.4641016151377553);
\draw [line width=1.8pt,color=red!60] (4.,2.309401076758504)-- (2.,3.4641016151377553);
\draw [line width=1.8pt,color=red!60] (4.,2.309401076758504)-- (4.,0.);

\draw [line width=1.5pt,color=green!100] (5.666666666666666,1.347150628109127)-- (7.,1.7320508075688776);
\draw [line width=1.5pt,color=green!100] (6.,0.)-- (5.666666666666666,1.347150628109127);
\draw [line width=1.5pt,color=green!100] (5.666666666666666,1.347150628109127)-- (4.,2.309401076758504);

\draw [line width=1.5pt,color=green!40!black] (2.333333333333334,1.347150628109127)-- (4.,2.309401076758504);
\draw [line width=1.5pt,color=green!40!black] (2.333333333333334,1.347150628109127)-- (1.,1.7320508075688776);
\draw [line width=1.5pt,color=green!40!black] (2.333333333333334,1.347150628109127)-- (2.,0.);

\draw [line width=1.5pt,color=green!70!blue] (4.,4.233901974057256)-- (5.,5.196152422706633);
\draw [line width=1.5pt,color=green!70!blue] (4.,4.233901974057256)-- (3.,5.196152422706633);
\draw [line width=1.5pt,color=green!70!blue] (4.,4.233901974057256)-- (4.,2.309401076758504);

\draw [line width=1.pt,color=blue!50!gray] (6.722222222222221,0.7377253439645219)-- (7.5,0.8660254037844388);
\draw [line width=1.pt,color=blue!50!gray] (6.722222222222221,0.7377253439645219)-- (7.,0.);
\draw [line width=1.pt,color=blue!50!gray] (6.722222222222221,0.7377253439645219)-- (5.666666666666666,1.347150628109127);
\draw [line width=1.pt,color=blue!100] (5.579379866483421,2.1948559892660473)-- (6.5,2.5980762113533165);
\draw [line width=1.pt,color=blue!100] (5.579379866483421,2.1948559892660473)-- (5.666666666666666,1.347150628109127);
\draw [line width=1.pt,color=blue!100] (5.579379866483421,2.1948559892660473)-- (4.571472932783599,2.639341128335699);
\draw [line width=1.pt,color=blue!40] (4.888888888888889,0.9990544738012257)-- (4.,1.6500127932945503);
\draw [line width=1.pt,color=blue!40] (4.888888888888889,0.9990544738012257)-- (5.666666666666666,1.347150628109127);
\draw [line width=1.pt,color=blue!40] (4.888888888888889,0.9990544738012257)-- (5.,0.);

\draw [line width=1.pt,color=blue!40] (4.690490977594541,3.7344567071050507)-- (5.5,4.327875);
\draw [line width=1.pt,color=blue!40] (4.690490977594541,3.7344567071050507)-- (4.571472932783599,2.639341128335699);
\draw [line width=1.pt,color=blue!40] (4.690490977594541,3.7344567071050507)-- (4.,4.233901974057256);
\draw [line width=1.pt,color=blue!50!gray] (3.5,6.062177826491071)-- (4.,5.452752542346467);
\draw [line width=1.pt,color=blue!50!gray] (4.5,6.062177826491071)-- (4.,5.452752542346467);
\draw [line width=1.pt,color=blue!50!gray] (4.,5.452752542346467)-- (4.,4.233901974057256);
\draw [line width=1.pt,color=blue!100] (4.,4.233901974057256)-- (3.3107098047776997,3.7337634350792035);
\draw [line width=1.pt,color=blue!100] (3.3107098047776997,3.7337634350792035)-- (2.5,4.3301270189221945);
\draw [line width=1.pt,color=blue!100] (3.3107098047776997,3.7337634350792035)-- (3.4321294143330974,2.637261312258161);

\draw [line width=1.pt,color=blue!40] (3.4321294143330974,2.637261312258161)-- (2.4218209158888104,2.194162717240201);
\draw [line width=1.pt,color=blue!40] (2.4218209158888104,2.194162717240201)-- (1.5,2.5980762113533165);
\draw [line width=1.pt,color=blue!40] (2.4218209158888104,2.194162717240201)-- (2.333333333333334,1.347150628109127);
\draw [line width=1.pt,color=blue!100] (4.,1.6500127932945503)-- (3.1111111111111116,0.9990544738012257);
\draw [line width=1.pt,color=blue!100] (2.333333333333334,1.347150628109127)-- (3.1111111111111116,0.9990544738012257);
\draw [line width=1.pt,color=blue!100] (3.1111111111111116,0.9990544738012257)-- (3.,0.);
\draw [line width=1.pt,color=blue!50!gray] (1.277777777777778,0.7377253439645219)-- (0.5,0.8660254037844388);
\draw [line width=1.pt,color=blue!50!gray] (1.,0.)-- (1.277777777777778,0.7377253439645219);
\draw [line width=1.pt,color=blue!50!gray] (1.277777777777778,0.7377253439645219)-- (2.333333333333334,1.347150628109127);
\draw [fill=black] (2.,3.4641016151377553) circle (1pt);
\draw [fill=black] (4.,0.) circle (1pt);
\draw [fill=black] (6.,3.4641016151377553) circle (1pt);
\draw [fill=black] (4.,2.309401076758504) circle (1pt);
\draw [fill=black] (7.,1.7320508075688776) circle (1pt);
\draw [fill=black] (6.,0.) circle (1pt);
\draw [fill=black] (2.,0.) circle (1pt);
\draw [fill=black] (1.,1.7320508075688776) circle (1pt);
\draw [fill=black] (5.,5.196152422706633) circle (1pt);
\draw [fill=black] (3.,5.196152422706633) circle (1pt);
\draw [fill=black] (5.666666666666666,1.347150628109127) circle (1pt);
\draw [fill=black] (2.333333333333334,1.347150628109127) circle (1pt);
\draw [fill=black] (4.,4.233901974057256) circle (1pt);
\draw [fill=black] (7.5,0.8660254037844388) circle (1pt);
\draw [fill=black] (7.,0.) circle (1pt);
\draw [fill=black] (5.,0.) circle (1pt);
\draw [fill=black] (6.5,2.5980762113533165) circle (1pt);
\draw [fill=black] (4.571472932783599,2.639341128335699) circle (1pt);
\draw [fill=black] (4.,1.6500127932945503) circle (1pt);
\draw [fill=black] (6.722222222222221,0.7377253439645219) circle (1pt);
\draw [fill=black] (5.579379866483421,2.1948559892660473) circle (1pt);
\draw [fill=black] (4.888888888888889,0.9990544738012257) circle (1pt);
\draw [fill=black] (4.690490977594541,3.7344567071050507) circle (1pt);
\draw [fill=black] (5.5,4.327875) circle (1pt);
\draw [fill=black] (4.5,6.062177826491071) circle (1pt);
\draw [fill=black] (3.5,6.062177826491071) circle (1pt);
\draw [fill=black] (2.5,4.3301270189221945) circle (1pt);
\draw [fill=black] (1.5,2.5980762113533165) circle (1pt);
\draw [fill=black] (0.5,0.8660254037844388) circle (1pt);
\draw [fill=black] (1.,0.) circle (1pt);
\draw [fill=black] (3.,0.) circle (1pt);
\draw [fill=black] (3.4321294143330974,2.637261312258161) circle (1pt);
\draw [fill=black] (4.,5.452752542346467) circle (1pt);
\draw [fill=black] (3.3107098047776997,3.7337634350792035) circle (1pt);
\draw [fill=black] (2.4218209158888104,2.194162717240201) circle (1pt);
\draw [fill=black] (3.1111111111111116,0.9990544738012257) circle (1pt);
\draw [fill=black] (1.277777777777778,0.7377253439645219) circle (1pt);

\end{scope}

\end{tikzpicture}
\caption{A family of tripods possessing property (G). Different tripods meeting at a point are denoted by different colors.}
\label{fig:propertyG}
\end{figure}
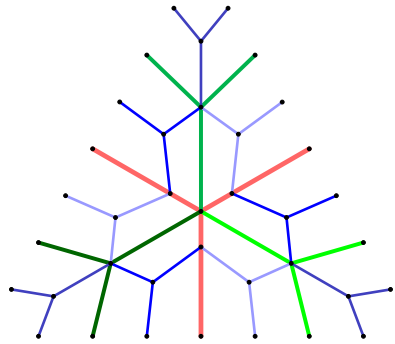

We wish to use the tripods $G_1,\dots,G_n$ in order to construct a flap-plane $S_n=S(G_1,\dots,G_n)$ that ``distinguishes" between the tripods $G_1,\dots,G_n$, in the sense that natural projections can be defined from $S_n$ onto flap-planes $S_k$, $k<n$. To achieve this, we do not glue the rectangles corresponding to the flap-planes of different tripods, even if the tripods intersect each other (the intersection can contain at most one point).
 
More precisely, we can construct a flap plane $S_2=S(G_1,G_2)$ as follows. If $G_2\cap G_1=\emptyset$, then we can construct the flap-plane $S_2$ by cutting the plane along $G_2$ and attaching rectangles of height $h_2$, as before. In this case, the rectangles corresponding to $G_1$ and $G_2$ do not intersect. If, however, $a\in G_2\cap G_1$, then we cut the plane along $G_2$ and attach rectangles to $G_2$, but the rectangles of $G_2$ containing $a$ are not attached to any of the rectangles corresponding to $G_1$, except at the point $a$; see Figure \ref{fig:TwoTripodflaps}. We also provide an alternative way to construct $S_2$, which shows that $S_2$ is homeomorphic to the plane. First consider the flap-plane $S_1=S(G_1)$ corresponding to $G_1$, with associated height $h_1$. Using the property $(G)$ of the tripods $G_1$ and $G_2$ and studying their relative positions we see that $S(G_1)$ contains a (unique) ``distinguished" homeomorphic copy $\widetilde G_2$ of $G_2$ that projects homeomorphically onto $G_2 \subset \R^2$ under the restriction of the projection $P_1\coloneqq P$; namely, if a non-central vertex of $G_2$ lies on $G_1$, then this copy is the closure in $S(G_1)$ of the preimages under $P_1$ of the open edges of $G_2$. We first glue the rectangles of height $h_2$ to $G_2$, disregarding the presence of $G_1$, and obtain a flap-plane $S(G_2)$ in this way. Then one uses the projection $P_1\colon S(G_1)\to \R^2\supset G_2$ in order to glue the rectangles attached to $G_2$ to the ``distinguished" homeomorphic copy of $G_2$ in $S(G_1)$. Topologically, we are just cutting the tripod $\widetilde G_2 \subset S(G_1)\simeq \R^2$ and we are ``inserting" a Jordan region, so the resulting space $S_2$ is also homeomorphic to $\R^2$.

\begin{figure}
\centering
\input{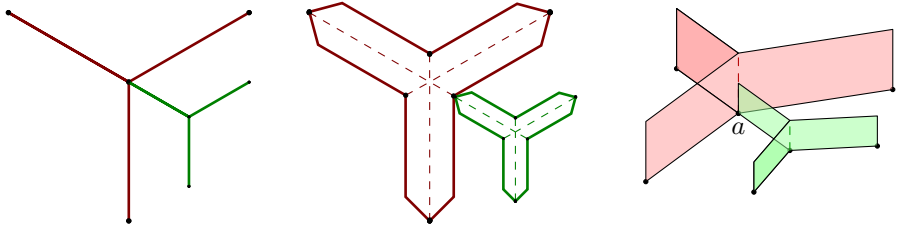}
\caption{The flap-plane (right) corresponding to two tripods $G_1,G_2$ (left) that intersect at a point. The rectangles attached to $G_1$ are not glued to any of the rectangles attached to $G_2$, except at a single point $a$; see the middle figure for the gluing pattern.}\label{fig:TwoTripodflaps}
\end{figure}

We identify $S(G_1,G_2)$ with the union of $\R^2\setminus (G_1\cup G_2)$ and the rectangles attached to $G_1$ and $G_2$, after proper identifications as explained above. One can define natural projections $P_2 \colon S_2\to S_1$ and $P_{2,0}\colon S_2\to \R^2$. Suppose that $E_2\sim G_2$ is a rectangle $E_2=[a,b]\times [c,d]$ and it is attached to the edge $e_2=[a,b]\times \{0\}\subset G_2$. Then for $(s,t)\in E_2$ with $(s,0)\in \R^2\setminus G_1$ we have $P_2((s,t))=(s,0)\in S_1$ (after identifying points of $S_1$ lying outside the rectangles $E\sim G_1$ with $\R^2\setminus G_1$). If $(s,0)\in G_1$, then $P_2((s,t))$ is the point of $S_1$ lying on the base of a rectangle $E_1\sim G_1$, which is in fact the ``intersection" point of $E_2$ and $E_1$ in $S_2$; this would be the point $a$ in Figure \ref{fig:TwoTripodflaps}. For points of $S_2$ that do not lie on rectangles $E\sim G_2$ the map $P_2$ is defined to be the ``identity". The map $P_{2,0}$ is defined to be the ``identity" on $\R^2\setminus (G_1\cup G_2)$ and on the rectangles attached to $G_1$ and $G_2$ it is defined to be the ``orthogonal" projection as above. One sees that $P_{2,0}=P_1\circ P_2$.

Using the property $(G)$ and induction on the number of tripods, for each $n\in \N$ we can define the flap-plane $S_n=S(G_1,\dots,G_n)$, with associated heights $h_1,\dots,h_n$ for the rectangles corresponding to $G_1,\dots,G_n$, respectively, and also define natural projections $P_n\colon S_n\to S_{n-1}$, $P_{n,0}\colon S_n\to \R^2$ so that:
\begin{itemize}
\item $S_n$ is homeomorphic to the plane and
\item if $G_0\subset \R^2$ is a tripod such that the family $\{G_0\}\cup \{G_i: i\in \{1,\dots,n\} \}$ possesses property $(G)$, then $S_n$ contains a ``distinguished" homeomorphic copy $\widetilde G_0$ of $G_0$ that projects homeomorphically onto $G_0 \subset \R^2$ under the restriction of the projection $P_{n,0}\colon S_n\to \R^2$.
\end{itemize}
We provide a sketch of the proof. Suppose that $S_{n-1}$ has the above two properties and let $G_0\subset \R^2$ be a tripod such that the family $\{G_0\}\cup \{G_i: i\in \{1,\dots,n\} \}$ possesses property $(G)$. Since both families $\{G_i : i\in \{1,\dots,n\}\}$ and $\{G_0\}\cup \{G_i :i\in \{1,\dots,n-1\}\}$ have property $(G)$, it follows that $S_{n-1}$ contains ``distinguished" homeomorphic copies $\widetilde G_n$ and $\widetilde G_0$ of $G_n$ and $G_0$, respectively. Since $S_{n-1}$ is homeomorphic to $\R^2$, we may think of $\widetilde G_{n}$ and $\widetilde G_0$ as (topological) tripods in the plane with non-straight edges. Property $(G)$ imposes the same restrictions on the relative positions of $\widetilde G_n$ and $\widetilde G_0$. For the construction of $S_n$, one cuts the plane along this ``distinguished" tripod $\widetilde G_n$ and attaches the rectangles, or else, ``inserts" a Jordan region whose boundary consists of the six edges of the slitted $\widetilde G_n$. This implies that $S_n$ is homeomorphic to the plane. Then the restrictions on the relative positions of $\widetilde G_n$ and $\widetilde G_0$ imply that $S_n$ contains the desired ``distinguished" homeomorphic copy of $\widetilde G_0$ and thus of $G_0$.

We now record some important properties of the flap-planes $S_n$. We endow the space $S_n$ with its internal path metric $d_n$. With this metric $(S_n,d_n)$ is homeomorphic to the plane, and it is also complete and locally compact. We also set $S_0=\R^2$. The notation $E\sim G_i$ is used to denote that a rectangle $E\subset S_n$ is glued to an edge of $G_i$, $i\in \{1,\dots,n\}$. The space $S_n$ is regarded as the union of $\R^2\setminus \bigcup_{i=1}^n G_i$ and the rectangles $E\sim G_i$, $i\in \{1,\dots,n\}$, with proper identifications. The metric $d_n$ has the property that is locally isometric to $d_{n-1}$, away from the rectangles $E\sim G_n$. Furthermore, $d_n$ restricted to a rectangle $E\sim G_i$ is locally isometric to the planar metric on the rectangle $E$.  For any two points $x,y\in S_n$ there exists a (not necessarily unique) geodesic $\gamma$ that connects them, with $d_n(x,y)= \length_{d_n}(\gamma)$; see \cite[Section 2.5.2]{BBI}. We will use the notation $B_n(x,r)$ for a ball in $S_n$, whenever it is more convenient than the notation $B_{d_n}(x,r)$.

Consider now the natural projections $P_{k}\colon S_k\to S_{k-1}$ for $k\geq 1$. These natural projections are defined similarly to $P_1$ and $P_2$ as above. We also define projections $P_{k,l}\colon S_k\to S_{l}$ by $P_{k,l}=P_k\circ \cdots\circ P_{l+1}$ for $0\leq l\leq k-1$ and let $P_{k,k}$ be the identity map on $S_k$. These projections satisfy the following:
\begin{enumerate}[($G$\upshape1)]
\item\label{G-Lipschitz} $P_k$ and $P_{k,l}$ are $1$-Lipschitz. 
\item\label{G-Lipschitz Upper inequality} For all $x,y\in S_k$ we have
\begin{align*}
d_k(x,y)\leq d_{k-1}(P_{k}(x),P_k(y)) +6h_k.
\end{align*}
\item\label{G-lift paths} If $\gamma^* \subset S_{k-1}$ is a polygonal path connecting $x^*$ and $y^*$, then $ P_k^{-1}(\gamma^*)$ contains a polygonal path that connects $P_k^{-1}(x^*)$ to $P_k^{-1}(y^*)$. 
\item\label{G-compatible} The projections are compatible, i.e., for $0\leq m\leq l\leq k$ we have
\begin{align*}
P_{k,m}= P_{l,m}\circ P_{k,l}. 
\end{align*}
\item\label{G-Geodesic} If $\gamma^*$ is a geodesic in $S_{k-1}$ that does not intersect the tripod $G_k$ (or rather the lift $P_{k-1,0}^{-1}(G_k)$), then $\gamma^*$ lifts isometrically under $P_k$ to a unique  geodesic $\gamma \subset S_k$. Conversely, if $\gamma \subset S_k$ is a geodesic that does not intersect the rectangles $E\sim G_k$, then its projection $\gamma^*$ is also a geodesic in $S_{k-1}$ and is isometric to $\gamma$.
\end{enumerate}
The latter property, implies the next important property:
\begin{enumerate}
\item[\mylabel{G-Ball}{($G$6)}] If a ball $B_{k-1}(x^*,r)\subset S_{k-1}$ does not intersect the tripod $G_k$, then $x^*$ has a unique preimage $x$ under $P_k$, and $P_k^{-1}(B_{k-1}(x^*,r))=B_k(x,r)$.  Conversely, if $B_k(x,r) \subset S_k$ does not intersect the rectangles $E\sim G_k$, then it projects onto a ball $B_{k-1}(x^*,r)$.
\end{enumerate}
In the same spirit, we have the following property for the Hausdorff $2$-measure $\mu_k$ on $S_k$:
\begin{enumerate}
\item[\mylabel{G-Measure}{($G$7)}] If $A\subset S_k$ is a Borel set, then 
\begin{align*}
\mu_k( A) \geq \mu_{k-1}(P_k(A)).
\end{align*}
Moreover, if the set $A$ does not intersect $G_k$, then we obtain equality.
\end{enumerate}
We finally record a property of the metric $d_k$ on the rectangles $E$ glued to the tripod $G_i$: 
\begin{enumerate}
\item[\mylabel{G-Rectangles}{($G$8)}] The metric $d_k$ restricted on a rectangle $E\sim G_i$, $i\in\{1,\dots,k\}$, is isometric, not only locally but also globally, to the Euclidean metric on $E$.
\end{enumerate}

\subsubsection{The inverse limit \texorpdfstring{$S_\infty$}{S(infinity)} of \texorpdfstring{$S_n$}{Sn}}\label{Section:flap-plane:inverse}

We consider the set $T_n=\bigcup_{i=1}^n G_i$ as a (possibly disconnected) planar graph, whose vertices are the vertices of $G_1,\dots,G_n$. Note that an edge $e$ of $G_j$ might be ``cut" into two (or more) edges, if a vertex of $G_i$, $i\neq j$, lies in the interior of the edge $e$. See Figure \ref{fig:propertyG} for such a graph $T_n$. We also consider the ``graph" $T_\infty =\bigcup_{i=1}^\infty G_i$. A point $x\in \R^2$ is a vertex of $T_\infty$ (by definition) if $x$ is a vertex of $T_n$ for all sufficiently large $n\in \N$. We suppose that $T_\infty$ has \textit{uniformly bounded degree}, that is, there exists $N_0>0$ such that the degree of each vertex $x$ of $T_n$ is bounded by $N_0$ for all $n\in \N$.

Under these assumptions, we can identify the \textit{inverse limit} $S_\infty$ of the sequence of spaces $S_n$. The set $S_\infty$ is the subset of $\prod_{n=0}^\infty S_n$ consisting of points $z=(z_0,z_1,\dots)$ with the property that $P_n(z_n)=z_{n-1}$ for all $n=1,2,\dots$. We will find a simpler representation for $S_\infty$. 

Note that if $z_0\in \R^2\setminus \bigcup_{i=1}^\infty G_i$ then $z_0=z_1=\dots$, recalling that we have identified points of $S_n$ not lying on any tripod with $\R^2\setminus \bigcup_{i=1}^n G_i$. Hence, in this case we may identify the point $z\in S_\infty$ with $z_0\in \R^2\setminus \bigcup_{i=1}^\infty G_i$ and say that $z\in S_n$ for all $n=0,1,\dots$. 

Now, if $z_0$ lies on a tripod $G_i$ and is not a vertex of $T_\infty$, then $z_i\in S_i$ lies on a rectangle attached to $G_i$ and $z_k$ has a unique preimage under $P_{k+1}$ for all $k\geq i$, since no rectangle is attached to $z_i$ in order to obtain the spaces $S_k$, $k> i$. Hence, we may write that $z_i\in S_k$ for all $k\geq i$ and we may identify the point $z\in S_\infty$ with $z_i$, thus saying that $z$ lies in $S_n$ for all sufficiently large $n$.   

Finally, suppose that $z_0$ lies on a tripod $G_i$ and it is a vertex of $T_\infty$. Since $T_\infty$ has uniformly bounded degree, it follows that there are finitely many tripods containing $z_0$, and we may assume that $i$ is the largest index with $z_0\in G_i$. It follows that $z_i\in S_i$, and as in the previous case, no rectangle is attached to $z_i$ in order to obtain the spaces $S_k$, $k>i$. Hence, we may write again that $z_i\in S_k$ for all $k\geq i$ and we may identify the point $z\in S_\infty$ with $z_i$, thus saying that $z$ lies in $S_n$ for all sufficiently large $n$.   

Summarizing, if $z\in S_\infty$, then $z$ lies in (or rather projects to) $S_n$ for all sufficiently large $n$. With this in mind, we may also identify $S_\infty$ with the union of $\R^n\setminus \bigcup_{i=1}^\infty G_i$ and the rectangles attached to each $G_i$, after proper identifications.

\begin{remark}
If we had not assumed that the degree of $T_\infty$ is uniformly bounded, then we would not be able to represent the inverse limit $S_\infty$ as above, since it could contain points that do not lie on $\R^2\setminus \bigcup_{i=1}^\infty G_i$ or in any rectangle attached to the tripods.
\end{remark}

\begin{prop}\label{General-Completeness}
If the heights $\{h_n\}_{n\in \N}$ of the rectangles attached to the tripods are chosen so that
\begin{align*}
\sum_{i=1}^\infty h_i<\infty
\end{align*}
then the inverse limit $(S_\infty,d_\infty)$ of $(S_n,d_n)$  is a complete metric space.
\end{prop}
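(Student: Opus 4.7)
Since $S_\infty$ is introduced as the inverse limit of the tower $(S_n,d_n)$ with $1$-Lipschitz bonding maps $P_n$, the natural metric on $S_\infty$ is
\[
d_\infty(z,w) \;=\; \lim_{n\to\infty} d_n(z_n,w_n),
\]
where $z=(z_0,z_1,\dots)$ and $w=(w_0,w_1,\dots)$ are compatible sequences. My first step is to check this is well defined. Setting $a_n=d_n(z_n,w_n)$, property (\ref{G-Lipschitz}) gives $a_{n-1}=d_{n-1}(P_n(z_n),P_n(w_n))\le a_n$, so $(a_n)$ is non-decreasing; property (\ref{G-Lipschitz Upper inequality}) gives $a_n\le a_{n-1}+6h_n$, so by induction
\[
a_n \;\le\; a_0 + 6\sum_{k=1}^{n}h_k \;\le\; d_0(z_0,w_0)+6\sum_{k=1}^\infty h_k,
\]
which is finite by hypothesis. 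Hence the limit exists. Symmetry and the triangle inequality for $d_\infty$ are inherited from each $d_n$, and non-degeneracy follows from the monotonicity: $d_\infty(z,w)=0$ forces $d_n(z_n,w_n)=0$ for every $n$, whence $z_n=w_n$ for every $n$.

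Second, I would prove completeness in three steps by lifting Cauchy behaviour level-by-level. Let $\{w^{(k)}\}_{k\in\N}$ be Cauchy in $(S_\infty,d_\infty)$. For any fixed $n$, monotonicity of $(a_j)$ gives $d_n(w_n^{(k)},w_n^{(l)})\le d_\infty(w^{(k)},w^{(l)})$, so $\{w_n^{(k)}\}_k$ is Cauchy in $(S_n,d_n)$. Since each $(S_n,d_n)$ is complete (as noted in Section~\ref{Section:flap-plane:multipletripods}), there exists $w_n^*\in S_n$ with $w_n^{(k)}\to w_n^*$. Using that $P_n$ is $1$-Lipschitz, hence continuous,
\[
P_n(w_n^*) \;=\; \lim_{k\to\infty} P_n(w_n^{(k)}) \;=\; \lim_{k\to\infty} w_{n-1}^{(k)} \;=\; w_{n-1}^*,
\]
so $w^*:=(w_n^*)_{n\ge 0}$ satisfies the compatibility relation and defines an element of $S_\infty$.

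Third, I would verify $w^{(k)}\to w^*$ in $d_\infty$. Given $\varepsilon>0$, choose $N$ with $d_\infty(w^{(k)},w^{(l)})<\varepsilon$ for all $k,l\ge N$. For each $n$ and each $k\ge N$, sending $l\to\infty$ in $d_n(w_n^{(k)},w_n^{(l)})\le d_\infty(w^{(k)},w^{(l)})<\varepsilon$ yields $d_n(w_n^{(k)},w_n^*)\le\varepsilon$. Since this bound is uniform in $n$, passing to the limit gives $d_\infty(w^{(k)},w^*)\le\varepsilon$, proving convergence.

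The key point—and the only place the summability of $\{h_n\}$ is used—is in the first step, where it prevents the sequence $a_n=d_n(z_n,w_n)$ from blowing up and ensures $d_\infty$ takes finite values; the rest is a standard inverse limit argument exploiting completeness of each stage and $1$-Lipschitz continuity of the bonds. I do not anticipate a genuine obstacle, but I would want to be careful that the statement implicitly uses the inverse-limit definition of $d_\infty$ and not an \emph{a priori} intrinsic path metric on the geometric realization described at the end of Section~\ref{Section:flap-plane:inverse}; if the latter is intended, one would additionally need to identify the two metrics, which can be done by lifting polygonal paths level-by-level via property (\ref{G-lift paths}) and using (\ref{G-Lipschitz Upper inequality}) together with $\sum h_i<\infty$ to control the added length.
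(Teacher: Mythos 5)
Your proof is correct, but it takes a genuinely different route from the paper's. You run the standard inverse-limit argument: project the $d_\infty$-Cauchy sequence to each stage $S_n$, observe that monotonicity of $d_n(z_n,w_n)$ makes each coordinate sequence $d_n$-Cauchy, invoke completeness of $(S_n,d_n)$ (which the paper asserts in Section~\ref{Section:flap-plane:multipletripods}), and assemble the coordinate limits into a compatible point of the product using $1$-Lipschitz continuity of the bonding maps. The paper, by contrast, argues directly inside the geometric realization of $S_\infty$: it takes a Cauchy sequence and does a case analysis on whether the points fall infinitely often into a single rectangle, into $\R^2\setminus\bigcup_i G_i$, or into infinitely many distinct rectangles, treating limit points on tripods (and at vertices of $T_\infty$) with separate geometric arguments that lean on the bounded-degree hypothesis, property~\ref{G-Geodesic}, and the empty interior of $\bigcup_i G_i$. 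Your version is shorter and more structural, and it has the advantage of not needing the simpler geometric description of $S_\infty$ at all—you work directly in the inverse limit as a subset of $\prod_n S_n$, so the bounded-degree assumption plays no role in the completeness argument itself. The trade-off is that you treat completeness of each $S_n$ as a black box, whereas the paper's proof is self-contained (and indeed the paper later cites its own proof of this proposition as justifying the completeness of $S_n$). Your closing caveat about an intrinsic path metric is not an issue: the paper's proof opens by defining $d_\infty$ precisely as the limit of the $d_n$, which is what you assumed.
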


\begin{proof}
The metric of $d_\infty$ is defined as the limit of $d_n$. To ensure that this exists, we fix $x,y\in S_\infty$, so $x,y\in S_{n}$ for all sufficiently large $n$, and note that 
\begin{align*}
d_{n-1}(x,y)=d_{n-1}( P_n(x),P_n(y))\leq d_n( x,y)
\end{align*}
if $n$ is sufficiently large, by the Lipschitz property \ref{G-Lipschitz} of the projections; here we have considered the identifications of $x,y$ with $P_n(x),P_n(y)$, respectively, for large $n$. Hence, the sequence $d_n(x,y)$ converges, possibly to $\infty$. To exclude this possibility, we apply repeatedly property \ref{G-Lipschitz Upper inequality} together with the compatibility \ref{G-compatible} and obtain
\begin{align*}
d_n(x,y) \leq d_{n-1}(P_n(x),P_n(y))+ 6h_n \leq |\tilde x-\tilde y|+ 6\sum_{i=1}^n h_i,
\end{align*}
where $\tilde x,\tilde y$ are the projections of $x,y$, respectively, to the plane. By assumption, the last sum is convergent, so our claim is proved.

Now, we show that $(S_\infty,d_\infty)$ is complete. Let $\{x_k\}_{k\in \N}$ be a Cauchy sequence in $S_\infty$. If $x_k$ lies infinitely often in a given rectangle $E \subset S_i$, then $x_k$ has a convergent subsequence, since the metric $d_\infty$, restricted on $E$, is isometric to the Euclidean metric, by the limiting version of \ref{G-Rectangles}. Hence, we may assume that $x_k$ either lies in $\R^2 \setminus \bigcup_{i=1}^\infty G_i$ infinitely often, or it has a subsequence, still denoted by $x_k$, that lies in distinct rectangles $E_k\subset S_{i_k}$. 

In the first case, $x_k$ (or rather its projection to the plane) converges in the Euclidean metric to a point $x\in \R^2$, because the projection of $S_\infty$ to the plane is $1$-Lipschitz.  If $x\in \R^2\setminus \bigcup_{i=1}^\infty G_i$, then the line segment $[x_k,x]\subset \R^2$ does not intersect any given tripod $G_i$ for sufficiently large $k$. Hence, for each $i_0\in \N$ there exists $k_0\in \N$ such that for $k\geq k_0$ the segment $[x_k,x]$ does not intersect $G_1,\dots,G_{i_0}$. For $n\geq i_0+1$, by a repeated application of \ref{G-Lipschitz Upper inequality}, we have  
\begin{align*}
d_{n}(x,x_k) &\leq d_{n-1}(x,x_k) + 6h_n\\
&\leq d_{i_0}(x,x_k) +6\sum_{i=i_0+1}^n h_i.
\end{align*} 
Now, applying repeatedly \ref{G-Geodesic}, we note that $d_{i_0}(x,x_k)= |x-x_k|$, which is the length of the geodesic $[x_k,x]$ in the plane. Hence,
\begin{align*}
d_n(x,x_k) \leq |x-x_k| +6\sum_{i=i_0+1}^\infty h_i,
\end{align*}
and passing to the limit we have
\begin{align*}
d_\infty(x,x_k) \leq |x-x_k| +6\sum_{i=i_0+1}^\infty h_i
\end{align*}
for all $k\geq k_0$. This implies that $d_\infty (x,x_k)\to 0$, as desired. 

On the other hand, if $x$ lies on a tripod, and $x$ is  a vertex of $T_\infty$, then there exists $N_x\in \N$ such that the degree of $x$ in $T_n$ is equal to $N_x\leq N_0$ for all sufficiently large $n\in \N$; recall that $N_0$ is a uniform bound on the degree of the graphs $T_n$. For a small $r>0$ the edges of tripods $G_n$ that meet at $x$ split the ball $B(x,r)\subset \R^2$ into $N_x$ components. Each of these components contains $x$ in its boundary and is a circular sector  if $r$ is sufficiently small, so in particular, it is convex.  One of these components, say $V$, must contain infinitely many points $x_k$, and thus all, after passing to a subsequence. We let $G_{n_0}$ be a tripod such that $x\in G_{n_0}$, and also $G_{n_0}$ contains one of the edges that bounds the sector $V$.  We let $x_0$ be the point of $S_{n_0}$ that projects to $x$, is accessible from $V$, and  lies on  the boundary of a rectangle $E\sim G_{n_0}$. We claim that $x_k \to x_0$ in $d_\infty$. We look at the open segments $(x_k,x)$ and note that they do not intersect $G_{n_0}$ or any other tripod $G_i$ infinitely often. Arguing as before, we have that for all $i_0\in \N$ there exists $k_0\in \N$ such that
\begin{align*}
d_\infty( x_0,x_k) \leq |x-x_k| + 6\sum_{i=i_0+1}^\infty h_i,
\end{align*}
for $k\geq k_0$. This shows convergence. The case that $x\in G_{n_0}$ but $x$ is not a vertex of $T_\infty$ is treated in the same way, and here $B(x,r) \setminus G_{n_0}$ contains only two components, provided that $r$ is small.

The last case is to assume that $x_k\in E_k\sim G_{i_k}$, where $E_k$ are rectangles of height $h_{i_k}$, and $h_{i_k} \to 0$, since the rectangles $E_k$ are distinct. In this case, we can find a point $y_k$ in the ``base" of the rectangle $E_k$ that is glued to $\R^2$ such that 
\begin{align*}
d_\infty(x_k,y_k)\leq h_{i_k},
\end{align*}
by \ref{G-Rectangles}. Hence, $y_k$ is also Cauchy in $d_\infty$, and it suffices to show that it converges, because $h_{i_k}\to 0$. Arbitrarily close to each $y_k$ we can find a point $z_k \in \R^2 \setminus \bigcup_{i=1}^\infty G_i$, with $d_\infty(y_k,z_k) \leq 1/k$. This is justified as in the previous paragraph and using the observation that $\bigcup_{i=1}^\infty G_i$ has empty interior (e.g., using the Baire category theorem). Now, the convergence of $z_k$ is obtained by the previous case.
\end{proof}

\begin{remark}\label{General-Metrics equivalent}
The preceding proof, together with the Lipschitz property \ref{G-Lipschitz}, show that if we restrict the metric $d_\infty$ of $S_\infty$ to $\R^2\setminus \bigcup_{i=1}^\infty G_i$, then it is topologically equivalent to the Euclidean metric, in the sense that  a sequence $x_k \in \R^2\setminus \bigcup_{i=1}^\infty G_i$ converges to a point $x_0\in S_\infty$ if and only if the projections $x_k$ converge to the projection of $x_0$ in the Euclidean metric.
\end{remark}

\begin{remark}\label{General-Properties S infinity}
There exist natural projections from $S_\infty$ onto $S_n$ and $\R^2$ for all $n\in \N$. Analogs of properties \ref{G-Lipschitz}, \ref{G-compatible} and \ref{G-Measure} also hold for these projections. Moreover, property \ref{G-Rectangles} is true for $S_\infty$, so the metric $d_\infty$ restricted on a rectangle $E\sim G_i$ for $i\in \N$ is isometric to the Euclidean metric on $E$. 
\end{remark}

\begin{prop}\label{General-Gromov Hausdorff}
If the heights $\{h_n\}_{n\in \N}$ are chosen as in Proposition \ref{General-Completeness}, then for each $p\in S_\infty$ there exist points $p_n\in S_n$ such that the sequence of spaces $(S_n,d_n,p_n)$ converges to $(S_\infty,d_\infty,p)$ in the pointed Gromov-Hausdorff sense of Definition \ref{Pre Definition pointed GH}.
\end{prop}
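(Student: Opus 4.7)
The plan is to set $p_n \coloneqq P_{\infty,n}(p)$, where $P_{\infty,n}\colon S_\infty\to S_n$ is the natural projection from Remark \ref{General-Properties S infinity}. The entire argument rests on a single two-sided comparison: for all $x,y\in S_\infty$,
\[
d_n(P_{\infty,n}(x),P_{\infty,n}(y))\ \leq\ d_\infty(x,y)\ \leq\ d_n(P_{\infty,n}(x),P_{\infty,n}(y))+6\sum_{i>n}h_i.
\]
The lower inequality is the Lipschitz property \ref{G-Lipschitz} applied to $P_{\infty,n}$. For the upper inequality, I would iterate property \ref{G-Lipschitz Upper inequality} through $S_k\to S_{k-1}\to\dots\to S_n$ to obtain $d_k(x,y)\leq d_n(P_{k,n}(x),P_{k,n}(y))+6\sum_{i=n+1}^k h_i$ for any $k$ large enough that $x,y$ are represented in $S_k$, and then let $k\to\infty$. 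The passage to the limit uses the convergence $d_k(x,y)\to d_\infty(x,y)$ already established in the proof of Proposition \ref{General-Completeness}, together with the stabilization $P_{k,n}(x)=P_{\infty,n}(x)$ for all sufficiently large $k$.

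Fix $r>0$ and $\varepsilon>0$, and choose $n_0$ so large that $6\sum_{i>n_0}h_i<\varepsilon$. For $n\geq n_0$ I would define $f_n\colon B_{d_n}(p_n,r)\to S_\infty$ by selecting, for each $x\in B_{d_n}(p_n,r)$, an arbitrary preimage $f_n(x)\in P_{\infty,n}^{-1}(x)$, with the distinguished choice $f_n(p_n)=p$ (which is legitimate since $P_{\infty,n}(p)=p_n$). No continuity is required of $f_n$, so any such selection suffices. Condition (i) of Definition \ref{Pre Definition pointed GH} holds by construction, and condition (ii) follows immediately from the two-sided estimate: since $P_{\infty,n}(f_n(x))=x$ and $P_{\infty,n}(f_n(y))=y$, we obtain
\[
0\ \leq\ d_\infty(f_n(x),f_n(y))-d_n(x,y)\ \leq\ 6\sum_{i>n}h_i\ <\ \varepsilon,
\]
so in particular $\dis(f_n)<\varepsilon$.

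For condition (iii), I would take any $q\in B_{d_\infty}(p,r-\varepsilon)$ and set $q_n\coloneqq P_{\infty,n}(q)$. The Lipschitz property gives $d_n(p_n,q_n)\leq d_\infty(p,q)<r-\varepsilon<r$, so $q_n\in B_{d_n}(p_n,r)$ and $f_n(q_n)$ is defined. Both $q$ and $f_n(q_n)$ lie in the fiber $P_{\infty,n}^{-1}(q_n)$, so the two-sided estimate, specialized to $P_{\infty,n}(q)=P_{\infty,n}(f_n(q_n))=q_n$, yields $d_\infty(q,f_n(q_n))\leq 6\sum_{i>n}h_i<\varepsilon$; hence $q$ lies in the $\varepsilon$-neighborhood of $f_n(B_{d_n}(p_n,r))$, as required. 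The only step requiring genuine care is the derivation of the two-sided comparison and, in particular, its stability under the limit $k\to\infty$; once that is in hand, the remaining verifications are essentially bookkeeping.
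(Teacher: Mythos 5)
Your proposal is correct and follows essentially the same route as the paper: the paper also sets $p_n$ to be the representative of $p$ in $S_n$, defines the comparison map $f$ as an arbitrary right inverse of the projection $S_\infty\to S_n$, and derives the same two-sided estimate from \ref{G-Lipschitz} and iterated \ref{G-Lipschitz Upper inequality} to verify conditions (2) and (3) of Definition \ref{Pre Definition pointed GH}. The only cosmetic difference is that you isolate the two-sided comparison as a standalone display and emphasize its stability under $k\to\infty$, whereas the paper derives it inline; the mathematical content is identical.
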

\begin{proof}
Let $p\in S_\infty$. Then, by the representation of $S_\infty$ that we gave, $p$ lies in $S_n$ for all sufficiently large $n$, so we set $p_n=p\in S_n$ for, say, $n\geq n_0$. We fix $n\geq n_0$ and $r,\varepsilon>0$  and define $f\colon S_n\to S_\infty$ to be any right inverse of the projection from $S_\infty$ onto $S_n$, which is surjective. Clearly, $f(p_n)=p$. 

Let $\tilde x,\tilde y\in S_n$ be arbitrary. Also, consider arbitrary lifts $x,y\in S_\infty$ of $\tilde x,\tilde y$, respectively. Then $x,y\in S_m$ for sufficiently large $m\in \N$. By \ref{G-Lipschitz} and \ref{G-Lipschitz Upper inequality}, for sufficiently large $m\geq n$ we have
\begin{align*}
0\leq d_{m}(x,y)-d_n(\tilde x,\tilde y) \leq 6\sum_{i=n+1}^\infty  h_i.
\end{align*}
Letting $m\to\infty$ yields
\begin{align}\label{General-Gromov Hausdorff-ineq}
0\leq d_{\infty}(x,y)-d_n(\tilde x,\tilde y) \leq 6\sum_{i=n+1}^\infty  h_i.
\end{align}
Since $\sum_{i=n+1}^\infty h_i \to 0$ as $n\to\infty$, this shows that $\dis(f)$ can be made less than $\varepsilon$, if $n$ is sufficiently large (independent of $\tilde x,\tilde y$). Hence, condition (2) of Definition \ref{Pre Definition pointed GH} holds.

Finally, we check condition (3). If $d_\infty(p,x)<r-\varepsilon$, then the projection $\tilde x$ of $ x$ to $S_n$ satisfies $d_n(p_n,\tilde x) \leq d_\infty(p,x)<r-\varepsilon<r$, for $n\geq n_0$; see property \ref{G-Lipschitz} and Remark \ref{General-Properties S infinity}. This implies that $\tilde x\in B_{d_n}(p_n,r)$. On the other hand, $f(\tilde x)$ is a lift of $\tilde x$ and the points $f(\tilde x)$ and $x$ project to the same point of $S_n$. Thus, by \eqref{General-Gromov Hausdorff-ineq} we obtain
\begin{align*}
d_\infty( f(\tilde x), x) \leq  6\sum_{i=n+1}^\infty  h_i.
\end{align*}
If $n$ is sufficiently large (independent of $x$), then the above is less than $\varepsilon$, so $x$ is contained in the $\varepsilon$-neighborhood of $f(B_{d_n}(p_n,r))$, as desired.
\end{proof}

\begin{remark}\label{General-Graph dependence}
The preceding results, Proposition \ref{General-Completeness} and Proposition \ref{General-Gromov Hausdorff}, do not depend on the geometry of the tripods $G_n$. Hence, they also hold if the tripods $G_n$, $n\in \N$, are not a priori given to us, but they are constructed inductively, based on the previous tripods. In fact, once we have the tripods $G_1,\dots,G_{n-1}$ and the corresponding heights $h_1,\dots,h_{n-1}$, then the tripod $G_n$ can depend both on $G_1,\dots,G_{n-1}$ \textbf{and} the heights $h_1,\dots,h_{n-1}$! In particular, in this more general setting, one can still obtain a limiting space $(S_\infty, d_\infty)$, as long as $\sum_{i=1}^\infty h_i<\infty$, and the degree of $T_\infty$ is uniformly bounded. A different way to think of that is as follows. Suppose we have a ``machine" or an algorithm that produces the tripod $G_n$, based on $G_1,\dots,G_{n-1}$ and $h_1,\dots,h_{n-1}$. Then we can choose $h_n$ to be sufficiently small and feed this back into the ``machine" to obtain the next tripod $G_{n+1}$. The point of this remark will be evident in Section \ref{Section QC homeo}, where a sequence of tripods is constructed inductively and is used to build a flap-plane.
\end{remark}

\subsubsection{Quasisymmetric embedding of \texorpdfstring{$S_\infty$}{S(infinity)} into the plane}
The main result in Section \ref{Section Flap-planes} is the next theorem. 

\begin{theorem}\label{General-QS Embedding}
If the heights $\{h_n\}_{n\in \N}$ are chosen to be sufficiently small (depending on the tripods $G_n$), then there exists a quasisymmetry from $(S_\infty,d_\infty)$ onto $\R^2$ with the Euclidean metric. Furthermore, $(S_\infty,d_\infty)$ is Ahlfors $2$-regular.
\end{theorem}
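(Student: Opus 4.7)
The plan is to apply the Bonk-Kleiner Theorem to a suitable compactification of $(S_\infty,d_\infty)$. This requires showing that $(S_\infty,d_\infty)$ is Ahlfors $2$-regular and linearly locally connected (LLC). The strategy is to first establish these two properties \emph{uniformly} for the finite approximations $(S_n,d_n)$, and then transfer them to $S_\infty$ via Proposition \ref{General-Gromov Hausdorff} together with Lemma \ref{Pre Convergence Ahlfors regular}. The smallness hypothesis on the heights $h_i$ will enter crucially in two ways: to control the excess area introduced by the flaps, and, via (G\ref{G-Lipschitz Upper inequality}) iterated, to keep $d_n$-distances close to Euclidean distances after projection, so that the uniform constants do not depend on $n$.

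For uniform Ahlfors $2$-regularity of $S_n$, the upper bound on $\mathcal H^2_{d_n}(B_n(x,r))$ would follow by projecting the ball under the $1$-Lipschitz map $P_{n,0}$ to a planar set of diameter at most $2r$, which has area $\lesssim r^2$, and then adding the contributions of the finitely many flaps that intersect $B_n(x,r)$. Here the uniformly bounded degree of $T_\infty$, together with property (G\ref{G-Rectangles}) and the smallness of the $h_i$, allows one to bound the flap contribution by $\lesssim r^2$ as well. For the lower bound I would locate inside $B_n(x,r)$ either a Euclidean disk in $\R^2\setminus\bigcup_i G_i$ or a planar disk inside some flap $E\sim G_i$, of area $\gtrsim r^2$, lifting planar balls through the projections by means of (G\ref{G-Ball}) when possible.

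For the LLC property, the main tool is the path-lifting property (G\ref{G-lift paths}) combined with the LLC of $\R^2$. Given two points to connect inside a ball (for LLC-1) or outside a ball (for LLC-2), I would project them to the plane, build an appropriate Euclidean connecting curve, and lift the curve back to $S_n$. Iterating (G\ref{G-Lipschitz Upper inequality}) shows that a lifted polygonal path is at most $6\sum_i h_i$ longer in $d_n$ than its Euclidean projection, so for small enough heights the lift remains inside a comparably-sized ball. The main obstacle I expect is LLC-2 when the center $x_0$ lies close to or on a flap, since one must route the detour in the plane around the footprint of the flap without inflating the Euclidean distance, and nearby flaps may require a careful inductive argument to avoid accumulated distortion.

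Once uniform Ahlfors $2$-regularity and uniform LLC of the $S_n$ are in hand, Lemma \ref{Pre Convergence Ahlfors regular} together with Proposition \ref{General-Gromov Hausdorff} yields Ahlfors $2$-regularity of $(S_\infty,d_\infty)$, and a standard Gromov-Hausdorff stability argument (using completeness from Proposition \ref{General-Completeness}) yields LLC. Finally, I would one-point compactify $S_\infty$ to obtain a topological sphere, verify that the resulting metric sphere still satisfies the hypotheses of Bonk-Kleiner, and apply \cite{BK} to obtain a quasisymmetric homeomorphism onto $S^2$. Removing the distinguished image of the point at infinity, and post-composing with a M\"obius inversion if necessary, produces the desired quasisymmetry from $(S_\infty,d_\infty)$ onto Euclidean $\R^2$.
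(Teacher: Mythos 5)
Your overall plan---establish Ahlfors $2$-regularity and LLC for the finite flap-planes $(S_n,d_n)$ with uniform constants, then pass to the limit---matches the paper's strategy up to the final step, but there you take a genuinely different route from the paper, and it is worth comparing the two. The paper does not transfer the LLC property to $S_\infty$ at all: instead it applies Wildrick's plane version of the Bonk-Kleiner theorem (Theorem \ref{General-Wildrick}) to each $S_n$ to obtain uniformly $\eta$-quasisymmetric maps $f_n\colon S_n\to\R^2$, normalizes these by M\"obius transformations, and then invokes the ``mapping package'' convergence result (Lemma \ref{Pre Mapping package}) to extract a limiting $\eta$-quasisymmetry $S_\infty\to\R^2$ along a subsequence. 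This has two advantages over your approach. First, it produces the homeomorphism $S_\infty\cong\R^2$ as an output of Lemma \ref{Pre Mapping package}, rather than requiring it as a hypothesis; in your approach you would need to show \emph{a priori} that $S_\infty$ (and hence its compactification) is a topological plane (respectively, sphere), which is not established anywhere and is not obvious just from the inverse-limit construction. Second, the paper never needs to argue that LLC is stable under pointed Gromov-Hausdorff convergence. You call this ``a standard stability argument,'' but there is no lemma in the paper to cite for it, and proving it carefully (continua in the limit as Hausdorff limits of continua in the $S_n$, etc.) is a real piece of work that the paper's route avoids entirely.

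A more serious concrete gap is in the final compactification step. You cannot simply ``one-point compactify'' $(S_\infty,d_\infty)$ and expect the hypotheses of Theorem \ref{General-Bonk-Kleiner} to survive: if you keep the metric $d_\infty$ and adjoin a point, balls around the new point do not satisfy Ahlfors $2$-regularity (and indeed every metric realizing the compact topology must be bounded, so you would have to change the metric). What you actually need is a sphericalized metric such as $\hat d(x,y)=d_\infty(x,y)/\bigl((1+d_\infty(x,p))(1+d_\infty(y,p))\bigr)$, and then you must verify that the sphericalized space is Ahlfors $2$-regular, LLC, and quasisymmetric to the round sphere if and only if the original space is quasisymmetric to $\R^2$. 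That verification is precisely the content of Wildrick's derivation of the plane version from the sphere version; carrying it out by hand is redoing his theorem. The paper sidesteps all of this by citing Theorem \ref{General-Wildrick} directly. Finally, a minor point: your sketch of the lower mass bound, by exhibiting a Euclidean disk of area $\gtrsim r^2$ inside each ball, is not obviously available when tripods accumulate near the center; the paper instead derives the lower bound from LLC and the coarea inequality (Proposition \ref{LLC implies 2-regular}), which is the more robust route.
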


Recall that a standing assumption is that the degree of the ``graph" $T_\infty$ is uniformly bounded. Again, the heights $h_n$ have to satisfy the condition in Proposition \ref{General-Completeness}, but this time we have more restrictions as it will be evident from the proof. Moreover, as in Remark \ref{General-Graph dependence}, the choice of the heights is to be interpreted as follows: for each $n\in \N$ the height $h_n$ has to be chosen to be sufficiently small, but only depending on the tripods $G_1,\dots,G_n$. Also, the tripod $G_n$ is not necessarily a priori given to us, but it can depend on $G_1,\dots,G_{n-1}$ and the already chosen heights $h_1,\dots,h_{n-1}$. In the latter scenario, it is necessary that $G_n$ is chosen so that the family $\{G_1,\dots,G_n\}$ has the property $(G)$ for all $n\in \N$.

The proof of Theorem \ref{General-QS Embedding} is based on the Bonk-Kleiner Theorem:
\begin{theorem}[Theorem 1.1, \cite{BK}]\label{General-Bonk-Kleiner}
Let $(X,d)$ be an Ahlfors $2$-regular metric space homeomorphic to the sphere $S^2$, equipped with the spherical metric. Then $(X,d)$ is quasisymmetric to $S^2$ if and only if $(X,d)$ is linearly locally connected.
\end{theorem}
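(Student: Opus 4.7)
The easy direction holds without heavy machinery: if $f\colon S^2\to X$ is an $\eta$-quasisymmetry, then $X$ inherits linear local connectivity from $S^2$ because LLC is expressed purely in terms of ratios of distances, which $\eta$-quasisymmetries quasi-preserve up to a constant depending only on $\eta$. I would dispatch this in a short paragraph. The substantive direction is to produce a quasisymmetry $f\colon X\to S^2$ from the three hypotheses (homeomorphic to $S^2$, Ahlfors $2$-regular, LLC), and my strategy is to build it as a limit of discrete maps derived from circle packings on the target sphere.

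The plan has three main stages. First, at each dyadic scale $2^{-k}$ build a controlled combinatorial approximation of $X$: fix a maximal $2^{-k}$-separated set $V_k\subset X$ and consider the cover $\mathcal U_k=\{B(v,2\cdot 2^{-k}):v\in V_k\}$. Ahlfors $2$-regularity yields bounded multiplicity of $\mathcal U_k$ and bounded valence for the nerve $N_k$, while the LLC hypothesis lets one refine $N_k$ into a genuine triangulation $T_k$ of $S^2$ whose triangular faces are witnessed by the connecting continua guaranteed by LLC. Second, invoke the Koebe--Andreev--Thurston circle packing theorem to realize $T_k$ as a packing $\mathcal P_k=\{D_v\}_{v\in V_k}$ of pairwise non-overlapping spherical disks whose tangency pattern equals $T_k$; the packing is unique up to M\"obius transformation. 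Normalize by anchoring three preselected disks corresponding to three fixed marked points in $X$, which removes the M\"obius freedom.

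The technical heart, and the principal obstacle, is to prove that the normalized packings $\mathcal P_k$ have \emph{uniformly bounded combinatorial distortion}: every disk $D_v$ has spherical radius comparable to $2^{-k}$, and ratios of radii of adjacent disks are bounded, with constants depending only on the Ahlfors regularity and LLC constants of $X$. This is where the metric hypotheses on $X$ are consumed. The mechanism is to compare three moduli attached to each ring-like region in $X$: a combinatorial modulus on $T_k$ associated to the corresponding subgraph, an analytic modulus in $X$ controlled from above by Ahlfors $2$-regularity via a volume estimate, and the spherical modulus of the corresponding annular region in $S^2$ recovered from the packing radii. The LLC hypothesis prevents degeneration from one side (nontrivial annuli in $X$ have nontrivial modulus, so the packed rings cannot collapse), while Ahlfors regularity controls it from the other side (no region of $X$ concentrates excessive measure at small scales). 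Carrying this comparison through rigorously requires a delicate combinatorial modulus argument and occupies the bulk of the proof.

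Given these uniform estimates, define maps $f_k\colon X\to S^2$ by sending each $x\in X$ to the spherical centre of any disk $D_v$ whose associated ball contains $x$, with a consistent tie-breaking rule. The bounded-distortion bounds on $\mathcal P_k$ translate, by a standard discrete-to-continuous argument, into a uniform $\eta$-quasisymmetry estimate for the $f_k$ on triples of points at mutual distances much larger than $2^{-k}$. Applying the mapping-package compactness principle (Lemma \ref{Pre Mapping package}) to the normalized sequence $\{f_k\}$, extract a subsequential limit $f\colon X\to S^2$; passing the ratio inequalities to the limit yields that $f$ is $\eta$-quasisymmetric with the same distortion function $\eta$. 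Injectivity follows because distinct points of $X$ are separated, at sufficiently fine scales, by nondegenerate annular regions whose image packings in $S^2$ remain nondegenerate by the uniform bounds. Surjectivity follows because the spherical area of $S^2\setminus\bigcup_{v\in V_k} D_v$ tends to $0$ as $k\to\infty$, by the spherical area-packing inequality combined with the uniform radius control. This produces the desired quasisymmetry from $X$ onto $S^2$.
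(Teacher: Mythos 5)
A point of orientation first: the paper does not prove this statement at all --- Theorem \ref{General-Bonk-Kleiner} is imported verbatim from Bonk--Kleiner \cite{BK} and used as a black box (through Wildrick's planar version, Theorem \ref{General-Wildrick}). So there is no internal proof to compare against; what you have written is an outline of the original argument of \cite{BK}, and as an outline it is faithful in its architecture: discrete approximations at dyadic scales, realization of the resulting triangulations by the Andreev--Koebe--Thurston circle packing theorem, uniform non-degeneracy of the packings, normalization by three points, and a compactness argument. The easy direction (quasisymmetric images of LLC spaces are LLC) is also handled correctly.

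As a proof, however, the proposal has genuine gaps. The decisive one is the step you yourself flag and then defer: the uniform bound on the combinatorial distortion of the packings $\mathcal P_k$. This is not a technical afterthought but the entire analytic content of the theorem --- it is exactly where one must prove a two-sided comparison between the combinatorial modulus of ring structures in the graph approximations and the modulus of the corresponding spherical annuli, and without it nothing prevents the disks $D_v$ from degenerating as $k\to\infty$. (Ahlfors $2$-regularity alone is known to be insufficient for the conclusion, so the LLC hypothesis must enter this estimate quantitatively; saying that the argument ``occupies the bulk of the proof'' does not discharge it.) A second gap is the passage from the nerve $N_k$ to a triangulation $T_k$ of $S^2$: the nerve of a cover of a topological sphere by metric balls is not automatically the $1$-skeleton of a triangulation of $S^2$, and in \cite{BK} this step requires the machinery of $K$-approximations together with a genuinely topological argument exploiting that $X$ is a $2$-sphere; invoking the LLC continua does not by itself produce the required planarity and face structure. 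Finally, your pointwise definition of $f_k$ (send $x$ to the center of a disk whose ball contains $x$) is neither injective nor continuous at fixed $k$; the correct formulation is that the vertex maps $V_k\to S^2$ are uniformly quasi-M\"obius on triples of separated points, and the limiting homeomorphism is extracted from these discrete maps --- which again is contingent on the missing uniform estimates.
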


See Section \ref{Section LLC} for the definition of linear local connectivity. This theorem gave an excellent criterion for quasisymmetric parametrizability of 2-dimensional surfaces, and a necessary and sufficient condition is still to be found. Since its publication, there have been some improvements and generalizations \cite{Wi}, \cite{MW}, and very recently two new proofs of the theorem were published \cite{Ra}, \cite{LW}, which give an alternative perspective to the problem of finding parametrizations of 2-dimensional surfaces. We direct the reader to \cite{Ra} for more references and background on the problem of uniformization of 2-dimensional surfaces.

In fact, we will need a plane version of this theorem, which was proved by Wildrick:

\begin{theorem}[Theorem 1.2, \cite{Wi}]\label{General-Wildrick}
Let $(X,d)$ be an Ahlfors $2$-regular and linearly locally connected metric space that is homeomorphic to the plane $\R^2$. If $(X,d)$ is unbounded and complete then $(X,d)$ is quasisymmetrically equivalent to $\R^2$ with the Euclidean metric.

The statement is quantitative, in the sense that the distortion function of the quasisymmetry can be chosen to depend only on the constants associated to the Ahlfors $2$-regularity and linear local connectivity. 
\end{theorem}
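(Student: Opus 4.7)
The plan is to apply Wildrick's Theorem \ref{General-Wildrick} directly to $(S_\infty, d_\infty)$. This requires verifying five properties of this space: it is homeomorphic to $\R^2$, unbounded, complete, Ahlfors $2$-regular, and linearly locally connected with uniform constants. The first three are essentially in hand. Completeness follows from Proposition \ref{General-Completeness} under the summability assumption $\sum h_n<\infty$, which we impose throughout. Unboundedness is clear because the natural projection $S_\infty\to\R^2$ is $1$-Lipschitz (analog of \ref{G-Lipschitz} for $S_\infty$, cf.\ Remark \ref{General-Properties S infinity}) and $\R^2$ is unbounded. That $S_\infty$ is homeomorphic to the plane follows from the inductive construction of Section \ref{Section:flap-plane:multipletripods} — each $S_n$ is homeomorphic to $\R^2$ — combined with Remark \ref{General-Metrics equivalent}, which identifies the topology of $S_\infty$ with the inverse limit topology. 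So the real work is to establish Ahlfors $2$-regularity and the LLC property.

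For Ahlfors $2$-regularity, the strategy is to first prove that the approximating spaces $(S_n,d_n)$ are uniformly Ahlfors $2$-regular (with constants independent of $n$), and then deduce the regularity of $(S_\infty,d_\infty)$ from Proposition \ref{General-Gromov Hausdorff} and Lemma \ref{Pre Convergence Ahlfors regular}. To bound $\mu_n(B_n(x,r))$ from above and below, I project via $P_{n,0}\colon S_n\to\R^2$ and use property \ref{G-Measure}: the measure in $S_n$ equals the Lebesgue measure of the projected ball plus contributions coming from the rectangles $E\sim G_i$ intersecting $B_n(x,r)$. Each such rectangle contributes Lebesgue area comparable to $h_i$ times the length of the edge of $G_i$ inside the projected ball, and the upper bound $\mu_n(B_n(x,r))\lesssim r^2$ follows by choosing $h_i$ small enough so that the total flap-contribution over all tripods intersecting a planar ball of radius $r$ is at most $Cr^2$. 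The lower bound $\mu_n(B_n(x,r))\gtrsim r^2$ reduces, via property \ref{G-Ball} and \ref{G-Rectangles}, to the Euclidean lower bound either on a disk in $\R^2$ or on a rectangle, depending on where $x$ sits; combined with \ref{G-Lipschitz Upper inequality}, this shows the projection of $B_n(x,r)$ contains a planar ball of radius comparable to $r$, provided $r$ dominates the nearby heights $h_i$, and the lower bound is immediate in the small-$r$ regime since $d_n$ is locally isometric to a Euclidean metric.

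The main obstacle is linear local connectivity. Recall this requires two conditions: (LLC1) any two points in $B_\infty(x,r)$ are joined by a continuum in $B_\infty(x,\lambda r)$, and (LLC2) any two points in $S_\infty\setminus B_\infty(x,r)$ are joined by a continuum in $S_\infty\setminus B_\infty(x,r/\lambda)$. The natural idea is: take two points $y,z$ in $B_\infty(x,r)$, project them to $\tilde y, \tilde z\in\R^2$, join $\tilde y, \tilde z$ by a short polygonal arc in the plane, and use the path-lifting property \ref{G-lift paths} (and its analog for $S_\infty$) to lift this arc back into $S_\infty$. The lift has length at most the Euclidean length of $\tilde\gamma$ plus the cumulative ``vertical'' excursion along every rectangle $E\sim G_i$ whose base is crossed by $\tilde\gamma$. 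The essential point is to choose the heights $h_n$ inductively so small that these excursions are dominated by $r$ times a fixed constant $\lambda$, uniformly at every scale. Concretely, once $G_1,\dots,G_n$ and $h_1,\dots,h_{n-1}$ are fixed, the new height $h_n$ is chosen so that the additional flap-mass and flap-length it introduces at each relevant scale $r\gtrsim \diam(G_n)$ is a small fraction of $r$. This uses the quantitative control $d_\infty(x,y)\le |\tilde x-\tilde y|+6\sum_{i\ge N}h_i$ when only tripods of index $\ge N$ are relevant. LLC2 is handled similarly, by choosing a detour in $\R^2$ that avoids the projected ball and lifting it, using the fact that a path can always be perturbed to cross a tripod at an open edge where the lift has a well-defined rectangular ``pillow'' that one travels across in length $\le 4h_i$.

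With both properties secured, Theorem \ref{General-Wildrick} applied to $(S_\infty,d_\infty)$ produces the desired quasisymmetry onto $\R^2$, and the Ahlfors $2$-regularity of $S_\infty$ is the second assertion. Note that the argument is inductive in the sense of Remark \ref{General-Graph dependence}: at each stage one chooses $h_n$ small enough (depending on $G_1,\dots,G_n$ and $h_1,\dots,h_{n-1}$) to preserve the quantitative AR and LLC bounds with constants independent of $n$, so the implicit distortion function $\eta$ in the conclusion depends only on these uniform bounds and not on the particular sequence of tripods.
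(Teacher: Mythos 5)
The statement you were asked to prove is Wildrick's uniformization theorem for metric planes: \emph{any} complete, unbounded, Ahlfors $2$-regular, linearly locally connected metric space homeomorphic to $\R^2$ is quasisymmetrically equivalent to the Euclidean plane. In the paper this is an external citation (Theorem 1.2 of \cite{Wi}); no proof is given there, since it is a deep uniformization result in the lineage of the Bonk--Kleiner Theorem \ref{General-Bonk-Kleiner}. Your proposal does not prove this statement: its very first sentence is ``apply Wildrick's Theorem \ref{General-Wildrick} directly to $(S_\infty,d_\infty)$,'' so the argument presupposes exactly the theorem it is supposed to establish. What you have actually sketched is a proof of Theorem \ref{General-QS Embedding}, i.e.\ the verification that the specific space $S_\infty$ satisfies the hypotheses of Wildrick's theorem. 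That is a different (and strictly easier) task: it says nothing about why an abstract space with these properties admits a quasisymmetric parametrization. A genuine proof of Theorem \ref{General-Wildrick} would have to construct the quasisymmetry from scratch --- for instance by exhausting $X$ by metric disks, running the Bonk--Kleiner machinery (graph approximations, combinatorial modulus estimates, the quantitative form of Theorem \ref{General-Bonk-Kleiner}) on each, and normalizing to pass to the plane; none of that appears in your write-up.

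As a secondary remark: even read as a proof of Theorem \ref{General-QS Embedding}, your outline diverges from the paper in one structurally important way. The paper does not verify linear local connectivity or Ahlfors regularity directly on the infinite-flap space $S_\infty$. It proves these properties for the finite stages $(S_n,d_n)$ with constants independent of $n$ (Propositions \ref{Ahlfors} and \ref{LLC}), applies Wildrick's theorem to each $S_n$ to obtain uniformly quasisymmetric maps $f_n\colon S_n\to \R^2$, and then extracts a limiting quasisymmetry via the pointed Gromov--Hausdorff convergence of Proposition \ref{General-Gromov Hausdorff} together with Lemma \ref{Pre Mapping package}; the $2$-regularity of $S_\infty$ then follows from Lemma \ref{Pre Convergence Ahlfors regular}. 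Your plan to check LLC on $S_\infty$ itself by lifting planar paths is plausible in spirit for large $r$, but it skips the genuinely delicate regime: when $r$ is much smaller than the heights of the nearby flaps, the detour required by LLC(ii) must be built inside the glued rectangles, and controlling this near a vertex where up to $N_0$ tripods meet is precisely where the bounded-degree hypothesis and the induction of Lemmas \ref{LLC-One Graph} and \ref{LLC-N_0 Graphs} enter.
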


In the next two sections we prove that the spaces $(S_n,d_n)$ are Ahlfors $2$-regular and linearly locally connected with uniform constants, under the assumptions of Theorem \ref{General-QS Embedding}. The proof of Theorem \ref{General-QS Embedding} is completed in Section \ref{Section Embedding}.

\subsection{Ahlfors regularity}\label{Section Ahlfors}
Recall that a metric space $(X,d)$ is Ahlfors $Q$-regular for some $Q>0$ if there exists a constant $C\geq 1$ such that for each $x\in X$ and for all $0<r<\diam(X)$ we have
\begin{align}\label{Ahlfors-Definition}
\frac{1}{C} r^Q \leq \mathcal H^Q_d(B_d(x,r)) \leq C r^Q,
\end{align}
where $\mathcal H^Q_d$ denotes the Hausdorff $Q$-measure.

In this section we prove, under the assumptions of Theorem \ref{General-QS Embedding}, that the spaces $(S_n,d_n)$ are Ahlfors 2-regular with uniform constants. Remark \ref{General-Graph dependence} also applies here, i.e., at the $n$-the stage the tripod $G_n$ need not be given to us, but it can depend on the tripods $G_1,\dots,G_{n-1}$ and the heights $h_1,\dots,h_{n-1}$.

In fact, we only need to show the right inequality in \eqref{Ahlfors-Definition}. The left inequality will follow from right inequality and the linear local connectivity of $(S_n,d_n)$ that is discussed in the next Section \ref{Section LLC}; see Proposition \ref{LLC implies 2-regular} and Proposition \ref{LLC}.

\begin{prop}\label{Ahlfors}
Assume that the degree of $T_n$ is bounded by $N_0>0$, for all $n\in \N$. If the heights $\{h_n\}_{n\in \N}$ are chosen to be sufficiently small, then the spaces $(S_n,d_n)$ are Ahlfors $2$-regular, with uniform constants, depending only on $N_0$.
\end{prop}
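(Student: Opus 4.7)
The proof proceeds by induction on $n$. The base case $n=0$, where $S_0=\R^2$ with the Euclidean metric, is trivially Ahlfors $2$-regular. As noted in the paragraph preceding the proposition, only the upper bound $\mu_n(B_n(x,r))\leq Cr^2$ needs to be established, since the lower bound will follow from linear local connectivity via Proposition \ref{LLC implies 2-regular}.

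For the inductive step, fix $x\in S_n$ and $r>0$, and decompose $B_n(x,r)=A_1\cup A_2$, where $A_1$ is the part lying off the rectangles attached to $G_n$ and $A_2$ is its intersection with $\bigcup_{E\sim G_n}E$. By \ref{G-Lipschitz}, the projection $P_n\colon S_n\to S_{n-1}$ is $1$-Lipschitz, so $P_n(A_1)\subset B_{n-1}(P_n(x),r)$; by the equality clause of \ref{G-Measure} applied to sets disjoint from $G_n$, the restriction of $P_n$ to $A_1$ preserves Hausdorff $2$-measure, hence $\mu_n(A_1)=\mu_{n-1}(P_n(A_1))$, which is bounded by $Cr^2$ via the inductive hypothesis.

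For $A_2$, each of the (at most six) rectangles $E\sim G_n$ is globally isometric to a Euclidean rectangle of dimensions $\ell_e\times h_n$ by \ref{G-Rectangles}. Tracing geodesics in $S_n$ from $x$ into $E$ through the base where $E$ is glued, one sees that $B_n(x,r)\cap E$ is contained in a Euclidean strip of width at most $\min(h_n,r)$ along the base of $E$ and length at most $2r$, so that $\mu_n(B_n(x,r)\cap E)\lesssim r\min(h_n,r)$. Summing over the six rectangles yields $\mu_n(A_2)\lesssim r\min(h_n,r)$.

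The main obstacle is closing the induction with a constant independent of $n$: the estimate above adds a fresh $O(r^2)$ term at every stage, which would allow the constant to grow. The resolution exploits the flexibility in choosing the heights $h_n$. The plan is to fix once and for all a summable sequence $\{\varepsilon_n\}$ with $\sum_n\varepsilon_n<\infty$, and then, following Remark \ref{General-Graph dependence}, choose $h_n$ at stage $n$ small enough (depending on $G_1,\ldots,G_n$ and $h_1,\ldots,h_{n-1}$) that the contribution $r\min(h_n,r)$ is dominated by $\varepsilon_n r^2$ at every scale $r$ at which the rectangles of $G_n$ can be reached from some ball center; the critical case is $r\sim h_n$, where this translates to a smallness condition on $h_n$ alone, and the case $r\gg h_n$ is absorbed into $\varepsilon_n r^2$ via $h_n\leq\varepsilon_n\cdot(\text{minimum relevant scale})$. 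The bounded-degree hypothesis on $T_\infty$ enters to bound the number of sheets meeting at any vertex by $N_0$, providing a uniform local Euclidean bound of $O(N_0 r^2)$ for the very small radii. Iterating the inductive bound then gives $\mu_n(B_n(x,r))\leq(C_0+\sum_{k=1}^n\varepsilon_k)r^2\leq C'r^2$ with $C'$ depending only on $N_0$.
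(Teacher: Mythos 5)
Your decomposition of $B_n(x,r)$ into the part $A_1$ off the $G_n$-rectangles and the part $A_2$ on them, together with the measure-preservation \ref{G-Measure} and the isometry \ref{G-Rectangles}, is sound, and is essentially the same raw identity (3.3) that the paper starts from. But the inductive scheme you build around it has a genuine gap. You want to close the induction via $\mu_n(A_2)\leq\varepsilon_n r^2$ with $\sum_n\varepsilon_n<\infty$, achieved by making $h_n$ small. This is impossible uniformly in $r$: if $x$ sits in the interior of a rectangle $E\sim G_n$ at distance more than $r$ from $\partial E$ and $r\leq h_n$, then $B_n(x,r)\cap E$ is a Euclidean disk and $\mu_n(A_2)=\pi r^2$, so $\varepsilon_n\geq\pi$ no matter how small $h_n$ is. Your ``minimum relevant scale'' heuristic cannot be made to work in this form, because the inductive bound $C_{n-1}r^2$ on $\mu_n(A_1)$ already tacitly pays for a ball that intersects \emph{all} of $G_1,\dots,G_{n-1}$, so switching to an $O(N_0 r^2)$ bound at small radii does not save you from the additive $O(r^2)$ penalty at stage $n$; that penalty, applied once per $n$, forces the constant to grow linearly.

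The paper avoids the telescoping trap by not doing a naive induction in $n$ at all. From the identity (3.3) it notes directly that for a ball whose projection meets at most $N_0$ tripods the measure is $\leq(\pi+24N_0\pi)r^2$ regardless of $n$, because each summand is $\leq 24\pi r^2$ and only the intersected tripods contribute. The content of the inductive choice of heights is then confined to balls whose projection meets $N_0+1$ or more tripods: the bounded-degree hypothesis forces, via the auxiliary vertices and the geometric quantity $\delta=\delta(G_1,\dots,G_{i_{N_0+1}})$, that such a ball has $r\geq\delta/2$; and the heights from the $(N_0{+}1)$-th intersected tripod onward are chosen so that the \emph{total} rectangle measures satisfy $\mu_n(\bigcup_{E\sim G_{i_{N_0+1}}}E)\leq\delta^2/4\leq r^2$ together with the geometric decay (3.6), so their combined contribution is $\leq 2r^2$ independently of how many there are. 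This gives a clean uniform constant $\pi+24N_0\pi+2$. The essential step you are missing is precisely this separation of the ``at most $N_0$ nearby'' tripods (bounded crudely, independent of $h$) from the ``far'' ones (bounded via the smallness of heights relative to $\delta$, with geometric decay over later indices), rather than a summable error accrued once per $n$.
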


As we remarked, we will only show the upper bound in \eqref{Ahlfors-Definition}. We denote the Hausdorff $2$-measure of $(S_n,d_n)$ by $\mu_n$, and we use the ball notation $B_n(x,r)$, instead of $B_{d_n}(x,r)$.

\begin{lemma}\label{Ahlfors-Upper}
Assume that the degree of $T_n$ is bounded by $N_0>0$, for all $n\in \N$. If the heights $\{h_n\}_{n\in \N}$ are chosen to be sufficiently small, then there exists a constant $c>0$ such that for all $n\in \N$, $x\in S_n$, and $r>0$ we have
\begin{align*}
\mu_n(B_n(x,r)) \leq c r^2.
\end{align*} 
The constant $c$ depends only on $N_0$.
\end{lemma}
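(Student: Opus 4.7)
The plan is to proceed by induction on $n$, with the trivial base case $(S_0,d_0)=(\R^2,|\cdot|)$ for which $\mu_0(B(x,r))=\pi r^2$. For the inductive step I would decompose
\begin{align*}
\mu_n(B_n(x,r)) = \mu_n\bigl(B_n(x,r)\setminus \textstyle\bigcup_{E\sim G_n}E\bigr) + \sum_{E\sim G_n} m_2(E\cap B_n(x,r)),
\end{align*}
and treat the two terms separately. The projection $P_n\colon S_n\to S_{n-1}$ is $1$-Lipschitz by \ref{G-Lipschitz} and restricts to an isometry away from the new rectangles, whose image $G_n$ is $\mu_{n-1}$-null by \ref{G-Measure}. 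Hence the first term equals $\mu_{n-1}(P_n(B_n(x,r)\setminus \bigcup_{E\sim G_n}E))$ and is at most $\mu_{n-1}(B_{n-1}(P_n(x),r))\le c r^2$ by the inductive hypothesis. For the second term, \ref{G-Rectangles} tells us $d_n$ restricted to each rectangle $E\sim G_n$ is Euclidean, so $E\cap B_n(x,r)$ has Euclidean diameter at most $2r$ and hence area at most $\min(4\pi r^2,\,2r h_n)$; summed over at most six rectangles attached to $G_n$, the new contribution is at most $\min(24\pi r^2,\,12\, r h_n)$.

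Naively adding these bounds would let the constant grow by $24\pi$ at each step, so to keep $c$ uniform I would split the analysis by the ratio $r/h_n$, choosing $h_n$ inductively (as a function of $G_1,\ldots,G_n$ and $h_1,\ldots,h_{n-1}$) to be very small. When $r\gg h_n$, the estimate $12\, r h_n=(12 h_n/r)\cdot r^2$ is a tiny multiple of $r^2$; by taking $h_n$ small enough at each step, the cumulative damage over all steps can be organized as a multiplicative factor $\prod(1+\epsilon_n)$ with $\sum\epsilon_n<\infty$, so the constant stays bounded. When $r\ll h_n$, property \ref{G-Ball} forces the ball to remain on one side of every pillow of height $h_n$, so it meets rectangles attached only to edges of $T_n$ emanating from a single vertex of $T_n$ lying within Euclidean distance $r$ of $\tilde x=P_{n,0}(x)$; since this vertex has degree at most $N_0$, a direct geometric estimate (balancing at most $N_0$ sectors of Euclidean half-disks and flap-quadrants) yields $\mu_n(B_n(x,r))\le C(N_0)r^2$ without invoking the inductive constant at all.

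The main obstacle is the intermediate regime $r\sim h_n$, where both the thin-rectangle bound and the direct geometric bound are comparable to $r^2$, and the two-term decomposition does not immediately absorb the new contribution into the inductive bound. Handling this scale requires a careful joint choice of $h_n$ against the existing geometric data of $S_{n-1}$---in particular against the shortest edge of $G_n$ and the minimum distance from $G_n$ to vertices of $T_{n-1}$ not lying on $G_n$---so that the newly inserted rectangles are confined to a scale at which the inductive bound remains stable. The uniform degree bound $N_0$ on $T_\infty$ is indispensable throughout, since it controls the local combinatorial complexity of the flap-plane near each vertex and thereby bounds the direct estimate in the small-$r$ regime at every inductive step.
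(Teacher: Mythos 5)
Your proposal takes a genuinely different route from the paper, which does \emph{not} run a straight induction with a multiplicative correction factor $c_n=c_{n-1}(1+\epsilon_n)$. The paper works directly with the decomposition $\mu_n(B_n(x,r)) = \mu_0(P_{n,0}(B_n(x,r))) + \sum_{k=1}^n\mu_n\bigl(\bigcup_{E\sim G_k}B_n(x,r)\cap E\bigr)$, orders the tripods met by the ball's projection by \emph{increasing index}, and imposes two inductive height conditions: the total rectangle area of $G_{i+1}$ is at most half that of $G_i$, and if the projection meets $N_0+1$ tripods $G_{i_1},\dots,G_{i_{N_0+1}}$ (with smallest such indices), then the degree bound combined with a ``local feature size'' $\delta$ of $G_1,\dots,G_{i_{N_0+1}}$ forces $\delta\leq 2r$, whereupon $h_{i_{N_0+1}}$ has been taken so small that its rectangles have total area $\leq \delta^2/4\leq r^2$. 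Thus at most $N_0$ tripods contribute the crude $24\pi r^2$ each, and all remaining tripods together contribute $\lesssim r^2$, giving a single constant $\pi+24N_0\pi+2$ without any regime split on $r/h_n$.

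Two points in your sketch are not sound as written. First, you flag the intermediate regime $r\sim h_n$ as ``the main obstacle'' but do not close it, and the gap is real: to absorb the new rectangle contribution, bounded by $\min(24\pi r^2,12rh_n)$, into a factor $1+\epsilon_n$ of the inductive bound $c_{n-1}r^2$, you would need $24\pi\leq\epsilon_n c_{n-1}$ throughout the intermediate range, which fails as soon as $\epsilon_n\to0$. The remedy — choose $h_n$ so small against the feature-size quantity you name that the ``large $r$'' threshold $\simeq h_n/\epsilon_n$ falls below the ``small $r$'' threshold, so the two regimes overlap — is exactly the step where the real work of the lemma resides, and your proposal stops short of carrying it out. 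Second, your small-$r$ argument misinvokes \ref{G-Ball}: that property concerns lifting balls that avoid the tripod, and says nothing about a ball staying ``on one side of a pillow.'' In fact the claim is false near a non-central vertex of $G_n$, where the slit terminates and a short path through the plane wraps around the tip of the slit to reach the opposite face of the pillow. The correct small-$r$ statement requires a case split — near a vertex of $T_n$ (ball meets $\leq 2N_0$ rectangles by the degree bound), near an open edge away from vertices (ball meets at most one two-sided pillow), away from $T_n$ (ball is Euclidean) — together with a precise threshold $\delta_n$ depending on $T_n$; neither is supplied.
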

\begin{proof}
Using \ref{G-Measure}, we write
\begin{align}\label{Ahlfors-Upper-Equality}
\mu_n(B_n(x,r))= \mu_0(P_{n,0}(B_n(x,r)))+ \sum_{k=1}^n \mu_n\left( \bigcup_{E\sim G_k}B_n(x,r)\cap E\right),
\end{align}
where $\mu_0$ is the Lebesgue measure in $\R^2$ and $P_{n,0}$ is the projection from $S_n$ to $\R^2$. By the Lipschitz property \ref{G-Lipschitz} we have
\begin{align*}
\mu_0(P_{n,0}(B_n(x,r))) \leq \mu_0(B( P_{n,0}(x),r)) =\pi r^2.
\end{align*}
Also, if $E\sim G_k$ and $y\in B_n(x,r)\cap E$, then $B_n(x,r)\cap E \subset B_n(y,2r)\cap E$. On the other hand, $B_n(y,2r)\cap E$ is isometric to the intersection of a Euclidean ball of radius $2r$ with the Euclidean subset $E$ by \ref{G-Rectangles}, therefore
\begin{align}\label{Ahlfors-N_0 Bound}
\mu_n\left( \bigcup_{E\sim G_k}B_n(x,r)\cap E\right) \leq 6 \cdot 4\pi r^2. 
\end{align}
Recall at this point that $6$ rectangles are attached to a given tripod $G_k$.

If the projection of $B_n(x,r)$ to $\R^2$ intersects at most $N_0$ tripods $G_{i_1},\dots,G_{i_{N}}$, $N\leq N_0$, then by \eqref{Ahlfors-Upper-Equality} we have
\begin{align*}
\mu_n(B_n(x,r)) \leq \pi r^2+ N_0 (6\cdot 4\pi r^2) = (\pi+ 24N_0\pi)r^2.
\end{align*}
This bound is independent of the choice of heights $h_i$ and tripods $G_i$ (cf. Remark \ref{General-Graph dependence}).

We claim that we can choose inductively the height $h_n$ of the rectangles $E\sim G_n$ depending only on the tripods $G_1,\dots,G_n$ such that whenever the projection of a ball $B_n(x,r)$ to the plane intersects some tripods $G_{i_1},\dots,G_{i_{N_0}},G_{i_{N_0+1}}$ with $i_1<\dots<i_{N_0+1}\leq n$, we have
\begin{align}\label{Ahlfors-Upper-Measure of E}
\mu_n\left( \bigcup_{E\sim G_{i_{N_0+1}}} E \right) \leq r^2 ,
\end{align}
and also
\begin{align}\label{Ahlfors-Upper-Geometric}
\mu_n\left( \bigcup_{E\sim G_{i+1}} E\right) \leq \frac{1}{2} \mu_n\left(\bigcup_{E\sim G_{i}}E\right),
\end{align}
for all $i=1,\dots,n-1$.

Assuming that, we finish the proof. Let $B_n(x,r)$ be an arbitrary ball in $S_n$, whose projection intersects the tripods $G_{i_1},\dots,G_{i_{N_0}}, G_{i_{N_0+1}}$, $i_1<\dots<i_{N_0+1}\leq n$. Also, assume that these are the smaller possible such indices, namely there exists no $i \notin \{i_1,\dots,i_{N_0+1}\}$ with $i<i_{N_0+1}$ such that the projection of $B_n(x,r)$ intersects $G_i$. Then we have
\begin{align*}
\sum_{k=1}^n \mu_n\left( \bigcup_{E\sim G_k}B_n(x,r)\cap E\right) &= \sum_{j=1}^{N_0} \mu_n\left( \bigcup_{E\sim G_{i_j}}  B_n(x,r)\cap E \right) \\
&\quad\quad + \mu_n\left( \bigcup_{E\sim G_{i_{N_0+1}}}  B_n(x,r)\cap E \right) \\
&\quad\quad\quad\quad +\sum_{i_{N_0+1}<k\leq n} \mu_n\left( \bigcup_{E\sim G_{k}}B_n(x,r)\cap E \right)\\
&\leq N_0  \cdot  24\pi r^2+  r^2+ \sum_{i_{N_0+1}<k\leq n}   \mu_n\left(\bigcup_{E\sim G_{k}}E\right),
\end{align*}
where we used the bound \eqref{Ahlfors-N_0 Bound} for the first term, and condition \eqref{Ahlfors-Upper-Measure of E} for the second term. By condition \eqref{Ahlfors-Upper-Geometric}, the last term is bounded by 
\begin{align*}
\mu_n\left(\bigcup_{E\sim G_{i_{N_0+1}}}E\right) \cdot \sum_{k=1}^\infty \frac{1}{2^k}\leq r^2.
\end{align*}
This concludes the proof, by \eqref{Ahlfors-Upper-Equality}, with constant $c= \pi+ 24N_0\pi +2$.

Now, we focus on our claim, which we will prove by induction on $n\in \N$. For $n=1,\dots,N_0$ we have nothing to show, since  the projection of a ball $B_n(x,r)$ to $\R^2$ will always intersect at most $N_0$ tripods. We can also adjust the heights $h_i$, $i=1,\dots,N_0$, to be so small, depending on $G_1,\dots,G_{N_0}$, that \eqref{Ahlfors-Upper-Geometric} holds. We assume that the statements hold for some $n\in \N$, $n\geq N_0$, and consider the tripod $G_{n+1}$ and the flap-plane $(S_{n+1},d_{n+1})$. We also choose $h_{n+1}$ to be so small that \eqref{Ahlfors-Upper-Geometric} holds; note that by \ref{G-Rectangles} the rectangles $E\sim G_i$, $i<n+1$, with metric $d_{n+1}$ are isometric to Euclidean rectangles, so we only have to choose $h_{n+1}$ to be small small enough so that \eqref{Ahlfors-Upper-Geometric} holds for the (last) index $i=n$. Later, we will make $h_{n+1}$ even smaller in order to achieve \eqref{Ahlfors-Upper-Measure of E}.

If $B_{n+1}(x,r)$ has a projection to the plane that intersects $G_{i_1},\dots,G_{i_{N_0}},G_{i_{N_0+1}}$, with $i_1<\dots<i_{N_0+1}\leq n+1$ then we split in two cases:

\textbf{Case 1:} $i_{N_0+1}\neq n+1$. If $P\colon S_{n+1}\to S_n$ denotes the natural projection, then in this case the projection of $B_{n}(P(x),r)$ to $\R^2$ also intersects $G_{i_1},\dots,G_{i_{N_0}},G_{i_{N_0+1}}$. This is essentially because the projection $P$ from $S_{n+1}$ to $S_n$ is $1$-Lipschitz by \ref{G-Lipschitz}, and also the projections are compatible by \ref{G-compatible}. Now, using the induction assumption we obtain
\begin{align*}
\mu_n\left( \bigcup_{E\sim G_{i_{N_0+1}}}E \right) \leq  r^2.
\end{align*}
The measure $\mu_n$, restricted to $E$, is identical to the Lebesgue measure, and also to $\mu_{n+1}$ by \ref{G-Rectangles}. This completes the proof of \eqref{Ahlfors-Upper-Measure of E} in this case.

\textbf{Case 2:} $i_{N_0+1}=n+1$. Then there exist points $a_{i_j} \in G_{i_j} \subset \R^2$ for $j=1,\dots,N_0$ and a point $a_{n+1}\in G_{n+1}$ such that 
\begin{align*}
|a_{i_j}-a_{n+1}| <2r
\end{align*}
for all $j=1,\dots,N_0$. This is because the projection of $B_{n+1}(x,r)$ to $\R^2$ intersects the corresponding tripods, by assumption, and also it is $1$-Lipschitz by \ref{G-Lipschitz}.

We now introduce auxiliary vertices on $G_{n+1}$ as follows. We partition each edge of $G_{n+1}$ in finitely many edges such that the interior of each (new) edge of $G_{n+1}$ does not  contain any vertex of $G_i$, $i\leq n$, and also each (new) edge of $G_{n+1}$ has one ``free" vertex that does not lie on any $G_i$, $i\leq n$; this is possible because the tripods $\{G_i\}_{i\in \N}$ possess property $(G)$ and in particular the intersection of $G_{n+1}$ with the union of the tripods $G_i$, $i\leq n$, contains finitely many points. For $i\leq n$ we set $\delta_i>0$ to be to be the minimum (Euclidean) distance of the (new) edges of $G_{n+1}$ from $G_i$, excluding the edges of $G_{n+1}$ that intersect $G_i$. We then set $\delta= \min_{1\leq i\leq n} \delta_i>0$. The partitioning of the edges of $G_{n+1}$ is only used to define $\delta$ in this proof, and is not supposed to alter the tripod $G_{n+1}$ for any other consideration.

If $|a_{i_j}-a_{n+1}| \geq \delta$ for some $j\in \{1,\dots, N_0\}$, then we have $\delta \leq 2r$. Hence, if we set $h_{n+1}$ to be so small (depending on $\delta$) that 
\begin{align*}
\mu_{n+1}\left( \bigcup_{E\sim G_{n+1}}E \right) \leq  \delta^2/4 \leq r^2,
\end{align*}
then we obtain the desired conclusion. Note that $\delta$, and thus $h_{n+1}$, is chosen depending only on $G_1,\dots,G_{n+1}$.

If $|a_{i_j}-a_{n+1}|<\delta$ for all $j\in \{1,\dots,N_0\}$, then by the definition of $\delta$ this means that $a_{n+1}$ necessarily lies on an edge $e$ of $G_{n+1}$ that intersects $G_{i_j}$, for all $j\in \{1,\dots,N_0\}$. However, the edge $e$ by construction has a ``free" vertex, so only one of the two vertices of the edge $e$ can intersect tripods $G_i$, $i\leq n$. Hence, there are $N_0+1$ tripods (including $G_{n+1}$) meeting at a vertex of the edge $e$. This implies that there are $N_0+1$ edges of the graph $T_{n+1}$, meeting at a vertex of the edge $e$. We now have a contradiction to the assumption that the degree of the planar graph $T_{n+1}$ is at most $N_0$.
\end{proof}

\subsection{Linear local connectivity}\label{Section LLC}

A metric space $(X,d)$ is \textit{linearly locally connected} (LLC) if there exists a constant $c\geq 1$ such that for each ball $B_d(x,r)$ and for any two points $z,w\in X$ we have:
\begin{enumerate}[\upshape(i)]
\item If $z,w\in B_d(x,r)$, then there exists a continuum $F\subset B_d(x,cr)$ connecting $z$ and $w$.
\item If $z,w\notin B_d(x,r)$, then there exists a continuum $F\subset X\setminus B_d(x,r/c)$, connecting $z$ and $w$. 
\end{enumerate}
In this case, we say that $X$ is $c$-LLC.

As we remarked earlier, the LLC property and the upper mass bound of Ahlfors regularity in the $2$-dimensional setting can yield the lower mass bound:

\begin{prop}\label{LLC implies 2-regular}
Let $(X,d)$ be a metric space homeomorphic to $\R^2$. If $(X,d)$ has locally finite Hausdorff $2$-measure and is $c$-LLC, then there exists a constant $c'>0$ depending only on $c$ such that
\begin{align*}
\mathcal H^2_d(B_d(x,r)) \geq c' r^2
\end{align*}
for all $x\in X$ and  $0<r<\diam(X)$.
\end{prop}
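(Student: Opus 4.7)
The plan is to argue by contradiction: suppose $\mathcal H^2_d(B_d(x,r))<\varepsilon r^2$ for some sufficiently small $\varepsilon=\varepsilon(c)>0$ to be chosen later, and produce a violation of either the LLC conditions or the planar topology of $X$. Since $r<\diam(X)$, there exists a point $y_0\in X$ with $d(x,y_0)>r$, and since $X$ is homeomorphic to $\R^2$, the metric sphere $S(x,t)\coloneqq\{y:d(x,y)=t\}$ separates $x$ from $y_0$ in $X$ for every $t\in(0,d(x,y_0))$.

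The first step is to select a ``thin'' separating sphere by means of the Eilenberg coarea inequality applied to the $1$-Lipschitz function $y\mapsto d(x,y)$, which gives
\begin{align*}
\int_0^r\mathcal H^1_d(S(x,t)\cap B_d(x,r))\,dt\;\leq\;\tfrac{4}{\pi}\,\mathcal H^2_d(B_d(x,r))\;<\;\tfrac{4}{\pi}\varepsilon r^2.
\end{align*}
By a Chebyshev/pigeonhole argument, there is a value $t_0\in(r/(4c),r/(2c))$ with $\mathcal H^1_d(S(x,t_0))\lesssim c\varepsilon r$. By the definition of Hausdorff $1$-measure, $S(x,t_0)$ can then be covered by countably many balls $B_d(z_i,\delta_i)$ with $\sum_i\delta_i\lesssim c\varepsilon r$; local compactness together with local finiteness of $\mathcal H^2_d$ lets us extract a finite subcover.

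Next, I would use LLC to manufacture a continuum from $x$ to $y_0$ that avoids $S(x,t_0)$, contradicting the topological separation. By LLC1 applied to the ball $B_d(x,t_0)$, the points $x$ and any $u\in B_d(x,t_0/c)$ are joined by a continuum $F_1\subset B_d(x,t_0)$. Similarly, by LLC1 applied at a large ball, $y_0$ and any $v\in X\setminus B_d(x,ct_0)$ can be joined by a continuum $F_2$ lying in $X\setminus B_d(x,t_0)$. To splice these across the sphere while staying off $S(x,t_0)$, one uses the covering balls $B_d(z_i,\delta_i)$: for each covering ball encountered by a tentative bridging path, LLC2 applied with radius $\delta_i$ furnishes a continuum in $X\setminus B_d(z_i,\delta_i/c)$ that detours around $z_i$. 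Since $\sum\delta_i\ll r/c$, all these detours live in an arbitrarily small neighborhood of $S(x,t_0)$ and can be concatenated with $F_1$ and $F_2$ to produce a continuum from $x$ to $y_0$ disjoint from $S(x,t_0)$, contradicting the planar separation property. Choosing $\varepsilon$ small enough in terms of $c$ forces the contradiction and yields $\mathcal H^2_d(B_d(x,r))\geq c'(c)\,r^2$.

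The main obstacle is making the bypass construction rigorous: one must ensure that iteratively applying LLC2 to cover balls of the sphere actually yields a continuum entirely in the complement of $S(x,t_0)$, rather than a path that merely avoids the centers $z_i$. This is where planarity of $X$ is essential, since it guarantees that $S(x,t_0)$, being a small-$\mathcal H^1$ separator, contains a minimal separating subcontinuum that can be enclosed in the union of the cover balls, so that ``detouring around the cover balls'' is equivalent to ``detouring around the sphere.'' The remaining technicalities (choosing the subcover carefully, and combining the detours in the correct cyclic order around $x$) are standard once this planar dichotomy is in place.
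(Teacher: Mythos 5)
Your high-level strategy is a contradiction reformulation of what the paper does directly, and the tools you reach for (coarea, LLC, topological separation by metric spheres) are the right ones. But the execution has a genuine gap that the paper's argument sidesteps cleanly.

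The flaw is in the ``iterated LLC2 over cover balls'' step. An application of LLC2 to a cover ball $B_d(z_i,\delta_i)$ produces a continuum in $X\setminus B_d(z_i,\delta_i/c)$, not in $X\setminus S(x,t_0)$. That detour can freely re-enter the annular region $B_d(z_i,\delta_i)\setminus B_d(z_i,\delta_i/c)$, which may contain points of $S(x,t_0)$, and it can also pass through other cover balls $B_d(z_j,\delta_j)$, forcing further detours with no guarantee of termination or of final avoidance of the sphere. Your closing remark that the minimal separating subcontinuum ``can be enclosed in the union of the cover balls'' does not rescue this: enclosing the subcontinuum in a union of small balls is not the same as enclosing it in a single ball to which LLC2 can be applied once.

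The missing observation is a diameter bound, not a covering argument. By Moore's theorem (\cite{Mo}, cited in the paper), the level set $\phi^{-1}(t_0)$ contains a subcontinuum $K_{t_0}$ separating $x$ from $y_0$. Since a continuum has Hausdorff $1$-measure at least its diameter, and $K_{t_0}\subset S(x,t_0)$, your coarea/Chebyshev step would give $\diam(K_{t_0})\leq\mathcal H^1_d(K_{t_0})\leq\mathcal H^1_d(S(x,t_0))\lesssim\varepsilon r$. Hence $K_{t_0}$ lies in a \emph{single} ball $B_d(w,\rho/c)$ with $\rho\lesssim c\varepsilon r$ and $w\in K_{t_0}$; for $\varepsilon$ small (relative to $c$) one checks $x,y_0\notin B_d(w,\rho)$, so \emph{one} application of LLC2 yields a continuum from $x$ to $y_0$ in $X\setminus B_d(w,\rho/c)\subset X\setminus K_{t_0}$, contradicting separation. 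That single-ball argument is exactly the content of the paper's proof, run forward rather than by contradiction: the paper fixes any $t\in[r/4,r/2]$ and $w\in K_t$, notes $x,y\notin B_d(w,r/4)$, applies LLC2 once to conclude $K_t\not\subset B_d(w,r/(4c))$, hence $\mathcal H^1_d(K_t)\geq\diam(K_t)\geq r/(4c)$ for every such $t$, and then integrates via coarea. Running it forward avoids both the Chebyshev selection of $t_0$ and the cover, and gives a uniform bound over the whole interval of levels, which is what the final integration needs.
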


This is a folklore statement, and it is discussed in \cite[Section 16]{Ra}; see also \cite[Corollary 1.4]{Ki}. We give a quick proof for the sake of completeness.

\begin{proof}
Let $x\in X$ and $r<\diam(X)$. Consider the function $\phi(z)=d(x,z)$, which is $1$-Lipschitz. Let $y\notin B(x,r)$. Then for each $t\in (0,r)$ there exists a component $K_t$ of $\phi^{-1}(t)$ that separates $x$ from $y$ \cite[IV Theorem 26]{Mo}. We claim that $\mathcal H^1_d(K_t) \geq  C_1 r$ for all $t\in [r/4,r/2]$ and for a constant $C_1>0$ depending only on $c$. If this is the case, then by the co-area formula \cite[Proposition 3.1.5]{AT}, there exists a uniform constant $C_2>0$ such that we have
\begin{align*}
 \mathcal H^2_d(B_d(x,r)) \geq C_2\int_{r/4}^{r/2} \mathcal H^1_d( \phi^{-1}(t)) \,dt \geq \frac{C_2 C_1}{4}r^2,
\end{align*}
as desired. 

To prove our claim, note that for any $w\in K_t$, $t\in [r/4,r/2]$, we have $x,y\notin B_d(w,r/4)$. By condition (ii) in the definition of linear local connectivity, it follows that there exists a continuum $F \subset X \setminus B_d(w,r/(4c))$ that connects $x$ and $y$. However, any such continuum has to intersect $K_t$. It follows that $K_t$ cannot be contained in $B_d(w,r/(4c))$ for any $w\in K_t$, hence $\mathcal H^1_d(K_t)\geq \diam(K_t) \geq r/(4c)$. Our claim is proved with $C_1=(4c)^{-1}$.
\end{proof}

Note that each space $(S_n,d_n)$ satisfies  condition (i) with constant $1$, since the space is endowed with its internal metric the distance between two points $x,y\in S_n$ is equal to the length of the shortest path between the two points. 

\begin{prop}\label{LLC}
Assume that the degree of $T_n$ is bounded by $N_0$, for all $n\in \N$. If the heights $\{h_n\}_{n\in \N}$ are chosen to be sufficiently small, then the spaces $(S_n,d_n)$ are $c$-LLC, with the constant $c$ depending only on $N_0$.
\end{prop}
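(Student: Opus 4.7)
Condition (i) is automatic with constant $1$ since $d_n$ is a geodesic metric: given $z,w \in B_n(x,r)$, the concatenation of geodesics from $z$ to $x$ and from $x$ to $w$ is a continuum in $B_n(x,r)$ joining $z$ and $w$, as noted before the proposition. The content is in condition (ii), and the goal is a constant $c$ depending only on $N_0$.

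The plan is to construct, for every $x \in S_n$ and every $r > 0$, a separating closed curve $L \subset S_n \setminus B_n(x, r/c)$ that separates $x$ from infinity in $S_n$, in such a way that both $z$ and $w$ lie in the unbounded component of $S_n \setminus L$. Any arc in that component connecting $z$ to $w$ then yields the required continuum $F$. To build $L$, pick a generic radius $\rho$ slightly larger than $r/c$ and consider the Euclidean circle $C = C(\tilde x, \rho) \subset \R^2$, where $\tilde x = P_{n,0}(x)$; take $L$ to be a lift of $C$ to $S_n$. On the plane part of $S_n$ the lift coincides with $C$, while at each transversal crossing of $C$ with an edge of some tripod $G_i$ the lift detours through the attached pillow, traversing one rectangle of height $h_i$ and exiting on the other side of the slit. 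By the bounded-degree hypothesis $C$ crosses only finitely many tripod edges, and genericity of $\rho$ lets us avoid all vertices of the tripods $G_1,\dots,G_n$, so $L$ is a well-defined closed curve in $S_n$ whose projection equals $C$.

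Two facts then drive the argument. First, since $P_{n,0}$ is $1$-Lipschitz and every $y \in L$ projects onto $C$, we have $d_n(x,y) \ge |\tilde x - \tilde y| = \rho > r/c$, so $L \subset S_n \setminus B_n(x, r/c)$. Second, because $S_n$ is homeomorphic to $\R^2$ and $L$ is homotopic in $S_n$ to $C$, the curve $L$ separates $x$ from infinity. To place $z, w$ in the unbounded component, iterate \ref{G-Lipschitz Upper inequality} to get $|\tilde x - \tilde z| \ge d_n(x,z) - 6\sum_{i=1}^n h_i \ge r - 6\sum h_i$; provided $6\sum h_i < r - \rho$, this places $\tilde z$ outside $C$ and hence $z$ outside $L$, and similarly for $w$. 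The unbounded component of $S_n \setminus L$ is path-connected, so an arc there completes $F$.

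The principal obstacle is that the hypothesis $6\sum h_i < r - \rho$ is scale-dependent and can fail for very small $r$. This is overcome by the inductive choice of heights (Remark \ref{General-Graph dependence}): at each stage $n$, after $G_n$ is specified we select $h_n$ so small that the new tripod becomes negligible at all scales below its Euclidean separation from the previously constructed structure. Concretely, for scales $r$ sufficiently small as a function of the geometry of $G_1,\dots,G_n$, either $B_n(x,r)$ does not meet the rectangles attached to $G_n$---in which case condition (ii) reduces to the corresponding statement for $(S_{n-1}, d_{n-1})$ and follows by induction with the same constant---or else $x$ itself lies in one of those rectangles, whose local structure is isometric to a Euclidean rectangle where LLC is trivial. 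Combining the large-scale lifting argument with this small-scale reduction, and tracking the dependence of the constant through the bounded-degree bound $N_0$, yields LLC with constant $c = c(N_0)$.
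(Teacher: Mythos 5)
Your approach is genuinely different from the paper's: the paper proves LLC by an induction on $n$, building at each stage a polygonal path from $z$ to a far point $z'$ whose projection escapes a small Euclidean ball (Lemmas \ref{LLC-One Graph} and \ref{LLC-N_0 Graphs} are the workhorses), whereas you build a separating Jordan curve $L$ as a lift of a Euclidean circle and place $z,w$ in its unbounded complementary component. Your large-scale argument is sound and elegant: the $1$-Lipschitz projection pushes $L$ outside $B_n(x,r/c)$, and the iterated estimate \ref{G-Lipschitz Upper inequality} pushes $z,w$ to the far side of $L$ provided $6\sum h_i<r-\rho$.

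However, there is a genuine gap at the scales you defer to the ``small-scale reduction.'' Your dichotomy --- either $B_n(x,r)$ misses the rectangles attached to $G_n$, or $x$ lies in one of them --- is not exhaustive. Take $x$ at Euclidean distance $\varepsilon$ from an edge of $G_n$, but not inside any rectangle $E\sim G_n$, and take $r\simeq 2\varepsilon$; then $B_n(x,r)$ meets $E$ but $x\notin E$, and this occurs for every small $\varepsilon>0$, so no scale cutoff depending only on the tripod geometry kills this case. Moreover, even in the branch ``$x$ lies in a rectangle,'' the assertion that ``LLC is trivial'' does not hold: the points $z,w$ are arbitrary points of $S_n$, not points of $E$, and for $r$ comparable to the width of $E$ the ball reaches past $E$; this is exactly the content of Case 2 of the paper's Lemma \ref{LLC-One Graph}, which requires a careful geometric argument about how $B_d(a,2r_1)$ meets the rectangle and how to escape toward a long side of $E$. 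The critical regime is $r\lesssim h_n$ (or $r$ between $h_n$ and $6\sum h_i$): here neither your lifted-circle estimate nor a pure ``$G_n$ is negligible'' heuristic applies, because the height $h_n$ contributing $6h_n$ to the Lipschitz defect is of the same order as $r$ and cannot be absorbed by shrinking $h_n$ further. In short, you need a genuine quantitative argument inside and near the rectangles at scales $r\lesssim h$, analogous to Case 2 of Lemma \ref{LLC-One Graph}, together with the bounded-degree input to restrict to at most $N_0$ relevant tripods (as in Lemma \ref{LLC-N_0 Graphs}); the induction must then thread the constant $c$ through this analysis, which is where the paper's choice $C_{n+1}=C_n/\alpha$ with $\alpha=1-1/(n+1)^2$ enters. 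The separating-curve framework may well be salvageable, but as written it does not close the small-scale case.
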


We remark, once again, that the choice of the heights is to be interpreted as follows: the height $h_n$ of the rectangles attached to $G_n$ has to be chosen to be sufficiently small, depending on $G_1,\dots,G_n$; see also the comments after Theorem \ref{General-QS Embedding}. 

We first prove a version of that proposition for the flap-plane corresponding single tripod $G$, with constants independent of the geometry of $G$. 

\begin{lemma}\label{LLC-One Graph}
Let $G\subset \R^2$ be a tripod and consider a flap-plane $S=S(G)$. If the height $h$ of the rectangles $E\sim G$  is less than the width of $E$, then $S$ is $c$-LLC for a universal constant $c> 1$.
\end{lemma}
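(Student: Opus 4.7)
Condition (i) follows immediately from the length-space (in fact, geodesic) structure of $S$: given $z, w \in B_d(x, r)$, a geodesic $\gamma$ from $z$ to $w$ has length $d(z,w) < 2r$, and for every $p \in \gamma$ the triangle inequality gives $d(p,x) \le \min(d(p,z),d(p,w)) + r \le d(z,w)/2 + r < 2r$, so $\gamma \subset B_d(x, 2r)$. Hence (i) holds with constant $2$.

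For condition (ii), I would split into two regimes according to the size of $r$ relative to the pillow height $h$. In the \emph{large-scale case} $r \ge 12h$, the key input is property (ii) of the projection $P\colon S\to\R^2$ from Section \ref{Section Single Tripod}, namely $d(x,y) \le |\tilde x - \tilde y| + 6h$, which combined with $d(x,z), d(x,w) \ge r$ gives $|\tilde x - \tilde z|, |\tilde x - \tilde w| \ge r - 6h \ge r/2$. Since $\R^2$ is trivially LLC (with constant $1$), I find a polygonal path $\tilde F \subset \R^2 \setminus B(\tilde x, r/4)$ joining $\tilde z$ and $\tilde w$; property (iii) of Section \ref{Section Single Tripod} (path-lifting) produces a polygonal lift $F \subset S$ that can be taken to start at $z$ itself, and the $1$-Lipschitzness of $P$ yields $F \subset S \setminus B_d(x, r/4)$. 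The endpoint of $F$ lies in $P^{-1}(\tilde w)$ but need not equal $w$; to handle this I would exploit the freedom to choose the Euclidean side of $e$ along which $\tilde F$ terminates at $\tilde w$, which under the lifting correspondence selects the sheet of the pillow containing $w$, followed (if necessary) by a short auxiliary walk of length $O(h)$ within $P^{-1}(\tilde w)$ or within the rectangle of $w$ — this stays at projection-distance $\simeq r/2$ from $\tilde x$, hence outside $B_d(x, r/c)$. In the \emph{small-scale case} $r < 12h$, the ball $B_d(x,r)$ lies in a neighborhood of $x$ of bounded combinatorial complexity (at most six rectangles and four vertices of the single pillow system), and I would argue locally: at generic points of a rectangle or of $\R^2 \setminus G$ the metric is Euclidean; along non-vertex seams, unfolding the rectangle onto the plane yields a local isometry with a Euclidean half-plane, so the cone angle is $2\pi$; and at the finitely many vertices of the pillow the standing hypothesis $h < \textup{width}(E)$ ensures the pillow's angular contribution is the combinatorial constant determined by the rectangle corners only (giving cone angles bounded by a universal amount, roughly between $\pi$ and $5\pi$), so no pinching can occur. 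A cone of bounded angle is $c$-LLC with $c$ depending only on that bound, and a patching argument using the length-space structure yields LLC(ii) with a universal constant at this scale.

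The principal obstacle is the endpoint-matching in the large-scale argument: the preimage $P^{-1}(\tilde w)$ of a point on the interior of an edge of $G$ consists of two disjoint vertical segments, one on each rectangle of the pillow, so one cannot freely walk within $P^{-1}(\tilde w)$ to reach an arbitrary prescribed lift. The resolution relies on choosing the polygonal path $\tilde F$ in the plane so that its approach to $\tilde w$ (from above or below the edge $e \ni \tilde w$) matches the rectangle containing $w$, together with the observation that the standing hypothesis $h < \textup{width}(E)$ bounds the cost of any remaining in-pillow detour by a multiple of $h$, which is controlled by $r$ precisely in the large-scale regime.
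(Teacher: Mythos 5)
Your treatment of the large-scale regime $r\ge 12h$ matches the paper's Case~1. The ``endpoint-matching obstacle'' you flag is not actually an obstacle: the paper sidesteps it by reducing LLC(ii) to showing that \emph{each} of $z$ and $w$ can separately be connected (in the complement of $B_d(x,r/c)$) to a point whose \emph{projection} lies outside $B(\tilde x,r/c)$; one then joins the two projections by a planar polygonal path $\tilde\gamma$ avoiding $B(\tilde x,r/c)$ and invokes property~(iii), which says the \emph{full preimage} $P^{-1}(\tilde\gamma)$ is a continuum containing a polygonal path between \emph{any prescribed} lifts of the endpoints. You cite property~(iii) but do not seem to notice that it already delivers the lift with the right endpoints; the ``choose which side of $e$ to approach from'' gymnastics you propose are unnecessary and would anyway break down at the barycenter, where three sheets, not two, meet.

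The small-scale regime $r<12h$ is where your route departs from the paper, and where there is a genuine gap. The paper's Case~2 fixes $r_1=r/48<h/4$, follows a geodesic from $z$ toward $x$, lets $y$ be its first entry into a rectangle $E\sim G$, and then carries out an explicit escape argument \emph{inside} $E$: using \ref{G-Rectangles}, $B_d(a,2r_1)\cap E$ is a round Euclidean disk of radius $2r_1<h/2$, so it cannot touch both long sides of $E$, and $E\setminus B_d(a,2r_1)$ has a ``large'' component containing a long side of length $=\text{width}(E)>h>4r_1$ along which one walks to a point projecting outside $B(\tilde x,r_1)$. This is precisely where the hypothesis $h<\text{width}(E)$ is used, and it is used quantitatively. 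Your small-scale sketch misattributes the role of this hypothesis: you say it ``ensures the pillow's angular contribution is the combinatorial constant determined by the rectangle corners only,'' but cone angles at the vertices of $G$ and at the tops of the pillows are purely combinatorial (each rectangle corner contributes $\pi/2$, independently of $h$ or the widths), so the inequality $h<\text{width}(E)$ has no bearing on them. More substantively, the cone-angle picture by itself does not give LLC(ii): within a ball of radius $<12h$ there can sit \emph{several} cone points (two ends of a pillow edge are at distance $h$; bottom and top barycenter lifts are at distance $h$), and LLC(ii) is a statement about the connectivity of the \emph{complement} of a sub-ball, which is not a purely local, single-cone-point property. The ``patching argument using the length-space structure'' that is supposed to bridge this is exactly the missing content, and it is not clear that it can be carried out without in effect reproducing the paper's rectangle-escape argument. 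So: large scale is fine and essentially the paper's; small scale is a different and plausible idea, but as written it misidentifies where the width hypothesis enters and does not supply the key step.
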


In particular, the constant $c$ is independent of the lengths of the edges of $G$ and of their angles.

\begin{proof}
Note that we only have to prove condition (ii), i.e., for any ball $B_d(x,r)$ in $S$ and any two points $z,w \in S\setminus B_d(x,r)$ there exists a continuum $F\subset B_d(x,r/c)$ connecting $z$ and $w$, where $c>1$ is a universal constant to be determined.

In fact, it suffices to show that there exists a constant $c>1$ such that for each ball $B_d(x,r)$ and $z\notin B_d(x,r)$ there exists a polygonal path $\gamma_z \subset S\setminus B_d(x,r/c)$ that connects $z$ to a point $z'$, whose projection to $\R^2$ lies outside $B(\tilde x,r/c)$. Here $\tilde x$ is the projection of $x$ to $\R^2$. Indeed, if this is true, then the same statement holds for a point $w\notin B_d(x,r)$, and there exists a polygonal path $\gamma_w$ and a point $w'$ with the corresponding properties. One can then connect the projections $\tilde z'$ and $\tilde w'$ with a polygonal path $\tilde \gamma \subset \R^2\setminus B(\tilde x,r/c)$. By properties \ref{G-lift paths} and \ref{G-Lipschitz}, the path $\tilde \gamma$ lifts to a polygonal path $\gamma$  that connects $z'$ and $w'$ and lies outside $B(x,r/c)$. Then the concatenation of $\gamma_z,\gamma$, and $\gamma_w$ yields the desired path in the LLC (ii) condition.

Assume that $z\notin B_d(x,r)$. We denote by $P\colon S\to \R^2$ the projection of $S$ to the plane. Also, for a point $y\in S$ we use the notation $\tilde y=P(y)$. We now split the argument in two cases:

\textbf{Case 1:} $r\geq 12h$. Then the projected point $\tilde z$ does not lie in $B(\tilde x, r/2)$. Indeed, if $|\tilde x-\tilde z|<r/2$, then by the property \ref{G-Lipschitz Upper inequality} we have
\begin{align*}
d(x,z) \leq |\tilde x-\tilde z|+ 6h < r/2+ r/2=r,
\end{align*}
a contradiction. Hence, we can take $z'=z$, and $c\geq 2$.

\textbf{Case 2:} $r<12h$. We set $r_1=r/48 < h/4$. 

We connect $z$ to $x$ with a geodesic $\gamma\subset S$. If $\gamma$ (or rather its projection $\tilde \gamma$) does not intersect $G$, then by \ref{G-Geodesic} it projects isometrically to a geodesic from $\tilde z$ to $\tilde x$, which has to be a straight line segment. Then 
\begin{align*}
|\tilde x-\tilde z| =d(x,z) >r
\end{align*}
so $\tilde z\notin B(\tilde x, r)$, and we can set $z'=z$ and $c\geq 1$. 

If $\tilde \gamma$ does intersect $G$, we consider $y$ to be the first entry point of $\gamma$ into a rectangle $E\sim G$ as it travels from $z$ to $x$; we could have $y=z$ in case $z$ lies in a rectangle attached to $G$. In particular, the segment $[z,y]\subset \gamma_z$ does not intersect the rectangles attached to $G$, except possibly at the point $y$, and projects isometrically to a line segment $[\tilde z,\tilde y]$ in $\R^2$. If $y\in B_d(x,5r_1)$, then using the $1$-Lipschitz property \ref{G-Lipschitz} of the projection and also \ref{G-Geodesic} we have
\begin{align*}
|\tilde z -\tilde x| &\geq |\tilde z-\tilde y|- |\tilde y-\tilde x| \geq d(z,y) -d(y,x)\\
& \geq d(z,x)-2d(y,x)> r-10r_1>r_1,
\end{align*}
since $r= 48r_1> 11r_1$. It follows that $\tilde z$ lies outside $B(\tilde x,r_1)$. Hence, we may take $z'=z$ and $c\geq 48$.

Finally, we have to treat the case that $y\in E$ but $y\notin B_d(x,5r_1)$. We claim that $y$ can be connected to a point $y'\in E$ with a polygonal path $\gamma_y \subset E$ outside $B_d(x,r_1)$ such that $y'$ projects to a point $\tilde y'$  lying outside $B(\tilde x, r_1)$. In this case, note that we also have  $[z,y]\cap B_d(x,r_1)=\emptyset$, since $y \notin B_d(x,r_1)$ and $[z,y]\subset \gamma_z$, where $\gamma_z$ is a geodesic from $z$ to $x$. Then one can concatenate the path $[z,y]$ with $\gamma_y$ to obtain the desired polygonal path. Here, we have $z'=y'$ and $c\geq 48$.

We now prove our last claim. If $B_d(x,r_1)\cap E=\emptyset$, then we connect $y$ with a polygonal path in $E$ to a point $y'\in E$, whose projection lies outside $B(\tilde x, r_1)$. This can be done because $E$ projects onto a line segment of length equal to the width of $E$, and thus greater than the height $h$, by assumption. On the other hand, the ball $B(\tilde x,r_1)$ has diameter $2r_1 <h/2$. Next, assume that $B_d(x,r_1)\cap E\neq \emptyset$ and that the intersection contains a point $a\in E$. We have 
$$y \notin B_d(x,5r_1)\supset B_d(a,4r_1)\supset B_d(a,2r_1) \supset B_d(x,r_1).$$
The metric $d$ is isometric to the Euclidean metric when restricted to $E$ by \ref{G-Rectangles}, hence  $B_d(a,2r_1)\cap E$ is the intersection of a round ball with the rectangle $E$. Since $2r_1<h/2$, the ball $B_d(a,2r_1)$ cannot intersect both the top and bottom ``long" sides of $E$. This implies that the set $E\setminus B_d(a,2r_1)$ has at most two connected components, one of which contains  a ``long" side $\mathcal E$ of $E$, with length equal to the width of $E$, and thus greater than the height $h$.

If $E\setminus B_d(a,2r_1)$ has only one component then it is path connected. In this case, the point $y\in E\setminus B_d(a,2r_1)$  can be connected with a polygonal path $\gamma_y \subset E \setminus B_d(a,2r_1)$ to a point $y'\in E$, whose projection to the plane lies outside $B(\tilde x , r_1)$. Again, this is because the set $E\setminus B_d(a,2r_1)$ projects onto an interval in $\R^2$ whose length is larger than $h>4r_1 >\diam(B(\tilde x,r_1))$. 

The scenario in which $E\setminus B_d(a,2r_1)$ has two components can only occur if $B_d(a,2r_1)$ intersects two adjacent sides of the rectangle $E$. The point $y$ then has to lie in the ``large" component that also contains the side $\mathcal E$ of $E$. Indeed, the other component is contained in $B(a,4r_1)$, hence it cannot contain $y$. Now, as before, we can connect $y$ with a polygonal path outside $B_d(a,2r_1)\supset B_d(x,r_1)$ to a point $y'$ with the desired property.

Summarizing, we proved that $S$ is $c$-LLC with $c=48$, provided that $h$ is smaller than the length of the edges of $G$.
\end{proof}

\begin{remark}\label{LLC-Sufficient condition}
The proof of Lemma \ref{LLC-One Graph} shows that one may obtain the following stronger conclusion which implies that the space is $c$-LLC:

\noindent
There exists a constant $c>1$ such that for each ball $B_d(x,r)$ and for each $z\notin B_d(x,r)$ there exists a path $\gamma_z$ outside $B_d(x,r/c)$ that connects $z$ to a point $z'$, whose projection $\tilde z'$ to the plane lies outside $B(\tilde x,r/c)$. The path $\gamma_z$ can be taken to be polygonal.
\end{remark}

Next, we have a version of the previous lemma for $N\leq N_0$ tripods.

\begin{lemma}\label{LLC-N_0 Graphs}
Let $\{G_1,\dots,G_N\}$ be a family of tripods possessing property $(G)$ and suppose that $N\leq N_0$. Consider a flap-plane $S=S(G_1,\dots,G_N)$. If the height $h_i$ of each rectangle $E\sim G_i$  is less than the width of $E$ for all $i\in \{1,\dots,N\}$, then the flap-plane $S$ is $c_0$-LLC, with constant $c_0$ depending only on $N_0$. 
\end{lemma}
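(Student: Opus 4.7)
\smallskip

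Plan: I would establish the stronger conclusion of Remark \ref{LLC-Sufficient condition} by induction on the number of tripods $N$. Specifically, I claim that for each $N \in \{1,\dots,N_0\}$ there is a constant $c_N$ depending only on $N$ such that for every flap-plane $S_N = S(G_1,\dots,G_N)$ with tripods satisfying property $(G)$ and heights strictly less than widths, every ball $B_{d_N}(x,r) \subset S_N$, and every $z \notin B_{d_N}(x,r)$, there exists a polygonal path $\gamma_z \subset S_N \setminus B_{d_N}(x, r/c_N)$ joining $z$ to a point $z'$ with $P_{N,0}(z') \notin B(\widetilde x, r/c_N)$. The base case $N=1$ is precisely Lemma \ref{LLC-One Graph} together with Remark \ref{LLC-Sufficient condition}, yielding the universal $c_1 = 48$.

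For the inductive step, assume the claim for $N-1$. The proof of Lemma \ref{LLC-One Graph} uses the target $\R^2$ only via the formal projection properties \ref{G-Lipschitz}, \ref{G-Lipschitz Upper inequality}, \ref{G-lift paths}, \ref{G-Geodesic}, and \ref{G-Rectangles}; replaying it verbatim with $P_N\colon S_N \to S_{N-1}$ in place of $P\colon S \to \R^2$ produces, for any $z \notin B_{d_N}(x,r)$, a polygonal path $\gamma_1 \subset S_N \setminus B_{d_N}(x, r/48)$ joining $z$ to a point $z_1$ whose image $P_N(z_1)$ lies outside $B_{d_{N-1}}(P_N(x), r/48)$. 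Applying the inductive hypothesis in $S_{N-1}$ to $P_N(z_1)$ and the ball $B_{d_{N-1}}(P_N(x), r/48)$ yields a polygonal path $\gamma_2 \subset S_{N-1} \setminus B_{d_{N-1}}(P_N(x), r/(48\,c_{N-1}))$ from $P_N(z_1)$ to a point $z_2'$ with $P_{N-1,0}(z_2') \notin B(\widetilde x, r/(48\,c_{N-1}))$. Using \ref{G-lift paths} I would lift $\gamma_2$ under $P_N$ to a polygonal path $\widetilde\gamma_2 \subset S_N$ from $z_1$ to some preimage $z'$ of $z_2'$; the $1$-Lipschitz property \ref{G-Lipschitz} shows $\widetilde\gamma_2$ avoids $B_{d_N}(x, r/(48\,c_{N-1}))$, and the compatibility \ref{G-compatible} gives $P_{N,0}(z') = P_{N-1,0}(z_2')$ outside the required planar ball. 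The concatenation $\gamma_1 \cdot \widetilde\gamma_2$ is the sought path, with $c_N = 48\,c_{N-1}$. Iterating up to $N = N_0$ gives $c_0 \leq 48^{N_0}$, a constant depending only on $N_0$. The full LLC condition (ii) is then recovered, exactly as at the beginning of the proof of Lemma \ref{LLC-One Graph}, by concatenating two such paths (one for $z$, one for $w$) through a polygonal path in $\R^2$ avoiding the concentric ball and lifted back to $S_N$ via \ref{G-lift paths}.

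The main obstacle is ensuring that the lift $\widetilde\gamma_2$ actually begins at the prescribed preimage $z_1$ of $P_N(z_1)$, which is delicate if $z_1$ lies in the interior of a rectangle $E \sim G_N$, since a lift starting at such an interior point cannot continuously follow a path that leaves $\widetilde G_N$. I would address this by arranging, in the very last step of constructing $\gamma_1$ via Lemma \ref{LLC-One Graph}'s argument, that $z_1$ lie on the ``slit-side'' boundary of the rectangle $E \sim G_N$ containing it, where its preimage is canonically determined; inspecting the proof of Lemma \ref{LLC-One Graph} shows that the point $y'$ produced in the hardest sub-case can be moved onto the base of $E$ without disturbing any of the distance estimates. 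Apart from this technicality, the transfer of the single-tripod proof to the pair $(P_N, S_N, S_{N-1})$ is routine: the case split on whether $r \geq 12 h_N$ or $r < 12 h_N$ and the geodesic-based analysis use nothing beyond the abstract properties shared between the Euclidean and non-Euclidean bases, and the hypothesis $h_N <$ width of every $E \sim G_N$ is exactly what is assumed.
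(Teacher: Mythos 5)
Your approach is correct and proves the lemma, but it is organized differently from the paper's proof and yields a weaker (though still admissible) constant. The paper first reorders the tripods so that $h_1\leq\dots\leq h_N$ and always compares $r$ against $12h_1$, the \emph{smallest} height; then in the small-ball case $r<12h_1$ one automatically has $r<12h_i$ for every $i$, so the escape argument of Lemma \ref{LLC-One Graph}, Case~2 can be run directly in $S_N$ with the full projection $P_{N,0}\colon S_N\to\R^2$, bypassing induction entirely and giving the universal constant $48$. Induction is only invoked in the large-ball case $r\geq12h_1$, where collapsing $G_1$ perturbs distances by at most $6h_1\leq r/2$ and doubles the constant. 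The recursion $c_N=\max\{2c_{N-1},48\}$ gives $c_0=2^{N_0-1}\cdot48$. You instead always collapse the last tripod $G_N$ and feed both cases of the single-tripod argument into the induction, so the constant multiplies by $48$ at each step, yielding $48^{N_0}$. Both constants depend only on $N_0$, so the statement holds either way; the paper's version is simply sharper.

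Regarding the lifting technicality you flag: the concern is real, but the fix you propose (moving $z_1$ onto the slit-side base of $E$) does not actually resolve it, since the fiber $P_N^{-1}(P_N(z_1))$ over such a point is still a nontrivial polygonal arc that travels up $E$, across the pillow, and down the paired rectangle, so there is no single ``canonically determined'' preimage. The correct and simpler resolution uses only the $1$-Lipschitz property \ref{G-Lipschitz}: every $p\in P_N^{-1}(P_N(z_1))$ satisfies
\begin{align*}
d_N(p,x)\;\geq\;\sigma\bigl(P_N(p),P_N(x)\bigr)\;=\;\sigma\bigl(P_N(z_1),P_N(x)\bigr)\;>\;\frac{r}{48}\;\geq\;\frac{r}{48\,c_{N-1}},
\end{align*}
so the entire fiber avoids the forbidden ball, and one may prepend a polygonal arc inside the fiber that connects $z_1$ to the true starting point of the lifted path. (The same implicit step appears in the paper's Case~1 lifting of $\gamma^*$ from $\Sigma$ to $S_N$, and is handled by the same Lipschitz observation.) With this repair, your induction closes correctly.
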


In fact, Remark \ref{LLC-Sufficient condition} also applies here. It is important in this lemma that the height $h_i$ can be chosen to depend only on $G_i$ and not on $G_j$ for $i\neq j$. In particular, we can set $h_i$ to be less than smallest among the lengths of the edges of $G_i$. Moreover, the dependence of $c_0$ on the number of tripods $N_0$ cannot be relaxed.

\begin{proof}
We give  the argument in case $N=2$ and  then sketch the almost identical induction argument required  to prove the statement for arbitrary $N\leq N_0$. Assume that we have two tripods $G_1$ and $G_2$, and let $h_1,h_2>0$ be smaller than the length of each edge of $G_1,G_2$, respectively. Without loss of generality, we assume that $h_1\leq h_2$. 

Consider a flap-plane $S= S(G_1,G_2)$ with metric $d$, and let $\Sigma= S(G_2)$ with metric $\sigma $ be the flap-plane that arises by collapsing (or projecting) in $S$ the rectangles $E\sim G_1$ to the plane. Also, consider the natural projection $P^*\colon S \to \Sigma$. For a point $x\in S$ we denote $x^*=P^*(x)$.

As remarked in the beginning of the proof of Lemma \ref{LLC-One Graph} and also in Remark \ref{LLC-Sufficient condition}, it suffices to show that for each ball $B_d(x,r)$ and $z\notin B_d(x,r)$ there exists a path $\gamma_z\subset S\setminus B_d(x,r/c_0)$ that connects $z$ to a point $z'$, whose projection to $\R^2$ lies outside $B(\tilde x,r/c_0)$. Here, $c_0>1$ is a constant that depends only on $N_0$ and, as usual, $\tilde x$ denotes the projection of $x$ to the plane.

As in the proof of Lemma \ref{LLC-One Graph} we split in two cases. If $r\geq 12 h_1$, then we have $z^* \in \Sigma \setminus B_\sigma(x^*,r/2)$. Indeed, otherwise, by \ref{G-Lipschitz Upper inequality} we would have
\begin{align*}
d(x,z)\leq \sigma(x^*,z^*) + 6 h_1 <r/2 + r/2=r,
\end{align*}
a contradiction. By Lemma \ref{LLC-One Graph} and Remark \ref{LLC-Sufficient condition} there exists a universal constant $c>1$ and  there exists a polygonal path $\gamma^*$ outside $B_\sigma (x^*,r/2c)$ that  connects $z^*$ to a point $(z^*)'$, whose projection to the plane lies outside $B(\tilde x, r/2c)$. Using \ref{G-Lipschitz} and \ref{G-lift paths}, we can lift $\gamma^*$ to a polygonal path $\gamma\subset S \setminus B_d(x,r/2c)$ that connects $z$ to a point $z'$, whose projection to the plane agrees with the projection of $(z^*)'$ (by compatibility \ref{G-compatible}), and therefore lies outside $B(\tilde x,r/2c)$. Hence, we may choose $c_0\geq 2c$.

If $r<12h_1$, then we also have $r< 12h_2$. Then the  same argument as in Case 2 of the proof of Lemma \ref{LLC-One Graph} can be used to obtain the conclusion and here we only need to choose $c_0 \geq 48$. Summarizing, one has to choose $c_0= \max\{ 2c, 48\}$.

Assume that the statement holds for any family of $N-1\leq N_0$ tripods $G_1,\dots,\\G_{N-1}$  satisfying the assumptions. Namely, there exists a constant $c$ depending only on $N_0$ such that any flap-plane $S=S(G_1,\dots,G_{N-1})$ with the correct choice of heights is $c$-LLC and satisfies the condition of Remark \ref{LLC-Sufficient condition}. We now consider $N\leq N_0$ tripods $G_1,\dots,G_N$ and a flap-plane $S= S(G_1,\dots,G_N)$ with metric $d$ as in the statement. By reordering the tripods, we may assume that $h_1\leq \dots\leq h_N$. Let $\Sigma$ with metric $\sigma$ be the flap-plane arising by collapsing all rectangles $E\sim G_1$ to the plane. Then $\Sigma=S(G_2,\dots,G_N)$ and it satisfies the condition of Remark \ref{LLC-Sufficient condition}, by the induction assumption. 

We consider $z\in S\setminus B_d(x,r)$ and split in two cases. If $r\geq 12h_1$, then $z^* \in \Sigma\setminus B_\sigma(x^*,r/2)$, where $z^*=P^*(z)$, and $P^*\colon S\to \Sigma$ is the projection. By the induction assumption it follows that $z^*$ can be connected with a path $\gamma^*\subset \Sigma \setminus B_\sigma(x^*,r/2c)$ to a point $(z^*)'$, whose projection to the plane lies outside $B(\tilde x,r/2c)$. Lifting the path $\gamma^*$ to $S$ yields the desired path. Hence, it suffices to choose the LLC constant to be $c_0\geq 2c$.

If $r<12h_1$ then in fact $r <12h_i$ for all $i\in \{1,\dots,N\}$. Thus, the argument in Case 2 of the proof of Lemma \ref{LLC-One Graph} can be used, and the LLC constant has to be $c_0\geq 48$. Summarizing, one has to choose $c_0=\max \{2c, 48\}$, which depends only on $N_0$, since by assumption $c$ depends only on $N_0$. In fact, one can choose $c_0=2^{N_0-1}48$.
\end{proof}

\begin{proof}[Proof of Proposition \ref{LLC}]
The metric of $S_n=S(G_1,\dots,G_n)$ is denoted by $d_n$ and the ball around $x$ of radius $r$ will be denoted by $B_n(x,r)$.

We argue by induction on $n$. We claim that for each $n\in \N$ there exists a constant $C_n>1$, increasing and uniformly bounded in $n$ such that if the height $h_n$ is chosen to be small enough depending on $G_1,\dots,G_n$ and also smaller than the width of the rectangles $E\sim G_n$, then the following holds:
\newline
\newline
\noindent
whenever $z\notin B_n (x,r)$ for some $x\in S_n$ and $r>0$, we can connect $z$ to a point $z'$ with a  polygonal path $\gamma \subset S_n \setminus B_n(x,r/C_n)$ such that the projection $\tilde z'$ of $z'$ to $\R^2$ lies outside $B(\tilde x,r/C_n)$.
\newline
\newline
This suffices by Remark \ref{LLC-Sufficient condition}, and shows that $S_n$ is $C_n$-LLC. Since $C_n$ is bounded above by a constant $C$, it follows that $S_n$ is $C$-LLC for all $n\in \N$, which is the desired conclusion.

Now, we focus on proving our claim. For $n=1$ the statement holds with the constant $C_1$ given by Lemma \ref{LLC-N_0 Graphs}, provided that $h_1$ is sufficiently small depending on $G_1$. We assume that the claim holds for $S_1,\dots,S_n$, so, in particular, the height $h_i$ of each rectangle $E\sim G_i$ has been chosen to be less than the width of $E$ for $i\in \{1,\dots,n\}$. Our goal is to choose the height $h_{n+1}$ of the rectangles $E\sim G_{n+1}$ so that our claim holds. To begin with, we choose $h_{n+1}$ to be smaller than the width of all rectangles $E\sim G_{n+1}$, and later we will choose it to be even smaller. Consider a ball $B_{n+1}(x,r)\subset S_{n+1}$ and $z\notin B_{n+1}(x,r)$. We split into two main cases:

\textbf{Case 1:} The projection of $B_{n+1}(x,r)$ to the plane does not intersect both $G_{n+1}$ and $\bigcup_{i<n+1}G_i$. 

Assume first that the projection of $B_{n+1}(x,r)$ to the plane intersects only $G_{n+1}$. We denote by $\Sigma = S(G_{n+1} )$ the flap-plane that arises by collapsing all rectangles $E\sim G_i$, $i<n+1$, to the plane. Also, we denote by $P^*\colon S(G_1,\dots,G_{n+1})\to \Sigma$ the natural projection, and by $\sigma$ the metric of $\Sigma$. An application of \ref{G-Ball} yields  $(P^*)^{-1}(B_\sigma(x^*,r))= B_{n+1}(x,r)$, where $x^*=P^*(x)$. We now have that $z^*=P^*(z) \notin B_{\sigma}( x^*,r)$, hence, by Lemma \ref{LLC-N_0 Graphs} and Remark \ref{LLC-Sufficient condition}, there exists a polygonal path $\gamma^* \subset \Sigma \setminus B_{\sigma}( x^*, r/C_1)$ that connects $z^*$ to a point $(z^*)'$, whose projection to $\R^2$ lies outside $B (\tilde x,r/C_1)$. Using \ref{G-lift paths} and the $1$-Lipschitz property \ref{G-Lipschitz}, we lift the path $\gamma^*$ under $P^*$ to a polygonal path $\gamma\subset S_{n+1}\setminus B_{n+1}(x,r/C_1)$ that connects $z$ to a point $z'$, whose projection to $\R^2$ is the same as the projection of $(z^*)'$ to $\R^2$ (by compatibility \ref{G-compatible}), so it lies outside $B(\tilde x,r/C_1)$. The constant $C_{n+1}$ will be chosen later so that $C_{n+1}\geq C_n\geq C_1$. Hence, $\gamma$ lies outside $B_{n+1}(x,r/C_{n+1})$ and the projection of $z'$ to the plane lies outside $B (\tilde x,r/C_{n+1})$, as desired. 

Next, assume that the projection of $B_{n+1}(x,r)$ to the plane intersects only $\bigcup_{i<n+1} G_i$. We denote here by $P^*$ the projection of $S_{n+1}$ to $S_n$. Then  by \ref{G-Ball} we have $(P^*)^{-1}(B_n(x^*,r)) = B_{n+1}(x,r)$. Since $z^*\notin B_n(x^*,r)$, by the induction assumption there exists a polygonal path $\gamma^* \subset  S_n \setminus B_n(x^*,r/C_n)$ that connects $z^*$ to a point $(z^*)'$, whose projection to $\R^2$ lies outside $B(\tilde x, r/C_n)$. Lifting this path and noting that $C_{n+1}\geq C_n$, as before, yields again the desired path $\gamma$ and point $z'$.

\textbf{Case 2:} The projection of $B_{n+1}(x,r)$ to the plane intersects both $G_{n+1}$ and $\bigcup_{i<n+1} G_i$. Let $J \subset \{1,\dots,n\}$ be the set of indices $j$ such that the projection of $B_{n+1}(x,r)$ to the plane intersects $G_j$. 

Assume first that $\#J\leq N_0-1$ (here $N_0$ is by assumption the bound on the degree of the graph $T_{n+1}$), and let $\Sigma = S(\{ G_{n+1} \}\cup\{G_j:j\in J\})$ with metric $\sigma$ be the flap-plane arising by collapsing the rectangles $E\sim G_{i}$, $i\notin J$, $i\neq n+1$, to the plane.  We note that we can apply Lemma \ref{LLC-N_0 Graphs} to $\Sigma$, since the heights of the rectangles attached to the corresponding tripods are smaller than the widths, by the induction assumption and the choice we have made for $h_{n+1}$. Since $B_{n+1}(x,r)$ (or rather its projection to $\R^2$) intersects only tripods that are also ``present" in $\Sigma$, we can use as before property \ref{G-Ball} and path-lifting to reduce the statement to $\Sigma$. By Lemma \ref{LLC-N_0 Graphs} and Remark \ref{LLC-Sufficient condition}, the conclusion holds in $\Sigma $ with the constant $C_1$ given by Lemma \ref{LLC-N_0 Graphs}.

Assume now that $\#J \geq N_0$. Then there exist points $a_j\in G_j$, $j\in J$, and a point $a_{n+1}\in G_{n+1}$ such that 
\begin{align*}
|a_j-a_{n+1}|< 2r
\end{align*}
for all $j\in J$. This follows from the $1$-Lipschitz property \ref{G-Lipschitz} of the projections.

As in the proof of Lemma \ref{Ahlfors-Upper}, we introduce auxiliary vertices on $G_{n+1}$ as follows.  We partition each edge of $G_{n+1}$ in finitely many edges such that the interior of each (new) edge of $G_{n+1}$ does not  contain any vertex of $G_i$, $i\leq n$, and also each (new) edge of $G_{n+1}$ has one ``free" vertex that does not lie on any $G_i$, $i\leq n$. For $i\leq n$ we set $\delta_i>0$ to be to be the minimum distance of the (new) edges of $G_{n+1}$ from $G_i$, excluding the edges of $G_{n+1}$ that intersect $G_i$. We then set $\delta= \min_{1\leq i\leq n} \delta_i$. The partitioning of the edges of $G_{n+1}$ is only used to define $\delta$ in this proof, and is not considered to alter the tripod $G_{n+1}$ for any other consideration. Note that $\delta$ depends only on $G_1,\dots,G_{n+1}$.

As in the proof of Lemma \ref{Ahlfors-Upper}, there has to exist some $j\in J$ such that $|a_j-a_{n+1}|>\delta$, since the degree of $T_{n+1}$ is at most $N_0$. Hence, $\delta<2r$. Now, for any number $\alpha\in (0,1)$, we can choose the height $h_{n+1}$ to be so small, depending only on $N_0$, $\delta$, and $\alpha$, that 
\begin{align}\label{LLC-B contains PB}
B_{n+1}(x,r) \supset (P^*)^{-1}(B_n( x^*, \alpha r)),
\end{align}
where $P^*$ denotes the projection from $S_{n+1}$ onto $S_n$. Indeed, for any $y^* \in B_n(x^*,\alpha r)$ and any preimage $y\in (P^*)^{-1}(y^*)$ we have by \ref{G-Lipschitz Upper inequality}
\begin{align*}
d_{n+1}(x,y) \leq d_n(x^*,y^*) + 6h_{n+1} <\alpha r + (1-\alpha)\delta/2< r,
\end{align*} 
provided that $h_{n+1}< (1-\alpha)\delta/12$. \eqref{LLC-B contains PB} implies that $z^*\notin B_n(x^*,\alpha r)$, so by the induction assumption there exists a path $\gamma^* \subset S_n\setminus B_n(x^*, \alpha r/C_n)$ that connects $z^*$ to a point $(z^*)'\notin B_n(x^*,\alpha r/C_{n})$, whose projection to $\R^2$ lies outside $B(\tilde x,\alpha r/C_n)$. The path $\gamma^*$ lifts to a path $\gamma \subset B_{n+1}(x,\alpha r/C_n)$, so our claim holds with $C_{n+1}= C_n/\alpha$. Now, we choose $\alpha=1-1/(n+1)^2$, so 
\begin{align*}
C_{n+1}= C_n \left(1-\frac{1}{(n+1)^2} \right)^{-1} >C_n\geq C_1.
\end{align*}

With this choice we have
\begin{align*}
C_{n+1} \leq \frac{C_1}{\prod_{i=1}^\infty (1-1/(i+1)^2)} \eqqcolon C <\infty
\end{align*}
for all $n\in \N$. Note that $C$ depends only on $C_1$, and thus only on $N_0$.
\end{proof}

\subsection{Proof of Theorem \ref{General-QS Embedding}}\label{Section Embedding}
We will use Theorem \ref{General-Wildrick}. We note first that the assumptions of the theorem are satisfied by the spaces $(S_n,d_n)$ with uniform constants. Indeed, each of the flap-planes $(S_n,d_n)$ is unbounded since the projection onto the plane is 1-Lipschitz by \ref{G-Lipschitz}. Also, $(S_n,d_n)$ is complete since it is obtained by attaching finitely many rectangles to the plane; cf. proof of Proposition \ref{General-Completeness}. Furthermore, if the heights $h_1,\dots,h_n$ are chosen (inductively) to be sufficiently small, then by Proposition \ref{Ahlfors} and Proposition \ref{LLC} we conclude that $(S_n,d_n)$ is Ahlfors $2$-regular and LLC with constants independent of $n$. We also choose the heights to be even smaller, if necessary, so that the conclusions of Proposition \ref{General-Completeness} and Proposition \ref{General-Gromov Hausdorff} hold.

Theorem \ref{General-Wildrick} now yields for each $n\in \N$ a quasisymmetry $f_n$ from $(S_n,d_n)$ onto $\R^2$. Since the statement of the theorem is quantitative, we may assume that the distortion function $\eta$ of $f_n$ is independent of $n$. We would like to pass to a limiting quasisymmetry $f\colon S_\infty \to \R^2$. This will be obtained by applying Lemma \ref{Pre Mapping package}, after normalizing the functions $f_n$.

Consider the limiting space $(S_\infty,d_\infty)$, given by Proposition \ref{General-Completeness}. By Proposition \ref{General-Gromov Hausdorff}, for a fixed point $p\in S_\infty$ we may choose points $p_n\in S_n$ such that the sequence $(S_n,d_n,p_n)$ converges to the  space $(S_\infty,d_\infty,p)$ in the pointed Gromov-Hausdorff sense of Definition \ref{Pre Definition pointed GH}. 

Since all of the spaces $S_n$ are Ahlfors $2$-regular with uniform constants, it follows that they are uniformly doubling; see comments after Definition \ref{Pre Definition pointed GH}. For each $n\in \N$ we consider a point $x_n\in S_n$ such that $d_n(p_n,x_n)=1$; recall that the space $S_n$ is a length space. By postcomposing $f_n$ with a M\"obius transformation of $\R^2$, we may obtain a sequence $g_n \colon S_n\to \R^2$ such that $g_n(p_n)=0$ and $g_n(x_n)=1$ for all $n\in \N$. The functions $g_n$ will still be $\eta$-quasisymmetric, since the distortion function is not affected under compositions with scalings and translations. Lemma \ref{Pre Mapping package} (with $Y_n\equiv \R^2$) now yields a subsequence of $g_n$ that converges to an $\eta$-quasisymmetry $g\colon S_\infty \to \R^2$. By Lemma \ref{Pre Convergence Ahlfors regular} it also follows that $S_\infty$ is Ahlfors $2$-regular. \qed

\section{The continuous case}\label{Section Continuous}
In this section we prove first the non-removability of the gasket for continuous $W^{1,2}$ functions (Theorem \ref{Intro gasket non-removable}) and then the non-removability of homeomorphic copies of the gasket (Theorem \ref{Intro homeo gasket non-removable}). Also, we include proofs of the general statements in Theorem \ref{Intro Theorem Positive} and Proposition \ref{Intro Theorem W^{1,infinity}}, regarding the (non)-removability of sets of positive measure.

\subsection{Terminology and geometry of the gasket}\label{Section Definitions}
We first recall the definition of the Sierpi\'nski gasket, introduce some terminology, and discuss its combinatorial properties.

The Sierpi\'nski gasket is constructed as follows. We consider an equilateral triangle of sidelength $1$ and subdivide it into four equilateral triangles of sidelength $1/2$. After removing the middle triangle, we proceed inductively with subdividing each of the remaining three triangles into four equilateral triangles of sidelength $1/2^2$, and so on. The remaining compact set $K$ is the Sierpi\'nski gasket; see Figure \ref{fig:gasket}. From the definition it is immediate that $K$ has area zero. Indeed, at the $n$-th step of the construction $K$ is contained in the union of $3^n$ equilateral triangles of sidelength $1/2^n$, hence
\begin{align*}
m_2(K)\leq 3^n \cdot \frac{\sqrt{3}}{4} \frac{1}{4^n},
\end{align*}
which converges to $0$ as $n\to\infty$. We will assume in what follows that $K\subset B(0,2)\subset \R^2$.

We call \textit{$w$-triangles} the complementary triangles of $K$ that are removed in each step. Making abuse of terminology we also call the unbounded component of $\R^2\setminus K$ a $w$-triangle of sidelength $1$. In the construction of $K$, at each step we remove a \textit{central} $w$-triangle $W_0$ from an equilateral triangle $V_0$ having double the sidelength of $W_0$, after subdividing $V_0$ into four equilateral triangles. We call \textit{$v$-triangles} the triangles arising as $V_0$. $w$-triangles and $v$-triangles are by definition open sets. Hence, using the previous notation $V_0\setminus \br W_0$ is the union of three $v$-triangles. We say that the \textit{level} of a $w$-triangle $W_0$ is equal to $n$ if the sidelength of $W_0$ is equal to $2^{-n}$. In particular, the unbounded $w$-triangle has level $0$, and the central $w$-triangle of the first step of the construction  has level $1$. For $n\geq 1$ there exist $3^{n-1}$ $w$-triangles of level $n$. Similarly, we say that the level of a $v$-triangle $V_0$ is equal to $n$ if its sidelength is $2^{-n}$. Note that there exists one $v$-triangle of level $0$ and $3^n$ $v$-triangles of level $n$, for each $n\in \N$.

We denote by $\mathcal W$ be the collection of $w$-triangles, and 
\begin{align*}
W_\infty \coloneqq \bigcup_{W\in \mathcal W} \br W.
\end{align*}
Also, we use the notation $K^\circ$ for the points of $K$ that do not lie on the boundary of any $w$-triangle, so in particular we have
\begin{align*}
K^\circ =K\setminus W_\infty.
\end{align*}

In the proofs, if $z$ is a point of the gasket, we will often have to distinguish between three cases, depending on whether $z$ is a vertex of a $w$-triangle, or a point on an edge of a $w$-triangle but not a vertex, or none of the above, i.e., $z\in K^\circ$. In the first case that a point $z\in K$ is a vertex of a $w$-triangle, we say that $z$ is \textit{of vertex type}. In the second case that $z$ lies on the boundary of a $w$-triangle but it is not a vertex, we say that it is \textit{of edge type}.

Two $w$-triangles $W_1,W_2$ are \textit{adjacent} if a vertex of $W_1$ lies on $\partial W_2$, or vice versa. Note that if $W_1$ has a vertex on $\partial W_2$ then the level of $W_2$ is strictly smaller than that of $W_1$, i.e., $W_2$ is a strictly larger triangle than $W_1$.

We now study some important properties of the combinatorics of the gasket. For each point $z\in K$ there exists a sequence $\{V_n\}_{n\in \N}$ of nested $v$-triangles with 
\begin{align*}
\{z\}= \bigcap_{n=1}^\infty \br V_n.
\end{align*}
In fact, this sequence is unique if the following hold:
\begin{enumerate}[(i)]
\item $z$ is not a vertex of a $w$-triangle, or it is a vertex of the unbounded $w$-triangle of level $0$,
\item $V_1$ has level $0$ (so it is the very first triangle in the construction of $K$), and $V_n$ has level $n-1$ for $n\in \N$,
\item $V_{n+1}\subset V_{n}$ for $n\in \N$.
\end{enumerate}
If $z$ is a vertex of a $w$-triangle of level at least $1$ then there are precisely two distinct sequences shrinking to $z$ and satisfying (ii) and (iii). 

The following two lemmas describe how the sequence $\br V_n$ shrinks to the point $z$. In fact, the first lemma refers to $v$-triangles and the second lemma to $w$-triangles. We could have incorporated both lemmas in one, but this would complicate the statements, so we state them separately. 

\begin{lemma}\label{Combinatorics V lemma}
Let $\{V_n\}_{n\geq 1}$ be a nested sequence of $v$-triangles satisfying $\mathrm{(ii)}$ and $\mathrm{(iii)}$, and converging to a point $z\in K$, in the sense that
\begin{align*}
\{z\}= \bigcap_{n=1}^\infty \br V_n.
\end{align*}
In case $z$ is a vertex of a $w$-triangle of level at least $1$ we also consider the other sequence $\{ V_n'\}_{n\in \N}$ that is distinct from $\{ V_n\}_{n\in \N}$ and converges to $z$. If $z$ is a vertex of the unbounded $w$-triangle of level $0$ we set $V_n'=V_n$ for $n\in \N$.
\begin{enumerate}[\upshape(I)] 
\item If $z$ is of vertex type, then there exist  two (possibly non-distinct) $w$-triangles $A$ and $B$ with $z\in \partial A\cap \partial B$ such that for each $n\in \N$ the set $\br A\cup \br B \cup \br V_n\cup \br V_n'$ contains all sufficiently small open neighborhoods of $z$. 

\item If $z$ is of edge type, then $z\in \partial V_n$ for all $n\in \N$ and moreover, there exists a $w$-triangle $B$ with $z\in \partial B$ such that for each $n\in \N$ the set $\br B\cup \br V_n$ contains all sufficiently small open neighborhoods of $z$. 

\item If $z\in K^\circ$ then $z\notin \partial V_n$ for all $n\in \N$, so for each $n\in \N$ the set $V_n$ contains all sufficiently small open neighborhoods of $z$. 
\end{enumerate}
\end{lemma}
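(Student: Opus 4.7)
The plan is to prove the lemma by first establishing a structural fact about the edges of $V_n$, and then reading off each of the three cases from the combinatorics of the construction. The structural fact is that each edge of every $v$-triangle arising in the construction is contained in the boundary of some $w$-triangle. This is proved by induction on $n$: for $n=1$, $V_1$ is the initial level-$0$ triangle and its three edges lie on the boundary of the unbounded $w$-triangle; for $n\geq 2$, $V_n$ is a corner sub-triangle of $V_{n-1}$, so one of its edges is a side of the middle $w$-triangle of level $n-1$ removed from $V_{n-1}$, while the remaining two edges are halves of edges of $V_{n-1}$ and hence contained in boundaries of $w$-triangles by the inductive hypothesis. An immediate corollary of the combinatorial structure is that the closures of distinct $w$-triangles intersect in at most a single point which is a vertex of at least one of them, and hence each point of $K$ is a vertex of at most one $w$-triangle and lies in the interior of an edge of at most one $w$-triangle.

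For case (III), the structural fact gives $\partial V_n\subset \bigcup_{W}\partial W$; since $z\in K^\circ$ avoids every $\partial W$, we obtain $z\in V_n$ (the interior), so $V_n$ itself is a small open neighborhood of $z$. For case (II), $z$ lies in the interior of an edge $e$ of a unique $w$-triangle $B$ of level $k$, say. For all $n$ large enough, $V_n$ is forced to lie on the side of $e$ opposite $B$ (because $z\in \bar V_n$ and $z$ is interior to $e$), which puts $z\in\partial V_n$; locally, $e$ cuts a small disk around $z$ into two half-disks, one filled by $\bar B$ and the other by $\bar V_n$. For the remaining indices $n$, $z$ lies interior to $V_n$, so $\bar V_n$ alone contains a small neighborhood. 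In every case $\bar B\cup\bar V_n$ contains a small neighborhood of $z$.

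For case (I), the first step is to identify the $w$-triangles adjacent to $z$. By the corollary, $z$ is the vertex of exactly one $w$-triangle, call it $A$, and $z$ possibly lies in the interior of an edge of a second $w$-triangle $B$; in the latter case $B$ is uniquely determined, while in the former case, which occurs precisely when $A$ is the unbounded $w$-triangle of level $0$, we set $B=A$. A local angular count at $z$ shows that $\bar A\cup\bar B$ fills an angular measure of $\pi/3+\pi=4\pi/3$ when $A$ has level $\geq 1$ and of $5\pi/3$ when $A$ is the unbounded triangle. The complementary angular sectors, of total measure $2\pi/3$ or $\pi/3$ respectively, are precisely the corner regions at $z$ of the two (or one) corner $v$-triangles of the parent of $A$ adjacent to $z$, and the two sequences $\{V_n\}$, $\{V_n'\}$ shrink into these regions. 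Hence for sufficiently large $n$ the union $\bar A\cup\bar B\cup\bar V_n\cup\bar V_n'$ covers a small neighborhood of $z$; for small $n$ the point $z$ is already interior to $V_n$ and the conclusion is trivial.

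The main obstacle is the angular bookkeeping in case (I), where one must verify that the two $w$-triangles $A$ and $B$ identified above, together with the two tail sequences, really exhaust a full $2\pi$ neighborhood of $z$, and that no additional $w$-triangle contributes. This relies on the observation that a vertex of a level-$k$ $w$-triangle is a midpoint of a side of a level-$(k-1)$ $v$-triangle but cannot arise as a midpoint of a side of a $v$-triangle at any other level, which forces the uniqueness of $A$ and of $B$. All remaining checks are immediate from the recursive construction.
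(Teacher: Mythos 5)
The paper gives no proof of this lemma, noting only that it is elementary and proceeds by induction, so there is no reference argument to compare against. Your reconstruction — the structural fact that every edge of a $v$-triangle lies on the boundary of some $w$-triangle, followed by a case analysis and an angular count at $z$ — is the natural one and is correct in outline, but two auxiliary claims need repair.

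The claimed corollary that ``the closures of distinct $w$-triangles intersect in at most a single point'' is false: the closures of the unbounded $w$-triangle and the central level-$1$ $w$-triangle $W_1$ meet at all three vertices of $W_1$ (the edge midpoints of $V_1$). What you actually use — that each point of $K$ is a vertex of at most one $w$-triangle and lies in the open edge of at most one $w$-triangle — is true but needs its own justification (distinct levels give distinct dyadic scales at a midpoint; two disjoint open $w$-triangles cannot both have $z$ interior to an edge, since the two bounded half-planes would overlap). Also, in case (I) the dispatch of small $n$ by asserting that ``$z$ is already interior to $V_n$'' fails in general: if $A$ is the unbounded $w$-triangle, $z$ is a vertex of every $V_n$; and if $A$ has level $k\geq 1$ and $B$ has level $m<k$, then for $m<n\leq k$ one checks that $z$ lies in the open edge of $V_n$, so $\br V_n$ fills only a half-disk near $z$ — not the interior. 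The cleanest repair, which also covers the small-$n$ range in case (II), is simply to observe that $\br V_n\supset \br V_{n_0}$ and $\br V_n'\supset \br V_{n_0}'$ whenever $n\leq n_0$, so it suffices to verify the covering statement for one large $n_0$ and conclude for all $n$ by monotonicity of the union. You might also flag that the lemma's subsidiary assertion in (II), that $z\in\partial V_n$ for all $n\in \N$, is slightly overstated: when $B$ has level $m\geq 1$ and $z$ is interior to an edge of $B$, the point $z$ is interior to $V_n$ for $n\leq m$, so the boundary membership begins only at $n=m+1$. This discrepancy is harmless since the later applications of the lemma only invoke the neighborhood statement.
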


\begin{figure}
\centering
\begin{tikzpicture}[line cap=round,line join=round,>=triangle 45,x=1.0cm,y=1.0cm,scale=.8]
\definecolor{zzttqq}{rgb}{0.26666666666666666,0.26666666666666666,0.26666666666666666}
\definecolor{qqqqff}{rgb}{0.3333333333333333,0.3333333333333333,0.3333333333333333}
\definecolor{xdxdff}{rgb}{0.6588235294117647,0.6588235294117647,0.6588235294117647}
\definecolor{cqcqcq}{rgb}{0.7529411764705882,0.7529411764705882,0.7529411764705882}

\clip(0.4395260322177296,-0.031267319862497915) rectangle (14.879166920830889,7.458621220864529);

\fill[line width=1.2pt,color=zzttqq,fill=zzttqq,fill opacity=0.10000000149011612] (8.,8.) -- (0.,8.) -- (4.,1.0717967697244903) -- cycle;
\fill[line width=1.2pt,color=zzttqq,fill=zzttqq,fill opacity=0.10000000149011612] (30.048858357398256,1.0717967697244903) -- (-10.07808728508504,1.0717967697244903) -- (9.985385536156599,-33.67915753294333) -- cycle;
\fill[line width=1.2pt,color=zzttqq,fill=zzttqq,fill opacity=0.10000000149011612] (6.74976672779656,5.8345324512305545) -- (9.49953345559312,1.0717967697244903) -- (12.249300183389682,5.83453245123055) -- cycle;
\fill[line width=1.2pt,color=zzttqq,fill=zzttqq,fill opacity=0.10000000149011612] (8.12465009169484,3.4531646104775224) -- (5.374883363898279,3.4531646104775224) -- (6.749766727796558,1.0717967697244917) -- cycle;
\fill[line width=1.2pt,color=zzttqq,fill=zzttqq,fill opacity=0.10000000149011612] (8.740088500143441,11.912665381127349) -- (14.99906691118624,1.0717967697244903) -- (21.25804532222905,11.912665381127342) -- cycle;

\draw [line width=1.2pt,color=zzttqq] (8.,8.)-- (0.,8.);
\draw [line width=1.2pt,color=zzttqq] (0.,8.)-- (4.,1.0717967697244903);
\draw [line width=1.2pt,color=zzttqq] (4.,1.0717967697244903)-- (8.,8.);
\draw [line width=1.2pt,color=zzttqq] (30.048858357398256,1.0717967697244903)-- (-10.07808728508504,1.0717967697244903);
\draw [line width=1.2pt,color=zzttqq] (-10.07808728508504,1.0717967697244903)-- (9.985385536156599,-33.67915753294333);
\draw [line width=1.2pt,color=zzttqq] (9.985385536156599,-33.67915753294333)-- (30.048858357398256,1.0717967697244903);
\draw [line width=1.2pt,color=zzttqq] (4.,1.0717967697244903)-- (9.49953345559312,1.0717967697244903);
\draw [line width=1.2pt,color=zzttqq] (6.74976672779656,5.8345324512305545)-- (9.49953345559312,1.0717967697244903);
\draw [line width=1.2pt,color=zzttqq] (9.49953345559312,1.0717967697244903)-- (12.249300183389682,5.83453245123055);
\draw [line width=1.2pt,color=zzttqq] (12.249300183389682,5.83453245123055)-- (6.74976672779656,5.8345324512305545);
\draw [line width=1.2pt,color=zzttqq] (8.12465009169484,3.4531646104775224)-- (5.374883363898279,3.4531646104775224);
\draw [line width=1.2pt,color=zzttqq] (5.374883363898279,3.4531646104775224)-- (6.749766727796558,1.0717967697244917);
\draw [line width=1.2pt,color=zzttqq] (6.749766727796558,1.0717967697244917)-- (8.12465009169484,3.4531646104775224);
\draw [line width=1.2pt,color=zzttqq] (8.740088500143441,11.912665381127349)-- (14.99906691118624,1.0717967697244903);
\draw [line width=1.2pt,color=zzttqq] (14.99906691118624,1.0717967697244903)-- (21.25804532222905,11.912665381127342);
\draw [line width=1.2pt,color=zzttqq] (21.25804532222905,11.912665381127342)-- (8.740088500143441,11.912665381127349);

\draw [fill=black] (0.,8.) circle (1.5pt);
\draw[color=black] (0.4665328418597742,7.2740746883105585) node {};
\draw [fill=black] (8.,8.) circle (1.5pt);
\draw[color=black] (7.848394144018622,7.373099656998055) node {};
\draw[color=black] (3.9954226350869306,5.7706956182367435) node {};

\draw [fill=black] (4.,1.0717967697244903) circle (1.5pt);
\draw[color=black] (4.040433984490338,1.2425538682539377) node [label={[label distance=0.05cm]-90:$x_{k(n),l(n)}$}]{};

\draw [fill=black] (4.,8.) circle (1.5pt);
\draw[color=black] (3.950411285683523,7.373099656998055) node {};
\draw [fill=black] (30.048858357398256,1.0717967697244903) circle (1.5pt);
\draw[color=black] (0.4395260322177297,7.5351405148503225) node {};
\draw [fill=black] (-10.07808728508504,1.0717967697244903) circle (1.5pt);
\draw[color=black] (0.6285736997120416,3.538132687827726) node {};
\draw[color=black] (9.981932105740142,-10.433390166989998) node {};
\draw [fill=black] (9.985385536156599,-33.67915753294333) circle (1.5pt);
\draw[color=black] (0.4395260322177297,7.5351405148503225) node {};

\draw [fill=black] (9.49953345559312,1.0717967697244903) circle (1.5pt);
\draw[color=black] (9.531818611706067,1.2425538682539377) node [label={[label distance=0.05cm]-90:$x_{n-1,l(n)}$}]{};

\draw [fill=black] (6.74976672779656,5.8345324512305545) circle (1.5pt);
\draw[color=black] (6.786126298098202,6.004754635134463) node [label={[label distance=-.35cm]120:$x_{n-1,k(n)}$}]{};

\draw[color=black] (9.49580953218334,4.330332437327701) node {};
\draw [fill=black] (12.249300183389682,5.83453245123055) circle (1.5pt);
\draw[color=black] (12.286513195194612,6.004754635134463) node {};
\draw [fill=black] (8.12465009169484,3.4531646104775224) circle (1.5pt);
\draw[color=black] (8.163473589842475,3.583144037231133) node {};

\draw [fill=black] (5.374883363898279,3.4531646104775224) circle (1.5pt);
\draw[color=black] (5.408779006353929,3.583144037231133) node [label={[label distance=-.35cm]120:$x_{n,k(n)}$}]{};

\draw[color=black] (6.741114948694794,2.7369306684470702) node {};

\draw [fill=black] (6.749766727796558,1.0717967697244917) circle (1.5pt);
\draw[color=black] (6.786126298098202,1.2065447887312115) node [label={[label distance=0.05cm]-90:$x_{n,l(n)}$}]{};

\draw [fill=black] (14.99906691118624,1.0717967697244903) circle (1.5pt);
\draw[color=black] (14.735130602739986,0.8104449139812244) node {};
\draw [fill=black] (8.740088500143441,11.912665381127349) circle (1.5pt);
\draw[color=black] (9.765877628603786,7.373099656998055) node {};
\draw[color=black] (14.996196429279749,8.372351613753704) node {};
\draw [fill=black] (21.25804532222905,11.912665381127342) circle (1.5pt);
\draw[color=black] (14.16798760025705,7.373099656998055) node {};

\draw (9.5, 4) node {$W_{n-1}$};
\draw (6.75, 2.5) node {$W_n$};
\draw (4,5.5) node {$A_n=W_{k(n)}$};
\draw (12,0.3) node {$W_{l(n)}=B_n$};

\end{tikzpicture}
\caption{A typical situation as described in Lemma \ref{Combinatorics lemma}(III).}\label{fig:zoom}
\end{figure}
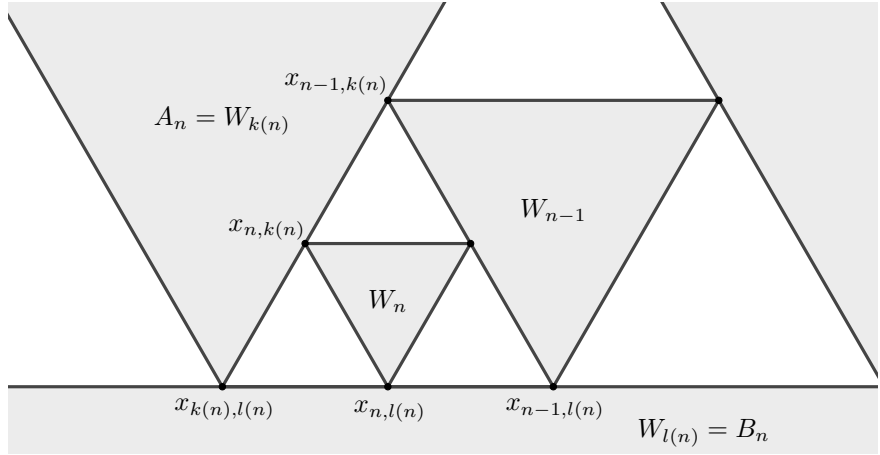

The following lemma describes essentially Figure \ref{fig:zoom}.

\begin{lemma}\label{Combinatorics lemma}
Let $\{V_n\}_{n\geq 1}$ be a nested sequence of $v$-triangles satisfying $\mathrm{(ii)}$ and $\mathrm{(iii)}$, and converging to a point $z\in K$, in the sense that
\begin{align*}
\{z\}= \bigcap_{n=1}^\infty \br V_n.
\end{align*}
Also, for each $n\in \N$ consider the central $w$-triangle $W_n\subset V_n$ of level $n$. Then, for $n\geq 2$, $W_n$ has one vertex on $\partial W_{n-1}$, and two vertices on the boundaries of some $w$-triangles $A_n,B_n$ (we could have $A_n=B_n$ if they are the unbounded $w$-triangle of level $0$). Assume that the level of $B_n$ is at most the level of $A_n$ (so $B_n$ is a larger triangle than $A_n$). Furthermore:
\begin{enumerate}[\upshape(I)]
\item If $z$ is a vertex of a $w$-triangle $A$ (i.e., $z$ is of vertex type), then there exists another $w$-triangle $B$ with $z\in \partial A\cap \partial B$ such that for all sufficiently large $n\in \N$ we have $A_n=A$ and $B_n=B$. In this case, $\partial V_{n}$ is contained in $\partial W_{n-1} \cup \partial A \cup \partial B$ and contains  $z$, $\partial W_{n-1}\cap \partial A$, $\partial W_{n-1}\cap \partial B$, and  also the vertices of $W_n\subset V_n$.

\item If $z$ is of edge type, then there exists a $w$-triangle $B$ such that $z\in \partial B$ and $B_n=B$ for all sufficiently large $n\in \N$, but no other $w$-triangle $A$ has the property that $A_n=A$ infinitely often. Moreover, for all sufficiently large $n\in \N$, $W_{n-1}$ has a vertex on $\partial B$ and a vertex on $\partial A_n$, and $A_n$ has a vertex on $\partial B$. In fact, there exists a sequence $\{k(n)\}_{n\in \N}$ with $k(n)\to\infty$ as $n\to\infty$ such that $A_n=W_{k(n)}$ for all sufficiently large $n\in \N$. In this case, $\partial V_{n}$ is contained in $\partial W_{n-1}\cup \partial W_{k(n)}\cup \partial B$ and contains the vertices $\partial W_{n-1}\cap \partial B$, $\partial W_{k(n)}\cap \partial B$, $\partial W_{n-1} \cap \partial W_{k(n)}$, and also the vertices of $W_n$.  Finally, the vertices $x_{n-1,k(n)}=\partial W_{n-1}\cap \partial W_{k(n)}$, $x_{n,k(n)}= \partial W_{n}\cap \partial W_{k(n)}$, and $x_{k(n),B}=\partial W_{k(n)}\cap \partial B$ are contained in a half-edge of $\partial W_{k(n)}$, and $x_{n,k(n)}$ lies between the two other vertices.

\item If $z\in K^\circ$, then no $w$-triangle $W$ has the property that $A_n=W$ or $B_n=W$ infinitely often. Moreover, for all sufficiently large $n\in \N$, $W_{n-1}$ has a vertex on $\partial A_n$ and a vertex on $\partial B_n$, and $A_n$ has a vertex on $\partial B_n$. In fact, there exist sequences $\{k(n)\}_{n\in \N}$, $\{l(n)\}_{n\in \N}$ that diverge to $\infty$ such that $A_n=W_{k(n)}$ and $B_n=W_{l(n)}$ for all sufficiently large $n\in \N$. In the latter case, note that $l(n)< k(n)< n-1<n$, and also $\partial V_{n} \subset \partial W_{n-1}\cup \partial W_{k(n)}\cup \partial W_{l(n)}$. Finally, the vertices $x_{n-1,k(n)}=\partial W_{n-1}\cap \partial W_{k(n)}$, $x_{n,k(n)}= \partial W_{n}\cap \partial W_{k(n)}$, and $x_{k(n),l(n)}=\partial W_{k(n)}\cap \partial W_{l(n)}$ are contained in a half-edge of $\partial W_{k(n)}$, and $x_{n,k(n)}$ lies between the two other vertices. The same statement holds with the roles of $k(n)$ and $l(n)$ reversed; see Figure \ref{fig:zoom}.
\end{enumerate}
\end{lemma}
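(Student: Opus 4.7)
The plan is to first establish, by induction on $n$, a geometric invariant describing how $V_n$ sits inside the gasket, then do a case analysis on the type of $z$, and finally verify the explicit positional claims at the end.

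\textbf{Invariant and update rule.} I would show by induction on $n \geq 2$ that the three edges of $V_n$ each lie on the boundary of a unique $w$-triangle, and that these three $w$-triangles are precisely $W_{n-1}$, $A_n$, and $B_n$, with $\{A_n, B_n\}$ (viewed as a $2$-element multiset, allowing the exceptional coincidence $A_n = B_n = W_0$) contained in $\{W_{n-2}, A_{n-1}, B_{n-1}\}$. This is immediate from the subdivision: $V_n$ is one of the three corner $v$-subtriangles obtained by subdividing $V_{n-1}$ and removing $W_{n-1}$, so it has one edge coinciding with an edge of $W_{n-1}$, and its other two edges are halves of two edges of $V_{n-1}$, each of which continues to lie on the same $w$-triangle boundary as its parent. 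The vertices of $W_n$ are the midpoints of the three edges of $V_n$, giving the three vertex-to-boundary assignments, and by induction the levels of $A_n, B_n$ are at most $n-2$. Moreover, writing $T^{(n)}_1 = W_{n-1}$, $T^{(n)}_2 = A_n$, $T^{(n)}_3 = B_n$, if $V_{n+1}$ is the corner subtriangle of $V_n$ at which the edges on $\partial T^{(n)}_i$ and $\partial T^{(n)}_j$ meet, then $\{A_{n+1}, B_{n+1}\} = \{T^{(n)}_i, T^{(n)}_j\}$; this is the update rule that drives the case analysis.

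\textbf{Case analysis.} In case (I), $z$ is a vertex of a $w$-triangle $A$; for large $n$, $V_n$ is contained in an arbitrarily small neighborhood of $z$, and locally only boundedly many $w$-triangles pass through $z$. Since $z$ is necessarily a corner of $V_n$, the two edges of $V_n$ meeting at $z$ must lie on the boundaries of these triangles, which are $A$ and some (possibly equal) $B$. The update rule then forces $\{A_n, B_n\} = \{A, B\}$ to stabilize. In case (II), $z$ lies on the interior of an edge $e$ of a unique $w$-triangle $B$, so for large $n$ exactly one edge of $V_n$ lies along $e$, giving $B_n = B$; since $z$ lies on no other $w$-triangle boundary, $A_n$ cannot stabilize. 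Each change of $A_n$ while $B_n$ remains fixed must, by the update rule, be a substitution $A_{n+1} = W_{n-1}$, and this happens infinitely often, so $A_n = W_{k(n)}$ with $k(n) \to \infty$. In case (III), $z \in K^\circ$ lies on no $w$-triangle boundary, so neither $A_n$ nor $B_n$ can stabilize, and the same reasoning yields $A_n = W_{k(n)}$, $B_n = W_{l(n)}$ with $k(n), l(n) \to \infty$; the strict inequalities $l(n) < k(n) < n-1$ follow by ordering the two most recent substitutions.

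\textbf{Positional claims.} The assertion that the three vertices $x_{n-1,k(n)}$, $x_{n,k(n)}$, and $x_{k(n),\bullet}$ (where $\bullet$ stands for $B$ in case (II) and $l(n)$ in case (III)) lie on a single half-edge of $W_{k(n)}$ with $x_{n,k(n)}$ strictly between the other two is verified by direct computation using the midpoint structure. Indeed, $x_{n,k(n)}$ is the midpoint of the edge of $V_n$ on $\partial W_{k(n)}$, which is one half of the edge of $V_{n-1}$ on $\partial W_{k(n)}$ whose midpoint is $x_{n-1,k(n)}$, and whose endpoint at which $V_n$ sits is the corner $x_{k(n),\bullet}$ of $V_{n-1}$. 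Since $k(n) \leq n-2$, the edge of $V_{n-1}$ has length $2^{-(n-2)} \leq 2^{-k(n)-1}$, so the whole configuration fits inside a single half-edge of $W_{k(n)}$, and $x_{n,k(n)}$ is the midpoint of the interval from $x_{k(n),\bullet}$ to $x_{n-1,k(n)}$. The containment statements for $\partial V_n$ and the identification of its distinguished vertices are read off directly from the invariant. The main obstacle is the book-keeping in the case analysis needed to rule out stabilization of $A_n$ in cases (II) and (III) and to ensure the update rule forces $k(n), l(n) \to \infty$; once that is in place, everything else reduces to elementary midpoint geometry.
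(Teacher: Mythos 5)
The paper does not prove Lemma \ref{Combinatorics lemma}; it states that the proofs of Lemmas \ref{Combinatorics V lemma} and \ref{Combinatorics lemma} ``are elementary and can be done by induction, so we leave them to the reader.'' Your proposal supplies the missing argument, and the strategy --- the inductive invariant that the three edges of $V_n$ lie on $\partial W_{n-1}$, $\partial A_n$, $\partial B_n$; the update rule $\{A_{n+1},B_{n+1}\}\subset\{W_{n-1},A_n,B_n\}$ with the dropped element determined by which corner of $V_n$ the triangle $V_{n+1}$ occupies; and the case split according to the type of $z$ --- is the natural one and is correct. The mechanism driving cases (II) and (III), namely that a $w$-triangle appearing as $A_n$ or $B_n$ infinitely often must have $z$ on its boundary because the corresponding edges of $V_n$ shrink to $z$ and boundaries are closed, should be spelled out where you write ``$A_n$ cannot stabilize,'' but it is clearly the intended argument and it works. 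Likewise, the refined form of the update rule under the level convention, $A_{n+1}\in\{A_n,W_{n-1}\}$ and $B_{n+1}\in\{B_n,A_n\}$, is what guarantees that once $A_n$ (resp.\ $B_n$) is one of the central triangles $W_k$ it stays of that form and its level is nondecreasing; you use this implicitly and it should be made explicit.

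One detail in the positional claim is off: the inequality $2^{-(n-2)}\leq 2^{-k(n)-1}$ requires $k(n)\leq n-3$ and fails in the borderline case $k(n)=n-2$, which occurs exactly when $A_n=W_{n-2}$ has just been substituted. In that case the edge of $V_{n-1}$ on $\partial W_{k(n)}$ is an entire edge of $W_{k(n)}$, not a subset of a half-edge. The conclusion is still correct: the three points $x_{k(n),\bullet}$, $x_{n,k(n)}$, $x_{n-1,k(n)}$ all lie in the closure of the edge $e_n$ of $V_n$ on $\partial W_{k(n)}$, and $e_n$ has length $2^{-(n-1)}\leq 2^{-k(n)-1}$ and has the vertex $x_{k(n),\bullet}$ of $W_{k(n)}$ as an endpoint, so $\br{e_n}$ is always contained in a half-edge of $W_{k(n)}$ with $x_{n,k(n)}$ at its midpoint. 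Phrase the final step in terms of $e_n$ rather than the edge of $V_{n-1}$ and the argument is airtight.
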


The proofs of both lemmas are elementary and can be done by induction, so we leave them to the reader. Especially the second lemma will be crucially used in the proof of continuity of $f$ in the next theorem, which is a restatement of Theorem \ref{Intro gasket non-removable}.

\begin{theorem}[Theorem \ref{Intro gasket non-removable}]\label{Theorem Non-removability}
There exists a continuous function $f\colon \R^2\to \R$ with $f\in W^{1,2}(\R^2\setminus K)$, but $f\notin W^{1,2}(\R^2)$. In particular, $K$ is non-removable for $W^{1,2}$.
\end{theorem}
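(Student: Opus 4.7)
The plan is to obtain the desired scalar $f$ as one coordinate of a continuous map $F\colon \R^2 \to \R^2$ that collapses each bounded complementary $w$-triangle $W$ of $K$ to a one-dimensional tripod $G_W \subset \overline{W}$, while acting as the identity outside a bounded neighborhood of $K$. Both coordinates of $F$ will lie in $W^{1,2}(\R^2\setminus K)$ by virtue of a uniform Lipschitz bound on each $w$-triangle, but at least one coordinate will fail to lie in $W^{1,2}(\R^2)$ because $F$ blows up the measure-zero set $K$ to a set of positive area; taking that coordinate yields the required $f$.

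For the construction, on each $w$-triangle $W$ I would take $G_W$ to be the tripod with center at the barycenter $b_W$ of $W$ and endpoints at the three vertices of $W$, and define $F|_W$ as the piecewise-affine ``folding'' map that sends the three edge-midpoints of $W$ to $b_W$ and collapses $W$ onto $G_W$. At the vertices of $W$ (which lie in $K$) $F$ is prescribed to be the identity, so the definitions match along $\partial W$. This piecewise-affine folding has derivative of order $1$ independent of the level of $W$, because the edges of $G_W$ are comparable in length to the sidelength of $W$. Summing gives $\int_{\R^2\setminus K}|\nabla F|^2 \lesssim \sum_W m_2(W) <\infty$ after a harmless cutoff making $F-\mathrm{id}$ compactly supported, so each coordinate of $F$ lies in $W^{1,2}(\R^2\setminus K)$.

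The core of the argument is establishing continuity of $F$ at every $z\in K$. I would fix such a point $z$ together with a nested sequence of $v$-triangles $V_n \downarrow \{z\}$ as in Lemma \ref{Combinatorics V lemma}, and use the combinatorial description of Lemma \ref{Combinatorics lemma} to write $\partial V_n \subset \partial W_{n-1} \cup \partial A_n \cup \partial B_n$ for three explicit adjacent $w$-triangles (the precise combinatorics depending on whether $z$ is of vertex type, of edge type, or lies in $K^\circ$). Since the folding $F|_W$ sends $W$ inside $\overline{W}$ and $F$ equals the identity at the vertices of each $V_n$, the image $F(V_n)$ is contained in $\overline{W_{n-1}} \cup \overline{A_n} \cup \overline{B_n}$, which shrinks to $\{z\}$ as $n\to\infty$. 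The main obstacle will be producing a quantitative, uniform-in-$z$ version of this shrinking estimate; I expect this to be precisely the content of Lemma \ref{Basic lemma}, and it is what justifies interchanging suprema over $z$ and limits over $n$.

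Finally, to show that $F\notin W^{1,2}(\R^2)$: the image $F(\R^2 \setminus K) = \bigcup_W G_W$ has Lebesgue measure zero, but $F$ is surjective onto a set of positive area. Indeed, $F$ is the identity outside the outer triangle $V_0$ and the folding acts locally on each $w$-triangle, so a degree argument shows $F(\R^2) \supset V_0$, and hence $F(K) \supset V_0 \setminus \bigcup_W G_W$ has positive area. In particular $F$ violates Lusin's $N$-property dramatically. If both coordinates of $F$ lay in $W^{1,2}(\R^2)$, then the ACL characterization together with a Fuglede-type absolute continuity argument on almost every line would force $F$ to map null sets to null sets, which is a contradiction; hence at least one coordinate is not in $W^{1,2}(\R^2)$, and it serves as the scalar function required by the theorem.
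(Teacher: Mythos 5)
The construction of $F$ as described cannot be carried out: the prescriptions are mutually inconsistent. You take the tripod $G_W$ to have endpoints at the vertices of $W$, so $F|_W$ fixes those vertices, while the three edge-midpoints of $W$ are sent to $b_W$. But a vertex of one $w$-triangle is typically an interior point of an edge --- often precisely the midpoint --- of an adjacent $w$-triangle of lower level. Concretely, one of the three vertices of a level-$2$ central $w$-triangle is the midpoint of an edge of the level-$1$ central $w$-triangle $W_1$; from the point of view of the level-$2$ triangle you demand that $F$ fix this point $p$, while the folding on $W_1$ sends $p$ to $b_{W_1}\neq p$. So no continuous $F$ satisfies both. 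The paper's collapsing map in Section \ref{Section QC Collapsing} resolves exactly this by choosing the tripod with endpoints at the \emph{images} $f(x_i)$, already determined inductively by earlier levels --- but then, precisely because $f$ must blow up the null set $K$ to a set of positive measure, the tripod $G_W$ is \emph{not} of a size comparable to the sidelength of $W$, so the uniform Lipschitz bound and the energy estimate $\int_{\R^2\setminus K}|\nabla F|^2\lesssim\sum_W m_2(W)$ you are relying on are no longer available.

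The closing step also relies on a false general principle: a continuous map with both coordinates in $W^{1,2}(\R^2)$ need \emph{not} satisfy Lusin's condition (N) --- there exist continuous $W^{1,n}(\R^n,\R^n)$ maps sending a null set onto a set of positive measure --- and ACL together with absolute continuity on a.e.\ line does not by itself give (N) for a highly non-injective map such as the proposed $F$. (The (N)-type facts used in the paper, Lemma \ref{Pre Measure zero} and Lemma \ref{Pre Absolutely continuous}, are for quasiconformal and quasisymmetric maps, a much stronger class than $W^{1,2}$.) The paper's $W^{1,2}$ argument is intrinsically scalar and sidesteps both problems: the building-block function on each $W$ is nearly constant, with energy made \emph{arbitrarily} small by exploiting that the $2$-capacity of a point vanishes, and $f\notin W^{1,2}(\R^2)$ is then forced by the specific boundary values $f\equiv 0$ near infinity and $f\equiv 1$ on a fixed ball through a direct ACL/Fubini estimate on a family of parallel segments --- condition (N) is never needed.
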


The function $f$ will be almost a constant on each $w$-triangle, and will rapidly change near the vertices. The (almost constant) value of $f$ on each $w$-triangle will be the average of the values on neighboring triangles of the previous level, with the exception of the central $w$-triangle of level $1$; see Figure \ref{fig:graph}.

The construction will be done in several steps. We will give an inductive construction of the function $f$, and ensure that it has \textit{finite energy}, i.e., $\nabla f \in L^2(\R^2)$. In fact, we will show that $\|\nabla f\|_{L^2(\R^2)}$ can be made arbitrarily small, and this will prevent $f$ from lying in $W^{1,2}(\R^2)$. Finally, we will focus on proving that our function $f$ with the inductive definition is continuous on $\R^2$. The proof of the latter property is very delicate and occupies most of the section.

\subsection{Building block}\label{Section Building block}
Here we describe the \textit{building block functions} that will be used to define $f$ on each $w$-triangle. 

Let $W \subset \R^2$ be an open equilateral triangle with vertices $x_1,x_2,x_3$. Then for each $\varepsilon>0$ and each choice of real numbers $a,c_1,c_2,c_3$ there exists a continuous function $g\colon \br W\to \R $ with $g\in W^{1,2}(W)$ and balls $B(x_i,r_i)$, $i=1,2,3$, such that 
\begin{enumerate}[($B$\upshape1)]
\item\label{B1 - g=a outside balls} $g\equiv a$ on $\br W\setminus \bigcup_{i=1}^3 B(x_i,r_i)$,
\item\label{B2 - g(x_i)=c_i} $g(x_i)=c_i$ for $i=1,2,3$,
\item\label{B3 - monotone edges} $g$ is monotone increasing or decreasing (not necessarily strictly) on each half-edge of $\partial W$, from its midpoint to a vertex,
\item\label{B4 - small energy} $\int_{W} |\nabla g|^2<\varepsilon$.
\end{enumerate}   
Furthermore, the balls $B(x_i,r_i)$ can be chosen to be arbitrarily small. Hence, by \ref{B1 - g=a outside balls} we may have that
\begin{enumerate}
\item[\mylabel{B5}{($B$5)}] $g$ has the value $a$ at the midpoints of the edges of $W$. 
\end{enumerate} 
The value $a$ is called the \textit{height} of $g$. See Figure \ref{fig:graph} for an illustration of the graph of four such functions. Finally, we require a monotonicity property:
\begin{enumerate}
\item[\mylabel{B6}{($B$6)}] $\osc_{\br W} (g)= \osc_{\partial W}(g)$.
\end{enumerate}

To construct such a function near the vertex $x_i$ of $W$, we may assume that $a=0$, $c_i=1$, and that $x_i=0\in \R^2$. The conceptual fact behind this construction is that the $2$-capacity of a point is equal to $0$; see \cite[Section 3]{FZ}. For $0<r<R$ consider the function 
\begin{align*}
g(x)=\begin{cases}
(\log(R/r)^{-1} \log(R/|x|), & r\leq |x|\leq R\\
1, & |x|<r\\
0, & |x|>R.
\end{cases}
\end{align*}
Then there exists a constant $C>0$ such that $\int |\nabla g|^2 \leq C \log (R/r)^{-1}$, which converges to $0$ as $r\to 0$. In fact, making $R$ smaller one sees that $g$ can be supported in an arbitrarily small neighborhood of $0$. Hence, the ball $B(x_i,r_i)$ with $r_i=R$ can be made arbitrarily small. One can now glue together three such functions, one near each vertex of $W$, to obtain the desired building block function. However, in order to prove the continuity of the function $f$ in Theorem \ref{Theorem Non-removability},  we will not use this particular function $g$, but we will need to make a more careful construction, towards the end of the proof, so that a certain modulus of continuity is satisfied.

\begin{remark}\label{Building block Continuity remark}
For the continuity of the function $f$ of Theorem \ref{Theorem Non-removability} we will need the properties \ref{B2 - g(x_i)=c_i}, \ref{B3 - monotone edges}, \ref{B5}, and \ref{B6} of the building block functions. Properties \ref{B1 - g=a outside balls} and \ref{B4 - small energy} are only used to show that $f$ does not lie in $W^{1,2}(\R^2)$ in the next section.
\end{remark}

\subsection{Avoidance of \texorpdfstring{$W^{1,2}(\R^2)$}{W1,2(R2)}}\label{Section Avoidance}
The function $f$ will be defined inductively in the next section so that $f\equiv 0$ in the unbounded component of $\R^2\setminus K$ and in particular outside a fixed ball $B(x_0,R_0)$, $f\equiv 1$ in a fixed ball $B(x_0,r_0)$ contained in the central $w$-triangle of sidelength $1/2$, and $0\leq f\leq 1$. Inside each $w$-triangle $W$ the function $f$ will be equal to a suitable building block function $g_W$, so that global continuity is ensured; see Figure \ref{fig:graph}. We fix $\varepsilon>0$. By choosing $\|\nabla g_W\|_{L^2(W)}$ to be sufficiently small for each $W$, we may have
\begin{align*}
\|\nabla f\|_{L^2(\R^2)}=\|\nabla f\|_{L^2(\R^2 \setminus K)} =\sum_{W\in \mathcal W} \|\nabla g_W\|_{L^2(W)} <\varepsilon.
\end{align*}
We remark that the ball $B(x_0,r_0)$ on which $f\equiv 1$ and the ball $B(x_0,R_0)$ outside of which $f\equiv 0$ are independent of $\varepsilon$.

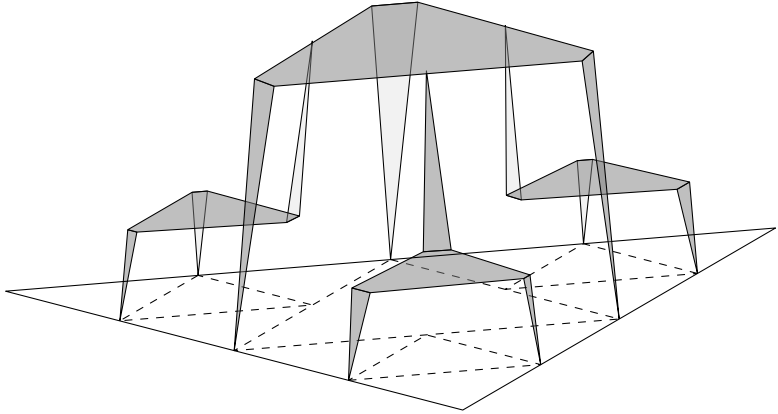
\begin{figure}
\centering
\begin{tikzpicture}[line join=bevel,z=-7,y=10, scale=10,rotate around y=20]

\coordinate (A1) at (0,0,0);
\coordinate (A2) at (1,0,0);
\coordinate (A3) at (0.5,0,0.866);
\coordinate (B1) at (0.5,0,0);
\coordinate (B2) at (0.75,0,0.433);
\coordinate (B3) at (0.25,0,0.433);
\coordinate (B11) at (0.47,1,0.05);
\coordinate (B12) at (0.53,1,0.05);
\coordinate (B21) at (0.72,1,0.40);
\coordinate (B22) at (0.70,1,0.45);
\coordinate (B31) at (0.28,1,0.40);
\coordinate (B32) at (0.30,1,0.45);

\coordinate (C1) at (0.25,0,0);
\coordinate (C2) at (0.125,0,0.216);
\coordinate (C3) at (0.375,1,0.216);
\coordinate (C11) at (0.24,0.33,0.02);
\coordinate (C12) at (0.26, 0.33, 0.02);
\coordinate (C31) at (0.36, 0.33, 0.2);
\coordinate (C32) at (0.34, 0.33, 0.23);
\coordinate (C21) at (0.1473, 0.33, 0.2151);
\coordinate (C22) at (0.1373, 0.33, 0.1978);

\draw[fill=black!10, fill opacity=.5] (C1)--(C11)--(C12)-- cycle;
\draw[fill=black!10, fill opacity=.5] (C3)--(C31)--(C32)-- cycle;
\draw[fill=black!50, fill opacity =0.5] (C2)--(C21)--(C22)-- cycle;
\draw[fill=black!50, fill opacity =0.5] (C11)--(C12)--(C31)--(C32)--(C21)--(C22)-- cycle;

\draw[dashed] (B1)--(B2)--(B3)--cycle;
\draw[dashed] (C1)--(C2)--(0.375,0,.216)--cycle;

\begin{scope}[shift={(1,0,0)}, rotate around y=-120]
\coordinate (C1) at (0.25,0,0);
\coordinate (C2) at (0.125,0,0.216);
\coordinate (C3) at (0.375,1,0.216);
\coordinate (C11) at (0.23,0.33,0.02);
\coordinate (C12) at (0.27, 0.33, 0.02);
\coordinate (C31) at (0.36, 0.33, 0.2);
\coordinate (C32) at (0.34, 0.33, 0.23);
\coordinate (C21) at (0.1473, 0.33, 0.2151);
\coordinate (C22) at (0.1373, 0.33, 0.1978);

\draw[fill=black!50, fill opacity=.5] (C1)--(C11)--(C12)-- cycle;
\draw[fill=black!10, fill opacity=.5] (C3)--(C31)--(C32)-- cycle;
\draw[fill=black!10, fill opacity=.5] (C2)--(C21)--(C22)-- cycle;
\draw[fill=black!50, fill opacity =0.5] (C11)--(C12)--(C31)--(C32)--(C21)--(C22)-- cycle;

\draw[dashed] (C1)--(C2)--(0.375,0,.216)--cycle;
\end{scope}

\begin{scope}[shift={(.5,0,.866)}, rotate around y=120]
\coordinate (C1) at (0.25,0,0);
\coordinate (C2) at (0.125,0,0.216);
\coordinate (C3) at (0.375,1,0.216);
\coordinate (C11) at (0.23,0.33,0.02);
\coordinate (C12) at (0.27, 0.33, 0.02);
\coordinate (C31) at (0.36, 0.33, 0.2);
\coordinate (C32) at (0.34, 0.33, 0.23);
\coordinate (C21) at (0.1473, 0.33, 0.2151);
\coordinate (C22) at (0.1373, 0.33, 0.1978);

\draw[fill=black!50, fill opacity=.5] (C1)--(C11)--(C12)-- cycle;
\draw[fill=black!50, fill opacity =0.5] (C3)--(C31)--(C32)-- cycle;
\draw[fill=black!50, fill opacity =0.5] (C2)--(C21)--(C22)-- cycle;
\draw[fill=black!50, fill opacity =0.5] (C11)--(C12)--(C31)--(C32)--(C21)--(C22)-- cycle;

\draw[dashed] (C1)--(C2)--(0.375,0,.216)--cycle;
\end{scope}

\draw (A1) -- (A2) -- (A3) -- cycle;
\draw[fill=black!10, fill opacity=.5] (B1)--(B11)--(B12)-- cycle;
\draw[fill=black!50, fill opacity =0.5] (B2)--(B21)--(B22)-- cycle;
\draw[fill=black!50, fill opacity =0.5] (B3)--(B31)--(B32)-- cycle;
\draw[fill=black!50, fill opacity =0.5] (B11)--(B12)--(B21)--(B22)--(B32)--(B31)--cycle;

\end{tikzpicture}
\caption{Illustration of the graph of $f$ on $w$-triangles of level at most $2$.}\label{fig:graph}
\end{figure}

Now, we wish to prevent $f$ from lying in $W^{1,2}(\R^2)$. Suppose that $f\in W^{1,2}(\R^2)$, and in particular that $f$ is absolutely continuous on almost every line; see e.g.\ \cite[Section 26]{Va}. Let $I_t$, $0\leq t\leq r_0$, be the family of horizontal segments $[0,1]\times \{t\}$, translated and scaled, so that $I_t$ starts inside $B(x_0,r_0)$ and ends outside $B(x_0,R_0)$ for each $t$. Since $f\equiv 0$ outside $B(x_0,R_0)$ and $f\equiv 1$ in $B(x_0,r_0)$, by the absolute continuity on almost every line we obtain
\begin{align*}
1 \leq \int_{I_t} |\nabla f|\, ds
\end{align*}
for a.e.\ $t\in [0,r_0]$. Integrating over $t\in[0,r_0]$ and applying Fubini's theorem and the Cauchy-Schwarz inequality we have
\begin{align*}
r_0\leq \int_{B(x_0,R_0)} |\nabla f| \leq \|\nabla f\|_{L^2(\R^2)} \pi^{1/2}R_0 < \varepsilon \pi^{1/2}R_0.
\end{align*}
If $\varepsilon$ is sufficiently small, we obtain a contradiction. Hence, by choosing a small $\varepsilon>0$ we may have that $f\notin W^{1,2}(\R^2)$.

\subsection{Inductive choice of parameters}\label{Section Choice}
Here, we give the inductive construction of $f$.

We let $f= 0$ on the closure of the $w$-triangle of level $0$ (i.e., the closure of the unbounded component of $\R^2\setminus K$), and we define $f$ in the closure of the central $w$-triangle of level $1$ to be a building block function with parameters $a=1$ and  $c_i=0$ for $i=1,2,3$. In particular, $f\equiv 1$ in a fixed ball $B(x_0,r_0)$, by the property \ref{B1 - g=a outside balls} of the building block function.

Once $f$ has been defined on the closure of $w$-triangles of level $m-1$, we define $f$ on each triangle $W\in \mathcal W$ of level $m$ as follows. Note that the vertices $x_{i}$, $i=1,2,3$, of the triangle $W$ lie on the boundaries of triangles of level at most $m-1$. Hence, the function $f$ has already been defined on the vertices of $W$. We now set 
\begin{align*}
c_{i}&=f(x_{i}),\, i=1,2,3, \quad \textrm{and}\\
a&=\frac{1}{3}( c_{1}+c_{2}+c_{3}).
\end{align*}
Define $f$ on $\br W$ to be equal to a building block function with these parameters; see Figure \ref{fig:graph}. We also set 
\begin{align*}
\mathcal O(W) =\max_{i=1,2,3} |a-c_i|,
\end{align*}
which controls the oscillation of $f$ on $W$. In particular,
\begin{align}\label{osc(f) less O(W)}
\osc_{\br W}(f)= \osc_{\partial W}(f) \leq 2\mathcal O(W),
\end{align}
by properties \ref{B3 - monotone edges} and \ref{B6}. Proceeding inductively, $f$ is defined on $W_\infty = \bigcup_{W\in \mathcal W} \br W$.

One important observation is that the function $f$ has a monotonicity property outside the central $w$-triangle of level $1$; here monotonicity is to be understood in the sense that the maximum and minimum  on open sets is attained at the boundary of these sets. Of course, validity of such a monotonicity property depends partly on the building block functions. We now formulate more precisely and prove the form of monotonicity that we will need.

\begin{lemma}\label{Monotonicity}
For each $v$-triangle $V$ of level $m\geq 1$ we have
\begin{align*}
\sup_{x\in \br {V} \cap W_\infty }f(x) &= \max_{x\in \partial V} f(x), \quad \textrm{and}\\
\inf_{ x\in \br {V} \cap W_\infty}f(x) &=\min_{x\in \partial V} f(x).
\end{align*}
In particular,
\begin{align*}
\osc_{\br V\cap W_\infty}(f)= \sup_{x\in \br {V} \cap W_\infty }f(x)-\inf_{x\in \br {V} \cap W_\infty }f(x)= \osc_{\partial V}(f).
\end{align*}
\end{lemma}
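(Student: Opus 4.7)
The strategy is to combine an elementary discrete maximum principle for $f$ on a single $w$-triangle with an iteration along the nested sequence of corner $v$-triangles of $V$. I prove only the supremum identity; the infimum identity is entirely analogous after replacing ``$\max$'' by ``$\min$'' throughout. For brevity, set $g(V') \coloneqq \max_{\partial V'} f$.

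The first step is a one-step estimate: for any $v$-triangle $V'$ of level $\geq 1$, with central $w$-triangle $W_{V'}$ and three corner $v$-triangles $V'_1, V'_2, V'_3$ of the next level, one has $g(V'_i) \leq g(V')$ for each $i$, and $\max_{\br W_{V'}} f \leq g(V')$. To see this, note that $\partial V'_i$ consists of two half-edges lying in $\partial V'$ together with one edge $e_i$ of $W_{V'}$. On $e_i$, the building block properties \ref{B2 - g(x_i)=c_i}, \ref{B3 - monotone edges}, \ref{B5}, \ref{B6} show that $f$ takes the vertex values $c_j, c_k$ at the two endpoints, the height value $a$ at the midpoint, and is monotone on each half-edge, so $\max_{e_i} f = \max(c_j, c_k, a)$. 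The hypothesis that $V'$ has level $\geq 1$ is used here: it forces $a = (c_1+c_2+c_3)/3 \leq \max_\ell c_\ell$, and consequently $\max_{\br W_{V'}} f = \max_\ell c_\ell$. The three vertices of $W_{V'}$ are midpoints of the edges of $V'$, hence lie on $\partial V'$, and both bounds follow.

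To prove the lemma, pick any $y \in \br V \cap W_\infty$ with $y \notin \partial V$. Then $y \in \br W$ for some $w$-triangle $W \subset V$ of finite level $j$. The $w$-triangle $W$ is the central $w$-triangle of a unique $v$-triangle ancestor of level $j-1$, and this ancestry determines a unique descending chain $V = V^{(0)} \supset V^{(1)} \supset \cdots \supset V^{(j-1-m)}$ in which each $V^{(k+1)}$ is a corner $v$-triangle of $V^{(k)}$ and $W_{V^{(j-1-m)}} = W$. If $y \in \partial V^{(k)}$ for some $k$, take $k_0$ minimal with this property, so that $f(y) \leq g(V^{(k_0)})$. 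Otherwise $y$ lies in the open interior of every $V^{(k)}$ with $k < j-1-m$, and setting $k_0 = j-1-m$ gives $y \in \br W = \br W_{V^{(k_0)}}$, whence $f(y) \leq \max_{\br W_{V^{(k_0)}}} f \leq g(V^{(k_0)})$ by the one-step estimate. Iterating $g(V^{(k+1)}) \leq g(V^{(k)})$ telescopes to $g(V^{(k_0)}) \leq g(V)$, so $f(y) \leq g(V) = \max_{\partial V} f$. The reverse inequality is immediate since $\partial V \subset W_\infty$.

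The hypothesis $m \geq 1$ enters precisely in the one-step estimate: it is what ensures $a \leq \max_\ell c_\ell$, and hence the discrete maximum principle on the central $w$-triangle. At root level zero, the central $w$-triangle $W^{(1)}$ is constructed with $a=1$ strictly larger than its vertex values $c_i = 0$, so the analog of the one-step estimate fails there, which is why monotonicity cannot be propagated to the ambient $v$-triangle of level $0$.
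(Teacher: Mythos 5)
Your proof is correct and follows essentially the same route as the paper: both arguments rest on the observation that, on any $v$-triangle of level at least one, the averaging rule $a=(c_1+c_2+c_3)/3$ together with properties \ref{B3 - monotone edges}, \ref{B5}, \ref{B6} forces the maximum of $f$ on the closed central $w$-triangle to be attained at its vertices, which lie on the boundary of the ambient $v$-triangle, and then one iterates through the nested corner $v$-triangles. Your explicit ``one-step estimate'' and chain formalism is a cleaner way to package the same induction that the paper sketches, and your closing remark pinpointing exactly where $m\geq 1$ is used is a nice touch.
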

Note that $\partial V$ is contained in the union of the boundaries of the $w$-triangles, so $f$ is already defined there and all the expressions that appear in the lemma make sense. 

\begin{proof}
Assume that $W_1\subset V$ is the $w$-triangle whose vertices  $x_{i}$, $i=1,2,3$, lie on $\partial V$. Then, by the averaging definition of $f$ and the monotonicity properties \ref{B3 - monotone edges} and \ref{B6} of the building block functions, it follows that the maximum and minimum of $f$ on $\br W_1$ are attained at the vertices of $W_1$, i.e.,
\begin{align*}
\min_{i\in \{1,2,3\}}f(x_{i}) \leq f(z) \leq \max_{i\in \{1,2,3\}} f(x_{i})
\end{align*}  
for all $z\in \br {W}_1$. Hence,
\begin{align*}
\max_{x\in \partial V} f(x)\leq f(z)\leq \max_{x\in \partial V} f(x)
\end{align*}
for all $z\in \br{W}_1$. 

Let $V_2$ be one of the three $v$-triangles of $V\setminus \br W_1$, and let $W_2\subset V_2$ be the central $w$-triangle whose vertices lie on $\partial V_2$. If the vertices of $W_2$ are $y_i$, $i=1,2,3$, then as before we have
\begin{align*}
\min_{i\in \{1,2,3\}}f(y_{i}) \leq f(z) \leq \max_{i\in \{1,2,3\}} f(y_{i})
\end{align*}  
for all $z\in \br W_2$. The vertices $y_i$ lie on $\partial V_2\subset \partial W_1\cup  \partial V$, hence
\begin{align*}
\max_{i\in \{1,2,3\}} f(y_{i}) \leq \max_{x \in \partial W_1 \cup \partial V} f(x) = \max_{x\in \partial V} f(x),
\end{align*}
by our conclusion for $W_1$. The analog of this statement also holds for the minimum, hence, we obtain in this case
\begin{align*}
\max_{x\in \partial V} f(x)\leq f(z)\leq \max_{x\in \partial V} f(x)
\end{align*}
for all $z\in \br{W}_2$. The proof of the general statement follows with the same argument by induction.
\end{proof}

\subsection{Proof of Continuity}

\begin{prop}\label{Continuity}
The function $f\colon  W_\infty \to \R$ is uniformly continuous and thus, it has a continuous extension to $\R^2$, which is the closure of $W_\infty =\bigcup_{W\in \mathcal W} \br W$.
\end{prop}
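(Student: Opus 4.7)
The plan is to reduce uniform continuity of $f\colon W_\infty \to \R$ to two ingredients: (A) an oscillation decay $\osc_{\br W}(f)\to 0$ uniformly as $\text{level}(W)\to \infty$ among $w$-triangles; and (B) for each $z\in K$, the oscillation of $f$ on the $v$-triangle neighborhoods $V_n$ shrinking to $z$ (provided by Lemma \ref{Combinatorics V lemma}) tends to $0$. Since $f$ vanishes outside the fixed ball $B(x_0,R_0)$ and is continuous on $\br W$ for each $w$-triangle $W$ (it is a building block function there), (A) and (B) combined with a compactness argument on $\br B(x_0,R_0)$ will give uniform continuity on $\R^2$, hence the desired continuous extension to $\R^2=\overline{W_\infty}$.

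For (A), the averaging rule is central. For a $w$-triangle $W$ of level $m\geq 2$ with vertex values $c_1,c_2,c_3$ and height $a=(c_1+c_2+c_3)/3$, the identity $a-c_i=\tfrac{1}{3}((c_j-c_i)+(c_k-c_i))$ yields
\[
\mathcal O(W)\leq \tfrac{2}{3}\max_{i,j}|c_i-c_j|.
\]
The vertices of $W$ lie on $\partial V$, where $V$ is the parent $v$-triangle of level $m-1\geq 1$, so Lemma \ref{Monotonicity} yields $\max_{i,j}|c_i-c_j|\leq \osc_{\br V\cap W_\infty}(f)=\osc_{\partial V}(f)$. Combined with \eqref{osc(f) less O(W)} this gives the contraction $\osc_{\br W}(f)\leq \tfrac{4}{3}\osc_{\partial V}(f)$. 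Setting $\Omega_m=\sup\{\osc_{\partial V}(f):\text{level}(V)\geq m\}$, statement (A) becomes $\Omega_m\to 0$. Since $\partial V$ decomposes into sub-segments each lying on $\partial W'$ for some earlier $w$-triangle $W'$, and on each such sub-segment the monotonicity on half-edges (\ref{B3 - monotone edges}) together with the midpoint-height property (\ref{B5}) bounds the local oscillation by $2\mathcal O(W')$, one feeds the averaging bound back into itself to produce a geometric decay $\Omega_m\lesssim \lambda^m$ for some $\lambda\in(0,1)$.

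For (B), fix $z\in K$ and the $v$-triangle sequence $V_n$ from Lemma \ref{Combinatorics V lemma}. By Lemma \ref{Monotonicity}, $\osc_{\br V_n\cap W_\infty}(f)=\osc_{\partial V_n}(f)$, so it suffices to show $\osc_{\partial V_n}(f)\to 0$. Lemma \ref{Combinatorics lemma} describes $\partial V_n$ as contained in the union of at most three $w$-triangle boundaries: in Case (III) ($z\in K^\circ$), all three $w$-triangles have levels tending to $\infty$, so (A) forces $\osc_{\partial V_n}(f)\to 0$; in Cases (I) and (II), one or two of the relevant $w$-triangles are fixed triangles $A,B$ with $z$ on their boundary, and the portions $\partial V_n\cap \partial A$ and $\partial V_n\cap \partial B$ shrink to $z$, contributing vanishing oscillation by continuity of $f$ on $\br A,\br B$, while each ``moving'' $w$-triangle of large level is handled by (A).

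The main obstacle is the geometric decay $\Omega_m\to 0$ in (A). The naive bound $\mathcal O(W)\leq \tfrac{2}{3}\osc_{\partial V^{(\mathrm{parent})}}(f)$ does not iterate cleanly by itself, because $\partial V$ can contain sub-segments lying on $w$-triangle boundaries of level \emph{much smaller} than that of $V$, so the right-hand side is not a priori dominated by $\Omega_{m-1}$. Closing this gap requires careful combinatorial analysis of how the edges of $\partial V$ split into sub-edges of increasingly deep $w$-triangles (essentially the vertex bookkeeping recorded in Lemma \ref{Combinatorics lemma}(III)), so that after enough iterations the factor $(2/3)^k$ overcomes the finite loss from lower-level triangle contributions. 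This is the delicate part the author refers to when noting that the proof ``occupies most of the section''.
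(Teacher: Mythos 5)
Your high-level plan — reduce continuity to a statement that the oscillation of $f$ on $w$- and $v$-triangles tends to $0$, and then conclude via the combinatorial data in Lemma \ref{Combinatorics V lemma} — matches the structure of the paper's proof (which proves Lemma \ref{Basic lemma} first, deduces Claim \ref{Claim:uniform continuity} on $v$-triangles from it, and then runs the case split by vertex/edge/$K^\circ$ type exactly as you sketch for your step (B)). The gap is entirely in your step (A), and it is not merely a combinatorial bookkeeping issue as you suggest — it is a substantive missing ingredient.

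Your proposed contraction $\mathcal O(W)\leq \tfrac{2}{3}\,\osc_{\partial V^{(\mathrm{parent})}}(f)$ does not iterate, and the reason is quantitative, not organizational. The parent boundary $\partial V$ decomposes into three arcs lying on $\partial W_{n-1}$, $\partial W_{k}$, $\partial W_{l}$ (Lemma \ref{Combinatorics lemma}(III)), and if you bound each arc's oscillation only by $2\mathcal O(W_*)$ via monotonicity (\ref{B3 - monotone edges}) and \eqref{osc(f) less O(W)}, you get something like $\mathcal O(W_n)\leq \tfrac{2}{3}\bigl(2\mathcal O(W_{n-1})+2\mathcal O(W_k)+2\mathcal O(W_l)\bigr)$, which is an \emph{expansion}, not a contraction. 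More to the point, tracking only the height averages $c_n$ as the paper does, the straightforward estimate gives $|c_n-c_{n-1}|\leq \tfrac{1}{3}(\Delta_{m-1}+\Delta_{m-1}+\Delta_{m-1})=\Delta_{m-1}$ — i.e.\ factor exactly $1$. There is no ``finite loss'' that $(2/3)^k$ eventually overcomes; the loss recurs at every scale. The obstruction is that a vertex of $W_n$ sits on an edge of a \emph{much earlier} triangle $W_{k(n)}$, and without extra information about how fast $f$ varies along that edge near the accumulation point, two consecutive vertex-values on it can be as far apart as $\mathcal O(W_{k(n)})$ itself.

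What closes the gap in the paper is not a generic building block function plus bookkeeping, but a deliberate analytic choice: condition $(\ast)$ of Section \ref{Section Condition *}, which requires that consecutive dyadic points on a half-edge of $W_0$ have $f$-values differing by at most $\mathcal O(W_0)/3$, arranged by building the boundary values out of $N$ concentric radial annular steps with step size $\mathcal O(W_0)/N$ (and $N\geq 3$). Plugging $(\ast)$ into the Case 3 computation upgrades two of the three terms from $\Delta_{m-1}$ to $\Delta_{m-1}/3$, giving the genuine contraction $\Delta_m=\tfrac{7}{9}\Delta_{m-1}$. Your proposal does not mention $(\ast)$ or the necessity of controlling the modulus of continuity of the building block functions near vertices; the paper explicitly warns (end of Section \ref{Section Building block}) that the standard log-capacity building block is \emph{not} the one used, precisely because of this. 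Also note that the paper does not prove a global exponential decay $\Omega_m\lesssim \lambda^m$ as you propose: near vertex- and edge-type points the contraction is driven by uniform continuity of $f$ restricted to finitely many \emph{fixed} triangles $A,B$ (a compactness input, not an averaging gain), and the averaging argument with $(\ast)$ is only invoked in the $K^\circ$ case where that compactness is unavailable; the argument is carried out locally, by contradiction at an accumulation point, rather than as a uniform rate estimate over all levels.
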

The way to interpret this statement is that \textit{there exists} a choice of building block functions that makes $f$ continuous. As we remarked in Section \ref{Section Building block}, we cannot use a ``generic" building block function, but we have to make a careful construction.

The proof of continuity relies on the next crucial lemma. Recall the definition of $\mathcal O(W)$ from Section \ref{Section Choice}.

\begin{lemma}\label{Basic lemma}
For each $\varepsilon>0$ there exist at most finitely many $w$-triangles $W$ with 
\begin{align*}
\mathcal O(W)>\varepsilon.
\end{align*}
In particular, for each $\varepsilon>0$ there exist at most finitely many $w$-triangles $W$ with 
\begin{align*}
\osc_{\br W}(f) >\varepsilon.
\end{align*}
\end{lemma}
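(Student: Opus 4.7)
I will prove the lemma by contradiction. Suppose there exist $\varepsilon>0$ and infinitely many distinct $w$-triangles $W_k$ with $\mathcal O(W_k)>\varepsilon$. Since only $3^{m-1}$ $w$-triangles exist at each level $m$, the levels of $W_k$ must tend to $\infty$, so $\diam(W_k)\to 0$. By the compactness of $K$ and the fact that $\bar W_k\subset K\cup W_\infty$, after passing to a subsequence we may assume the closures $\bar W_k$ converge in the Hausdorff metric to a singleton $\{z\}$ for some $z\in K$.

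For each $W_k$, let $V^{(k)}$ be its parent $v$-triangle and $W^{*(k)}_1,W^{*(k)}_2,W^{*(k)}_3$ the three $w$-triangles adjacent to the three edges of $V^{(k)}$. The vertex values of $W_k$ are $c_i^{W_k}=f(y_i^{(k)})$, where $y_i^{(k)}$ is the midpoint of the $i$-th edge of $V^{(k)}$, lying on $\partial W^{*(k)}_i$. Since $\mathcal O(W_k)\le \max_{i,j}|c_i^{W_k}-c_j^{W_k}|$, it suffices to show $c_1^{W_k},c_2^{W_k},c_3^{W_k}$ converge to a common limit, which will give the contradiction $\mathcal O(W_k)\to 0$.

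I will apply Lemma \ref{Combinatorics lemma} to $z$. Since $\diam V^{(k)}=2\diam W_k\to 0$ and $V^{(k)}\to z$, for large $k$ the combinatorics of $V^{(k)}$ is constrained by the lemma: each parent-neighbor $W^{*(k)}_i$ is either (a) a ``same-scale'' central $w$-triangle of some ancestor in the canonical nested sequence $\{V_n\}$ for $z$, or (b) one of the fixed ``larger-scale'' $w$-triangles (corresponding to $A,B$ in case (I), to $B$ in case (II), or absent in case (III)) whose boundary passes through $z$. For vertices $y_i^{(k)}$ lying on a same-scale neighbor $W^{*(k)}_i$ (where the edge of $V^{(k)}$ is a full edge of $W^{*(k)}_i$), property \ref{B5} gives $f(y_i^{(k)})=a_{W^{*(k)}_i}$ directly. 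For vertices on a larger-scale neighbor, $y_i^{(k)}$ lies on an edge of $W^{*(k)}_i$ at distance $O(\diam V^{(k)})\to 0$ from a vertex $\xi$ of $W^{*(k)}_i$; the monotonicity property \ref{B3 - monotone edges} and continuity of the building block function of $W^{*(k)}_i$ at $\xi$ then imply $f(y_i^{(k)})\to f(\xi)$.

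The core of the argument is then a coupling of this boundary continuity with the averaging recursion $a_W=\tfrac13\sum a_{W^*_i}$. Iterating this recursion along the canonical nested sequence for $z$ — i.e., setting $\alpha_n=a_{W_n}$ with $\alpha_n=\tfrac13(\alpha_{n-1}+\beta_n+\gamma_n)$, where $\beta_n,\gamma_n$ are the $f$-values on the two larger-scale edges that by the above tend to a common limit $f(z)$ — forces $\alpha_n\to f(z)$ geometrically (the contraction constant $1/3$ arising from the averaging). Combining, all $c_i^{W_k}\to f(z)$ as $k\to\infty$, and hence $\mathcal O(W_k)\to 0$, contradicting $\mathcal O(W_k)>\varepsilon$.

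The main obstacle I anticipate is (i) in case (I), where $z$ is a vertex of fixed $w$-triangles $A,B$, verifying that the fixed-point value of the averaging recursion truly equals $f(z)$ — this forces one to couple the building block of $A$ (resp.\ $B$) near $z$ with the averaging of nested heights, using that the values on $\partial A$ and $\partial B$ near $z$ tend to the common value $f(z)$; and (ii) the fact that a generic $W_k$ need not belong to the canonical nested sequence of $z$, so one must argue that every $w$-triangle near $z$ of sufficiently high level has parent-neighbors satisfying the convergence above — this requires the inductively chosen moduli of continuity for the building blocks (to be fixed later in the construction) to be uniform enough across all $w$-triangles sharing a vertex with a fixed lower-level neighbor, so that the convergence of $f(y_i^{(k)})$ holds uniformly in $k$.
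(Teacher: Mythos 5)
Your high-level approach matches the paper's: argue by contradiction, zoom in on an accumulation point $z$, read off the structure of the three parent-neighbors from Lemma \ref{Combinatorics lemma}, and exploit the averaging recursion $a_W=\tfrac{1}{3}\sum a_{W_i^*}$. But there are two genuine gaps that sink the argument as written. First, your classification of the parent-neighbors into ``same-scale'' ancestors (where \ref{B5} applies) and ``fixed larger-scale'' triangles ($A,B$) misses exactly the case that causes the difficulty: in Lemma \ref{Combinatorics lemma}(II) and (III), two of the three neighbors are $W_{k(n)}$ and $W_{l(n)}$ --- ancestors more than one level above $W_n$ in the nested sequence, which are themselves shrinking and are \emph{not} fixed. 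The vertex of $W_n$ on $\partial W_{k(n)}$ is not a midpoint, so \ref{B5} gives nothing, and because $W_{k(n)}$ is not fixed there is no single building block whose continuity one can invoke. Second, and more fatal, your claim that $\beta_n,\gamma_n\to f(z)$ (so that the recursion contracts at rate $1/3$) is circular in Case (III): those quantities are precisely $f$-values near $z$ whose convergence is exactly what needs proving, and without an a-priori bound on $\mathcal O(W_{k(n)})$ you cannot assert it.

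The paper's resolution is condition $(\ast)$ in Section \ref{Section Condition *}: a \emph{relative} modulus of continuity, $|f(z_1)-f(z_2)|\le\mathcal O(W_0)/3$ whenever $z_1,z_2$ are vertices of adjacent triangles both lying on a lower-level $\partial W_0$. This is a local, verifiable constraint on each individual building block (achieved by a dyadic radial construction), and crucially it is normalized by $\mathcal O(W_0)$ rather than being an absolute uniform modulus. That normalization, combined with the sign cancellation from the monotonicity \ref{B3 - monotone edges} of $f$ along a half-edge, is what produces an honest contraction, $\Delta_m=\tfrac{7}{9}\Delta_{m-1}$ in Case (III) (and $\tfrac{5}{6}$, $\tfrac{2}{3}$ in the easier cases), rather than the naive $1/3$. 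One also needs Lemma \ref{Monotonicity} to pass from the canonical central triangles $W_n$ --- the only ones the recursion directly bounds --- to an arbitrary $W_k\subset V_n$; your proposal flags both of these as ``obstacles'' but does not close them, and identifying the relative form of $(\ast)$ is precisely where the real work of the proof lies.
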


Assuming the lemma, we prove Proposition \ref{Continuity}.

\begin{proof}[Proof of Proposition \ref{Continuity}]
Using Lemma \ref{Basic lemma}, we fist prove:
\begin{claim} \label{Claim:uniform continuity}
For each $\varepsilon>0$ there exist at most finitely many $v$-triangles $V$ with 
\begin{align}\label{Continuity osc V}
\osc_{\br V\cap W_\infty}(f)>\varepsilon.
\end{align}
\end{claim}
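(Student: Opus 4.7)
The plan is as follows. By Lemma \ref{Monotonicity} it suffices to bound $\osc_{\partial V}(f)$. A short induction on the subdivision step shows that each side $s_i$ of a $v$-triangle $V$ of level $m$ lies on the boundary of a unique $w$-triangle $W_i^V$ of level at most $m$: each corner child of a level $(m-1)$ $v$-triangle inherits two sub-segments of its parent's sides (which by induction lie on boundaries of $w$-triangles) and acquires its third side on the boundary of the newly created central $w$-triangle of level $m$. On $s_i$ one has $f=g_{W_i^V}$, and combining the monotonicity property \ref{B3 - monotone edges} and the midpoint property \ref{B5} with the averaging relation $a=\frac{1}{3}(c_1+c_2+c_3)$ yields
\begin{align*}
\osc_{s_i}(f)\leq \osc_{\partial W_i^V}(g_{W_i^V}) = \max_j c_j^{W_i^V}-\min_j c_j^{W_i^V}\leq 2\mathcal O(W_i^V).
\end{align*}

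Next I would establish the inequality
\begin{align*}
\osc_{\partial V}(f)\leq 2\max_{i\in\{1,2,3\}}\osc_{s_i}(f).
\end{align*}
Indeed, if the maximum and minimum of $f$ on $\partial V$ are attained on the same side, this is immediate; if they lie on two distinct sides $s_a,s_b$ at points $x,y$, then since any two sides of a triangle share a vertex $v$, one writes $|f(x)-f(y)|\leq |f(x)-f(v)|+|f(v)-f(y)|\leq \osc_{s_a}(f)+\osc_{s_b}(f)$ to obtain the bound.

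I then argue by contradiction. Suppose infinitely many distinct $v$-triangles $V_k$ satisfy $\osc_{\partial V_k}(f)>\varepsilon$. Since there are only $3^m$ $v$-triangles of level $m$, the levels $n_k$ tend to infinity and the side lengths $2^{-n_k}$ shrink to zero. By the side-wise bound, after passing to a subsequence the same index, say $i=1$, witnesses $\osc_{s_1^{V_k}}(f)>\varepsilon/2$ for all $k$; the first displayed inequality forces $\mathcal O(W_1^{V_k})>\varepsilon/4$. By Lemma \ref{Basic lemma} only finitely many $w$-triangles satisfy the latter, so a further subsequence makes $W_1^{V_k}=W^*$ constant for all $k$.

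To close the argument, I would pass to one more subsequence along which $s_1^{V_k}\subset\partial W^*$ converges in Hausdorff distance to a single point $z\in\partial W^*$; this is possible since $\partial W^*$ is compact and the segments have vanishing length. Continuity of the building block $g_{W^*}$ on $\br W^*$ then forces $\max_{s_1^{V_k}} g_{W^*}$ and $\min_{s_1^{V_k}} g_{W^*}$ to both converge to $g_{W^*}(z)$, so $\osc_{s_1^{V_k}}(f)\to 0$, contradicting $\osc_{s_1^{V_k}}(f)>\varepsilon/2$. The hard part will be the side-wise oscillation bound of the second paragraph; once it is in place, the remainder is a compactness/continuity argument that leverages Lemma \ref{Basic lemma} exactly to collapse an infinite sequence of $V_k$ down to finitely many candidates for the bounding $w$-triangle $W_1^{V_k}$, after which continuity of the single building block $g_{W^*}$ closes the argument.
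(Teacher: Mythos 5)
Your proof is correct and follows essentially the same route as the paper: both reduce to $\osc_{\partial V}(f)$ via Lemma~\ref{Monotonicity}, split $\partial V$ among the boundaries of at most three $w$-triangles, and close the argument by combining Lemma~\ref{Basic lemma} with the uniform continuity of $f$ restricted to a single fixed $w$-triangle. The only difference is cosmetic: you invoke Lemma~\ref{Basic lemma} first to extract a constant triangle $W^*$ along a subsequence and then apply continuity of $g_{W^*}$, whereas the paper first proves the correspondence $V\mapsto W$ is finite-to-one using that same continuity and only then appeals to Lemma~\ref{Basic lemma}.
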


We argue by contradiction, assuming that there exists $\varepsilon>0$ such that the above holds for infinitely many $v$-triangles. Let $V$ be one of them. Then by the monotonicity of $f$ from Lemma \ref{Monotonicity} we have $\osc_{\partial V}(f)>\varepsilon$. Each edge of $\partial V$ is contained in an edge of a $w$-triangle, so there exist three (possibly non-distinct) $w$-triangles $W_1$, $W_2$, and $W_3$ such that
\begin{align*}
\varepsilon < \osc_{\partial V}(f) \leq \osc_{\partial V\cap \partial W_1}(f)+\osc_{\partial V\cap \partial W_2}(f)+\osc_{\partial V\cap \partial W_3}(f).
\end{align*}
In particular, for one of them, say for $W$, we have $\osc_{\partial V\cap \partial W}(f)>\varepsilon$. We thus see that each $v$-triangle of the set $\{V: \osc_{\br V\cap W_\infty}(f)>\varepsilon\}$ corresponds to a $w$-triangle $W$ such that $\osc_{\partial V\cap \partial W}(f)>\varepsilon/3$. Moreover, this correspondence is finite-to-one. Indeed, if there were infinitely many $v$-triangles $V_n$, $n\in \N$, corresponding to a single $w$-triangle $W$, then the diameters of $V_n$ would shrink to $0$. However, the uniform continuity of the restriction of $f$ to $\partial W$ would imply that $\osc_{\partial V_n\cap \partial W}(f) \to 0$, a contradiction. It follows that there exist infinitely many $w$-triangles $W$ with the property that there exists a $v$-triangle $V$ such that 
\begin{align*}
\varepsilon/3 <\osc_{\partial V\cap \partial W}(f)\leq \osc_{\partial W}(f) \leq 2\mathcal O(W),
\end{align*}
where we used \eqref{osc(f) less O(W)}. This contradicts Lemma \ref{Basic lemma}.

Now, we prove that $f$ is uniformly continuous on $W_\infty$. We argue by contradiction, assuming that there exists $\varepsilon>0$ and sequences $x_n,y_n\in W_\infty$ with $|x_n-y_n|\to 0$ such that $|f(x_n)-f(y_n)|\geq \varepsilon$ for all $n\in \N$. The sequences $x_n,y_n$ cannot escape to $\infty$ since $f$ is identically equal to $0$ in a neighborhood of $\infty$. Consider an accumulation point $z$ of $x_n$ and $y_n$, and by passing to a subsequence, assume that $x_n,y_n \to z$. Note that $z$ cannot lie in the interior of any $w$-triangle, since the function $f$ is already continuous there. Hence, $z\in K$ and we split into three cases. 

Suppose first that $z$ is of vertex type. By Lemma \ref{Combinatorics V lemma}(I) there exist two (possibly non-distinct) $w$-triangles $A$ and $B$ containing $z$ on their boundary and two (possibly non-distinct) sequences of $v$-triangles $\br V_k$ and $\br V_k'$ shrinking to $z$ such that for each $k\in \N$ the set $\br A\cup \br B\cup \br V_k\cup \br V_k'$ contains all sufficiently small neighborhoods of $z$. We fix $k$ and a small $r>0$ such that  $B(z,r)\subset \br A\cup \br B\cup \br V_k\cup \br V_k'$. Since $z\in \partial A\cap \partial B\cap  \br V_k  \cap \br V_k'$, for each $p\in B(z,r)$ we have
\begin{align*}
|f(p)-f(z)| \leq \osc_{\br A\cap B(z,r)} (f) + \osc_{\br B\cap B(z,r)}(f) +\osc_{\br V_k\cap W_\infty}(f)+\osc_{\br V_k'\cap W_\infty}(f).
\end{align*}
By choosing a sufficiently large $k$, we may have that $\osc_{\br V_k\cap W_\infty}(f)<\varepsilon/8$ and $\osc_{\br V_k'\cap W_\infty}(f)<\varepsilon/8$, by Claim \ref{Claim:uniform continuity}. By choosing $r>0$ to be sufficiently small, using the uniform continuity of the restriction of $f$ on $\br A$ and $\br B$ we may also have $\osc_{\br A\cap B(z,r)} (f)<\varepsilon/8$ and $\osc_{\br B\cap B(z,r)} (f)<\varepsilon/8$. It follows that $|f(p)-f(z)|<\varepsilon/2$ for all $p\in B(z,r)$. Now, if $n$ is sufficiently large, then $x_n,y_n\in B(z,r)$, hence 
\begin{align*}
|f(x_n)-f(y_n)|\leq |f(x_n)-f(z)|+|f(y_n)-f(z)| < \varepsilon/2+ \varepsilon/2=\varepsilon,
\end{align*}
which is a contradiction.

If $z$ is of edge type, then applying Lemma \ref{Combinatorics V lemma}(II) we obtain a $w$-triangle $B$ with $z\in \partial B$ and a sequence of $v$-triangles $\br V_k$ shrinking to $z$ such that for each $k\in \N$ the set $\br B\cup \br V_k$ contains all sufficiently small neighborhoods of $z$. One now argues exactly as in the previous case, using Claim \ref{Claim:uniform continuity} or using the uniform continuity of the restriction of $f$ on $\br B$. It follows that for each $k\in \N$ there exists a small $r>0$ such that for all  $p\in B(z,r)$ we have 
\begin{align*}
|f(p)-f(z)|\leq \osc_{\br B\cap B(z,r)}(f) +\osc_{\br V_k\cap W_\infty}(f)< \varepsilon/8+\varepsilon/8 =\varepsilon/4.
\end{align*}
Since $x_n,y_n\in B(z,r)$ for all sufficiently large $n$, we obtain again a contradiction to the assumption that $|f(x_n)-f(y_n)|\geq \varepsilon$ for $n\in \N$.

Finally, suppose that $z\in K^\circ$. By Lemma \ref{Combinatorics V lemma}(III) there exists a sequence of $v$-triangles $V_k$ shrinking to $z$ such that $z\in V_k$ for all $k\in \N$. It follows that for each $k$ there exists a large $n$ such that $x_n,y_n\in V_k$. In particular, we have
\begin{align*}
|f(x_n)-f(y_n)|\leq \osc_{\br V_k \cap W_\infty}(f).
\end{align*}
If we choose a sufficiently large $k\in \N$ then the latter is less than $\varepsilon$ by Claim \ref{Claim:uniform continuity} and we obtain a contradiction.
\end{proof}

Finally, we prove the basic Lemma \ref{Basic lemma}.

\begin{proof}[Proof of Lemma \ref{Basic lemma}]
We argue by contradiction, assuming that for some $\varepsilon_0>0$ we have 
\begin{align}\label{Basic lemma osc}
\osc_{\br{W}}(f)> \varepsilon_0
\end{align}
for infinitely many $w$-triangles $W$. We split in three cases. 

\textbf{Case 1:} There exist infinitely many  $w$-triangles satisfying \eqref{Basic lemma osc} and converging to a point $z\in K$ of vertex type. 

By Lemma \ref{Combinatorics lemma}(I), the vertex $z$ is (contained in) the intersection of the closures of two fixed $w$-triangles $A$ and $B$ (these might be non-distinct if they are the unbounded $w$-triangle). Since the restriction of $f$ is uniformly continuous on $\br{A\cup B}$, for each $\varepsilon>0$ there exists $\delta>0$ such that for all $x,y\in \br {A\cup B}$ with $|x-y|<\delta$ we have $|f(x)-f(y)|<\varepsilon$. 

Using the notation from Lemma \ref{Combinatorics lemma}(I), we consider the sequences of triangles $W_n$ and $V_n$. For sufficiently large $n$, the triangle $V_n$ has $z$ on its boundary, and is contained in the $\delta/2$-neighborhood of $z$. Note that each triangle $W_n$ has its vertices on $\partial W_{n-1}$, $\partial A$, and $\partial B$ for all sufficiently large $n$. Assume that all of the above hold for $n\geq N$.

We denote by $c_n$ the  height of $f$ on $W_n$ (recall the definition of the height of a building block function in Section \ref{Section Building block}), and note that for $n>N$ we have 
\begin{align*}
c_n=\frac{1}{3}(c_{n-1}+ c_{n,A}+c_{n,B}),
\end{align*} 
where $c_{n,A}$ and $c_{n,B}$ are the values of $f$ on the vertex of $W_n$ lying on $\partial A$ and $\partial B$, respectively. Note that the vertex of $W_n$ lying on $\partial W_{n-1}$ is the midpoint of an edge of $\partial W_{n-1}$. Hence, by property \ref{B5}, the value of $f$ at this vertex is equal to the height of $f$ on the triangle $W_{n-1}$, i.e., $c_{n-1}.$ 

Our goal is to find a sequence $\{\Delta_n\}_{n\in \N}$ such that 
\begin{align}\label{Basic lemma case1}
\mathcal O (W_n)= \max \{|c_n-c_{n-1}|,|c_n-c_{n,A}|, |c_n-c_{n,B}| \} \leq \Delta_n
\end{align}
for $n\in \N$, and $\Delta_n\leq 3\varepsilon$ for sufficiently large $n$. Then 
\begin{align*}
\osc_{\partial V_n}(f) &\leq \osc_{\partial V_n \cap \partial A}(f) +\osc_{\partial V_n\cap \partial B}(f)+ \osc_{\partial V_n\cap \partial W_{n-1}}(f)\\
&\leq 2\varepsilon+ \osc_{\br {W}_{n-1}} (f) \leq 2\varepsilon +2\Delta_{n-1}\leq 8\varepsilon 
\end{align*}
for all sufficiently large $n$, where we used \eqref{osc(f) less O(W)}.

Note that there are at most two nested sequences $\{V_n\}_{n\in \N}$ and $\{V_n'\}_{n\in \N}$ shrinking to $z$, for which the above bounds hold; see Lemma \ref{Combinatorics V lemma}(I). If $W$ is a small $w$-triangle near $z$ satisfying \eqref{Basic lemma osc}, then it has to be contained in $V_n$ or $V_n'$ for some large $n$, by Lemma \ref{Combinatorics V lemma}(I). Using the monotonicity of $f$ we see that 
$$\osc_{\br W}(f) \leq \max\{\osc_{\partial V_n} (f), \osc_{\partial V_n'}(f)\} \leq 8\varepsilon.$$ 
This contradicts \eqref{Basic lemma osc} if we choose $\varepsilon<\varepsilon_0/8$.

We proceed to the proof of \eqref{Basic lemma case1}. For $1\leq n\leq N$ we use the trivial bound $\mathcal O(W_n)\leq 1\eqqcolon \Delta_n$. Once $\Delta_{n-1}$ has been defined and satisfies \eqref{Basic lemma case1}, for $n>N$ we have
\begin{align*}
|c_n-c_{n-1}|&\leq \frac{1}{3}( |c_{n,A}-c_{n-1}| +|c_{n,B}-c_{n-1}|)\\
&\leq \frac{1}{3}( |c_{n,A}-c_{n-1,A}|+|c_{n-1,A}-c_{n-1}|\\
&\quad\quad + |c_{n,B}-c_{n-1,B}|+|c_{n-1,B}-c_{n-1}|)\\
&\leq \frac{1}{3}( \varepsilon+ \Delta_{n-1} + \varepsilon +\Delta_{n-1}) \\
&\leq  \frac{2\varepsilon}{3} + \frac{2}{3} \Delta_{n-1},\\
|c_n-c_{n,A}|&\leq \frac{1}{3} (|c_{n-1}-c_{n,A}|+ |c_{n,B}-c_{n,A}|)\\
&\leq \frac{1}{3}(\varepsilon +\Delta_{n-1} +\varepsilon)\\
&\leq \frac{2\varepsilon}{3}+ \frac{1}{3}\Delta_{n-1}, \quad \textrm{and}\\
|c_n-c_{n,B}|&\leq  \frac{2\varepsilon}{3}+ \frac{1}{3}\Delta_{n-1}.
\end{align*}
Here, we used the fact from Lemma \ref{Combinatorics lemma}(I) that the vertices $\partial W_{n}\cap \partial A$, $\partial W_{n-1}\cap \partial  A$, and $\partial W_{n}\cap \partial B$, $\partial W_{n-1}\cap \partial B$ are all contained in $\partial V_n$, which lies in the $\delta/2$-neighborhood of $z$. Thus, we may choose $\Delta_n\coloneqq  \frac{2\varepsilon}{3}+ \frac{2}{3}\Delta_{n-1}$ for $n>N$, which yields
\begin{align*}
\Delta_n= \frac{2\varepsilon}{3} + \dots +\frac{2^{n-N}\varepsilon}{3^{n-N}} +\frac{2^{n-N}}{3^{n-N}}\leq 2\varepsilon+ \frac{2^{n-N}}{3^{n-N}}.
\end{align*}
Since $\Delta_n\leq 3\varepsilon$ for sufficiently large $n$, we have the desired conclusion.

\textbf{Case 2:} There exist infinitely many $w$-triangles satisfying \eqref{Basic lemma osc} and converging to a point $z\in K\setminus K^\circ$ that is of edge type, i.e., it lies on an open edge of a $w$-triangle $B$.  

We consider the unique sequences of triangles $V_n$ and $W_n$ converging to $z$, as in Lemma \ref{Combinatorics lemma}(II). We fix a small $\varepsilon>0$ and consider $\delta>0$ such that $|f(x)-f(y)|<\varepsilon$ whenever $|x-y|<\delta$ and $x,y\in \partial B$. Assume that $V_n$ is contained in the $\delta/2$-neighborhood of $z$ for $n>N$. Then arguing as in Case 1, we wish to find a sequence $\Delta_n$ that bounds the oscillation of $f$ on $W_n$, and $\Delta_n$ is sufficiently small, depending on $\varepsilon$. 

This time we have
\begin{align*}
c_n= \frac{1}{3} (c_{n-1}+c_{n,A_n} +c_{n,B}).
\end{align*}
for $n\geq 1$; also here, $c_n$ is the height of $f$ on $W_n$ and we use again property \ref{B5}. By Lemma \ref{Combinatorics lemma}(II), $A_n=W_{k(n)}$ for, say, $n>N$, where $k(n)\to \infty$. We set $ c_{n,k(n)} \coloneqq c_{n,A_n}$. In general, if a $w$-triangle $W_n$ has a vertex on $\partial W_m$ for some $m$, then the value of $f$ at that vertex is denoted by $c_{n,m}$.

Our claim now is that for each $m\in \N$ there exists $N_m\in \N$ and $\Delta_m$ such that 
\begin{align*}
\mathcal O(W_n)\leq \Delta_m
\end{align*}
for $n>N_m$, and $\Delta_m\leq 5\varepsilon$ for sufficiently large $m$. 

We assume this for the moment. If $W$ is a small $w$-triangle sufficiently close to $z$, then $W\subset V_{n}$ for some large $n$, by Lemma \ref{Combinatorics V lemma}(II). The boundary of $V_{n}$ is contained in $\partial W_{n-1}$, $\partial W_{k(n)}$, and $\partial B$; see Lemma \ref{Combinatorics lemma}(II). Also $V_{n}$ lies in the $\delta/2$-neighborhood of $z$ for large $n$. Hence, 
\begin{align*}
\osc_{\br W}(f)&\leq \osc_{\partial V_{n}}(f) \leq \varepsilon +2\mathcal O (W_{n-1}) +2\mathcal O(W_{k(n)})\\
&\leq \varepsilon + 4\Delta_{m}
\end{align*}
provided that $n-1,k(n)>N_m$. If $W$ is sufficiently close to $z$, then $V_n$ can be chosen to be sufficiently close to $z$, so $n-1$, $k(n)$ and $m$ can be large enough, in order to have $n-1,k(n)>N_m$, and $\Delta_m\leq 5\varepsilon$. Hence,
\begin{align*}
\osc_{\br W}(f) \leq 21\varepsilon
\end{align*}
and this contradicts \eqref{Basic lemma osc}, if we choose $\varepsilon< \varepsilon_0/21$.

Now, we prove our claim. For $m=1$ we use the trivial bound $\mathcal O(W_n)\leq 1\eqqcolon \Delta_1$, which holds for all $n\in \N$. If $N_{m-1}$ has been chosen, we choose $N_m>N$ to be so large that $n-1,k(n)>N_{m-1}$ for all $n>N_m$. This can be done since $k(n)\to \infty$ by Lemma \ref{Combinatorics lemma}(II). For $n>N_m$ we have
\begin{align*}
|c_{n}-c_{n-1}|&\leq \frac{1}{3}( |c_{n,k(n)}-c_{n-1}| +|c_{n,B}-c_{n-1}|).
\end{align*}
If $|c_{n,k(n)}- c_{n-1,k(n)}|\leq \Delta_{m-1}/2$ then we have
\begin{align*}
|c_n-c_{n-1}|&\leq \frac{1}{3}( |c_{n,k(n)}-c_{n-1,k(n)}|+|c_{n-1,k(n)}-c_{n-1}| \\
&\quad \quad + |c_{n,B}- c_{n-1,B}|+|c_{n-1,B}-c_{n-1}|)\\
&\leq \frac{1}{3}(\Delta_{m-1}/2 + \Delta_{m-1}+\varepsilon+\Delta_{m-1} )\\
&\leq \frac{2\varepsilon}{3}+ \frac{5}{6} \Delta_{m-1}.
\end{align*}
Here we used the fact from Lemma \ref{Combinatorics lemma}(II) that $\br V_n$ contains the vertices $\partial W_{n-1} \cap \partial B$ and $\partial W_{n}\cap \partial B$, so they are $\delta$-close to each other. 

If $|c_{n,k(n)}- c_{n-1,k(n)}|> \Delta_{m-1}/2$, then we necessarily have $|c_{n,k(n)}- c_{k(n),B}|\leq \Delta_{m-1}/2$, where $c_{k(n),B}$ denotes the value of $f$ at the vertex of $W_{k(n)}$ lying on $\partial B$. This is because the vertices $\partial W_{n-1}\cap \partial W_{k(n)}$, $\partial W_{n}\cap \partial W_{k(n)}$,  $\partial W_{k(n)}\cap \partial B $ are ordered points, contained in a half-edge of $W_{k(n)}$ (by Lemma \ref{Combinatorics lemma}), where $f$ is monotone increasing or decreasing by property \ref{B3 - monotone edges} in Section \ref{Section Building block}. On the other hand, by the induction assumption, the oscillation of $f$ on this half-edge is bounded by $\mathcal O(W_{k(n)}) \leq \Delta_{m-1}$, since $k(n)>N_{m-1}$. In this case, we have
\begin{align*}
|c_{n}-c_{n-1}|&\leq \frac{1}{3}( |c_{n,k(n)}-c_{k(n),B}| +|c_{k(n),B}-c_{n-1,B}|+ |c_{n-1,B}-c_{n-1}|\\
& \quad \quad +|c_{n,B}-c_{n-1,B}|+|c_{n-1,B}-c_{n-1}| )\\
&\leq \frac{1}{3}( \Delta_{m-1}/2+ \varepsilon+ \Delta_{m-1}+\varepsilon +\Delta_{m-1})\\
&=\frac{2\varepsilon}{3}+ \frac{5}{6}\Delta_{m-1}.
\end{align*}
In the same way we also compute bounds for $|c_n-c_{n,k(n)}|$ and $|c_n-c_{n,B}|$, and we can show that they are all bounded by 
\begin{align*}
\mathcal O(W_n) \leq  \frac{2\varepsilon}{3}+ \frac{5}{6}\Delta_{m-1}\eqqcolon \Delta_m.
\end{align*}
Observe that
\begin{align*}
\Delta_m  \leq 4\varepsilon + \frac{5^{m-1}}{6^{m-1}}\leq 5\varepsilon
\end{align*}
for sufficiently large $m$, as desired.

\textbf{Case 3:} There exist infinitely many $w$-triangles satisfying \eqref{Basic lemma osc} and converging to a point $z\in K^\circ$. 

In the previous two cases the proof was mostly combinatorial, based on Lemma \ref{Combinatorics V lemma} and Lemma \ref{Combinatorics lemma}, on qualitative properties of the building block functions, and on the fact that the restriction of $f$ on the union of finitely many $w$-triangles is uniformly continuous. However, in this case it will be crucial to make a suitable choice of the building block functions, so that they have a certain modulus of continuity on each $w$-triangle near the vertices. 

Let $W_0$ be a $w$-triangle, and consider the corresponding oscillation $\mathcal O(W_0)$, which depends on the value of $f$ at the vertices of $W_0$, by the inductive definition of $f$. We require the following:
\subsubsection{Condition \texorpdfstring{$(\ast)$}{(*)}}\label{Section Condition *}
\begin{enumerate}
\item[($\ast$)] Assume that two $w$-triangles $W_1$ and $W_2$ are adjacent, and  each has a vertex on a triangle $\partial W_0$ of strictly lower level. If $z_1\in \partial W_1\cap\partial W_0$ and $z_2\in \partial W_2\cap\partial W_0$ are these vertices, then
\begin{align*}
|f(z_1)-f(z_2)|\leq \mathcal O(W_0)/3.
\end{align*}
\end{enumerate}
Recall that $W_1$ is adjacent to $W_2$ if $W_1$ has a vertex on $\partial W_2$, or vice versa. Note at this point the vertices of $W_0$ can only lie on triangles of strictly lower level from  that of $W_0$, by our observations in Section \ref{Section Definitions}, so in particular they do not lie on $W_1$ or $W_2$. Of course, we still require the initial properties \ref{B1 - g=a outside balls}--\ref{B6} of the building block functions from Section \ref{Section Building block}.

To construct $f$ on $W_0$ with the desired properties we work as follows. We fix an edge $I \subset \partial W_0$, and a vertex $z\in I$ of $W_0$. Assume that the edges of $W_0$ have length $1$, and that $I=[0,1]$,  $z=0$. We consider the points $2^{-k}$, $k\geq 2$, on $I$, and define a radial function with the following procedure. For a fixed $N\in \N$ we define $f$ on the annulus $A_1\coloneqq A(0;2^{-3},2^{-2})$ to be a radial function that is equal to $0$ in the outer circle and increases to $1/N$ in the inner circle, with slope $\simeq 2^3/N$. For $1\leq k\leq N$, we define $f$ in $A_k\coloneqq A(0;2^{-2-k}, 2^{-1-k})$ to be a radial function that is equal to $(k-1)/N$ in the  outer circle and increases to $k/N$ in the inner circle, with slope $\simeq 2^{2+k}/N$. In the ball $B(0,2^{-2-N})$ we set $f\equiv 1$, outside the ball $B(0,2^{-2})$ we set $f\equiv 0$, and then we restrict $f$ to the triangle $\br W_0$. Then $f\in W^{1,2}(W_0)$ and
\begin{align*}
\int_{W_0} |\nabla f|^2 = \sum_{k=1}^N  \int_{W_0\cap A_k} |\nabla f|^2 \lesssim \sum_{k=1}^N \left(\frac{2^{2+k}}{N} \right)^2 m_2(W_0\cap A_k) \simeq \sum_{k=1}^N \frac{1}{N^2} \simeq \frac{1}{N}.
\end{align*}
Hence, by choosing a large $N$ we can achieve both that the $f$ has small energy, and that 
\begin{align*}
|f(2^{-k}) -f(2^{-k-1})| \leq \frac{1}{N }\leq \frac{1}{3}
\end{align*}
for all $k\geq 1$. Note that the same bounds hold for the corresponding dyadic points lying on the other edge of $W_0$ that is connected to $0$, and is a rotation of $I$ by $60$ degrees. Of course, the assumptions that $f=1$ near $z=0$ and $f=0$ outside $B(0,2^{-2})$ are not restrictive, since by rescaling $f$ and choosing a sufficiently large $N$ we can achieve the oscillation we wish with small energy.

We now consider dyadic points as above on each half-edge of $W_0$, converging to the corresponding vertex, and do a similar construction for all vertices. Near each vertex we may have that $f$ oscillates radially from a given value to the desired height and also has the property that if $x_1$ and $x_2$ are adjacent dyadic points lying on an edge $I$ of $W_0$ then 
\begin{align}\label{Basic lemma O(W)/3}
|f(x_1)-f(x_2)| \leq  \mathcal O(W_0)/3.
\end{align} 
Again, by dyadic points we mean points of the form $2^{-k}$ and $1-2^{-k}$ for $k\in \N$, once we scale $W_0$ so that $I=[0,1]$ is an edge of $W_0$. The properties \ref{B1 - g=a outside balls}--\ref{B6} in Section \ref{Section Building block} hold by construction. Especially, note that the property \ref{B3 - monotone edges} holds, since $f$ is radially increasing or decreasing near each vertex. 

To check property ($\ast$) we first use a scaling followed by a rotation of the Sierpi\'nski gasket so that the points $z_1,z_2$ in question lie on the edge $I=[0,1]$ of  $\partial W_0$. One now has to observe that if $z_1,z_2\in I\subset \partial W_0$ are vertices of adjacent triangles $W_1,W_2$, and they are not vertices of $W_0$, then they must both lie in one of the closed dyadic intervals of the form $[2^{-k-1},2^{-k}]$ or $[1-2^{-k},1-2^{-k-1}]$, $k\geq 1$. Hence, using the monotonicity of $f$ on these intervals (property \ref{B3 - monotone edges}) and \eqref{Basic lemma O(W)/3} we obtain
\begin{align*}
|f(z_1)-f(z_2)|\leq \mathcal O(W_0)/3.
\end{align*} 

To prove the observation mentioned, we first note that the two vertices of $\partial W_0$ that are endpoints $I$, actually lie on the edges of two triangles $A$ and $B$. The triangles $W_0,A$, and $B$ bound a $v$-triangle $V$, and each $w$-triangle that has a vertex on $I$ must be contained in $V$. We consider the central $w$-triangle $W(1/2)\subset V$ that has a vertex at the midpoint of $I$. The points $2^{-2}, 1-2^{-2}\in I$ are vertices of triangles $W(2^{-2})$, $W(1-2^{-1})$, respectively, which also have a vertex on $\partial W(1/2)$. Inductively,  the points $2^{-k},1-2^{-k}\in I$, $k\geq 2$, are the vertices of triangles $W(2^{-k})$, $W(1-2^{-k})$, which have vertices on $W({2^{-k+1}})$, $W(1-2^{-k+1})$, respectively. Note that the $v$-triangles bounded by $\partial W_0$, $\partial W(2^{-k+1})$, and $\partial W(2^{-k})$, or by $\partial W_0$, $\partial W(1-2^{-k+1})$, and $\partial W(1-2^{-k})$ are disjoint for $k\geq 1$, and that the closures of these $v$-triangles cover $I$, except for its endpoints.

Now, if $W_1,W_2\subset V$ are adjacent triangles as in the statement of ($\ast$), and they are not equal to the ``dyadic" triangles $W(2^{-k})$, $W(1-2^{-k})$, $k\in \N$, that have a vertex on a dyadic point of $I$, then their vertices $z_1,z_2\in I$ cannot lie in distinct dyadic intervals. This is because in this case $W_1$ and $W_2$ would lie in disjoint $v$-triangles, and thus, they would not be adjacent. With a similar analysis one deals with the cases where one of $W_1,W_2$, or both, are equal to ``dyadic" triangles. In any case the vertices $z_1\in \partial W_1\cap \partial W_0$ and $z_2\in  \partial W_2\cap \partial W_0$ must both lie in one of the intervals of the form $[2^{-k-1},2^{-k}]$ or $[1-2^{-k},1-2^{-k-1}]$, $k\geq 1$, as desired. \qed
\newline

Now, we return to the main proof of Case 3, that \eqref{Basic lemma osc} cannot occur for infinitely many $w$-triangles near a point $z\in K^\circ= K\setminus W_\infty$.

We consider the sequence of nested $v$-triangles $\{V_n\}_{n\geq 1}$ converging to $z$ and the corresponding $w$-triangles $W_n\subset V_n$, given by Lemma \ref{Combinatorics lemma}(III). In this case, for sufficiently large  $n\in \N $ the triangle $W_n$ has a vertex on $\partial W_{n-1}$ and two vertices on boundaries of some $w$-triangles $ W_{k(n)},W_{l(n)}$, with $k(n),l(n)\to\infty$; see Figure \ref{fig:zoom}. 

Let $\Delta_1=1$. We claim that for each $m\geq 2$ there exists $N_m\in \N$ and $\Delta_{m}=7\Delta_{m-1}/9$ such that for $n>N_m$ we have
\begin{align*}
\mathcal O(W_n) \leq \Delta_{m}.
\end{align*}
If this is the case, then $\Delta_m= 7^{m-1}/9^{m-1}$ for $m\in \N$, and it follows that 
\begin{align*}
\mathcal O(W_n) \leq 7^{m-1}/9^{m-1}\leq \varepsilon
\end{align*}
for all sufficiently large $m$ and $n>N_m$, where $\varepsilon>0$ is to be chosen. The triangle $V_n$ that contains $W_n$ has boundary contained in $\partial W_{n-1}$, $\partial W_{k(n)}$, and $\partial W_{l(n)}$. Note that if $W$ is a $w$-triangle that is sufficiently close to $z$, then $W\subset V_n$ for some large $n$, by Lemma \ref{Combinatorics V lemma}(III). On the other hand, for any $w$-triangle $W\subset V_n$ by the monotonicity of $f$ in Lemma \ref{Monotonicity} and \eqref{osc(f) less O(W)} we have
\begin{align*}
\osc_{\br W}(f)\leq \osc_{\partial V_n} (f) \leq 2\mathcal O(W_{n-1})+2\mathcal O(W_{k(n)})+2\mathcal O(W_{l(n)})\leq 6\varepsilon,
\end{align*}
provided that $n$ is sufficiently large, so that $n-1,k(n),l(n)>N_m$. Choosing $\varepsilon< \varepsilon_0/6$ yields a contradiction to \eqref{Basic lemma osc}.

Now, we proceed with the proof of our claim. For $m=1$ we use the trivial bound $\mathcal O(W_n)\leq 1\eqqcolon \Delta_1$, which holds for all $n\in \N$. If $N_{m-1}$ has been chosen, we choose $N_m$ to be so large that for $n>N_m$ the vertices of the triangle $W_n$ lie on triangles $\partial W_{n-1}$, $\partial W_{k(n)}$, $\partial W_{l(n)}$, with $n-1,k(n),l(n)>N_{m-1}$. This can be done since $k(n),l(n)\to \infty$. Now we have
\begin{align*}
c_n= \frac{1}{3}( c_{n-1}+c_{n,k(n)}+c_{n,l(n)}),
\end{align*}
using the notation of Case 2. We assume that the  order of the triangles by level is $W_{l(n)},W_{k(n)},W_{n-1},W_n$, i.e., $l(n)<k(n)< n-1<n$, as in Lemma \ref{Combinatorics lemma}(III). In order to control $\mathcal O(W_n)$, we will find bounds for the differences $c_n-c_{n-1}$, $c_n-c_{n,k(n)}$, and $c_n-c_{n,l(n)}$.

We have
\begin{align*}
c_n-c_{n-1}&= \frac{1}{3}( c_{n,k(n)}-c_{n-1}+c_{n,l(n)}-c_{n-1})\\
&= \frac{1}{3}[(c_{n,k(n)}-c_{n-1,k(n)})+(c_{n-1,k(n)}-c_{n-1}) \\
&\quad \quad +(c_{n,l(n)}-c_{n-1,l(n)})+(c_{n-1,l(n)}-c_{n-1})]
\end{align*}
The first difference is bounded (in absolute value) by $\mathcal O(W_{k(n)})/3 \leq \Delta_{m-1}/3$, by the induction assumption and the property ($\ast$) of the function $f$; note that the triangles $W_n$ and $W_{n-1}$ are adjacent and they both have a vertex on $\partial W_{k(n)}$. With the same reasoning the third difference is also bounded by $\mathcal O(W_{l(n)})/3 \leq \Delta_{m-1}/3$. Finally, by the definition of $f$ on $W_{n-1}$, the sum of the second and fourth differences is equal to $c_{n-1}-c_{n-1,D}$, where $c_{n-1,D}$ is the value of $f$ on the third vertex of $W_{n-1}$ that does not lie on $\partial W_{k(n)}$ and $\partial W_{l(n)}$. Hence, for the absolute value of the sum of the second and fourth differences we obtain the upper bound $\mathcal O(W_{n-1})\leq \Delta_{m-1}$. Putting these altogether, we have
\begin{align*}
|c_n-c_{n-1}|\leq \frac{1}{3} ( \Delta_{m-1}/3+\Delta_{m-1}/3 +\Delta_{m-1}) = \frac{5}{9}\Delta_{m-1}.
\end{align*}

We now do the same analysis for the difference $c_n-c_{n,k(n)}$. We have
\begin{align}
\begin{aligned}\label{Case 3 differences}
c_n-c_{n,k(n)} &=\frac{1}{3}( c_{n-1}-c_{n,k(n)}+ c_{n,l(n)}-c_{n,k(n)})\\
&=\frac{1}{3}[ (c_{n-1}-c_{n-1,k(n)})+(c_{n-1,k(n)} -c_{n,k(n)})\\
&\quad \quad + (c_{n,l(n)}-c_{k(n),l(n)})+(c_{k(n),l(n)}-c_{n,k(n)})].
\end{aligned}
\end{align}
We bound the first difference by $\Delta_{m-1}$, using the induction assumption. The second and fourth differences have opposite sign since  the vertices $\partial W_{n-1}\cap \partial W_{k(n)}$, $\partial W_{n}\cap \partial W_{k(n)}$,  $\partial W_{k(n)} \cap \partial W_{l(n)} $ are ordered points (see Figure \ref{fig:zoom}), contained in a half-edge of $W_{k(n)}$ (by Lemma \ref{Combinatorics lemma}), where $f$ is monotone increasing or decreasing by property \ref{B3 - monotone edges} in Section \ref{Section Building block}. Using the fundamental inequality
\begin{align*}
|x+y|\leq \max\{ |x|,|y|\}
\end{align*}
whenever $x,y\in \R$ and $xy<0$ (or more generally $x,y\in \C$ and their angle is $\pi$), we conclude that 
\begin{align*}
&|(c_{n-1,k(n)} -c_{n,k(n)})+ (c_{k(n),l(n)}-c_{n,k(n)})|\\
&\quad \quad \leq \max\{|c_{n-1,k(n)} -c_{n,k(n)}|,|c_{k(n),l(n)}-c_{n,k(n)}|\} \leq \mathcal O(W_{k(n)})\leq \Delta_{m-1}.
\end{align*}
For the third  difference in \eqref{Case 3 differences}, by Lemma \ref{Combinatorics lemma}(III) we have that $W_{k(n)}$ has a vertex on $\partial W_{l(n)}$. Hence, $W_{k(n)}$ and $W_{n}$ are adjacent triangles having a vertex on the strictly larger triangle $\partial W_{l(n)}$. Property ($\ast$) now implies that
\begin{align*}
|c_{n,l(n)}-c_{k(n),l(n)}|\leq \mathcal{O}(W_{l(n)})/3 \leq \Delta_{m-1}/3,
\end{align*}
by the induction assumption. Summarizing,
\begin{align*}
|c_n-c_{n,k(n)}|\leq \frac{7}{9}\Delta_{m-1}.
\end{align*}

Finally, we look at the difference $c_n-c_{n,l(n)}$. As before, we have
\begin{align*}
c_n-c_{n,l(n)}&=\frac{1}{3}(c_{n-1}-c_{n,l(n)}+c_{n,k(n)}-c_{n,l(n)})\\
&=\frac{1}{3}[(c_{n-1}-c_{n-1,l(n)})+(c_{n-1,l(n)}-c_{n,l(n)})  \\
&\quad \quad +(c_{n,k(n)}-c_{k(n),l(n)})+(c_{k(n),l(n)}-c_{n,l(n)})].
\end{align*}
Exactly as in the previous computation, the first difference is bounded by $\Delta_{m-1}$, the sum of the second and fourth differences is bounded by $\Delta_{m-1}$ using property \ref{B3 - monotone edges}, and the third difference is bounded by $\Delta_{m-1}/3$. Hence,
\begin{align*}
|c_n-c_{n,l(n)}|\leq \frac{7}{9}\Delta_{m-1}.
\end{align*}

Summarizing, we have 
\begin{align*}
\mathcal O(W_n) \leq \frac{7}{9}\Delta_{m-1}\eqqcolon \Delta_m
\end{align*}
for all $n> N_m$, and the proof is completed.
\end{proof}

\subsection{Generalization to homeomorphic gaskets}\label{Section Generalization}
Here we show that any image of the gasket under a homeomorphism of $\R^2$ is non-removable for $W^{1,2}$ as claimed in Theorem \ref{Intro homeo gasket non-removable}. We have to split in two cases, depending on whether the ``homeomorphic gasket" has area zero or positive area. In the first case, our proof for the standard gasket applies with some modifications, while the second case can be treated with the general statement that sets of positive measure are non-removable for Sobolev spaces; see Theorem \ref{Theorem All W1p}.

\begin{theorem}\label{Theorem Generalization}
Let $h\colon  \R^2\to \R^2$ be a homeomorphism, and $K$ be the Sierpi\'nski gasket. If $m_2(h(K))=0$, then $h(K)$ is non-removable for $W^{1,2}$.
\end{theorem}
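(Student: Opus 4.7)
The plan is to construct a function $\tilde f\colon \R^2\to \R$ mimicking the one from Theorem \ref{Theorem Non-removability}. Since $h$ is a homeomorphism of $\R^2$, the combinatorics of $h(K)$ coincide with those of $K$: the components of $\R^2\setminus h(K)$ are the Jordan domains $h(W)$ for $W\in\mathcal W$, and the notions of level, adjacency, vertex, edge, midpoint, and the shrinking $v$-triangle sequences of Lemmas \ref{Combinatorics V lemma} and \ref{Combinatorics lemma} all carry over to $h(K)$ via $h$. I would therefore assign heights $a_W$ to each $h(W)$ and values $c(h(x))=f(x)$ to each vertex of $h(K)$ by the exact recursive averaging of Section \ref{Section Choice}, starting from $\tilde f\equiv 0$ on the closure of the unbounded component of $\R^2\setminus h(K)$ and $\tilde f$ equal to a building-block function of height $1$ with zero vertex values on the image of the central $w$-triangle of level $1$.

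The main technical step is to construct a building-block function $\tilde g_{h(W)}\in W^{1,2}(h(W))$ on each Jordan domain $h(W)$ satisfying the analogs of properties \ref{B1 - g=a outside balls}--\ref{B6} and Condition $(\ast)$ of Section \ref{Section Condition *}. For this I plan to use the conformal uniformization $R_W\colon \D\to h(W)$, which extends to a homeomorphism of closures by Carath\'eodory's theorem because $h(W)$ is a Jordan domain. Mark the three preimages $R_W^{-1}(h(x_i))$ on $\partial \D$ as ``vertices'' and the three preimages $R_W^{-1}(h(m_{ij}))$ of the image midpoints as ``midpoints'', build on $\D$ a Euclidean building-block function $\hat g_W$ as in Sections \ref{Section Building block} and \ref{Section Condition *} adapted to these marked points, and set $\tilde g_{h(W)}= \hat g_W\circ R_W^{-1}$. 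Conformal invariance of the $2$-dimensional Dirichlet energy gives $\|\nabla\tilde g_{h(W)}\|_{L^2(h(W))}=\|\nabla\hat g_W\|_{L^2(\D)}$, which can be made arbitrarily small, so summing over $W$ we can force $\|\nabla\tilde f\|_{L^2(\R^2\setminus h(K))}<\varepsilon$ for any prescribed $\varepsilon>0$. The boundary properties \ref{B2 - g(x_i)=c_i}, \ref{B3 - monotone edges}, \ref{B5}, \ref{B6} and the oscillation control of Condition $(\ast)$ transfer from $\hat g_W$ to $\tilde g_{h(W)}$ via the boundary homeomorphism $R_W|_{\partial\D}$, since monotonicity along an arc is intrinsic to the arc and does not depend on parameterization.

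With these building blocks in place, the argument of Sections \ref{Section Choice}--\ref{Section Condition *} transfers almost verbatim. The monotonicity Lemma \ref{Monotonicity} uses only properties \ref{B3 - monotone edges} and \ref{B6} of the building blocks together with the inductive decomposition of a $v$-triangle into one central $w$-triangle and three smaller $v$-triangles, all of which are present in the homeomorphic setting. Similarly, the three cases of the basic Lemma \ref{Basic lemma} and the continuity Proposition \ref{Continuity} are driven by the topological combinatorics of Lemmas \ref{Combinatorics V lemma} and \ref{Combinatorics lemma}, by property \ref{B5}, and by Condition $(\ast)$. Consequently $\tilde f$ extends continuously to $\R^2$ and lies in $W^{1,2}(\R^2\setminus h(K))$ with arbitrarily small Dirichlet energy.

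To finish, I would repeat the line integral argument of Section \ref{Section Avoidance}. By construction $\tilde f\equiv 0$ outside some ball $B(y_0,S_0)$ containing the compact set $h(K)$, and $\tilde f\equiv 1$ on some ball $B(y_0,s_0)$ inside the image of the central $w$-triangle of level $1$. If $\tilde f\in W^{1,2}(\R^2)$, then absolute continuity on almost every horizontal line from $B(y_0,s_0)$ to the exterior of $B(y_0,S_0)$, together with Fubini and Cauchy--Schwarz, would give $s_0\le \pi^{1/2}S_0\|\nabla\tilde f\|_{L^2(\R^2)}$; the hypothesis $m_2(h(K))=0$ guarantees $\|\nabla\tilde f\|_{L^2(\R^2)}=\|\nabla\tilde f\|_{L^2(\R^2\setminus h(K))}<\varepsilon$, a contradiction for sufficiently small $\varepsilon$. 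The main obstacle is verifying that the transfer of Condition $(\ast)$ via the conformal map $R_W$ is compatible with the dyadic combinatorics used in Case 3 of Lemma \ref{Basic lemma}; the key observation is that the dyadic points on an edge of $W$ map under $h$ and then $R_W^{-1}$ to specific discrete points on an arc of $\partial\D$ accumulating at the corresponding ``vertex'' of $\hat g_W$, so the dyadic construction of $\hat g_W$ can be centered on those discrete points and the oscillation estimates of Condition $(\ast)$ hold for $\tilde g_{h(W)}$ as well.
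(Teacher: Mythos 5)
Your overall structure — conformally uniformizing each Jordan complementary component, invoking conformal invariance of the Dirichlet energy, and observing that the combinatorics (and hence Lemmas \ref{Combinatorics V lemma}, \ref{Combinatorics lemma}, Lemma \ref{Monotonicity}, and Cases 1--2 of Lemma \ref{Basic lemma}) carry over unchanged — matches the paper's argument, which uses $\UHP$ rather than $\D$ as the model domain, a cosmetic difference. The gap lies precisely where you flag the ``main obstacle,'' and your proposed resolution does not work.

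You claim the dyadic construction of $\hat g_W$ can be ``centered on'' the discrete points $a_k$ (the conformal preimages of $h$-images of dyadic points) on $\partial\D$ — that is, built with annuli whose boundary circles pass through those points. That destroys the energy estimate: if $g$ is required to increase by $1/N$ across the annulus $A(0;|a_{k+1}|,|a_k|)$, then
\begin{align*}
\int_{A(0;|a_{k+1}|,|a_k|)}|\nabla g|^2 \,\simeq\, \frac{1}{N^2}\cdot\frac{|a_k|+|a_{k+1}|}{|a_k|-|a_{k+1}|},
\end{align*}
which is uncontrollable when the gaps $|a_k|-|a_{k+1}|$ are small relative to $|a_k|$. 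Nothing rules this out: the homeomorphism $h$ composed with the Riemann map can make the discrete points accumulate at $0$ in essentially arbitrary fashion. Moreover, the two half-edges meeting at the chosen vertex give rise to \emph{two} sequences of discrete points on two separate boundary arcs, generally with unrelated spacings, so a single radial scheme pegged to one sequence does not simultaneously handle the other.

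The paper's construction resolves exactly this by decoupling energy control from oscillation control. It builds $g$ radially using $N$ \emph{geometric} annuli $A_m=A(0;r_m,R_m)$ with $R_m=2r_m$ (each contributing $\simeq 1/N^2$ energy, so $\simeq 1/N$ in total), and between consecutive $A_m$'s inserts \emph{transition annuli} on which $g$ is constant; the transition annulus between $A_m$ and $A_{m+1}$ is taken thin enough to contain at least one discrete point from \emph{each} of the two boundary sequences. Strict monotonicity of the sequences then forces any interval between consecutive discrete points to meet at most one energy-carrying annulus $A_m$, giving $|g(a_k)-g(a_{k+1})|\le 1/N$ and hence the analogue of Condition $(\ast)$. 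Without some such device — which your proposal lacks — the transfer of $(\ast)$ to Jordan domains, and therefore Case 3 of Lemma \ref{Basic lemma} and the continuity of $\tilde f$, is not established.
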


\begin{proof}
Our goal is to obtain a continuous function $f\colon \R^2\to \R$, $f\in W^{1,2}(\R^2\setminus h(K))$, with $0\leq f\leq 1$, $f\equiv 0$ outside a ball $B(x_0,R_0)$, $f\equiv 1$ in a ball $B(x_0,r_0)$, such that $\|\nabla f\|_{L^2(\R^2)} =\|\nabla f\|_{L^2(\R^2\setminus h(K))}$ is as small as we wish; here it is crucial that $m_2(h(K))=0$. Then, arguing as in Section \ref{Section Avoidance}, one can show that $f\notin W^{1,2}(\R^2)$, if $\|\nabla f\|_{L^2(\R^2)}$ is sufficiently small.

Our proof of Theorem \ref{Theorem Non-removability} and more specifically the construction of $f$ was combinatorial, except for the construction of the particular building block functions that satisfy \ref{B1 - g=a outside balls}--\ref{B6} and ($\ast$). We remark that even in Cases 1 and 2 of the proof of Lemma \ref{Basic lemma} (which is the heart of the proof of continuity) we only used properties \ref{B1 - g=a outside balls}--\ref{B6} of the building block functions, together with the continuity of the restriction of $f$ on each particular $w$-triangle, but we did not need any specific modulus of continuity. The property $(\ast)$ was only needed in Case 3 and requires some special care.

Since the combinatorics of the gasket are preserved under homeomorphisms of $\R^2$, it remains to show that building block functions satisfying \ref{B1 - g=a outside balls}--\ref{B6} and ($\ast$) exist when the domain is an arbitrary Jordan region $\Omega$, rather than a triangle. Assume that $W$ is a $w$-triangle of $K$ such that $h(W)=\Omega$. It suffices to do the construction near each vertex of $\Omega$, i.e.,  near the images of the vertices of $W$. Let $z$ be a vertex of $W$, and consider the two half-edges $I,J\subset \partial W$ that meet at $z$. Also, consider the dyadic points $x_k\in I$, $k\in \N$, and $y_k\in J$, $k\in \N$, converging to $z$, as in the proof of ($\ast$) in Case 3; for example, if $I=[0,1]$, then $x_k=2^{-k}$ for $k\in \N$.  Let $z'=h(z)$, $I'=h(I)$, $J'=h(J)$, $x_k'=h(x_k)$, and $y_k'=h(y_k)$. 

Using a conformal map, we map $\Omega$ onto the upper half plane $\UHP$, and assume that $z'=0$ and that $I'\subset [0,\infty)$, $J'\subset (-\infty,0]$ are closed intervals meeting at $0$. Furthermore, $x_k' \in I'$ is a strictly decreasing sequence converging to $0$, and $y_k' \in J'$ is strictly increasing and converging to $0$. 

We wish to construct a continuous function $g\colon \br \UHP \to \R$ with the following properties:
\begin{enumerate}[$(B1')$]
\item g is supported in an arbitrarily small neighborhood of $0$,
\item $g(0)=1$,
\item $g$ is monotone increasing or decreasing on each of $I'$ and $J'$
\item $\int_\UHP |\nabla g|^2$ is arbitrarily small,
\item $g$ has the value $0$ at the endpoints of $I'$ and $J'$ that are distinct from $0$,
\item $g$ is monotone, in the sense that $\osc_{\UHP} (g)= \osc_{\partial \UHP} (g)$, and
\item[($\ast'$)]  $|g(x_k')-g(x_{k+1}')| \leq 1/3$ and $|g(y_k')-g(y_{k+1}')|\leq 1/3$ for all $k\in \N$.
\end{enumerate}
Since a conformal map from $\UHP$ onto $\Omega$ extends to a homeomorphism (using the spherical metric) from $\br \UHP \cup \{\infty\}$ onto $\br\Omega$, and also it does not change the Dirichlet energy $\int |\nabla g|^2$, all these properties can be transferred to $\Omega$, and yield a function with the corresponding properties. The proof of the analog of ($\ast$) then follows, as in the proof of Case 3 of the previous section; see Section \ref{Section Condition *}.

The construction of $g$ is very similar to the construction we did in Case 3 of the previous section. We fix a small $R_1>0$ and define $r_1=R_1/2$. In the annulus $A_1\coloneqq A(0;r_1,R_1)$ we define $g$ to be a radial function that is equal to $0$ in the outer circle, and increases to $1/N$ in the inner circle, where $N\in \N$ is fixed. The slope of $g$ is $\simeq \frac{1}{N(R_1-r_1)}$ in $A_1$. Then we define $R_2<r_1$ to be so small that the ``transition" annulus $A(0;R_2,r_1)$ contains a point $x_k'\in I'$ and a point $y_l'\in J'$. Here we set $g$ to be constant, equal to $1/N$. Then we define $r_2=R_2/2$ and $A_2\coloneqq A(0;r_2,R_2)$, and we set $g$ to be a radial function that increases from $1/N$ in the outer circle to $2/N$ in the inner circle. By construction, no interval $[x_{m+1}',x_{m}']$ or $[y_m',y_{m+1}']$ can intersect both annuli $A_1$ and $A_2$; this is because the sequences $x_m'$ and $y_m'$ are strictly monotone. The use of the transition annulus is crucial as we will see. We proceed in the same way, until the last annulus $A_N=A(0;r_N,R_N)$, where the function $g$ increases from $(N-1)/N$ in the outer circle to $1$ in the inner circle, with slope $\simeq \frac{1}{N(R_N-r_N)}$. Finally, we set $g \equiv 0$ outside $B(0,R_1)$, $g\equiv 1$ inside $B(0,r_N)$, and then restrict $g$ to $\br \UHP$.

By construction, each of the intervals $[x_{k+1}',x_{k}'],[y_k',y_{k+1}']$, $k\in \N$,  intersects at most one annulus $A_m$, $m\in \{1,\dots,N\}$, where the function $g$ increases by $1/N$. In the transition annuli of the form $A(0;R_{m},r_{m-1})$ the function $g$ is constant. Hence, we have
\begin{align*}
|g(x_k')-g(x_{k+1}')| \leq \frac{1}{N} \quad \textrm{and}\quad |g(y_k')-g(y_{k+1}')| \leq \frac{1}{N}
\end{align*}
for all $k\in \N$. In particular, these are less than $1/3$ if $N$ is sufficiently large.

Regarding the Dirichlet energy, we compute:
\begin{align*}
\int |\nabla g|^2 &=\sum_{m=1}^N \int_{\UHP\cap A_m} |\nabla g|^2 \lesssim \sum_{m=1}^N \frac{1}{N^2(R_m-r_m)^2} m_2(\UHP\cap A_m)\\
& \simeq \sum_{m=1}^N \frac{1}{N^2 (R_m-r_m)^2} (R_m^2-r_m^2) \simeq \sum_{m=1}^N \frac{1}{N^2 r_m^2}r_m^2 \simeq \frac{1}{N}, 
\end{align*}
where we used the fact that $R_m=2r_m$. If $N$ is sufficiently large, then $\int |\nabla g|^2$ can be as small as we wish, completing the proof.
\end{proof}

The case that $h(K)$ has positive Lebesgue measure has to be treated separately, and, in fact, the following more general statement holds in $\R^n$:

\begin{theorem}[Theorem \ref{Intro Theorem Positive}]\label{Theorem All W1p}
Let $K\subset \R^n$ be a compact set of positive Lebesgue measure and $1\leq p<\infty$. Then $K$ is non-removable for $W^{1,p}$.
\end{theorem}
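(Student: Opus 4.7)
The goal is to construct a continuous, compactly supported $f\colon\R^n\to\R$ such that $f\in W^{1,p}(\R^n\setminus K)$ but $f\notin W^{1,p}(\R^n)$. By inner regularity of Lebesgue measure we may replace $K$ with a compact subset of positive measure contained in a ball.

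\emph{One-dimensional model.} Treat $n=1$ first. Here $\R\setminus K$ is a disjoint union of open intervals, so a continuous function that is constant on each such interval automatically has zero weak gradient on $\R\setminus K$. Any compact set of positive measure is uncountable, and an iterated nested-pair construction inside $K$ (picking, at each stage, two disjoint compact subsets of the remaining pieces whose total measure is at most half of the previous stage) yields a topological Cantor set $C\subset K$ with $m_1(C)=0$. Let $\mu$ be the canonical Cantor probability measure on $C$, which is atomless and singular with respect to Lebesgue measure, and set
\[
f(t)=\mu((-\infty,t]),
\]
multiplied by a smooth cut-off to make $f$ compactly supported. Then $f$ is continuous, constant on each component of $\R\setminus C\supseteq\R\setminus K$, and $f'=0$ almost everywhere. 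Hence $f\in W^{1,\infty}(\R\setminus K)\subset W^{1,p}(\R\setminus K)$, while $f$ is nonconstant and singular, so $f$ is not absolutely continuous and $f\notin W^{1,p}(\R)$ for any $p\in[1,\infty)$.

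\emph{Higher-dimensional construction.} For $n\geq 2$ I would fibre the one-dimensional construction over a positive-measure family of slices. By Fubini applied to $\chi_K$, the set
\[
Y=\{\mathbf{y}\in\R^{n-1}:m_1(K_{\mathbf{y}})>0\},\qquad K_{\mathbf{y}}=\{t\in\R:(t,\mathbf{y})\in K\},
\]
has positive $(n-1)$-measure, and a standard regularity argument gives a compact $\widetilde Y\subset Y$ with $m_1(K_{\mathbf{y}})\geq c>0$ uniformly on $\widetilde Y$. Fix $\varphi\in C_c^\infty(\R^{n-1})$ supported near $\widetilde Y$ and equal to $1$ on a positive-measure subset of $\widetilde Y$, and for each $\mathbf{y}\in\widetilde Y$ construct a singular Cantor staircase $g_{\mathbf{y}}$ attached to $K_{\mathbf{y}}$ as in the one-dimensional model. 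Set
\[
f(t,\mathbf{y})=\varphi(\mathbf{y})\,g_{\mathbf{y}}(t).
\]
Off $K$ the weak derivative $\partial_t f$ vanishes (since $g_{\mathbf{y}}$ is constant on components of $\R\setminus K_{\mathbf{y}}$), and the $\mathbf{y}$-derivatives equal $\varphi'(\mathbf{y})g_{\mathbf{y}}(t)+\varphi(\mathbf{y})\partial_{\mathbf{y}}g_{\mathbf{y}}(t)$, which can be kept in $L^p$ provided the family $\{g_\mathbf{y}\}$ is sufficiently regular in $\mathbf{y}$. On the other hand, if $f$ were in $W^{1,p}(\R^n)$ then by the ACL characterization its restriction to almost every horizontal line $\R\times\{\mathbf{y}\}$ would be absolutely continuous; but on a positive-measure set of $\mathbf{y}$ (where $\varphi(\mathbf{y})=1$) this restriction is precisely the singular staircase $g_\mathbf{y}$, which is not AC. This contradiction gives $f\notin W^{1,p}(\R^n)$.

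\emph{Main obstacle.} The technical heart of the proof is the construction of the family $\{g_{\mathbf{y}}\}_{\mathbf{y}\in\widetilde Y}$ in such a way that $(t,\mathbf{y})\mapsto g_{\mathbf{y}}(t)$ is jointly continuous on $\R^n$ and has $L^p$-integrable $\mathbf{y}$-derivatives off $K$, while each $g_{\mathbf{y}}$ retains its singular Cantor-staircase character. Since the slice $K_{\mathbf{y}}$ may vary very irregularly with $\mathbf{y}$, a pointwise measurable choice of Cantor subset and singular measure will not by itself produce any $\mathbf{y}$-regularity. The natural remedy is to carry out the iterated nested-pair construction \emph{globally}, using half-space splittings whose defining hyperplanes depend Lipschitz-continuously on $\mathbf{y}$, so that $C_{\mathbf{y}}$ arises as the image of a fixed symbol space $\{0,1\}^{\N}$ under a family of parameterizations that is Lipschitz in $\mathbf{y}$; the resulting $\mu_{\mathbf{y}}$, and hence $g_{\mathbf{y}}$, is then Lipschitz in $\mathbf{y}$ in the weak-$\ast$ sense, which is enough to close the argument.
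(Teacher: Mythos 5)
Your one-dimensional argument is correct, and it is genuinely different from the paper's: by attaching a Cantor--Lebesgue staircase to a null Cantor subset of $K$ you get $\nabla f\equiv 0$ off $K$, whereas the paper accepts a small nonzero gradient off $K$ and tunes it to be $p$-integrable. In dimension one both routes work, and yours is arguably cleaner.

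Once $n\geq 2$, however, the sketch has a genuine gap, and it is exactly the one you flag as the ``main obstacle.'' The proposed remedy does not close it. Weak-$\ast$ (Wasserstein) Lipschitz dependence of $\mu_{\mathbf{y}}$ on $\mathbf{y}$ controls $\int|g_{\mathbf{y}_1}-g_{\mathbf{y}_2}|\,dt$, not $\sup_t|g_{\mathbf{y}_1}(t)-g_{\mathbf{y}_2}(t)|$: already $\delta_0$ and $\delta_\varepsilon$ have CDFs that are $O(\varepsilon)$-close in $L^1$ but $1$ apart in $L^\infty$. Thus weak-$\ast$ Lipschitz control gives neither the joint continuity of $(t,\mathbf{y})\mapsto g_{\mathbf{y}}(t)$ (you would also need $t$-equicontinuity of the family $g_{\mathbf{y}}$, which requires a uniform geometric regularity of the Cantor sets $C_{\mathbf{y}}$ that you have not arranged) nor the $L^p$-integrability of $\partial_{\mathbf{y}}g_{\mathbf{y}}$ off $K$; for the latter you would in fact want to argue that, if the dyadic pieces $C_{\mathbf{y},\sigma}$ move Hausdorff-continuously and mass is split dyadically, then for $(t,\mathbf{y})\notin K$ the value $g_{\mathbf{y}}(t)$ is locally \emph{constant} in $\mathbf{y}$ (being the dyadic fraction attached to the gap containing $t$), so that $\partial_{\mathbf{y}}g_{\mathbf{y}}=0$ off $K$. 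That observation is absent. Worse, the starting data $\mathbf{y}\mapsto K_{\mathbf{y}}$ is only measurable, hence (after Lusin) at best $L^1$-continuous on a positive-measure compact set of $\mathbf{y}$'s; there is no reason such a family admits Lipschitz-continuous ``cut points'' that split each $K_{\mathbf{y}}$ into positive-measure halves uniformly, because the internal measure distribution of $K_{\mathbf{y}}$ can shift discontinuously even while $m_1(K_{\mathbf{y}})$ varies continuously. So the Lipschitz-hyperplane scheme is not established and, as far as I can see, cannot be established at this level of generality without substantial further work.

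The paper bypasses slicing entirely. It fixes a Lebesgue density point $x_0\in K$, chooses radii $r_i$ with $m_n(B(x_0,r_i)\setminus K)\leq 2^{-ip}r_i^n$ and $r_{i+1}<r_i/2$, and sets $f=\sum_i\phi_i$ with $\phi_i$ a radial sawtooth of amplitude $c_i$ and frequency $m_i$ supported in the annulus $A(x_0;r_i/2,r_i)$. Then $\|\nabla\phi_i\|_{L^p(\R^n)}\simeq c_im_ir_i^{n/p-1}$ while, by the density estimate, $\|\nabla\phi_i\|_{L^p(\R^n\setminus K)}\lesssim 2^{-i}c_im_ir_i^{n/p-1}$; choosing $(c_i,m_i)$ so that the first sum's $p$-th powers diverge while the second sum converges, with $c_i\to 0$ for uniform convergence (the supports are disjoint), yields a continuous $f\in W^{1,p}(\R^n\setminus K)\setminus W^{1,p}(\R^n)$. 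Because the building blocks are explicit, disjointly supported, and uniformly continuous, there are no compatibility issues between slices; this is what makes the Lebesgue-point argument more robust than the fibred Cantor-staircase approach.
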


\begin{proof}
We may assume that $\inter(K)=\emptyset$, otherwise $K$ is trivially non-removable for $W^{1,p}$, $1\leq p\leq \infty$, since one can simply consider a continuous function with no partial derivatives, supported on $\inter(K)$.

Define $\Omega \coloneqq \R^n\setminus K$. Let $x_0\in K$ be a Lebesgue point, i.e.,
\begin{align*}
\frac{m_n(B(x_0,r)\cap \Omega)}{m_n(B(x_0,r))} \to 0
\end{align*}  
as $r\to 0$. Hence, for each $i\in \N$ there  exists  an arbitrarily small $r_i>0$ such that
\begin{align}\label{Theorem Lebesgue Point}
m_n(B(x_0,r_i) \cap \Omega) \leq 2^{-ip} r_i^n.
\end{align}
Without loss of generality, we assume that $x_0=0$ and we set $B_i=B(x_0,r_i)$. We can also assume that the sequence $\{r_i\}_{i\in \N}$ satisfies 
\begin{align}\label{Theorem All W1p - r_i}
r_{i+1}<r_i/2< 1/2
\end{align}
for $i\in \N$.

Let $\phi\colon \R\to \R$ be the $1$-periodic extension of $|t|\x_{[-1/2,1/2]}(t)$. Let $c_i$ be a sequence of positive numbers and $m_i$ be a sequence of positive integers, to be determined. We define 
$$\phi_i(x)= c_i \phi\left( m_i \left( \frac{2|x|}{r_i}-1 \right) \right) \cdot \x_{[ r_i/2,r_i]} (|x|)$$  
for $x\in \R^n$ and $i\in \N$. Roughly speaking, we changed the amplitude and frequency of $x\mapsto \phi(|x|)$, and also translated its support to the annulus $A(0;r_i/2,r_i)$. Observe that $\phi_i\in W^{1,p}(\R^n)$ for all $1\leq p<\infty$, $\phi_i$ is continuous in $\R^n$, and $|\phi_i|\leq c_i$. Furthermore, for a.e.\ $x$ in the annulus $A(0;r_i/2,r_i)$ we have
\begin{align*}
|\nabla \phi_i(x)|\simeq \frac{c_im_i}{r_i},
\end{align*}
with uniform constants. Hence,
\begin{align*}
\|\nabla \phi_i \|_{L^p(\R^n)} \simeq c_im_i r_i^{n/p-1}
\end{align*}
with constants depending only on the dimension $n$, and 
\begin{align*}
\|\nabla \phi_i \|_{L^p(\Omega)} \lesssim \frac{c_im_i}{r_i}2^{-i} r_i^{n/p} \simeq 2^{-i}c_im_i r_i^{n/p-1},  
\end{align*}
by \eqref{Theorem Lebesgue Point}.

We define 
\begin{align*}
f= \sum_{i=1}^\infty \phi_i
\end{align*}
and note that $f$ is pointwise defined with $f(0)=0$, since $\phi_i$ have disjoint supports by \eqref{Theorem All W1p - r_i}. Observe that if $c_i\to 0$, then the series $\sum_{i=1}^\infty \phi_i$ converges uniformly to a continuous function. 

We have
\begin{align*}
\|\nabla f\|_{L^p(\Omega)} \leq  \sum_{i=1}^\infty \|\nabla \phi_i\|_{L^p(\Omega)} \lesssim \sum_{i=1}^\infty  2^{-i}c_im_i r_i^{n/p-1},
\end{align*}
so we wish to have that the latter series converges. If this is the case, then we will have indeed $f\in W^{1,p}(\Omega)$ by the completeness of the space.

If $f\in W^{1,p}(\R^n)$, then 
\begin{align*}
\|\nabla f\|_{L^p(\R^n)}^p = \int  \biggl|\sum_{i=1}^\infty \nabla \phi_i \biggr|^p= \sum_{i=1}^\infty \|\nabla \phi_i\|_{L^p(\R^n)}^p \simeq \sum_{i=1}^\infty  c_i^pm_i^p r_i^{n-p}
\end{align*}
because the functions $\phi_i$ have disjoint support. We wish the latter to be a divergent series, so that $f\notin W^{1,p}(\R^n)$.

Summarizing, we have to choose $c_i,m_i$ such that $c_i\to 0$, 
\begin{align*}
\sum_{i=1}^\infty  2^{-i}c_im_i r_i^{n/p-1} <\infty,\quad \textrm{and} \quad \sum_{i=1}^\infty  c_i^p m_i^p r_i^{n-p}=\infty.
\end{align*}
If $p\geq  n$, then we can choose $c_i= r_i^{1-n/p}\cdot i^{-1/p}$ and $m_i=1$ for all $i\in \N$. If $1\leq p<n$, then we choose $c_i=i^{-1/p}$ and $m_i$ to be the smallest integer such that $m_ir_i^{n/p-1} \geq 1$. Then $(m_i-1)r_i^{n/p-1} <1$, so $m_ir_i^{n/p-1}\leq 2$. 
\end{proof}

However, the conclusion fails for $W^{1,\infty}$:

\begin{prop}\label{W-1-infinity positive}There exists a compact set $K\subset \R^n$ of positive Lebesgue measure that is $W^{1,\infty}$-removable.
\end{prop}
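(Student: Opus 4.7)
The plan is to exhibit, for $n\geq 2$, an explicit compact set $K\subset \R^n$ of positive Lebesgue measure that is $W^{1,\infty}$-removable. Let $C\subset [0,1]$ be a fat Cantor set (compact, nowhere dense, with $m_1(C)>0$) and put
\begin{align*}
K\coloneqq \underbrace{C\times\cdots\times C}_{n\text{ times}} \subset\R^n,
\end{align*}
so that $K$ is compact with $m_n(K)=m_1(C)^n>0$ and empty interior (since $C$ does).

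The heart of the argument is the $\sqrt{2}$-quasiconvexity of $U\coloneqq \R^n\setminus K$. I would show that any $x,y\in U$ are joined by a polygonal path in $U$ whose length is at most $\sqrt{2}|x-y|$ (up to arbitrarily small error). Given $x,y\in U$, the definition of $U$ supplies indices $i,j$ with $x_i\notin C$ and $y_j\notin C$. When $i=j$ (say $i=j=1$), take the three-bend path $x\to (x_1,y_2',y_3,\dots,y_n)\to (y_1,y_2',y_3,\dots,y_n)\to y$ where $y_2'\in C^c$ is chosen arbitrarily close to $y_2$: the first and third segments keep the first coordinate constant at $x_1$ or $y_1$ (each in $C^c$), the middle segment keeps its second coordinate constant at $y_2'\in C^c$, so the entire polygonal path misses $K$. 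By Cauchy--Schwarz the length tends to $|x_1-y_1|+\sqrt{(x_2-y_2)^2+\cdots+(x_n-y_n)^2}\leq \sqrt{2}|x-y|$ as $y_2'\to y_2$. The remaining cases ($i\neq j$, or $x_1,y_1\in C$ forcing a pivot $z_1\in C^c$ between them, etc.) are dispatched by similar two- or three-bend constructions, each exploiting the density of $C^c$ in $[0,1]$.

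Granted the quasiconvexity of $U$, removability is immediate. Let $f\colon \R^n\to\R$ be continuous with $f\in W^{1,\infty}(\R^n\setminus K)$, and set $L\coloneqq \|\nabla f\|_{L^\infty(U)}$. The $W^{1,\infty}$ hypothesis, combined with the continuity of $f$, yields via the ACL characterization that $|f(p)-f(q)|\leq L|p-q|$ on any closed segment $[p,q]$ whose interior lies in $U$; concatenating along an arbitrary polygonal path $\gamma\subset U$ gives $|f(\gamma(1))-f(\gamma(0))|\leq L\cdot\length(\gamma)$. Combining with the $\sqrt{2}$-quasiconvexity of $U$ produces $|f(x)-f(y)|\leq \sqrt{2}\,L\,|x-y|$ for all $x,y\in U$. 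Since $K$ has empty interior, $U$ is dense in $\R^n$, and the continuity of $f$ extends the Lipschitz bound to all $x,y\in\R^n$. Hence $f$ is globally $\sqrt{2}L$-Lipschitz, in particular $f\in W^{1,\infty}(\R^n)$, so $K$ is $W^{1,\infty}$-removable.

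The only substantive point is the quasiconvexity of $U=\R^n\setminus C^n$, and its essential content is the density of $C^c$ in $[0,1]$ (equivalently, the emptiness of the interior of $C$), which is what permits the axis-parallel pivot coordinates to be placed in $C^c$ arbitrarily close to any desired target. The positive measure of $C$ plays no role in producing the detour; it only ensures that $m_n(K)>0$, so the example is non-trivial. A final remark: the argument genuinely requires $n\geq 2$, since in dimension one no positive-measure compact set can be $W^{1,\infty}$-removable (a Cantor-staircase construction furnishes a counterexample that is continuous on $\R$, constant on each complementary gap of the Cantor set, and globally non-Lipschitz).
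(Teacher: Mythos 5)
Your proof is correct and uses the same example $K=C^n$ as the paper, and the core idea---exploiting the density of $C^c$ to deduce a global Lipschitz bound for any continuous $f\in W^{1,\infty}(\R^n\setminus K)$---is shared. The two arguments are, however, organized differently. The paper never explicitly verifies that $\R^n\setminus K$ is quasiconvex; instead it shows directly that $f$ is $M$-Lipschitz on \emph{every} line $L$ parallel to a coordinate axis, by translating $L$ slightly to a parallel line $L'$ that misses $K$ entirely (possible since $C$ has empty interior), reading off the $W^{1,\infty}$ Lipschitz bound along $L'$, and passing to the limit by continuity. After that, any two points are joined by the standard axis-parallel staircase path, which is allowed to pass through $K$ because Lipschitzness along every axis-parallel segment has already been established. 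Your route instead proves quasiconvexity of $\R^n\setminus K$ head-on, producing explicit axis-parallel detours that stay in $\R^n\setminus K$, concatenates the Lipschitz bounds along these detours, and finishes by density of $\R^n\setminus K$ and continuity. That is precisely the strategy the paper isolates in the remark following its proof (quasiconvexity of the complement of a compact set with empty interior implies $W^{1,\infty}$-removability); you have essentially realized that remark, while the paper's proof body sidesteps constructing detours by means of the line-perturbation trick. Both are correct, and both genuinely require $n\geq 2$, as you observe.
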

\begin{proof}
Let $C\subset \R$ be a Cantor set of positive Lebesgue measure, and define $K\coloneqq C^n$, so $m_n(K)>0$. We claim that $K$ is $W^{1,\infty}$-removable. Let $f$ be a continuous function on $\R^n$ that lies  in  $W^{1,\infty}(\R^n\setminus K)$. We wish to show that $f$ is $M'$-Lipschitz, where $M'>0$ depends on $M=\| f\|_{W^{1,\infty}(\R^n\setminus K)}$. We fix a coordinate direction, say $e_1$, and a line $L$ parallel to $e_1$. The function $f$ is $M$-Lipschitz on each component of $L\setminus K$. On the other hand, by perturbing the line $L$ we may obtain a line $L'$ arbitrarily close and parallel to $L$ such that $L'\cap K=\emptyset$. Hence, $f$ is $M$-Lipschitz on $L'$, and by continuity it is also $M$-Lipschitz on $L$. If $x,y\in \R^n$ are arbitrary points, then one can connect them with a polygonal path $\gamma$, each of whose segments is parallel to a coordinate direction such that the length of $\gamma$ is comparable to $|x-y|$. The conclusion follows by using the Lipschitz bound on each of the segments of $\gamma$.
\end{proof}
\begin{remark}
In fact, the complement of the Cantor set $K$ is a \textit{quasiconvex set} in $\R^n$, i.e., there exists a constant $M>0$ such that for any two points $x,y\in \R^n\setminus K$ there exists a rectifiable path $\gamma \subset \R^n\setminus K$ that connects $x$ and $y$, with
\begin{align*}
\length(\gamma) \leq M|x-y|.
\end{align*} 
The argument in the proof of Proposition \ref{W-1-infinity positive} can be modified to show that if the complement of compact set $K$ with empty interior is quasiconvex, then $K$ is $W^{1,\infty}$-removable.
\end{remark}

\section{Quasiconformal non-removability}\label{Section QC non}

We quickly sketch the strategy of constructing a homeomorphism $F\colon \R^2\to \R^2$ that is quasiconformal on $\R^2\setminus K$, but not globally quasiconformal; see also Section \ref{Section Sketch}.

First we will define a continuous map $f\colon \R^2\to \R^2$ that is the identity on the unbounded complementary component of the gasket $K$, but collapses each $w$-triangle to a tripod; see Figure \ref{fig:FoldGasket}. This is done in Section \ref{Section QC Collapsing}. 

Of course, this map is not a homeomorphism on the $w$-triangles, so we have to correct it. We do that by ``folding" each $w$-triangle on top of each tripod; see Figures \ref{fig:FoldTriangle} and \ref{fig:folding}. The folding map will be $M$-quasiconformal, in the sense of Definition \ref{Pre Definition QC metric}, restricted on each $w$-triangle. The folding of a single equilateral triangle on top of a tripod is explained in Section  \ref{Section QC Folding}. Moreover, the folding has to be compatible, in a sense, with $f$ on the boundary of each $w$-triangle.

If the heights of the rectangles attached to each tripod are chosen to be sufficiently small, then we will obtain a homeomorphism $\Phi$ from $\R^2$ onto a limiting flap-plane $S$ (Figure \ref{fig:FoldGasket}), which is constructed out of infinitely many tripods, and thus falls into the setting of Proposition \ref{General-Completeness}. The map $\Phi$ is the result of patching together the map $f$ outside the $w$-triangles with the folding map of each $w$-triangle. The construction of the map $\Phi$ and of the flap-plane $S$ is discussed in Section \ref{Section QC homeo}.

Finally, if one chooses the heights of the rectangles to be even smaller, then by Theorem \ref{General-QS Embedding} one obtains a quasisymmetric embedding $\Psi$ of $S$ onto $\R^2$. The composition $F= \Psi\circ \Phi$ will be a homeomorphism of $\R^2$ that is $M'$-quasiconformal on each $w$-triangle for some uniform $M'>0$, but it cannot be globally quasiconformal, because it has to blow the gasket $K$ to a set of positive area. Section \ref{Section QC Finish} contains these details that finish the proof of non-removability.

\begin{figure}
\centering
\input{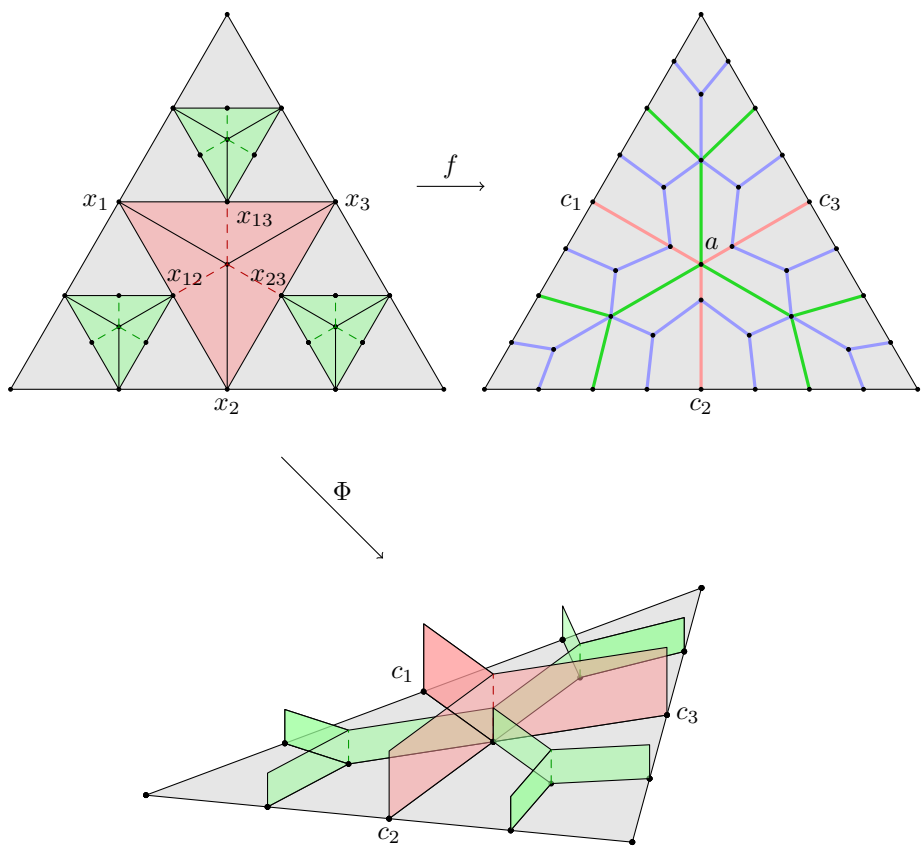}
\caption{Illustration of the collapsing map $f\colon \R^2\to \R^2$  and of the homeomorphism $\Phi$ from $\R^2$ onto the flap-plane $S$. For each $i=1,2,3$ the collapsing map $f$ sends the vertex $x_i$ of the central pink triangle to the vertex $c_i$ of the central pink tripod. For $i<j$ the midpoint $x_{ij}$ of $x_i$ is mapped to the barycenter $a$ of the central tripod. The map $\Phi$ is a homeomorphism so in particular the green rectangles are \textit{not} glued to the red rectangles, except at the three points $\Phi(x_{ij})$, $i<j$; these are the points that correspond to the barycenter $a$ of the pink tripod.}\label{fig:FoldGasket}
\end{figure}

\subsection{Collapsing of \texorpdfstring{$w$}{w}-triangles}\label{Section QC Collapsing}
Recall the definitions of $w$-triangles and $v$-tri\-angles from Section \ref{Section Definitions}. In this section we define a continuous map $f\colon \R^2\to \R^2$ that is equal to the identity in the unbounded $w$-triangle, and collapses each bounded $w$-triangle to a tripod $G$.  Further properties of $f$ will be that it is injective on $K^\circ$, and it maps the latter to a set of positive measure. Interestingly, the construction of such a map is rather a modification of the construction of the  continuous function that we constructed in Section \ref{Section Continuous}.

\subsubsection{Building block}
Recall from Section \ref{Section Flap-planes} that a tripod $G$ is by definition the union of three line segments in the plane, which have a common endpoint, but otherwise they are disjoint; note that their length need not be the same. We call the common endpoint the central vertex of the tripod $G$. Every triple $c_1,c_2,c_3\in \R^2$ of non-collinear points defines a \textit{canonical} tripod, whose central vertex is the barycenter of $c_1,c_2,c_3$, i.e., it is 
\begin{align*}
a= \frac{1}{3}(c_1+c_2+c_3).
\end{align*}
In what follows, we will only be using canonical tripods, even if we do not mention it explicitly.

We consider an analog of the building block function discussed in Section \ref{Section Building block}. Let $W\subset \R^2$ an open equilateral triangle with vertices $x_1,x_2,x_3$. Then for each triple of non-collinear points $c_1,c_2,c_3 \in \R^2$ and for the canonical tripod $G$ corresponding to these points there exists a continuous map $g \colon \br W\to G$ such that
\begin{enumerate}
\item[\mylabel{BB2}{$(\widetilde B2)$}] $g(x_i)=c_i$ for $i=1,2,3$,
\item[\mylabel{BB3}{$(\widetilde B3)$}] $g$ is injective and \textit{monotone} (see comments below), on each half-edge of $\partial W$, from its midpoint to a vertex,
\item[\mylabel{BB5}{$(\widetilde B5)$}]$g$ maps the midpoints of the edges of $W$ to the central vertex $a$ of $G$, and
\item[\mylabel{BB6}{$(\widetilde B6)$}]$g$ is monotone in the sense that $\osc_{\br W}(g)= \osc_{\partial W} (g)$. Here, $\osc_{\br W}(g)= \sup \{|g(x)-g(y)|:x,y\in \br W\}$.
\end{enumerate}
These properties should be compared to the properties of the building block function in Section \ref{Section Building block}. Note that \ref{BB6} follows immediately from continuity and \ref{BB2}, since they imply that $g(\partial W)=G$. Also, \ref{BB2}, \ref{BB5}, and the injectivity from \ref{BB3} imply that $g$ maps each half-edge of $W$ homeomorphically onto an edge of the tripod $G$. In particular, \ref{B3 - monotone edges} from Section \ref{Section Building block} holds here, in the sense that if $I\subset \partial W$ is a half-edge of $W$ and $J_1\subset J_2\subset I$ are segments, then $\osc_{J_1}(g) \leq \osc_{J_2}(g)$. This explains the use of the word \textit{monotone} in the statement of \ref{BB3}.

From now on, a \textit{building block map} will be a map $g$ as above, and we will say that its \textit{parameters} are $c_1,c_2,c_3$. At this moment we are not interested in the definition of the map $g$ in the interior of the triangle $W$ (which could be anything as long as $g(\br W)=G$ and $g$ is continuous), but we only focus on its boundary. The construction of such a continuous map $g$ is elementary. For example, one can first collapse $\br W$ to the canonical tripod defined by its vertices $y_1,y_2,y_3$, so that the midpoints of the edges are mapped to the barycenter of the triangle $W$, and so that the map is injective on each edge of $\partial W$. Then one can use an affine map to map this tripod to the canonical tripod $G$ defined by $c_1,c_2,c_3$ such that the vertices $y_1,y_2,y_3$ are mapped to $c_1,c_2,c_3$, respectively.  Note that affine maps preserve barycenters.

\subsubsection{Inductive definition}
A fundamental lemma that we will use is the following. A \textit{convex quadrilateral} is the open region in the plane that is bounded by a polygon with four sides and (interior) angles strictly less than $\pi$ (we wish to exclude degenerate cases).
\begin{lemma}\label{QC collapsing convex}
Let $U\subset \R^2$ be  convex quadrilateral and consider points $c_1,c_2,c_3 \in \partial U$ such that $c_1$ is a vertex of $U$, and $c_2,c_3$ lie on the interior of distinct sides of $U$ that are not congruent to the vertex $c_1$. Also, consider  the canonical tripod $G$ corresponding to $c_1,c_3,c_3$, which are necessarily non-collinear points. Then $G\subset \br U$ and each component $Z$ of $U\setminus G$ is a convex quadrilateral. Furthermore, two of the sides of such a component $Z$ are two of the edges of the tripod $G$ and  are congruent to its central vertex, while the other two sides of $Z$ are contained in distinct sides of $U$.  
\end{lemma}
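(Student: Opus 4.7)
The plan is to reduce everything to an angle-chasing argument, once the convexity of $U$ has been used to locate the centroid $a = (c_1+c_2+c_3)/3$ inside $U$. First I would fix notation by labeling the vertices of $U$ in cyclic order as $c_1, v_1, v_2, v_3$. By hypothesis the two sides of $U$ not incident to $c_1$ are $[v_1,v_2]$ and $[v_2,v_3]$; relabeling if necessary, I may assume $c_2\in (v_1,v_2)$ and $c_3\in(v_2,v_3)$.

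Next I would verify $G\subset\br U$ and in fact that $a\in U$. Since $U$ is convex and $c_1,c_2,c_3\in\br U$, the closed triangle with vertices $c_1,c_2,c_3$ is contained in $\br U$. Moreover, each of the segments $[c_1,c_2]$, $[c_1,c_3]$, $[c_2,c_3]$ joins a boundary point of $U$ to another boundary point lying in a non-adjacent side, so its interior lies in $U$. Hence the open triangle is contained in $U$, and in particular $a$—which is interior to that triangle because $c_1,c_2,c_3$ are non-collinear—lies in $U$. The three edges $[a,c_i]$ then lie in $\br U$, giving $G\subset\br U$.

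Then I would identify the components of $U\setminus G$. The tripod $G$ is a tree embedded in the topological disk $\br U$ with its three leaves on $\partial U$, so $U\setminus G$ consists of exactly three components. Each is bounded by two edges of $G$ meeting at $a$ together with the arc of $\partial U$ running between two consecutive $c_i$'s through the intervening $v_k$. Thus the three components are (topological) quadrilaterals with vertex sequences $(a,c_1,v_1,c_2)$, $(a,c_2,v_2,c_3)$, $(a,c_3,v_3,c_1)$, whose two ``tripod sides'' are incident to $a$, and whose two other sides lie on two distinct sides of $U$ (one of these sides being possibly a whole side of $U$ when it is incident to $c_1$).

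The only remaining point is convexity of each component $Z$, which I would check by verifying that each interior angle of $Z$ is less than $\pi$. At $a$ the three angles between consecutive tripod edges sum to $2\pi$ and each is strictly less than $\pi$ because $a$ is strictly inside the triangle $c_1c_2c_3$. At a vertex $v_k$ of $U$ the interior angle of $Z$ coincides with the interior angle of $U$, hence is less than $\pi$ by convexity of $U$. At $c_1$ the two angles of the two incident components sum to the interior angle of $U$ at $c_1$, so each is less than $\pi$. At $c_2$ (and similarly $c_3$) the two angles of the two incident components sum to $\pi$ because $c_2$ lies in the interior of a side of $U$, and each is strictly positive since $a$, being in the open set $U$, is not collinear with that side. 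I do not expect any serious obstacle; the main care is just in checking the angles at the $c_i$, and this is immediate once $a\in U$ has been established.
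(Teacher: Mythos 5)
Your argument is correct, and since the paper explicitly omits the proof of Lemma~\ref{QC collapsing convex} as ``elementary,'' there is nothing to compare it against; the angle-chasing approach you take is a natural (and complete) way to fill the gap. One small wording slip: you justify $(c_2,c_3)\subset U$ by saying the endpoints lie on ``non-adjacent'' sides, but the two sides $[v_1,v_2]$ and $[v_2,v_3]$ are in fact adjacent at $v_2$. The correct reason is simply that $c_2,c_3$ do not lie on a common closed side of $U$ (each is in the open interior of its own side and neither equals $v_2$), so the open chord cannot be contained in $\partial U$ and must lie in $U$ by convexity; the rest of the proof, including locating $a$ in $U$, the count of three components via the tree-in-a-disk argument, and the verification that every interior angle of each component is strictly between $0$ and $\pi$, is sound.
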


The proof is elementary and is omitted. Now, we define the desired map $f$ inductively, in a very similar way, as the map we defined in Section \ref{Section Choice}. We define $f$ to be the identity in the closure of the $w$-triangle of level $0$ (i.e., the closure of the unbounded component of $\R^2\setminus K$). 

Note that the map $f$ is already defined on the vertices of the central $w$-triangle $W_1$ of level $1$. We define $f$ on $\br W_1$ to be a building block map that collapses this triangle to a tripod $G_1$. This tripod is contained in the $v$-triangle $V_1$ of level $0$, which is convex. Each component $U_2$ of $V_1\setminus G_1$ is a convex quadrilateral. We will be calling $U_2$ a $u$-quadrilateral. Note that each $v$-triangle $V_2$ of level $1$ (i.e., $V_2$ is a component of $V_1\setminus \br W_1$) corresponds to a $u$-quadrilateral $U_2$, and in fact $f$ maps $\partial V_2$ to $\partial U_2$ homeomorphically, in an orientation-preserving way, by property \ref{BB3}. Moreover, the midpoint $x_1$ of an edge of $V_2$ is mapped to a vertex $c_1$ of $U_2$ and the other two edges of $V_2$ are mapped to the other edges of $U_2$ that are not congruent to $c_1$.

We claim that we can define $f$ on all $w$-triangles, so that each $v$-triangle corresponds to a $u$-quadrilateral as above. We now formulate and prove the inductive step.

Let $V$ be a $v$-triangle and suppose that $f\big|_{\partial V}$ has been defined and maps $\partial V$ homeomorphically onto the boundary of a convex quadrilateral $U$. Moreover, suppose that $f$ maps the midpoint $x_1$ of an edge of $V$ to a vertex $c_1$ of $U$ and that each of the other two edges of $V$ is mapped to one of the other two sides of $U$ that are not congruent to $c_1$. 
    
Consider the central $w$-triangle $W\subset V$ that has its vertices $x_1,x_2,x_3$ on $\partial V$. By the assumptions on the map $f\big|_{\partial V}$, the points $c_2\coloneqq f(x_2)$ and $c_3\coloneqq f(x_3)$ lie on distinct sides of $U$ that are not congruent to $c_1=f(x_1)$. We define $f$ on $\br W$ to be a building block map that collapses the triangle $\br W$ to a tripod $G$ with vertices $c_1,c_2,c_3$.

By Lemma \ref{QC collapsing convex} we see that each component $U'$ of $U\setminus G$ is a convex quadrilateral. Two of the sides of $U'$ are edges of the tripod $G$, each of which is the homeomorphic image of a half-edge of $W$ under $f$, by property \ref{BB3} of the building block map on $\br W$. Suppose, for instance, that the edge of $W$ is the segment $[x_1,x_2]$. The other two sides of $U'$ are contained in two distinct sides of $U$. These two sides of $U'$ have to correspond under the homeomorphism $f\big|_{\partial V}$ to an arc of $\partial V$ that connects $x_1$ and $x_2$. Among the two such arcs, there is only one possibility, since it follows by  the assumptions that $f^{-1}\big|_{\partial U}$ maps each side of $U$ into one edge of $V$.  

We thus see that there exists a $v$-triangle $V' \subset V\setminus \br W$ such that $f$ maps $\partial V'$ homeomorphically onto $\partial U'$. The midpoint $x_1'=\frac{x_1+x_2}{2}$ of the edge $\partial V'\cap \br W$ of $V'$ is mapped to a point $c_1'$ that is a vertex of $U'$ (in fact it is the central vertex of $G$ as follows from Lemma \ref{QC collapsing convex}) and the other two edges of $V'$ are mapped to the other two sides of $U'$ by the mapping properties of $f$. 

This completes the proof of the inductive step and shows that $f$ can be defined on all of $W_\infty= \bigcup_{W\in \mathcal W} \br W$.

\subsubsection{Properties of $f$}\label{Section Properties of f}
(a) Each $w$-triangle $\br W$ is mapped to a tripod $G$ with vertices $c_1,c_3,c_3$ and central vertex $a=(c_1+c_2+c_3)/3$. Following the notation of Section \ref{Section Continuous}, we define
\begin{align*}
\mathcal O(W)&= \max_{i=1,2,3} |a-c_i|.
\end{align*}
Note that $\mathcal O(W)$ is the length of the largest edge of the tripod $G$. The map $f$ is \textit{monotone} in the following sense. For each $v$-triangle $V$ of level $m\geq 1$ we have
\begin{align*}
\osc_{\br V \cap W_\infty}(f)= \sup_{x,y\in \br V\cap W_\infty} |f(x)-f(y)|= \osc_{\partial V}(f).
\end{align*}
This is the analog of Lemma \ref{Monotonicity}. The reason it holds in our case is that, by its inductive definition, $f$ maps $\partial V$ homeomorphically onto a quadrilateral $\partial U$, and all $w$-triangles contained in $ V$ are mapped to tripods contained in $\br U$.

We define the \textit{level} of a $u$-quadrilateral $U$, which corresponds as above to a $v$-triangle $V$, to be an integer equal to the level of $V$; see also Section \ref{Section Definitions}. More precisely, if the sidelength of $V$ is $2^{-n}$, $n\in \N$, then the level of $V$ and $U$ is $n$.

(b) The map $f$ has a continuous extension to $\R^2$. The proof is  the same as the proof of Proposition \ref{Continuity}, with minor modifications. One observes that we have given essentially the same definition for $f$ and it satisfies the properties \ref{BB2}, \ref{BB3}, \ref{BB5}, and \ref{BB6}, as in Section \ref{Section Building block}; see also Remark \ref{Building block Continuity remark}. Namely, what we called ``height" of $f$ on a $w$-triangle $W$ in Section \ref{Section Building block} is now the barycenter $a=(c_1+c_2+c_3)/3$ of the tripod $G$. Hence, the ``height" of $f$ here is the average of the values of $f$ on the vertices of $W$, precisely as it was the case in the definition of $f$ in Section \ref{Section Choice}. The only  difference is that $f$ is now complex-valued, instead of real-valued, but this does not affect the proofs. The proof of continuity goes through, if we ensure that $f$ has on each $w$-triangle a certain modulus of continuity, as described in Case 3 of the proof of the basic Lemma \ref{Basic lemma} (see Section \ref{Section Condition *}):
\begin{enumerate}
\item[($\ast$)] Assume that the triangles $W_1,W_2$ are adjacent and each has a vertex $z_1,z_2$, respectively, lying on a triangle $\partial W_0$ of a strictly lower level. Then 
\begin{align*}
|f(z_1)-f(z_2)|\leq \mathcal O(W_0)/3.
\end{align*}
\end{enumerate}
As already discussed in Section \ref{Section Condition *}, this boils down to looking at the dyadic points $\{x_k\}_{k\in \N}$ contained in each half-edge $I$ of $\partial W_0$ and accumulating in the corresponding vertex of $W_0$, and requiring that 
\begin{align*}
\mylabel{BB*}{\tag{\textrm{Dyadic  $\ast$}}} |f(x_k)-f(x_{k+1})| \leq \mathcal O(W_0)/3,
\end{align*}
for all $k\in \N$, where $x_{k}$ and $x_{k+1}$ are ``consecutive" dyadic points (for example, if $I=[0,1]$ and $0$ is a vertex of $W_0$, then $x_k=2^{-k}$ for $k\in \N$). This will be ensured in the next section, where we construct more carefully the map $f$ on each $w$-triangle, so that it still has properties \ref{BB2}, \ref{BB3}, \ref{BB5}, and \ref{BB6}, and also has this particular modulus of continuity.

(c) By continuity, the map $f\colon \R^2\to \R^2$ is surjective. The $u$-quadrilaterals  induce subdivisions in the image side, exactly as the $v$-triangles do in the domain. Let $U_1$ be the equilateral triangle of sidelength $1$ that is the image of the $v$-triangle $V_1$ of level $0$ (in fact, $U_1=V_1$ since $f$ is the identity on $\partial V_1$). Then $U_1$ contains three disjoint $u$-quadrilaterals of level $1$, which lie in the complement of a ``removed" tripod. Each of these quadrilaterals is the image of a $v$-triangle of level $1$. This follows by the continuity and the monotonicity of $f$. In general, the closure of each $u$-quadrilateral of level $m$ is the union of the closures of three disjoint $u$-quadrilaterals of level $m+1$. These $u$-quadrilaterals are the images of $v$-triangles of level $m+1$; see Figure \ref{fig:FoldGasket}. By continuity, the diameters of $u$-quadrilaterals of level $m$ converge to $0$ as $m\to\infty$.

For each point $z\in \br U_1$ we can find a sequence $U_n$, $n\in \N$, of nested $u$-quadri\-laterals such that 
\begin{align*}
\{z\} = \bigcap_{n=1}^\infty \br U_n.
\end{align*}
We set $L^\circ$ to be the set of points of $\br U_1$ that do not lie on any ``removed" tripod. The correspondence between $U_n$ and $V_n$ and the uniqueness of a sequence $V_n$ shrinking to a point $x\in K^\circ$ (see the comments before Lemma \ref{Combinatorics V lemma}) imply that $f$ maps $K^\circ $ onto $L^\circ$, and in fact $f$ is injective on $K^\circ$.

(d) The tripods contained in $\br U_1$, which are the images of the $w$-triangles, have $\sigma$-finite length. Thus, the area of the tripods is equal $0$. This implies that the image of $K^\circ$ has full measure inside $\br U_1$.

\subsection{Folding equilateral triangles to tripods}\label{Section QC Folding}
So far, we have a continuous surjective map $f\colon \R^2\to \R^2$ that is injective outside the union $W_\infty$ of the closures of the $w$-triangles; of course continuity is still subject to choosing suitably the building block maps so that they have a certain modulus of continuity. We wish to change the definition of the map $f$ only inside each $w$-triangle so that $f$ becomes injective everywhere. However, this is not possible if the target is $\R^2$. Thus, we change the target to a flap-plane $S$ by attaching rectangles to each of the tripods; see Section \ref{Section Flap-planes} for the definition of a flap-plane. We then change the definition of $f$ inside the $w$-triangles and allow them to be mapped onto the rectangles attached to the corresponding tripod. We  wish to do this in such a way that the resulting map $\Phi\colon \R^2\to S$ is a homeomorphism, quasiconformal inside the $w$-triangles. This will be discussed in detail in the next section.

In this section we show how one can ``fold" an equilateral triangle $W\subset \R^2$ onto a space $X$ obtained by attaching rectangles $E$ to a single tripod $G$. The space $X$ is constructed very similarly to the flap-planes discussed in Section \ref{Section General}. One first cuts the plane along the edges of the tripod $G$, and then attaches two rectangles on each slit arising from an edge. The width of each rectangle is equal to the length of the corresponding edge of $G$, and the height is a prescribed constant $h>0$, which is the same for all $6$ rectangles. The barycenter $a$ of $G$ must ``lift" to three line segments, along which neighboring rectangles are glued; see Figure \ref{fig:Tripodflaps} for the gluing pattern. We remark that the lengths of the edges of $G$, and thus the widths of the rectangles, need not be equal to each other. The space $X$ is a topological disk, and we endow it with its natural length metric $d$, so that each rectangle $E\sim G$ (i.e., $E$ is glued to an edge of $G$) is isometric to a rectangle with the Euclidean metric.   

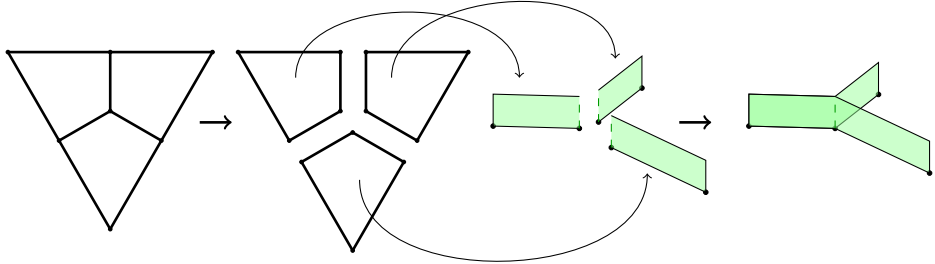
\begin{figure}
\centering
\begin{tikzpicture}[scale=.85]

%
%
%
%
\begin{scope}

\begin{scope}[xshift=8cm,yshift=2cm, rotate around y=-20]
\draw [fill=black] (1.,0,1.7320508075688776) circle (1.0pt) node (C1){};
\draw [fill=black] (2.333333333333333,0,1.3471506281091274) circle (1.0pt) node (C0){};
\node (C11) at ($(C1.center)+(0,.5,0)$){};
\node (C00) at ($(C0.center)+(0,.5,0)$){};

\draw[fill=green!40, fill opacity=.5] (C0.center)--(C1.center)--(C11.center)--(C00.center);
\draw[dashed, color=green!60!black] (C0.center)--(C00.center);
\end{scope}

\begin{scope}[xshift=8.5cm, yshift=1.7cm, rotate around y=-20]
\draw [fill=black] (5.,0,2.3094010767585043) circle (1.0pt) node (C2){};
\draw [fill=black] (2.333333333333333,0,1.3471506281091274) circle (1.0pt) node (C0){};
\node (C22) at ($(C2.center)+(0,.5,0)$){};
\node (C00) at ($(C0.center)+(0,.5,0)$){};

\draw[fill=green!40, fill opacity=.5] (C0.center)--(C2.center)--(C22.center)--(C00.center);
\draw[dashed, color=green!60!black] (C0.center)--(C00.center);
\end{scope}

\begin{scope}[xshift=8.3cm, yshift=2.1cm, rotate around y=-20]
\draw [fill=black] (2.,0,0.) circle (1.0pt) node (C3){};
\draw [fill=black] (2.333333333333333,0,1.3471506281091274) circle (1.0pt) node (C0){};
\node (C33) at ($(C3.center)+(0,.5,0)$){};
\node (C00) at ($(C0.center)+(0,.5,0)$){};

\draw[fill=green!40, fill opacity=.5] (C0.center)--(C3.center)--(C33.center)--(C00.center);
\draw[dashed, color=green!60!black] (C0.center)--(C00.center);
\end{scope}

\begin{scope}[xshift=12cm, yshift=2cm, rotate around y=-20]
\draw [fill=black] (2.,0,0.) circle (1.0pt) node (C3){};
\draw [fill=black] (5.,0,2.3094010767585043) circle (1.0pt) node (C2){};
\draw [fill=black] (1.,0,1.7320508075688776) circle (1.0pt) node (C1){};
\draw [fill=black] (2.333333333333333,0,1.3471506281091274) circle (1.0pt) node (C0){};
\node (C11) at ($(C1.center)+(0,.5,0)$){};
\node (C22) at ($(C2.center)+(0,.5,0)$){};
\node (C33) at ($(C3.center)+(0,.5,0)$){};
\node (C00) at ($(C0.center)+(0,.5,0)$){};
\draw[fill=green!40, fill opacity=.5] (C1.center)--(C11.center)--(C00.center)--(C33.center)--(C3.center)--(C0.center)--cycle;
\draw[fill=green!40, fill opacity=.5] (C1.center)--(C11.center)--(C00.center)--(C22.center)--(C2.center)--(C0.center)--cycle;

\draw[dashed, color=green!60!black] (C0.center)--(C00.center);

\end{scope}

\end{scope}

%
%
%
%

\begin{scope}[scale=.8, xshift=0cm,line cap=round,line join=round,>=triangle 45,x=1.0cm,y=1.0cm]
\clip(-0.1,-1) rectangle (12.111625304043951,5.061947321901701);
\fill[line width=2.pt,fill=white] (0.,3.) -- (2.,-0.4641016151377553) -- (4.,3.) -- cycle;
\draw [line width=1.pt] (0.,3.)-- (2.,-0.4641016151377553);
\draw [line width=1.pt] (2.,-0.4641016151377553)-- (4.,3.);
\draw [line width=1.pt] (4.,3.)-- (0.,3.);
\draw [line width=1.pt] (2.,1.845299461620748)-- (2.,3.);
\draw [line width=1.pt] (2.,1.845299461620748)-- (1.,1.2679491924311224);
\draw [line width=1.pt] (2.,1.845299461620748)-- (3.,1.2679491924311224);
\draw [line width=1.pt] (4.5,3.)-- (6.5,3.);
\draw [line width=1.pt] (4.5,3.)-- (5.5,1.2679491924311224);
\draw [line width=1.pt] (6.5,3.)-- (6.5,1.8452994616207472);
\draw [line width=1.pt] (5.5,1.2679491924311224)-- (6.5,1.8452994616207472);
\draw [line width=1.pt] (7.,3.)-- (9.,3.);
\draw [line width=1.pt] (9.,3.)-- (8.,1.2679491924311228);
\draw [line width=1.pt] (7.,1.845299461620744)-- (8.,1.2679491924311228);
\draw [line width=1.pt] (7.,1.845299461620744)-- (7.,3.);
\draw [line width=1.pt] (5.740174552951677,0.8496170377519137)-- (6.74017455295168,1.4269673069415396);
\draw [line width=1.pt] (6.74017455295168,1.4269673069415396)-- (7.740174552951688,0.8496170377519157);
\draw [line width=1.pt] (5.740174552951677,0.8496170377519137)-- (6.740174552951674,-0.8824337698169641);
\draw [line width=1.pt] (6.740174552951674,-0.8824337698169641)-- (7.740174552951688,0.8496170377519157);
\draw [fill=black] (0.,3.) circle (1.0pt);
\draw [fill=black] (2.,-0.4641016151377553) circle (1.0pt);
\draw [fill=black] (4.,3.) circle (1.0pt);
\draw [fill=black] (2.,3.) circle (1.0pt);
\draw [fill=black] (3.,1.2679491924311224) circle (1.0pt);
\draw [fill=black] (1.,1.2679491924311224) circle (1.0pt);
\draw [fill=black] (2.,1.845299461620748) circle (1.0pt);
\draw [fill=black] (4.5,3.) circle (1.0pt);
\draw [fill=black] (6.5,3.) circle (1.0pt);
\draw [fill=black] (5.5,1.2679491924311224) circle (1.0pt);
\draw [fill=black] (6.5,1.8452994616207472) circle (1.0pt);
\draw [fill=black] (7.,3.) circle (1.0pt);
\draw [fill=black] (9.,3.) circle (1.0pt);
\draw [fill=black] (7.,1.845299461620744) circle (1.0pt);
\draw [fill=black] (8.,1.2679491924311228) circle (1.0pt);
\draw [fill=black] (5.740174552951677,0.8496170377519137) circle (1.0pt);
\draw [fill=black] (6.74017455295168,1.4269673069415396) circle (1.0pt);
\draw [fill=black] (7.740174552951688,0.8496170377519157) circle (1.0pt);
\draw [fill=black] (6.740174552951674,-0.8824337698169641) circle (1.0pt);
\end{scope}

\draw[->] (4.5,2) to [out=90, in=90] (8,2);
\draw[->] (6,2) to[out=90, in=90] (9.5, 2.3);
\draw[->] (5.5,0.4) to [out=-90, in=-90] (10,0.5);
\draw[->, line width=1pt] (3,1.3) to (3.5,1.3);
\draw[->, line width=1pt] (10.5,1.3) to (11,1.3);

\end{tikzpicture}
\caption{An equlateral triangle $W$ is split into three quadrilaterals $Z_i$, $i=1,2,3$. Then each $Z_i$ is folded over an edge of the tripod $G$. Finally, the resulting rectangles are glued to obtain the space $X$.}\label{fig:FoldTriangle}
\end{figure}

We wish to construct a homeomorphism $\phi\colon \br W\to X$ that has the following properties:
\begin{enumerate}[(A)]
\item the composition of $\phi$ with the natural projection $X\to G$ satisfies \ref{BB2}, \ref{BB3},  \ref{BB5}, and \ref{BB6}, 
\item the same composition has the modulus of continuity in \eqref{BB*}, and
\item $\phi$ is $M$-quasiconformal (in the sense of Definition \ref{Pre Definition QC metric}) in the interior of the preimage of each rectangle $E\sim G$, where $M$ is independent of the tripod $G$ and of the height $h$ of the rectangles $E\sim G$,
\end{enumerate}
provided that $h$ is sufficiently small.

To do this, we first draw the heights of $W$, which split it into three quadrilaterals, each of which is a rotation of the other, and contains in its boundary a vertex of $W$ together with two congruent half-edges of $W$; see Figure \ref{fig:FoldTriangle}. Let $Z$ be one of these quadrilaterals. We will ``fold" $Z$ with a piecewise linear map to the two rectangles attached to an edge of $e$ of $G$. Assume that the length of $e$ is $\ell$. We assume that the height $h$ of the rectangles is less than $\ell/6$. Then there exists a unique $N\in \N$ such that 
\begin{align*}
\ell= Nh+q,
\end{align*}
where $h\leq q<2h$ is a remainder term.

We now divide each of the two rectangles $E$ attached to $e$ into $N$ squares of dimensions $h\times h$, and a rectangle of dimensions $q\times h$, as in Figure \ref{fig:folding}. We also subdivide $Z$ by considering $N+1$ dyadic points on each half-edge of $W$ that is contained in $\partial Z$, and drawing trapezoids as in Figure \ref{fig:folding}. Each of these $2N$ trapezoids is similar to the trapezoid $ABCD$, in the sense that it can be obtained by applying a Euclidean similarity to $ABCD$.

Each of these trapezoids can be mapped with a piecewise linear map to the corresponding square of dimensions $h\times h$. In fact, this can be done by drawing one diagonal in each trapezoid and in each  square, and  and then gluing two linear maps. This piecewise linear map is $M$-quasiconformal for a uniform $M>0$. 

We have to treat specially the two triangles near the vertex of $W$ that lies in $\partial Z$. The triangle $HIK$ is a right triangle with angle $\widehat{HIK}$ equal to $\pi/6$. Hence, it can be mapped to the rectangle $H'I'J'K'$ (so that vertices are mapped to the corresponding vertices) with a piecewise linear map that is $M$-quasiconformal for a universal $M>0$. Recall here that the ratio of the sides of the rectangle $H'I'J'K'$ is by construction bounded  between $1$ and $2$. The construction of such a map is done by converting the triangle $HIK$ to a quadrilateral, by introducing the midpoint $J$ of the segment $IK$. Then one can draw the diagonals $HJ$ and $H'J'$ and glue together two linear maps, one from the triangle $HJK$ to the triangle $H'J'K'$, and one from the triangle $HIJ$ to the triangle $H'I'J'$.

\begin{figure}
\centering
\begin{tikzpicture}[scale=2,line cap=round,line join=round,>=triangle 45,x=1.0cm,y=1.0cm]


\draw[line width=0.8pt] (5.,1.5) -- (6.,1.5) -- (6.,2.5) -- (5.,2.5) -- cycle;
\draw[line width=0.8pt] (6.,1.5) -- (7.,1.5) -- (7.,2.5) -- (6.,2.5) -- cycle;
\draw[line width=0.8pt] (5.,2.5) -- (6.,2.5) -- (6.,3.5) -- (5.,3.5) -- cycle;
\draw[line width=0.8pt] (6.,2.5) -- (7.,2.5) -- (7.,3.5) -- (6.,3.5) -- cycle;
\draw[line width=0.8pt,fill=red,fill opacity=0.1] (5.,3.5) -- (6.,3.5) -- (6.,4.5) -- (5.,4.5) -- cycle;
\draw[line width=0.8pt,fill=green,fill opacity=0] (6.,3.5) -- (7.,3.5) -- (7.,4.5) -- (6.,4.5) -- cycle;
\draw[line width=0.8pt,color=black,fill=red,fill opacity=0.1] (3.,4.267949192431123) -- (1.5,3.4019237886466858) -- (2.25,2.1028856829700264) -- (3.,2.535898384862245) -- cycle;
\draw[line width=0.8pt,color=black,fill=green,fill opacity=0] (3.,4.267949192431123) -- (3.,2.535898384862245) -- (3.75,2.1028856829700255) -- (4.5,3.401923788646684) -- cycle;
\draw[line width=0.8pt,color=black,fill=black!10] (5.,1.5) -- (6.,1.5) -- (6.,0.) -- (5.,0.) -- cycle;
\draw[line width=0.8pt] (6.,1.5) -- (7.,1.5) -- (7.,0.) -- (6.,0.) -- cycle;
\draw[line width=0.8pt,color=black,fill=black!10] (2.8125,1.128607103712532) -- (3.,1.2368602791855867) -- (3.,0.8038475772933671) -- cycle;

\draw [line width=0.8pt] (3.,4.267949192431123)-- (1.5,3.4019237886466858);
\draw [line width=0.8pt] (3.,4.267949192431123)-- (4.5,3.401923788646684);
\draw [line width=0.8pt] (2.25,2.1028856829700264)-- (3.,2.535898384862245);
\draw [line width=0.8pt] (3.,2.535898384862245)-- (3.75,2.1028856829700255);
\draw [line width=0.8pt] (2.625,1.4533666301316968)-- (3.,1.6698729810778061);
\draw [line width=0.8pt] (3.,1.6698729810778061)-- (3.375,1.4533666301316963);
\draw [line width=0.8pt] (2.8125,1.128607103712532)-- (3.,1.2368602791855867);
\draw [line width=0.8pt] (3.1875,1.1286071037125316)-- (3.,1.2368602791855867);
\draw [line width=0.8pt] (1.5,3.4019237886466858)-- (3.,0.8038475772933671);
\draw [line width=0.8pt] (3.,0.8038475772933671)-- (4.5,3.401923788646684);
\draw [line width=0.8pt] (3.,4.267949192431123)-- (3.,0.8038475772933671);
\draw [line width=0.8pt] (5.,1.5)-- (6.,1.5);
\draw [line width=0.8pt] (6.,1.5)-- (6.,2.5);
\draw [line width=0.8pt] (6.,2.5)-- (5.,2.5);
\draw [line width=0.8pt] (5.,2.5)-- (5.,1.5);
\draw [line width=0.8pt] (6.,1.5)-- (7.,1.5);
\draw [line width=0.8pt] (7.,1.5)-- (7.,2.5);
\draw [line width=0.8pt] (7.,2.5)-- (6.,2.5);
\draw [line width=0.8pt] (6.,2.5)-- (6.,1.5);
\draw [line width=0.8pt] (5.,2.5)-- (6.,2.5);
\draw [line width=0.8pt] (6.,2.5)-- (6.,3.5);
\draw [line width=0.8pt] (6.,3.5)-- (5.,3.5);
\draw [line width=0.8pt] (5.,3.5)-- (5.,2.5);
\draw [line width=0.8pt] (6.,2.5)-- (7.,2.5);
\draw [line width=0.8pt] (7.,2.5)-- (7.,3.5);
\draw [line width=0.8pt] (7.,3.5)-- (6.,3.5);
\draw [line width=0.8pt] (6.,3.5)-- (6.,2.5);
\draw [line width=0.8pt,color=black] (5.,3.5)-- (6.,3.5);
\draw [line width=0.8pt,color=black] (6.,3.5)-- (6.,4.5);
\draw [line width=0.8pt,color=black] (6.,4.5)-- (5.,4.5);
\draw [line width=0.8pt,color=black] (5.,4.5)-- (5.,3.5);
\draw [line width=0.8pt,color=black] (6.,3.5)-- (7.,3.5);
\draw [line width=0.8pt,color=black] (7.,3.5)-- (7.,4.5);
\draw [line width=0.8pt,color=black] (7.,4.5)-- (6.,4.5);
\draw [line width=0.8pt,color=black] (6.,4.5)-- (6.,3.5);
\draw [line width=0.8pt,color=black] (3.,4.267949192431123)-- (1.5,3.4019237886466858);
\draw [line width=0.8pt,color=black] (1.5,3.4019237886466858)-- (2.25,2.1028856829700264);
\draw [line width=0.8pt,color=black] (2.25,2.1028856829700264)-- (3.,2.535898384862245);
\draw [line width=0.8pt,color=black] (3.,2.535898384862245)-- (3.,4.267949192431123);
\draw [line width=0.8pt,color=black] (3.,4.267949192431123)-- (3.,2.535898384862245);
\draw [line width=0.8pt,color=black] (3.,2.535898384862245)-- (3.75,2.1028856829700255);
\draw [line width=0.8pt,color=black] (3.75,2.1028856829700255)-- (4.5,3.401923788646684);
\draw [line width=0.8pt,color=black] (4.5,3.401923788646684)-- (3.,4.267949192431123);
\draw [line width=0.8pt,color=black] (5.,1.5)-- (6.,1.5);
\draw [line width=0.8pt,color=black] (6.,1.5)-- (6.,0.);
\draw [line width=0.8pt,color=black] (6.,0.)-- (5.,0.);
\draw [line width=0.8pt,color=black] (5.,0.)-- (5.,1.5);
\draw [line width=0.8pt] (6.,1.5)-- (7.,1.5);
\draw [line width=0.8pt] (7.,1.5)-- (7.,0.);
\draw [line width=0.8pt] (7.,0.)-- (6.,0.);
\draw [line width=0.8pt] (6.,0.)-- (6.,1.5);
\draw [line width=0.8pt,color=black] (2.8125,1.128607103712532)-- (3.,1.2368602791855867);
\draw [line width=0.8pt,color=black] (3.,1.2368602791855867)-- (3.,0.8038475772933671);
\draw [line width=0.8pt,color=black] (3.,0.8038475772933671)-- (2.8125,1.128607103712532);

\draw [fill=black] (3.,0.8038475772933671) circle (1.0pt);
\draw[color=black] (3.0709748970611255,0.9318258316488557) node [label={[label distance=0.1cm]-90:$I$}]{};
\draw [fill=black] (3.,4.267949192431123) circle (1.0pt);
\draw[color=black] (3.0709748970611255,4.394439157992787) node [label={[label distance=-0.3cm]90:$D$}]{};
\draw [fill=black] (1.5,3.4019237886466858) circle (1.0pt);
\draw[color=black] (1.57528985077053,3.533908035469443) node[label={[label distance=0.1cm]200:$A$}]{};
\draw [fill=black] (4.5,3.401923788646684) circle (1.0pt);
\draw [fill=black] (2.25,2.1028856829700264) circle (1.0pt);
\draw[color=black] (2.323132373915828,2.2328669335591496) node[label={[label distance=0.1cm]-120:$B$}]{};
\draw [fill=black] (2.625,1.4533666301316968) circle (1.0pt);
\draw [fill=black] (2.8125,1.128607103712532) circle (1.0pt);
\draw[color=black] (2.8865753708061206,1.2596472116577484) node [label={[label distance=0.05cm]183:$H$}]{};
\draw [fill=black] (3.75,2.1028856829700255) circle (1.0pt);
\draw [fill=black] (3.375,1.4533666301316963) circle (1.0pt);
\draw [fill=black] (3.1875,1.1286071037125316) circle (1.0pt) node[label={[label distance=-0.1cm]0:$L$}]{};
\draw [fill=black] (3.,2.535898384862245) circle (1.0pt);
\draw[color=black] (3.0709748970611255,2.6631324948208213) node[label={[label distance=-0.3cm]0:$C$}]{};
\draw [fill=black] (3.,1.6698729810778061) circle (1.0pt);
\draw [fill=black] (3.,1.2368602791855867) circle (1.0pt);
\draw[color=black] (3.0709748970611255,1.3620913929105276) node {$K$};
\draw [fill=black] (6.,1.5) circle (1.0pt);
\draw [fill=black] (5.,1.5) circle (1.0pt);
\draw[color=black] (5.068636431490345,1.6284462641677533) node [label={[label distance=-0.35cm]0:$H'$}]{};
\draw [fill=black] (7.,1.5) circle (1.0pt);
\draw [fill=black] (6.,1.5) circle (1.0pt);
\draw[color=black] (6.072589407767594,1.6284462641677533) node [label={[label distance=-0.37cm]0:$K'$}]{};
\draw [fill=black] (6.,2.5) circle (1.0pt);
\draw [fill=black] (5.,2.5) circle (1.0pt);
\draw [fill=black] (7.,2.5) circle (1.0pt);
\draw [fill=black] (6.,2.5) circle (1.0pt);
\draw [fill=black] (6.,3.5) circle (1.0pt);
\draw [fill=black] (5.,3.5) circle (1.0pt);
\draw[color=black] (5.094247476803541,3.651718843910139) node [label={[label distance=-0.4cm]0:$B'$}]{};
\draw [fill=black] (7.,3.5) circle (1.0pt);
\draw [fill=black] (6.,3.5) circle (1.0pt);
\draw[color=black] (6.098200453080789,3.651718843910139) node [label={[label distance=-0.4cm]0:$C'$}]{};
\draw [fill=black] (6.,4.5) circle (1.0pt);
\draw [fill=black] (5.,4.5) circle (1.0pt);
\draw[color=black] (5.094247476803541,4.655671820187373) node[label={[label distance=-0.4cm]0:$A'$}]{};
\draw [fill=black] (7.,4.5) circle (1.0pt);
\draw [fill=black] (6.,4.5) circle (1.0pt);
\draw[color=black] (6.098200453080789,4.655671820187373) node [label={[label distance=-0.4cm]0:$D'$}]{};
\draw [fill=black] (6.,0.) circle (1.0pt);
\draw[color=black] (6.098200453080789,0.15837226319037406) node [label={[label distance=-0.37cm]0:$J'$}]{};
\draw [fill=black] (5.,0.) circle (1.0pt);
\draw[color=black] (5.094247476803541,0.15837226319037406) node [label={[label distance=-0.35cm]0:$I'$}]{};
\draw [fill=black] (7.,0.) circle (1.0pt) node [label={[label distance=-0.15cm]45:$\tilde I'$}]{};
\draw [fill=black] (2.999748970611255,1.04) circle (1.0pt) node [label={[label distance=-0.3cm]60:{}}]{};
\draw[<-, dashed, opacity=.5] (3.05,1)--(3.5,0.5);
\draw (3.5,0.5) node [label={[label distance=-0.35cm]-30:{$J$}}]{};

\end{tikzpicture}
\caption{Illustration of the folding map from a quadrilateral $Z$ onto two rectangles.}\label{fig:folding}
\end{figure}
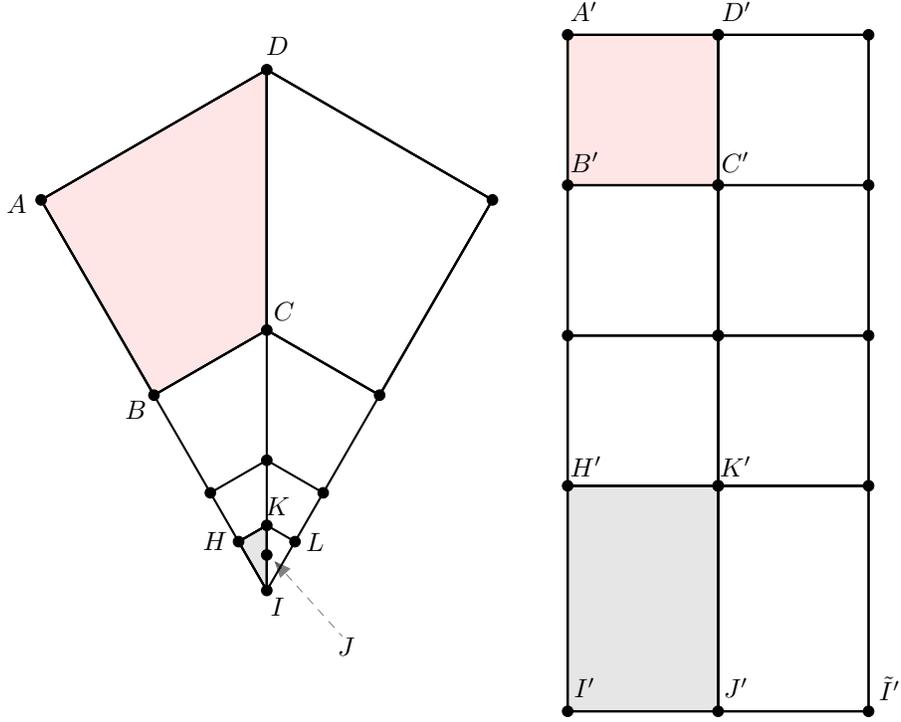

Hence, we obtain two piecewise linear maps, one defined on the triangle $HIK$ and one defined on the triangle $KIL$. The first one maps linearly the segment $IJ$ to the segment $I'J'$ and the second maps linearly the segment $IJ$ to the segment $\tilde I'J'$. Upon folding, the segment $I'J'$ gets glued  to the segment $\tilde I'J'$, so we obtain a homeomorphism $\phi$ from $\br Z$ onto the two folded rectangles that are glued over an edge of the tripod $G$. We remark that the segment $A'D'$ is not glued to anything at this moment, and it will be glued to the corresponding segment that arises from the rectangles attached to another edge of $G$.

With the same procedure, we construct such a homeomorphism $\phi_i$ for each one of the three quadrilaterals $Z_i$, $i=1,2,3$, in the subdivision of the original triangle $W$. The map $\phi_i$ maps $\br Z_i$ to the two folded rectangles attached to the edge $e_i$ of $G$, for $i=1,2,3$. We remark that the heights of all these rectangles are equal to $h$, but their  widths  might vary if they are attached to distinct edges of $G$. The quadrilateral $Z_1$ is glued to $Z_2$ along  edges of the form $AD$. The maps $\phi_1$ and $\phi_2$ are linear on these edges and they map $AD$ to edges of ``type" $A'D'$, whose length is $h$. Hence, the maps $\phi_1$ and $\phi_2$ can be ``glued" together to obtain a homeomorphism from $\br Z_1\cup \br Z_2$ onto the rectangles that are attached on two of the edges of the tripod $G$. Finally, one glues the third piece $\br Z_3$ to obtain the desired homeomorphism $\phi\colon  \br W\to X$; see Figure \ref{fig:FoldTriangle}. The map $\phi$ is $M$-quasiconformal, in the sense of Definition \ref{Pre Definition QC metric}, in the interior of  the preimage of each rectangle $E\sim G$. Note that the set where $\phi$ is quasiconformal is the complement in $W$ of finitely many line segments, along which the triangle $W$ is ``folded".
     
By construction, (A) and (C) hold, so it only remains to check that $\phi$ has the desired  modulus of continuity (B). We reformulate the claim, using the metric $d$ of the space $X$. The quantity $\mathcal O(W)$ is equal to the maximum length of the edges of the tripod $G$. We need to check that if $x_k,x_{k+1}$ are consecutive ``dyadic" points in a half-edge of $W$, then 
\begin{align*}
 d(\phi (x_k),\phi (x_{k+1})) \leq \mathcal O(W)/3.
\end{align*}
Since the segment $[\phi(x_k),\phi(x_{k+1})] \subset X$ projects isometrically into an edge of $G\subset \R^2$, the desired claim will then follow. Note that $\phi(x_k)$ and $\phi(x_{k+1})$ are contained either in an edge of a square of dimensions $h\times h$, or in an edge of  a rectangle of dimensions $q\times h$, where $h\leq q<2h$; recall the subdivision of $Z$ into ``dyadic" trapezoids and the definition of $\phi$ in each trapezoid. Hence, $d(\phi (x_k),\phi (x_{k+1})) \leq 2h \leq \ell/3$, since we chose $h\leq \ell/6$, where $\ell$ is the length of the edge of $G$ that contains $x_k$ and $x_{k+1}$. On the other hand, $\ell \leq \mathcal O(W)$, so our claim is proved. 

\subsection{Homeomorphism onto a flap-plane}\label{Section QC homeo}

Here we show how to patch together the folding maps $\phi$ of each $w$-triangle $W$ (from Section \ref{Section QC Folding}) with the continuous map $f\colon \R^2 \to \R^2$ in order to obtain a homeomorphism  $\Phi$ from $\R^2$ onto a flap-plane $S$. This will be done in three steps. (a) First, we explain how the folding maps $\phi$ of the $w$-triangles can induce the building block maps of $f\colon \R^2\to \R^2$, so that $f$ is continuous; recall the comments in Section \ref{Section Properties of f}(b). (b) Then we discuss how to construct a flap-plane $S$ by gluing rectangles to the tripods provided by $f$.  (c) Finally, we explain how one can ``patch" together the map $f$ with the folding maps $\phi$ to obtain a homeomorphism $\Phi\colon \R^2 \to S$ that is quasiconformal in $\R^2\setminus K$.

(a) Let $W$ be an equilateral triangle and $G$ be a tripod. Consider a map $\phi$ that maps $\br W$ to a metric space $X$ as in Section \ref{Section QC Folding}, satisfying (A), (B), and (C). There is a natural projection $P\colon X\to G$, so that the composition $P\circ \phi$ satisfies \ref{BB2}, \ref{BB3}, \ref{BB5}, \ref{BB6}, and condition ($\ast$). Note that $\phi$ depends on the height $h$ of the rectangles that we attach to the tripod $G$, and condition ($\ast$) is subject to choosing a sufficiently small height $h$.

Using the compositions $P\circ \phi$ as the building block maps of $f$ we obtain a continuous map $f\colon \R^2\to \R^2$. More specifically, once $f$ has been defined on $w$-triangles of level $m-1$, then we know the tripods (as sets) that will correspond to the $w$-triangles of level $m$. Then one considers the folding maps $\phi$ with respect to the $w$-triangles of level $m$ and the corresponding tripods. The compositions $P\circ \phi$ yield the building block maps that are used to define $f$ on $w$-triangles of level $m$. The comments in Section \ref{Section Properties of f}(b) justify why $f$ will extend continuously on all of $\R^2$.

A subtlety here is that if we change the height $h$ of the rectangles attached to a specific tripod, then this changes the folding map $\phi$, and alters the map $f$ completely!

(b) Let $\mathcal G$ be the family of tripods arising from $f$, that is, the family $f(\br W)$, where $W$ is a $w$-triangle of level at least $1$. The family $\mathcal G$ is a family of tripods that has property $(G)$, i.e., any two tripods have at most one point of intersection, in which case it is a non-central vertex of one of them; recall the definitions from Section \ref{Section:flap-plane:multipletripods}. Also, the ``graph" $T_\infty =\bigcup_{G\in \mathcal G} G$ has degree uniformly bounded by $6$, as one can see inductively. Proposition \ref{General-Completeness} implies that if the heights of the rectangles attached to each tripod are sufficiently small, then one obtains a limiting flap-plane $(S,d)$, which is a complete metric space. Recall Remark \ref{General-Graph dependence}, which allows us to choose inductively the tripods $G$ and the heights $h$ of the corresponding rectangles, and still obtain the limiting flap-plane. The limiting space $S$ can be regarded as the union of $\R^2\setminus \bigcup_{G\in \mathcal G} G$ with the rectangles attached to each tripod $G$, after proper identifications; see also the comments in Section \ref{Section:flap-plane:inverse}.

(c) The map $f$ can be ``patched" with the maps $\phi$ to yield naturally a map $\Phi\colon  \R^2 \to  S$. Namely, the maps $f$ and $\Phi$ agree outside the closures of $w$-triangles, and inside a $w$-triangle $\br W$ the map $\Phi$ is defined to be equal to the folding map $\phi$ that folds $\br W$ on top of the corresponding tripod $f(\br W)$. There is possibly an ambiguity in the definition of $\Phi$, whenever two $w$-triangles $\br W_1,\br W_2$ intersect at one point. In this case the corresponding tripods $G_1,G_2$ also intersect at one point (as described in property $(G)$). Then $\Phi(\br W_1)$ and $\Phi(\br W_2)$ also have to intersect at precisely one point in the space $S$, by our basic rules in the construction of a flap-plane; see Section \ref{Section General} and Figure \ref{fig:TwoTripodflaps}. Hence, $\Phi$ can unambiguously be defined.

We claim that $\Phi$ is injective. Recall from Section \ref{Section Properties of f}(c) that $f$ is injective on $K^\circ$ with $f(K^\circ)=L^\circ$ and it is the identity in the unbounded complementary component of the gasket $K$.  Also, each of the maps $\phi$ is a homeomorphism from a $w$-triangle $\br W$ onto the rectangles attached to the corresponding tripod. In order to show that $\Phi$ is injective, it remains to prove that if $x_1\in \partial W_1$ and $x_2\in \partial W_2$, where $W_1,W_2$ are distinct $w$-triangles with $\Phi(x_1)=\Phi(x_2)$, then $x_1=x_2$. From the construction of $S$ (looking at a finite stage of the construction) one sees that the equality $\Phi(x_1)=\Phi(x_2)$ is only possible if the triangles $W_1$ and $W_2$ are adjacent, so we necessarily have that $x_1=x_2$ and it is the intersection point $\partial W_1\cap \partial W_2$. This completes the proof of injectivity. 

In fact, $\Phi$ is also surjective, since $f$ maps $K^\circ$ surjectively onto $L^\circ$; see the comments in Section \ref{Section Properties of f}(c). Note that $\Phi$ maps, in a sense, $\infty$ to $\infty$  and is continuous in a neighborhood of $\infty$, as it agrees with the identity there. Hence, if we show that $\Phi\colon \R^2\to S$ is continuous, then it will be a a proper bijective map, and hence a homeomorphism, as desired.

The proof of continuity is very similar to the proof of Proposition \ref{General-Completeness}, so we only provide a sketch. Assume for the sake of contradiction that $x_k$ is a sequence in $\R^2$ converging to a point $x\in \R^2$, but the image points $y_k=\Phi(x_k)$, $y=\Phi(x)$ satisfy $d(y_k,y)\geq \delta$ for all $k\in \N$ and for some $\delta>0$. The map $\Phi$ is already continuous in the interior of all $w$-triangles, as it agrees there with the homeomorphisms $\phi$, hence $x$ cannot lie in the interior of a $w$-triangle. It follows that $x$ must lie on the gasket $K$. 

With the same reasoning, we cannot have that infinitely many terms $x_k$ lie in the same $w$-triangle $\br W$. Hence, we either have a subsequence of $x_k$ all of whose terms lie in $K^\circ$, or there exists a subsequence of $x_k$, still denoted by $x_k$, whose terms lie in distinct $w$-triangles $\br W_k$. 

In the first case, we assume that $x_k\in K^\circ $ for all $k\in \N$ and we consider two subcases. The first subcase is that $x\in K^\circ$, in which case $y\in L^\circ \subset \R^2 \setminus \bigcup_{G\in \mathcal G} G$; recall that $\Phi(K^\circ )= f(K^\circ)=L^\circ$ from Section \ref{Section Properties of f}(c). Then we necessarily have $y_k\to y$ with the Euclidean metric. This is because the map $\Phi$ ``agrees" there with $f$, and $f$ is continuous. We would like to argue that $y_k\to y$ with respect to the metric $d$ of $S$. This follows  because the metric $d$ restricted to $\R^2 \setminus \bigcup_{G\in \mathcal G} G$ is topologically equivalent to the Euclidean metric; see Remark \ref{General-Metrics equivalent}. Hence, we obtain a contradiction to the assumption that $d(y_k,y)\geq \delta$ for all $k\in \N$. 

The other subcase is that $x$ lies on an edge of a $w$-triangle $W$. Then one can find a nested sequence $V_k$ of $v$-triangles and a subsequence of $x_k$, still denoted by $x_k$, such that $x_k\in V_k$ and $x\in \partial V_k$ for all $k\in \N$; see comments before Lemma \ref{Combinatorics V lemma}. The triangles $V_k$ correspond, under $f$, to a nested sequence of $u$-quadrilaterals $U_k$ such that the projection $\tilde y$ of the point $\Phi(x)=y\in S$ to the plane lies in $\partial U_k$. In fact, $y$ lies in the boundary of a rectangle $E$ attached to the tripod $G$ that corresponds to $W$, and also $y$ is ``accessible" from $U_k$; this statement can be made more precise if one looks at a finite stage $(S_n,d_n)$ of the construction of $(S,d)$, as discussed in Section \ref{Section Flap-planes}. The line segments $(\tilde y,y_k]\subset \R^2$ are contained in the convex quadrilateral $\br U_k$ and have the property that they do not intersect any tripod infinitely often. As in the proof of Proposition \ref{General-Completeness}, this implies that for any given $\varepsilon>0$ we have
\begin{align*}
d(y,y_k) \leq |\tilde y-y_k|+\varepsilon \leq \diam(U_k) +\varepsilon
\end{align*} 
for all sufficiently large $k$. Since $\diam(U_k)\to 0$ by the continuity of $f$, we obtain a contradiction.

The second case is that  $x_k \in \br W_k$, which are distinct $w$-triangles. We will reduce this to the previous case. One can find points $x_k' \in \partial W_k$ with $x_k' \to x$, since the diameters of $W_k$ shrink to $0$, such that the corresponding image points $y_k'$ satisfy  $d(y_k',y_k) \to 0$. This is because the heights of the rectangles attached to distinct tripods have to shrink to $0$; see the statement of Proposition \ref{General-Completeness}. It suffices to show that $d(y_k',y)\to 0$. Finally, arbitrarily close to $x_k'$ one can find points $x_k'' \in K^\circ$ that converge to $x$ such that the corresponding image points $y_k''\in L^\circ$ satisfy $d(y_k',y_{k}'')\to 0$. This follows from the argument in the previous paragraph. However, we are now reduced to the previous case, so $d(y_k'',y)\to 0$, and therefore $d(y_k,y)\to 0$, a contradiction.

\subsection{Finishing the proof of non-removability}\label{Section QC Finish}
We have constructed a homeomorphism $\Phi\colon \R^2 \to S$ that is $M$-quasiconformal in  each $w$-triangle $W$, except at the finitely many line segments along which $W$ is folded; recall property (C) of the folding maps in Section \ref{Section QC Folding}. If the heights of the rectangles attached to the tripods are  chosen inductively to be sufficiently small, then by Theorem \ref{General-QS Embedding} there exists an $\eta$-quasisymmetry $\Psi\colon S\to \R^2$. Now, we consider the composition $F\coloneqq \Psi\circ \Phi$ which is a homeomorphism of $\R^2$. 

The map $\Psi$, restricted on a rectangle $E$ attached to a tripod, is $\eta$-quasisymmetric, and thus $M'$-quasiconformal, where $M'$ depends only on $\eta$; this follows from Lemma \ref{Pre QS implies QC} and the fact that the metric $d$, restricted to $E$, is isometric to the Euclidean metric. Hence, by Lemma \ref{Pre Compositions}, in each $w$-triangle $W$ the map $F$ is $M\cdot M'$-quasiconformal in the complement of finitely many line segments; these are precisely the segments along which the triangle $W$ is folded. Since these segments have finite length, the are removable for quasiconformal maps by Lemma \ref{Pre Removability}, so $F$ is $M\cdot M'$-quasiconformal on each $w$-triangle $W$. 

We finally claim that $F$ cannot be quasiconformal on $\R^2$. First, recall the continuous map $f\colon \R^2\to \R^2$ that is in fact the composition of $\Phi$ with the natural projection $P$ from $S$ to $\R^2$; see Section \ref{Section General} and Remark \ref{General-Properties S infinity} for the definition of the projection. We note that $f(K^\circ)=L^\circ $ has positive Lebesgue measure, as remarked in Section \ref{Section Properties of f}(d). Since the projection $P$ is $1$-Lipschitz and $L^\circ$ projects to itself, if $\mu$ denotes the Hausdorff $2$-measure of $S$, we have
\begin{align*}
\mu (L^\circ) \geq m_2(L^\circ) >0.
\end{align*}
Compare to the property \ref{G-Measure} of the projections of flap-planes in Section \ref{Section General}. 

Finally, by Lemma \ref{Pre Absolutely continuous}, we conclude that the pushforward measure $\Psi_*\mu$ and the Lebesgue measure on $\R^2$ are mutually absolutely continuous. This implies that $F(K^\circ)= \Psi(L^\circ) \subset \R^2$ has positive Lebesgue measure. Thus, the map $F$ blows the set $K^\circ$ of measure zero to a set of positive measure. Using Lemma \ref{Pre Measure zero}, we conclude that $F$ cannot be globally quasiconformal. \qed


\begin{thebibliography}{99}\setlength{\itemsep}{-1pt}
\small


\bibitem[AT04]{AT}
	L.~Ambrosio, P.~Tilli,
	\emph{Topics on analysis in metric spaces},
	Oxford University Press, Oxford, 2004.

\bibitem[AIM09]{AIM}
  	K.~Astala, T.~Iwaniec, and G.~Martin,
  	\emph{Elliptic Partial Differential Equations and
Quasiconformal Mappings in the Plane},
  	Princeton University Press, Princeton, NJ,  2009. 

\bibitem[Be31]{Be}
	A.S.~Besicovitch,
	\emph{On Sufficient Conditions for a Function to be Analytic, and on Behaviour of Analytic Functions in the Neighbourhood of Non-Isolated Singular Points},
	Proc.\ London Math.\ Soc.\ (2) 32 (1931), no.\ 1, 1--9.

\bibitem[Bi94]{Bi}
	C.~Bishop,
	\emph{Some homeomorphisms of the sphere conformal off a curve},
	Ann.\ Acad.\ Sci.\ Fenn.\ Ser.\ A I Math.\ 19 (1994), no.\ 2, 323--338.

\bibitem[Bi98]{Bi:R3}
	C.~Bishop,
	\emph{Non-removable sets for quasiconformal and locally biLipschitz mappings in $\R^3$},
	Stony Brook IMS, 1998, preprint: \url{http://www.math.stonybrook.edu/cgi-bin/preprint.pl?ims98-6}.

\bibitem[Bi15]{Bi2}
	C.~Bishop,
	NSF research proposal, 2015,
	\url{http://www.math.stonybrook.edu/~bishop/vita/nsf15.pdf}.
	
\bibitem[BK02]{BK}
	M.~Bonk, B.~Kleiner,
	\emph{Quasisymmetric parametrizations of two-dimensional metric spheres},
	Invent.\ Math.\ 150 (2002), no.\ 1, 127--183.
	
\bibitem[BM17]{BM}
	M.~Bonk, D.~Meyer,
	\emph{Expanding Thurston Maps},
	Mathematical Surveys and Monographs, 225,
	American Mathematical Society, Providence, RI, 2017.
	
\bibitem[BBI01]{BBI}
	D.~Burago, Y.~Burago, S.~Ivanov,
	\emph{A course in metric geometry},
	Graduate Studies in Mathematics , 33,
	American Mathematical Society,
	Providence, RI, 2001.

\bibitem[DS97]{DS}
	G.~David, S.~Semmes,
	\emph{Fractured fractals and broken dreams},
	Oxford University Press, New York, 1997.

\bibitem[FZ72]{FZ}
	H.~Federer, W.P.~Ziemer,
	\emph{The Lebesgue set of a function whose distribution derivatives are $p$-th power summable},
	Indiana Univ.\ Math.\ J.\ 22 (1972/73), 139--158.

\bibitem[Fo99]{Fo}
	G.B.~Folland,
	\emph{Real Analysis},
	2nd ed., Wiley, New York, 1999.

\bibitem[Ge60]{Ge:remov}
	F.W.~Gehring,
	\emph{The definitions and exceptional sets for quasiconformal mappings},
	Ann.\ Acad.\ Sci.\ Fenn.\ Ser.\ A I No.\ 281, 1960, 0--28.	

\bibitem[GS09]{GS}
	J.~Graczyk, S.~Smirnov,
	\emph{Non-uniform hyperbolicity in complex dynamics},
	Invent.\ Math.\ 175 (2009), no.\ 2, 335--415.

\bibitem[He01]{He}
	J.~Heinonen,
	\emph{Lectures on Analysis on Metric Spaces},
	Springer Verlag, New York, 2001.

\bibitem[HK98]{HK}	
	J.~Heinonen, P.~Koskela,
	\emph{Quasiconformal maps in metric spaces with controlled geometry},
	Acta Math.\ 181 (1998), no.\ 1, 1--61.	

\bibitem[HS94]{HS}
	Z.-X.~He, O.~Schramm,
	\textit{Rigidity of circle domains whose boundary has $\sigma$-finite linear measure},
	Invent.\ Math.\ 115 (1994), no.\ 2, 297--310.

\bibitem[Jo91]{Jo}
	P.~Jones,
	\emph{On removable sets for Sobolev spaces in the plane},
	Essays on Fourier analysis in honor of Elias M.~Stein (Princeton, NJ, 1991), 250--276,
	Princeton Math.\ Ser., 42, Princeton Univ.\ Press, Princeton, NJ, 1995.

\bibitem[JS00]{JS}
	P.~Jones, S.~Smirnov,
	\emph{Removability theorems for Sobolev functions and quasiconformal maps},
	Ark.\ Mat.\ 38 (2000), no.\ 2, 263--279.

\bibitem[Kah98]{Kah}
	J.~Kahn,
	\emph{Holomorphic Removability of Julia Sets},
	preprint arXiv:math/9812164.

\bibitem[Kau84]{Kau}
	R.~Kaufman,
	\emph{Fourier-Stieltjes coefficients and continuation of functions},
	Ann.\ Acad.\ Sci.\ Fenn.\ Ser.\ A I Math.\ 9 (1984), 27--31.

\bibitem[KW96]{KW}
	R.~Kaufman, J.-M.~Wu,
	\emph{On removable sets for quasiconformal mappings},
	Ark.\ Mat.\ 34 (1996), no.\ 1, 141--158.

\bibitem[Ki18]{Ki}
	K.~Kinneberg,
	\emph{Lower bounds for codimension-1 measure in metric manifolds},
	Rev.\ Mat.\ Iberoam.\ 34 (2018), no.\ 3, 1103--1118.

\bibitem[KL04]{KL}
	S.~Keith, T.~Laakso,
	\emph{Conformal Assouad dimension and modulus},
	Geom.\ Funct.\ Anal. 14 (2004), no.\ 6, 1278--1321.

\bibitem[KRZ17]{KRZ}
	P.~Koskela, T.~Rajala, Y.~Zhang,
	\emph{A density problem for Sobolev functions on Gromov hyperbolic domains},
	Nonlinear Anal.\ 154 (2017), 189--209.

\bibitem[KN05]{KN}
	P.~Koskela, T.~Nieminen,
	\emph{Quasiconformal removability and the quasihyperbolic metric},
	Indiana Univ.\ Math.\ J.\ 54 (2005), no.\ 1, 143--151.

\bibitem[LW17]{LW}
	A.~Lytchak, S.~Wenger,
	\emph{Canonical parametrizations of metric discs},
	preprint arXiv:1701.06346.

\bibitem[MW13]{MW}
	S.~Merenkov, K.~Wildrick,
	\emph{Quasisymmetric Koebe uniformization},
	Rev.\ Mat.\ Iberoam.\ 29 (2013), no.\ 3, 859--909.


\bibitem[Mo62]{Mo}
	R.L.~Moore,
	\emph{Foundations of point set theory},
	American Mathematical Society Colloquium Publications,
	American Mathematical Society, Providence, R.I., 1962.

\bibitem[Nt17]{Nt}
	D.~Ntalampekos,
	\emph{A removability theorem for Sobolev functions and detour sets},
	preprint arXiv:1706.07687.

\bibitem[Nt18]{Nt2}
	D.~Ntalampekos,
	\emph{Non-removability of Sierpinski carpets},
	preprint arXiv:1809.05605.
	
\bibitem[NY18]{NY}
	D.~Ntalampekos, M.~Younsi,
	\emph{Rigidity theorems for circle domains},
	preprint arXiv:1809.05573.

\bibitem[Ra17]{Ra}
	K.~Rajala,
	\emph{Uniformization of two-dimensional metric surfaces},
	Invent.\ Math.\ 207 (2017), no.\ 3, 1301--1375.

\bibitem[Sh16]{Sh}
	S.~Sheffield,
	\emph{Conformal weldings on random surfaces: SLE and the quantum gravity zipper},
	Ann.\ Probab.\ 44 (2016), no.\ 5, 3474--3545.
	
	
\bibitem[SS90]{SS}
	W.~Smith, D.~Stegenga,
	\emph{H\"older domains and Poincar\'e domains},
	Trans.\ Amer.\ Math.\ Soc.\ 319 (1990), no.\ 1, 67--100.	

\bibitem[V\"a71]{Va}
	J.~V\"ais\"al\"a,
	\emph{Lectures on $n$-dimensional quasiconformal mappings},
	Lecture Notes in Mathematics, Vol.\ 229, 
	Springer-Verlag, Berlin-New York, 1971.

\bibitem[Wh58]{Wh2}
	G.T.~Whyburn,
	\emph{Topological characterization of the Sierpi\'nski curve},
	Fund.\ Math.\ 45 (1958), 320--324.

\bibitem[Wi08]{Wi}
	K.~Wildrick,
	\emph{Quasisymmetric parametrizations of two-dimensional metric planes},
	Proc.\ Lond.\ Math.\ Soc.\ (3) 97 (2008), no.\ 3, 783--812.

\bibitem[Wu98]{Wu}
	J.-M.~Wu,
	\emph{Removability of sets for quasiconformal mappings and Sobolev spaces},
	Complex Variables Theory Appl.\ 37 (1998), no.\ 1--4, 491--506.

\bibitem[Yo15]{Yo}
	M.~Younsi,
	\emph{On removable sets for holomorphic functions},
	EMS Surv.\ Math.\ Sci.\ 2 (2015), no.\ 2, 219--254.
	
\bibitem[Yo16]{YoKoebe}
	M.~Younsi,
	\emph{Removability, rigidity of circle domains and Koebe's conjecture},
	Adv.\ Math.\ 303 (2016), 1300--1318.	
	
	
\end{thebibliography}
\end{document}